\newcommand{\comment}[2]{#2}
\newcommand{\ds}{\displaystyle\sum}
\newcommand{\complex}{\mathbb{C}}
\newcommand{\nat}{\mathbb{N}}
\newcommand{\integ}{\mathbb{Z}}
\newcommand{\orb}{\mathcal{O}}
\newcommand{\op}{\operatorname}
\newcommand{\Hom}{\operatorname{Hom}}
\newcommand{\End}{\operatorname{End}}
\newcommand{\Ext}{\operatorname{Ext}}
\newcommand{\GL}{\operatorname{GL}}
\newcommand{\SL}{\operatorname{SL}}
\newcommand{\Sp}{\operatorname{Sp}}
\newcommand{\SO}{\operatorname{SO}}
\newcommand{\spin}{\operatorname{Spin}}
\newcommand{\Gtwo}{\operatorname{G_2}}
\newcommand{\E}{\operatorname{E_6}}
\newcommand{\rep}{\operatorname{rep}}
\newcommand{\Sym}{\operatorname{Sym}}
\newcommand{\dif}{\mathcal{D}}
\newcommand{\lie}{\mathfrak{g}}
\newcommand{\under}{\underline}
\newcommand{\ove}{\overline}
\newcommand{\dual}{\mathbb{D}}
\newcommand\charC{{\operatorname{charC}}}
\newtheorem{theorem}{Theorem}[section]
\newtheorem*{theoremnonum}{Theorem}
\newtheorem{lemma}[theorem]{Lemma}
\newtheorem{proposition}[theorem]{Proposition}
\newtheorem{corollary}[theorem]{Corollary}
\newtheorem*{varthm}{}
\newtheorem{conjecture}{Conjecture}
\theoremstyle{definition}
\newtheorem{remark}[theorem]{Remark}
\newtheorem{definition}[theorem]{Definition}
\newtheorem{notation}[theorem]{Notation}
\newtheorem{note}[theorem]{Note}
\newtheorem{example}[theorem]{Example}
\newtheorem{xca}[theorem]{Exercise}
\newtheorem{idea}[theorem]{Idea}
\newtheorem{question}[theorem]{Question}
\newtheorem{theorem}[equation]{Theorem}
\newtheorem{lemma}[equation]{Lemma}
\newtheorem{proposition}[equation]{Proposition}
\newtheorem{corollary}[equation]{Corollary}
\newtheorem*{prb}{Problem}
\theoremstyle{definition}
\newtheorem{remark}[equation]{Remark}
\newtheorem{definition}[equation]{Definition}
\newtheorem{notation}[equation]{Notation}
\newtheorem{example}[equation]{Example}
\def\presuper#1#2{\mathop{}
  \mathopen{\vphantom{#2}}^{#1}
  \kern-\scriptspace #2}
\newcommand{\calO}{{\mathcal O}}
\newcommand{\calM}{{\mathcal M}}
\newcommand{\CC}{\complex}
\def\AAA{\widehat{{\mathrm A}\hspace{-0.5ex}{\mathrm A}}}
\def\EEE{\widehat{\reflectbox{\ensuremath{\mathrm E}}\hspace{-0.4ex}{\mathrm E}}_6}
\def\calO{{\mathcal O}}
\numberwithin{equation}{section}
\begin{document}

\title{On categories of equivariant $\dif$-modules}
\author{Andr\'as C. L\H{o}rincz, Uli Walther}
\thanks{UW gratefully acknowledges NSF support through grant DMS-1401392.}
\date{}

\begin{abstract}
Let $X$ be a variety with an action by an algebraic group $G$. In this paper we discuss  various properties of $G$-equivariant $\dif$-modules on $X$, such as the decompositions of their global sections as representations of $G$ (when $G$ is reductive), and descriptions of the categories that they form. When $G$ acts on $X$ with finitely many orbits, the category of equivariant $\dif$-modules is equivalent to the category of finite-dimensional representations of a finite quiver with relations. We describe explicitly these categories for irreducible $G$-modules $X$ that are spherical varieties, and show that in such cases the quivers are almost always representation-finite (\emph{i.e.} with finitely many indecomposable representations).
\end{abstract}

\maketitle

\tableofcontents

\vspace*{5mm}

\section{Introduction}
\label{sec:intro}
A standard method for studying a complex algebraic variety $X$ is to
investigate the modules over its structure sheaf $\calO_X$,
and---in the affine, projective, or toric case---its coordinate ring. If a group
$G$ acts on $X$ then induced is an action of $G$ on $\calO_X$. The
collection of equivariant $\calO_X$-modules, those that
inherit this $G$-action, often reflects many details about the
geometry of $X$. A spectacular case is when $X$ is toric where Picard
group, Betti cohomology, and various other invariants can be computed
entirely from equivariant data.

If $X$ is smooth, then among the $\calO_X$-modules one finds a second
special category, the holonomic (left or right) $\dif_X$-modules. For
general $X$, the appropriate substitute arises from an embedding of
$X$ into a smooth $X'$; the holonomic $\dif_{X'}$-modules with
support in $X$ form a category that is
essentially independent of the embedding. 
The Riemann--Hilbert correspondence of Kashiwara and Mebkhout sets up
a bijection between the regular holonomic $\dif_X$-modules 
(for which the derived solutions exhibit polynomial growth at all
singularities) and perverse sheaves. This, and its derived version,
imply that this category encodes topological, algebraic and analytic
information on $X$. Included in this framework are de Rham cohomology,
regular and irregular connections on the smooth locus, and all
(iterated) local cohomology modules computed from $\calO_X$.

Whenever a group $G$ acts on a smooth complex variety $X$ 
then $X$ supports the category of (weakly) $G$-equivariant
$\dif$-modules $\calM$. 
For example, in the case of
a monomial action of a torus on an affine space, the
$A$-hypergeometric systems introduced by Gel'fand et
al.\ \cite{GGZ,GZK} and their generalizations from \cite{MMW05} are
of this type.
As $G$ acts on $X$, the Lie algebra elements induce vector fields on
$X$ and as such become sections of $\dif_{X}$, acting on $\calM$. For a weakly
$G$-equivariant module, differentiation 
yields a second action of the Lie algebra on $\calM$.
If these two actions by the Lie algebra on $\calM$ coincide, the corresponding
module is \emph{strongly equivariant}. There are several sources for
strong equivariance, including $\calO_X$ and all its local cohomology modules 
supported on equivariant closed subsets. These have been studied
intensively recently using methods related to ours, see
\cite{perlmana, lyubez, perlmanhyper,perlmanlyubez,claudiu2,claudiu4,claudiu3}.

Strong equivariance  is the theme of our
investigations here. In the greater part of this article (and for the
remainder of the introduction), we shall assume that $X$ is a finite
union of $G$-orbits. In this situation, strongly equivariant (coherent)
$\dif_X$-modules  
are always
regular holonomic.
To get a feeling of the constraints that (strong)
equivariance imposes, consider $\complex^*$ acting on $X=\complex^1$ by the
usual multiplication. Here, as always, $\dif_X$ is weakly (but not strongly)
equivariant. However, even on the class of holonomic modules the two
concepts differ;
% up to formation of (equivariant, when
%applicable, so that the Euler operator acts semi-simply) extensions
%and submodules, 
the irreducible objects in
the three categories are as follows: 1) all the simple quotients of
$\dif_X$ for the holonomic category; 2) the modules $\dif_X/\dif_X x$ and
$\dif_X/\dif_X \partial_x$ as well as all modules 
$\dif_X/\dif_X(x\partial_x-\alpha)$ with $\alpha\in\complex\setminus 
\integ$ for the
weakly equivariant category, while in the strongly equivariant case 3), this reduces
to just 
%equivariant extensions of 
$\dif_X/\dif_X x$ and $\dif_X/\dif_X \partial_x$
%(in other words, the
%Euler operator acts semi-simply).  
(and extensions in the category of strongly equivariant modules are not
arbitrary: 
the
Euler-operator must act semi-simply). The  modules $\dif_X/\dif_X x$
and $\dif_X/\dif_X \partial_x$ correspond to the
two orbits $\{0\}$ and $\complex\setminus\{0\}$ of the action.
In
quite good generality, this dialogue of orbits and strongly
equivariant $\dif_X$-modules holds true, although a further ingredient
is needed in the form of a representation of the stabilizer groups, as can be seen in \cite[Section 11.6]{htt}. In this dictionary, the simple strongly equivariant
$\dif$-modules are indexed by irreducible equivariant local systems on
the orbits. Explicit realizations of the $\dif_X$-modules in question
are in general difficult to
obtain (see Open Problem 3 in \cite[Section~6]{macvil}).  We take this
challenge as our point of departure.

One way of studying a $\dif_X$-module is via its characteristic
variety. In the presence of a $G$-action, the union $\Lambda$ of all
the conormal bundles to the orbits is the fiber over zero of the
moment map. If $G$ is semi-simple and simply connected, we prove here
that the categories of $G$-equivariant $\dif_X$-modules, and of
$\dif_X$-modules with characteristic variety inside $\Lambda$
agree. The key point is that in this case strong equivariance is
preserved under extensions. 
%This equivalence of categories is not
%possible in the case of infinitely many orbits. \andras{what is
%$\Lambda$ in th at case?}

If $G$ is linear reductive, topologically connected, and if $X$ is a
smooth $G$-variety, then we give estimates for the multiplicities of
weights in global section modules of strongly equivariant simple
$\dif$-modules. These come in terms of multiplicities attached to the moment
map, and of the characteristic cycle. If $B$ is a Borel subgroup of
$G$ and if $X$ happens to have an open $B$-orbit ($X$ is then \emph{spherical}),
we show that these multiplicities are either zero or
one (the global sections module is \emph{multiplicity-free}, as well as (in most cases) its characteristic cycle), Theorem \ref{thm:multfreee}.

Various categories of holonomic $\dif$-modules (or perverse sheaves) often admit interpretations as categories of representations of quivers \cite{gel-mac-vil,vilon}, but explicit descriptions of these quivers are not easy to obtain \cite{bragri,galligo-granger-1,galligo-granger-2,perlmana,binary, mac-vil-cusp,nang, perlmanhyper}.
The largest part of this article is concerned with an important case of such type, a spherical vector space $X$ where $G$ is linear reductive and
connected.  In \cite{kac} is given a classification of all group
actions that fit this setup, by way of a finite list of families plus
some isolated cases. In Section \ref{sec:examples}, we examine these cases one by one
and determine in each case the (finite) quiver with relations that
encodes the category of strongly $G$-equivariant
$\dif_X$-modules. There are strong general constraints on the quivers that can
show up this way. For example,
up to relations, the number of paths between any two
vertices in these
quivers is at most one by Corollary \ref{cor:quivnice}.

This atlas of quivers to equivariant
$\dif$-modules on spherical vector spaces is created explicitly by
identifying the simple modules within each category, as well as  their
projective covers. The cases with
or without semi-invariant (\emph{i.e.}, whether the complement of the
big orbit is a divisor) behave rather differently; heuristically, the
former case exhibits more interesting structure while the latter is
nearer to the semi-simple case. The other source of interesting
quivers is the existence of non-trivial local systems on orbits,
caused by non-connected stabilizers, which appear in several
of these families.

Our investigations show that the basic building block for these
quivers is a ``doubled $A_n$-quiver with relations'' $\AAA_n$;
%that is, a directed tree with
%no nodes having degree $3$ or more, and with arrows both ways between
%any pair of adjacent vertices.
shown in \eqref{eq:double}. All but one of our quivers are
finite disjoint unions of such $\AAA_n$ (allowing for $n=1$,
an isolated vertex). The one exception arises from the standard action
of $\Sp_4\otimes \GL_4$ on the $4\times 4$ matrices. In this case the
quiver has a vertex of in- and out-degree $3$; in analogy to
$\AAA_n$ we call this the ``doubled $E_6$'', denoted $\EEE$. We show
in Theorem \ref{thm:EE} that $\EEE$ is of (domestic) tame representation
type, and in Theorem \ref{thm:AA} that all  $\AAA_n$ are of finite representation type.

\medskip

Here is, in brief, the outline for the paper. In Section \ref{sec:prelim}  we collect
basic material on equivariance and on quivers. In Section \ref{sec:finite}, we
consider the general case of finitely many  orbits, and consider
multiplicities and the moment map. We zoom into the spherical case and
discuss projective covers. In Section \ref{sec:tech}
we collect various tools to be used in Section \ref{sec:examples}: a reduction
technique for subgroups,
Bernstein--Sato polynomials, and the Fourier transform. In Section \ref{sec:examples}, we then study the category of strongly equivariant
$\dif_X$-modules on spherical vector spaces. In the last section, we give some concluding remarks regarding explicit presentations of equivariant $\dif$-modules and descriptions of their characteristic cycles. 

\medskip

The quiver-theoretic description of the category of equivariant $\dif$-modules on the space of generic matrices in Theorem \ref{thm:gener} is an important ingredient in the article \cite{lyubez}, where the explicit $\dif$-module structures of the (iterated) local cohomology modules supported on determinantal varieties are determined. In particular, the Lyubeznik numbers of determinantal rings are computed. This is done for Pfaffian rings in \cite{perlmanlyubez}. Similar computations could be pursued based on these methods for the other spherical vector spaces that are considered in Section \ref{sec:examples}.

%%%%%%%%%%%%%%%%%%%%%%%%%%%%%%%%%%%%%%%%%%%%%%%%%%%%%%%%%%%%%
\section{Preliminaries}
\label{sec:prelim}
%%%%%%%%%%%%%%%%%%%%%%%%%%%%%%%%%%%%%%%%%%%%%%%%%%%%%%%%%%%%%

Throughout we work over the field of complex numbers $\complex$. Unless otherwise stated, we assume that $X$ is a connected smooth complex algebraic variety equipped with the algebraic action of a connected linear algebraic group $G$. 

\subsection{Equivariant $\dif$-modules} \label{subsec:equiv}
%%%%%%%%%%%%%%%%%%%%%%%%%%%%%%%%%%%%%%%%%%%%%%%%%%%%%%%%%%%%

Let $\dif_X$ be the sheaf of differential operators on $X$ and $\lie$ the Lie algebra of $G$. Differentiating the action of $G$ on $X$ yields a map $\lie \to \Gamma(X,\dif_X)$. This map can be extended to an algebra map $U(\lie) \to \Gamma(X,\dif_X)$, where $U(\lie)$ denotes the universal enveloping algebra of $\lie$.

A $\dif_X$-module $M$ on $X$ is a quasi-coherent sheaf of (left) $\dif_X$-modules. We call $M$ a (strongly) $G$-\textit{equivariant} $\dif$-module, if we have a $\dif_{G\times X}$-isomorphism
\[
\tau\colon p^*M \rightarrow m^*M,
\]
where $p$ and $m$ are the projection and multiplication maps
\[
p\colon G\times X\to X,\qquad\qquad m\colon G\times X\to X
\]
respectively, and $\tau$ satisfies the usual compatibility conditions on $G\times G\times X$ (see \cite[Definition 11.5.2]{htt}). Roughly speaking, this amounts to $M$ admitting an algebraic $G$-action such that differentiating it coincides with the action induced from $\lie \to \Gamma(X,\dif_X)$ (see \cite[Proposition 2.6]{vander2}). A $\dif$-module morphism between $G$-equivariant $\dif_X$-modules automatically preserves $G$-equivariance (since $G$ is connected, see \cite[Proposition 3.1.2]{vander}).

We denote the category of quasi-coherent (resp.\ coherent) $\dif$-modules by  $\op{Mod}(\dif_X)$ (resp.\ $\op{mod}(\dif_X)$), and the full subcategory of quasi-coherent (resp.\ coherent) equivariant $\dif$-modules by $\op{Mod}_G(\dif_X)$ (resp.\  $\op{mod}_G(\dif_X)$). These are Abelian categories that are stable under taking subquotients within $\op{Mod}(\dif_X)$ (resp.\ $\op{mod}(\dif_X)$), see \cite[Proposition 3.1.2]{vander}. In particular, if a map $\tau$ as above exists, it must be unique, and equivariance of a $\dif$-module should be thought of as a condition, rather then additional data.

For the results where we allow $X$ to be not necessarily smooth, \textit{we will assume at least that we can find an equivariant closed embedding $X\hookrightarrow X'$ with $X'$ a smooth $G$-variety}. For example, this is possible whenever $X$ is quasi-affine (see \cite[Theorems 1.5 and 1.6]{popvin}), or when $X$ is a normal quasi-projective $G$-variety (see \cite[Theorem 1]{sumi}). Then we define $\op{Mod}(\dif_X):= \op{Mod}^X(\dif_{X'})$, where $\op{Mod}^X(\dif_{X'})$ is the full subcategory of $\dif_{X'}$-modules supported on $X$. Similarly, one can define  $\op{mod}(\dif_X), \op{Mod}_G(\dif_X), \op{mod}_G(\dif_X)$ and one checks that the definition is independent of the embedding. We note that one could also define these categories whenever $X$ can be embedded locally into smooth varieties in an equivariant way (for example, whenever $X$ is a normal $G$-variety (see \cite[Lemma 8]{sumi})). Several results can be easily extended to this case.

We call a (possibly infinite-dimensional) vector space $V$ a \textit{rational} $G-$\textit{module}, if $V$ is equipped with a linear action of $G$, such that every $v\in V$ is contained in a finite-dimensional $G$-stable subspace on which $G$ acts algebraically.

For an equivariant $\dif_X$-module $M$ on a smooth $G$-variety $X$, the cohomology groups $H^i(X,M)$ are rational $G$-modules for any $i\in \nat$. Moreover, the same is true for all local cohomology modules $H^k_Z(X,M)$ supported on a closed $G$-stable subset $Z$.

If $f$ is any $G$-equivariant map $f:X \to Y$ between smooth $G$-varieties $X,Y$, then the $\dif$-module-theoretic direct image $f_+$ and inverse image $f^*$  (and their derived functors) send (complexes of) equivariant modules to equivariant modules. We note that in the case when $f$ is an open embedding, $f_+$ coincides with the $\calO_X$-theoretic $f_*$ 

For a finite-dimensional rational representation $V$ of $G$, we can define the $\dif$-module 
\[
P(V):=\dif_X\otimes_{\under{U}(\lie)} \under{V} \in \op{mod}(\dif_X),
\]
where underline in $\under{U}(\lie),\under{V}$ indicates constant sheaves. Though we consider it only for finite-dimensional representations $V$, the construction of $P(V)$ is similar to that of the $\dif_X$-modules on flag varieties used in proving the Kazhdan--Lusztig conjecture \cite{localisation, brykash}. 

The following result is most likely known by experts, but we provide the proof for sake of completeness.

\begin{lemma}\label{lem:rep}
Let $V$ be a finite-dimensional $G$-module and $M\in \op{Mod}_G(\dif_X)$.  Then:
\begin{itemize}
\item[(a)] $\Hom_{\dif_X}(P(V), M)\cong \Hom_G (V,\Gamma(X,M)),$
\item[(b)] If $j: U \to X$ is an open embedding, then $j^*(P(V)) = \dif_U\otimes_{\under{U}(\lie)} \under{V},$
\item[(c)] $P(V)\in  \op{mod}_G(\dif_X)$.
\end{itemize}
\end{lemma}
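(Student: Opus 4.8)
The three parts are naturally proved in the order (a), (b), (c), since (c) will follow from (b) plus a local computation, and (a) is the most fundamental — it says $P(V)$ represents the functor $M \mapsto \Hom_G(V, \Gamma(X,M))$ on $\op{Mod}_G(\dif_X)$.

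For part (a), the plan is to unwind the tensor-hom adjunction. A $\dif_X$-module map $P(V) = \dif_X \otimes_{\underline{U}(\lie)} \underline{V} \to M$ is, by adjunction between $\dif_X \otimes_{\underline{U}(\lie)} (-)$ and the restriction-of-scalars functor along $\underline{U}(\lie) \to \dif_X$, the same as a map of sheaves of $\underline{U}(\lie)$-modules $\underline{V} \to M$. Since $\underline{V}$ is a constant sheaf on the connected $X$, such a map is determined by a $U(\lie)$-linear map $V \to \Gamma(X,M)$, i.e.\ a $\lie$-module map. The content is then to check that a $\lie$-module map $V \to \Gamma(X,M)$ is automatically $G$-equivariant: this is exactly where strong equivariance of $M$ enters, because $\Gamma(X,M)$ is a rational $G$-module whose differentiated action agrees with the $\lie$-action coming from $\lie \to \Gamma(X,\dif_X)$, and $V$ is a rational $G$-module, so since $G$ is connected a $\lie$-map between rational $G$-modules integrates to a $G$-map. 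Conversely every $G$-map is a $\lie$-map, giving the stated bijection; functoriality in $M$ is immediate. I expect this integration step (connectedness of $G$, rationality) to be the only real subtlety, and it is standard.

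For part (b), the point is simply that $j^*$ for an open embedding is exact and is just restriction of sheaves, and it commutes with $\otimes$ and with pullback of the constant sheaves $\underline{U}(\lie)$ and $\underline{V}$ (which restrict to the corresponding constant sheaves on $U$, as $U$ is open in the connected $X$ hence connected — or one argues componentwise); moreover $j^* \dif_X = \dif_U$. So $j^*(P(V)) = j^*\dif_X \otimes_{j^*\underline{U}(\lie)} j^*\underline{V} = \dif_U \otimes_{\underline{U}(\lie)} \underline{V}$. This is essentially formal.

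For part (c), I must show $P(V)$ is coherent and equivariant. Equivariance: $\dif_X$ is weakly equivariant and $\underline{V}$ carries its $G$-action; one checks the isomorphism $\tau\colon p^*P(V) \to m^*P(V)$ directly from the analogous structure on $\dif_X$ and $\underline{V}$, using that $p^*$ and $m^*$ commute with the tensor construction — alternatively, by part (a) and Yoneda, $P(V)$ is a quotient of a sum of copies of $\dif_X$ (take any generating set of $V$), and one invokes that $\op{Mod}_G(\dif_X)$ is closed under subquotients in $\op{Mod}(\dif_X)$ once we know $\dif_X \otimes \underline{V} \cong \dif_X^{\oplus \dim V}$ as a $\dif_X$-module carries an obvious equivariant structure (diagonal action on the two factors). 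Coherence: $P(V)$ is generated over $\dif_X$ by the image of the finite-dimensional $V$, since $\dif_X \otimes_{\underline{U}(\lie)} \underline{V}$ is a quotient of $\dif_X \otimes_\complex \underline{V} \cong \dif_X^{\oplus \dim V}$, which is $\dif_X$-coherent; a quotient of a coherent $\dif_X$-module is coherent, so $P(V) \in \op{mod}(\dif_X)$, and combined with equivariance, $P(V) \in \op{mod}_G(\dif_X)$. The main obstacle overall is the equivariance bookkeeping in (c), but it reduces cleanly to the observation that forgetting the $\underline{U}(\lie)$-action, $\dif_X \otimes_{\underline{U}(\lie)} \underline{V}$ is just $\dif_X^{\oplus \dim V}$ with a twisted (but still algebraic) $G$-action.
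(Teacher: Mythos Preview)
Parts (a) and (b) are fine and close to the paper. For (a) the paper does exactly the adjunction chain you sketch; for (b) the paper instead uses a Yoneda argument (both sheaves represent $N \mapsto \Hom_{U(\lie)}(V,\Gamma(U,N))$), but your direct observation that restriction to an open commutes with tensor products over a constant sheaf of rings is equally valid and a bit more elementary.

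Part (c) has a genuine gap. Your ``alternative'' route is to exhibit $P(V)$ as a quotient of $\dif_X \otimes_\complex \underline{V} \cong \dif_X^{\oplus \dim V}$ with the diagonal $G$-action, then invoke closure of $\op{Mod}_G(\dif_X)$ under subquotients. But $\dif_X \otimes_\complex V$ is \emph{not} in $\op{Mod}_G(\dif_X)$: already $\dif_X$ itself is only weakly equivariant. Concretely, the differentiated diagonal action on $\dif_X\otimes_\complex V$ gives $\xi\cdot(D\otimes v)=[\psi(\xi),D]\otimes v + D\otimes \xi v$, while the $\dif_X$-module action via $\psi\colon\lie\to\dif_X$ gives $\psi(\xi)D\otimes v$; these differ by $D\psi(\xi)\otimes v - D\otimes \xi v$, which is nonzero in $\dif_X\otimes_\complex V$. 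So the subquotient argument only yields weak equivariance. Your first suggestion (``check $\tau$ directly from the analogous structure on $\dif_X$'') has the same problem: the structure on $\dif_X$ you are using is the weak one, and you have not said why $\tau$ is $\dif_{G\times X}$-linear rather than merely $\calO_{G\times X}$-linear.

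What the paper actually does for (c) is: first establish weak equivariance essentially as you outline, and then perform the explicit computation that the two $\lie$-actions on $P(V)$ coincide. The point is precisely that the discrepancy $D\psi(\xi)\otimes v - D\otimes \xi v$ \emph{vanishes in the quotient} $\dif_X\otimes_{\underline{U}(\lie)}\underline{V}$, by definition of tensoring over $\underline{U}(\lie)$. This one-line check is the real content of (c), and it is missing from your plan.
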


\begin{proof}
Part (a) follows by adjunction:
$$\Hom_{\dif_X} (P(V),M)\cong \Hom_{\under{U}(\lie)}(\under{V},\mathcal{H}om_{\dif_X}(\dif_X,M))\cong\Hom_{U(\lie)}(V,\Gamma(M))\cong \Hom_G(V,\Gamma(M)).$$ 
For part (b), take an arbitrary $N \in \op{Mod}(\dif_U)$. Again by adjunction we have:
\[\Hom_{\dif_U}(j^*(P(V)), N) \cong \Hom_{\dif_X}(P(V),j_* N) \cong \Hom_{U(\lie)}(V, \Gamma(U,N)).\]
Together with part (a), this shows that the $\dif_U$-modules $\dif_U\otimes_{\under{U}(\lie)} \under{V}$ and $j^*(P(V))$ represent the same functor, hence they are isomorphic.

For part (c), we first show that $P(V)$ is a weakly $G$-equivariant $\dif$-module, in the sense of \cite[Section 2]{vander2}. We use \cite[Proposition 2.2]{vander2}. Let $R$ be a $k$-algebra and denote $X_R = \op{Spec} R \times X$. An $R/k$-point $i_g : \op{Spec} R \to G$ induces an $R$-automorphism $g: X_R \to X_R$. As in part (b), for an affine open $U\subset X_R$, we have $\Gamma(U, \dif_{X_R} \otimes_{\under{U}(\lie)}\under{V}) = \dif_{X_R}(U) \otimes_{U(\lie)}V$.
 Analogous to the discussion following \cite[Proposition 2.2]{vander}, it is enough to show that we have isomorphisms
\[r_g:  \dif_{X_R}(U) \otimes_{U(\lie)} V \to \dif_{X_R}(g^{-1}U) \otimes_{U(\lie)}V,\]
satisfying $r_{gh}=r_h r_g$ and $r_{1} = \op{id}$.  For $D\in \dif_{X_R}(U)$ let $g^* D \in \dif_{X_R}(g^{-1}U)$ be the operator that for any $f\in \calO_{X_R}(g^{-1}U)$ yields $(g^* D) \cdot f =  (D \cdot (f \circ g^{-1})) \circ g  \in \calO_{X_R}(g^{-1}U)$. Then we define $r_g (D \otimes v) = g^* D \otimes g^{-1} v$, and this satisfies the requirements.

Now we show that $P(V)\in  \op{mod}_G(\dif_X)$. We have two actions of $\lie$ on $P(V)$, one induced by the weakly equivariant structure (the tensor representation given by $r_g$) and the other induced by the $\dif_X$-module structure via the map $\psi: 
\underline\lie \to \dif_X$. According to \cite[Proposition 2.6]{vander2}, we must show that these actions coincide.

Let $U\subset X$ be an affine open, $D\in \dif_X(U)$, $v\in V$ and $\xi \in \lie$. Then the weakly equivariant action of $\lie$ on $P(V)$ is
\begin{equation}\label{eq:action1}
\xi \cdot (D \otimes v) = (\xi \cdot D) \otimes v + D \otimes \xi v = [\psi(\xi),D]\otimes v + D \otimes \xi v.
\end{equation}
On the other hand, the action of $\lie$ via the $\dif_X$-module structure gives
\begin{equation}\label{eq:action2}
\psi(\xi) \cdot (D \otimes v) = \psi(\xi) D \otimes v = [\psi(\xi),D]\otimes v + D\psi(\xi)\otimes v = [\psi(\xi),D]\otimes v + D \otimes \xi v.
\end{equation}
Hence the actions (\ref{eq:action1}) and (\ref{eq:action2}) coincide, thus finishing the proof.
\end{proof}

\begin{remark}\label{rem:notequi}
Naturally, we could define $P(V)$ for any finite-dimensional $\mathfrak{g}$-module. For example, when $X$ is affine, this yields a $\dif_X$-module with a locally finite $\mathfrak{g}$-action (with the $\lie$-action induced by the map map $\lie \to \dif_X$), i.e. every $x\in P(V)$ is contained in a finite-dimensional $\lie$-stable subspace of $P(V)$. Several results of this paper could be extended readily for such $\dif_X$-modules, but we focus mainly on $G$-equivariant $\dif_X$-modules.
\end{remark}

When $G$ is a connected linear reductive group, the category of rational (possibly infinite-dimensional) representations of $G$ is semi-simple, and the irreducible (finite-dimensional) representations $V_\lambda$ are in 1-to-1 correspondence with (integral) dominant weights $\lambda$. Let $\Pi$ denote the set of all dominant weights of $G$.
For a highest weight $\lambda\in \Pi$ we put $P(\lambda):=P(V_\lambda)$. We note that the $\dif_X$-module $P(\lambda)$ has an explicit presentation. Namely, pick a highest weight vector $v_\lambda$ of $V_\lambda$. Then we can write a $U(\lie)$-isomorphism $V_\lambda\cong U(\lie)/\!\op{Ann}_{U(\lie)}v_\lambda$, where $\op{Ann}_{U(\lie)}v_\lambda$ denotes the annihilator of $v_\lambda$ in $U(\lie)$ (the generators of $\op{Ann}_{U(\lie)}v_\lambda$ are well-known: see \cite[Theorem 21.4]{humph}). Hence we have a $\dif_X$-module isomorphism
\begin{equation}\label{eq:highest}
P(\lambda) \cong \dif_X \otimes_{U(\lie)} (U(\lie)/\!\op{Ann}_{U(\lie)}v_\lambda) \cong \dif_X /(\op{Ann}_{U(\lie)}v_\lambda).
\end{equation}

\begin{definition}\label{def:multfree}
We call a rational $G$-module $V$ \textit{multiplicity-finite} if we have an isotypical decomposition
\[
\qquad\qquad\qquad\qquad V=\bigoplus_{\lambda \in \Pi} V_{(\lambda)}, \qquad\text{ where }
V_{(\lambda)}\cong V_\lambda^{\oplus m_\lambda(V)} \text{ and }
m_\lambda(V) \in \nat \text{ for all } \lambda\in\Pi.
\]
We call $V$ \textit{multiplicity-bounded} if the set $\{m_\lambda(V)\}_{\lambda\in \Pi}$ is bounded, and \textit{multiplicity-free} if this bound is $1$, \emph{i.e.} $m_\lambda(V)\leq 1$ for all $\lambda\in \Pi$.
\end{definition}

A smooth variety $X$ is $\dif$-\textit{affine} if the global sections functor 
\[
\Gamma\colon \op{Mod}{\dif_X} \longrightarrow \op{Mod} {\Gamma(X,\dif_X)}
\]
is exact and faithful on objects. In this case $\Gamma$ induces an equivalence of categories (preserving equivariance), and we shall freely identify coherent $\dif_X$-modules with their global sections. Smooth affine varieties, projective spaces and (partial) flag varieties are $\dif$-affine (see \cite[Theorem 1.6.5 and Corollary 11.2.6]{htt}).

\begin{proposition}\label{prop:proj}
Assume that $X$ is $\dif$-affine and $G$ is linearly reductive. Let $V$ be a finite-dimensional $G$-module. Then $P(V)$ is projective in $\op{Mod}_G(\dif_X)$ (resp.\ $\,\op{mod}_G(\dif_X)\,$), and the category $\op{Mod}_G(\dif_X)$ (resp.\ $\,\op{mod}_G(\dif_X)\,$) has enough projectives.
\end{proposition}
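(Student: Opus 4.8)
The plan is to deduce projectivity of $P(V)$ directly from the adjunction in Lemma \ref{lem:rep}(a), using that on a $\dif$-affine variety the global sections functor is exact, and that $\Gamma(X,-)$ lands in rational $G$-modules on which $\op{Hom}_G(V,-)$ is exact because $G$ is linearly reductive. Concretely, I would argue as follows. Let $0\to M'\to M\to M''\to 0$ be a short exact sequence in $\op{Mod}_G(\dif_X)$. Applying $\Gamma(X,-)$ gives a short exact sequence of rational $G$-modules $0\to \Gamma(X,M')\to \Gamma(X,M)\to \Gamma(X,M'')\to 0$: exactness on the left and in the middle is automatic, and surjectivity on the right holds because $X$ is $\dif$-affine (here one must note that the connecting map into $H^1$, or simply the cokernel computed in $\op{Mod}(\Gamma(X,\dif_X))$, vanishes by exactness of $\Gamma$, and that the identification of $\Gamma(X,M'')$ as a $G$-module is compatible). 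Then I would apply $\op{Hom}_G(V,-)$; since $G$ is linearly reductive the category of rational $G$-modules is semisimple, so $\op{Hom}_G(V,-)$ is exact and we get a short exact sequence
\[
0\to \op{Hom}_G(V,\Gamma(X,M'))\to \op{Hom}_G(V,\Gamma(X,M))\to \op{Hom}_G(V,\Gamma(X,M''))\to 0.
\]
By Lemma \ref{lem:rep}(a) this is exactly the sequence obtained by applying $\op{Hom}_{\dif_X}(P(V),-)$, so $P(V)$ is projective in $\op{Mod}_G(\dif_X)$. For the coherent statement, I would note $P(V)\in\op{mod}_G(\dif_X)$ by Lemma \ref{lem:rep}(c), and that $\op{mod}_G(\dif_X)$ is a full subcategory closed under the relevant operations, so projectivity in the ambient category restricts.

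\textbf{Enough projectives.}
Given $M\in\op{Mod}_G(\dif_X)$, I want a surjection from a (possibly infinite) direct sum of modules of the form $P(V_\lambda)$ onto $M$. Decompose the rational $G$-module $\Gamma(X,M)=\bigoplus_{\lambda\in\Pi}\Gamma(X,M)_{(\lambda)}$ into isotypic components; choosing for each $\lambda$ a $G$-equivariant surjection $V_\lambda^{\oplus m_\lambda}\twoheadrightarrow \Gamma(X,M)_{(\lambda)}$ (with $m_\lambda$ possibly infinite) and summing, we get a $G$-equivariant surjection $\bigoplus_\lambda V_\lambda^{\oplus m_\lambda}\twoheadrightarrow \Gamma(X,M)$, equivalently — by adjunction $\op{Hom}_{\dif_X}(P(V_\lambda),M)\cong\op{Hom}_G(V_\lambda,\Gamma(X,M))$ and the fact that $\Gamma$ induces an equivalence $\op{Mod}_G(\dif_X)\simeq \op{Mod}_G(\Gamma(X,\dif_X))$ on a $\dif$-affine $X$ — a $\dif_X$-morphism $\bigoplus_\lambda P(V_\lambda)^{\oplus m_\lambda}\to M$ whose image has full global sections. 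Since $\Gamma$ is faithful (hence detects surjectivity: a $\dif_X$-module with zero global sections is zero on a $\dif$-affine variety, applied to the cokernel), this morphism is surjective. In the coherent case $M\in\op{mod}_G(\dif_X)$, a Noetherianity argument shows $\Gamma(X,M)$ is finitely generated over $\Gamma(X,\dif_X)$ and $G$-isotypically of finite total type in the appropriate sense, so finitely many summands $P(V_\lambda)$ suffice, giving a surjection from an object of $\op{mod}_G(\dif_X)$. An infinite direct sum of projectives is projective, so in all cases $M$ receives a surjection from a projective.

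\textbf{Main obstacle.}
The routine categorical manipulations (adjunction, exactness of $\op{Hom}_G(V,-)$ by reductivity) are immediate; the one point that needs genuine care is the interaction of the $G$-action with $\dif$-affineness — specifically, verifying that applying $\Gamma(X,-)$ to an exact sequence in $\op{Mod}_G(\dif_X)$ yields an exact sequence of \emph{rational $G$-modules} (not merely of $\Gamma(X,\dif_X)$-modules), and that the equivalence $\Gamma$ induced by $\dif$-affineness is compatible with equivariance so that surjectivity of $\dif_X$-morphisms can be checked on global sections $G$-equivariantly. Once one records that $\Gamma(X,-)$ preserves equivariance and that a nonzero object of $\op{Mod}_G(\dif_X)$ has nonzero global sections on a $\dif$-affine $X$ (faithfulness), everything else is formal. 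In the coherent setting one additionally needs that finitely generated rational modules have only finitely many isotypic components appearing, which follows from coherence over $\Gamma(X,\dif_X)$ together with the fact that $P(V)$ is coherent by Lemma \ref{lem:rep}(c).
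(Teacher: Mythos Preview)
Your argument for projectivity of $P(V)$ is correct and is exactly the paper's argument, just spelled out in more detail: exactness of $\Gamma(X,-)$ on a $\dif$-affine variety together with exactness of $\Hom_G(V,-)$ for reductive $G$, combined with Lemma~\ref{lem:rep}(a).

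For ``enough projectives'' in the quasi-coherent case your approach is fine; the paper merely says the claim is ``analogous'' to the coherent case, so you are simply being more explicit.

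There is, however, a genuine gap in your coherent argument. The assertion that ``finitely generated rational modules have only finitely many isotypic components appearing'' is false: already $\Gamma(X,\orb_X)$ is cyclic over $\Gamma(X,\dif_X)$ but typically contains infinitely many irreducible $G$-summands. So you cannot conclude that a finite sum $\bigoplus P(V_\lambda)$ suffices from the isotypic decomposition alone. The paper's fix is cleaner and avoids this entirely: since $M$ is coherent, choose finitely many global sections $x_1,\dots,x_l$ that generate $M$; by rationality of the $G$-action on $\Gamma(X,M)$ there is a single finite-dimensional $G$-submodule $V\subset\Gamma(X,M)$ containing all the $x_i$; then the adjunction map $P(V)\to M$ is surjective because its image contains a generating set. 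This uses local finiteness of the action rather than any finiteness of the isotypic decomposition, which is the point you are missing.
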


\begin{proof}
Since $G$ is reductive and $\Gamma(X,-)$ exact, $P(V)$ is projective
by Lemma \ref{lem:rep} (a). Let $M$ be in $\op{mod}_G(\dif_X)$. Since
$M$ is coherent, we can take $x_1,x_2,\dots x_l\in \Gamma(X,M)$ that
generate $M$ everywhere. The action of $G$ on $M$ is locally finite, so we can find a finite-dimensional representation $V$ of $G$ that contains $x_1,\dots,x_l$. Then we have a surjective map $P(V)\twoheadrightarrow M$. The claim for $\op{Mod}_G(\dif_X)$ is analogous.
\end{proof}

Note that $\op{Mod}_G(\dif_X)$ always has enough injectives (see \cite[Lemma 1.5.3 and Corollary 2.8]{vander2}).

The following lemma will be useful in determining irreducible or indecomposable $\dif_X$-modules:

\begin{lemma}\label{lem:irrind}
Let $X$ be $\dif$-affine and $M$ an equivariant $\dif_X$-module. Assume that $M$ is globally generated by a highest weight vector $v_\lambda \in \Gamma(X,M)$ of weight $\lambda \in \Pi$, and $m_\lambda(\Gamma(X,M))=1$. Then $M$ has a unique (non-zero) irreducible quotient and $\End_{\dif}(M)=\complex$.
\end{lemma}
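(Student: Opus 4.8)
The plan is to analyze $M$ through its global sections, which we may do since $X$ is $\dif$-affine and $\Gamma(X,-)$ induces an equivalence of categories. First I would observe that every nonzero equivariant $\dif_X$-submodule $N \subseteq M$ has nonzero global sections (by $\dif$-affineness and faithfulness of $\Gamma$), and since $\Gamma(X,N)$ is a $G$-submodule of the rational $G$-module $\Gamma(X,M)$, it decomposes into isotypical pieces. The key point is that $N$ must contain the highest weight vector $v_\lambda$: if not, then $N$ avoids the (one-dimensional, by the hypothesis $m_\lambda(\Gamma(X,M))=1$) $\lambda$-isotypical component entirely, but then the quotient $M/N$ would still be globally generated by the image of $v_\lambda$, which is impossible if $M/N \neq M$ — more carefully, I would argue that since $M$ is \emph{generated} by $v_\lambda$, any submodule $N$ with $v_\lambda \notin \Gamma(X,N)$ satisfies $N \subsetneq M$, and the submodule $\dif_X \cdot v_\lambda$ generated by $v_\lambda$ equals $M$, so $N$ cannot contain $\dif_X\cdot v_\lambda = M$; the real content is that among proper submodules the one generated by everything \emph{except} the $\lambda$-isotypical line is the unique maximal one.

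More precisely, here is the cleaner route I would take. Let $N_0 \subseteq M$ be the sum of all equivariant $\dif_X$-submodules $N$ of $M$ with $v_\lambda \notin \Gamma(X,N)$. Since $\Gamma$ is exact and commutes with (filtered) colimits on quasi-coherent modules, $\Gamma(X,N_0) = \sum \Gamma(X,N)$, and because each summand misses the one-dimensional $\lambda$-isotypical component $\Gamma(X,M)_{(\lambda)} = \complex v_\lambda$, so does the sum; hence $v_\lambda \notin \Gamma(X,N_0)$, so $N_0$ is itself a proper submodule (it cannot be $M$ since $M = \dif_X\cdot v_\lambda$ visibly has $v_\lambda$ among its global sections). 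Thus $N_0$ is the unique maximal proper equivariant $\dif_X$-submodule of $M$: any proper submodule $N$ fails to contain $v_\lambda$ (otherwise $\dif_X \cdot v_\lambda = M \subseteq N$), hence $N \subseteq N_0$. Therefore $M/N_0$ is the unique nonzero irreducible quotient of $M$ in $\op{Mod}_G(\dif_X)$, and since any $\dif_X$-module quotient of an equivariant module is automatically equivariant (a $\dif$-morphism preserves equivariance, as recalled in the text), it is the unique nonzero irreducible quotient in $\op{Mod}(\dif_X)$ as well.

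For the endomorphism statement: given $\varphi \in \End_{\dif}(M)$, it preserves equivariance, so $\Gamma(X,\varphi)$ is a $G$-module endomorphism of $\Gamma(X,M)$; it therefore restricts to an endomorphism of the one-dimensional space $\complex v_\lambda$, i.e. $\varphi(v_\lambda) = c\, v_\lambda$ for some $c \in \complex$. Then $\varphi - c\cdot\op{id}$ kills $v_\lambda$, hence kills $\dif_X \cdot v_\lambda = M$, so $\varphi = c\cdot \op{id}$; thus $\End_{\dif}(M) = \complex$.

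The main obstacle, and the step requiring the most care, is justifying that a nonzero submodule $N$ with $v_\lambda \notin \Gamma(X,N)$ stays proper after summing over \emph{all} such $N$ — i.e. that $\Gamma$ commutes with the relevant colimit so that the $\lambda$-multiplicity cannot ``accumulate'' to produce $v_\lambda$ in $N_0$. This is where $\dif$-affineness (exactness of $\Gamma$) and the fact that quasi-coherent $\dif_X$-modules form a category where global sections behave well under unions of submodules do the work; once that is in hand, the rest is the standard unique-maximal-submodule argument together with the observation that $M$ being \emph{globally generated} by $v_\lambda$ forces $\dif_X\cdot v_\lambda = M$ on the nose.
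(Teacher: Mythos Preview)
Your proof is correct and is essentially the same as the paper's: both show that every proper $\dif$-submodule misses $v_\lambda$ (equivalently, has $m_\lambda=0$), take the sum of all such submodules to obtain the unique maximal proper submodule, and then use that $v_\lambda$ generates $M$ to deduce $\End_{\dif}(M)=\complex$ via Schur's lemma on the one-dimensional $\lambda$-isotypical component. The paper is terser and simply identifies $\dif_X$-modules with their global sections via $\dif$-affineness, whereas you spell out the colimit compatibility of $\Gamma$, but the arguments are the same.
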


\begin{proof}
We identify $\dif_X$-modules with their modules of global sections. If $N$ is a proper $\dif$-submodule of $M$, then we must have $m_\lambda(N)=0$. Let $M'$ denote the sum of all proper submodules of $M$. Then $m_\lambda(M')=0$, hence $M/M'$ is the required irreducible $\dif$-module. 

Any $\dif$-module map $f:M\to M$ induces a $G$-module map on global sections. By Schur's lemma we have $f(v_\lambda)=c\cdot v_\lambda$, for some $c\in \complex$. Since $M$ is generated by $v_\lambda$ as a $\dif$-module, we obtain $f=c \cdot \op{id}.$
\end{proof}

Although $\op{Mod}_G(\dif_X)$ in general is not is a Serre subcategory of $\op{Mod}(\dif_X)$, it is one when $G$ is  semi-simple (\emph{i.e.} when $\lie$ is a semi-simple Lie algebra):

\begin{proposition}\label{prop:serre}
Assume that $G$ is semi-simple, and $X$ is a (not necessarily smooth) $G$-variety. Then $\op{Mod}_G(\dif_X)$ (resp.\ $\op{mod}_G(\dif_X)$)  is closed under extensions in $\op{Mod}(\dif_X)$ (resp.\ $\op{mod}(\dif_X)$).
\end{proposition}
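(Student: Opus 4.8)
The statement asserts that if $G$ is semisimple, then $\op{Mod}_G(\dif_X)$ is closed under extensions inside $\op{Mod}(\dif_X)$; equivalently, given a short exact sequence $0 \to M' \to M \to M'' \to 0$ of $\dif_X$-modules with $M', M''$ equivariant, the middle term $M$ is equivariant. First I would reduce to the smooth case: by the conventions set up in the excerpt, $\op{Mod}_G(\dif_X) = \op{Mod}^X_G(\dif_{X'})$ for an equivariant closed embedding $X \hookrightarrow X'$ into a smooth $G$-variety, and an extension of modules supported on $X$ is again supported on $X$, so it suffices to treat $X$ smooth. Next I would exploit the characterization of strong equivariance recalled after Lemma~\ref{lem:rep}: on a smooth $G$-variety, a $\dif_X$-module is strongly equivariant precisely when (i) it is \emph{weakly} equivariant, i.e. carries an algebraic $G$-action compatible with the $\calO_X$-structure, and (ii) the Lie algebra action obtained by differentiating this $G$-action agrees with the action induced by $\psi\colon\underline\lie \to \dif_X$ (cf. \cite[Proposition 2.6]{vander2}). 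So the argument splits into showing that an extension of weakly equivariant modules is weakly equivariant, and that the differentiated and $\psi$-induced $\lie$-actions on the extension still coincide.

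**Weak equivariance of the extension.** For the first half, the key tool is that weak equivariance is governed by a descent/gluing datum $\tau\colon p^*M \xrightarrow{\sim} m^*M$ satisfying a cocycle condition, and that forming such isomorphisms is controlled by $\Hom$ and $\Ext^1$ groups between the $\calO$-modules $p^*M'$, $p^*M''$, $m^*M'$, $m^*M''$ on $G \times X$. Since $p$ and $m$ are flat (smooth), $p^*$ and $m^*$ are exact, so $0 \to p^*M' \to p^*M \to p^*M'' \to 0$ and likewise for $m^*$ are exact; the equivariant structures on $M'$ and $M''$ give a commuting square at the sub- and quotient levels, and lifting $\tau$ across the middle is an obstruction living in $\Ext^1_{\calO_{G\times X}}(p^*M'', m^*M')$. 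Here I would use that $G$ is affine with coordinate ring having only constant units and trivial Picard group when $G$ is semisimple (simply connected, or at least that $\mathcal{O}(G)^\times = \mathbb{C}^\times$); more to the point, one reduces via the cocycle condition — the classical argument, analogous to \cite[Proposition 3.1.2]{vander} — to the statement that there are no nonzero $G$-equivariant maps or extensions forcing a discrepancy, because any two weak structures on the same underlying module differ by a character of $G$, and a \emph{semisimple} $G$ has no nontrivial characters.

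**Matching the two Lie algebra actions.** For the second half — this is the part I expect to be the genuine obstacle — one has on $M$ the $\psi$-induced $\lie$-action (automatic from the $\dif_X$-structure) and the differentiated $\lie$-action $\rho$ coming from the weak structure just produced. Their difference $\delta := \rho - \psi$ is a $\complex$-linear endomorphism-valued map $\lie \to \End_{\complex}(M)$ which, because both $\rho$ and $\psi$ act by the same $\calO_X$-semilinear rule (Leibniz with the same vector field), is in fact $\calO_X$-linear in each slot, i.e. $\delta$ is a 1-cochain on $\lie$ with values in $\End_{\calO_X}(M)$; moreover $\delta$ vanishes on $M'$ and induces $0$ on $M''$ by strong equivariance of the ends, so $\delta$ factors as $\lie \to \Hom_{\calO_X}(M'', M')$. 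One checks $\delta$ is in fact a Lie algebra cocycle, so its class lies in $H^1(\lie, \Hom_{\calO_X}(M'', M'))$ — and here the semisimplicity of $\lie$ is decisive: by Whitehead's first lemma $H^1(\lie, W) = 0$ for any $\lie$-module $W$, so $\delta$ is a coboundary, $\delta(\xi) = \xi\cdot\phi - \phi\cdot(\text{--})$ for some $\phi \in \Hom_{\calO_X}(M'', M')$, and modifying the weak $G$-structure (equivalently the splitting of the underlying $\calO_X$-extension) by the automorphism $\op{id} + \phi$ of $M$ kills $\delta$ and makes the two $\lie$-actions agree. The coherent case follows since coherence is preserved throughout, and the Serre-subcategory consequence — closure under sub, quotient (already known) and extension — is immediate. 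I would remark that semisimplicity is used twice and essentially: once to rule out a discrepancy by a character (weak equivariance), once via Whitehead's lemma (strong equivariance), which is consistent with the introduction's claim that this fails for general reductive $G$.
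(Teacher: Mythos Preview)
Your approach differs substantially from the paper's and has genuine gaps.

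\textbf{Gap 1: weak equivariance.} You assert that the obstruction to lifting $\tau', \tau''$ to an isomorphism $\tau$ on $M$ vanishes, invoking ``characters of $G$'' --- but the relevant obstruction class lives in $\Ext^1(p^*M'', m^*M')$ (in the category of $\dif_{G\times X}$-modules, not merely $\calO_{G\times X}$-modules), and nothing about $G$ having trivial character group forces this to vanish. The character argument addresses \emph{uniqueness} of a weak structure on a fixed module, not \emph{existence} of a compatible lift across an extension; these are different problems. As written, this step is not justified.

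\textbf{Gap 2: Whitehead's lemma.} Whitehead's first lemma asserts $H^1(\lie, W) = 0$ for \emph{finite-dimensional} $W$. Your $W = \Hom_{\calO_X}(M'', M')$ is typically infinite-dimensional, and the $\lie$-action on it need not be locally finite, so the lemma does not directly apply. One could try to argue that the image of the cocycle $\delta$ generates a finite-dimensional $\lie$-submodule of $W$, but establishing that requires exactly the kind of local-finiteness statement you have not proved. There is also an unaddressed issue: the coboundary $\phi$ you obtain is only $\calO_X$-linear, so modifying by $\op{id}+\phi$ alters the identification of $M$ as a $\dif_X$-module, and you would need to check that the resulting weak $G$-structure is still compatible with the original $\dif_X$-structure.

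\textbf{What the paper does instead.} The paper sidesteps both issues by not separating ``produce a weak structure'' from ``match the two $\lie$-actions.'' It works directly with the $\lie$-action on the middle term $Q$ induced by $\psi\colon \lie \to \dif_X$ (which exists automatically on any $\dif_X$-module) and shows, by an elementary lifting argument in the $\dif$-affine case, that this action is \emph{locally finite}: given a lift $\tilde x \in Q$ of $x \in N$, take a finite-dimensional $\lie$-stable $V \subset N$ containing $x$, lift a basis to $\tilde V\subset Q$, and observe that $\lie \cdot \tilde V$ lands in $\tilde V$ plus a finite-dimensional piece of $M$. Semisimplicity of $\lie$ then makes $Q$ a direct sum of finite-dimensional $\lie$-simples, each of which integrates to $G$ (being a summand of the rational $G$-modules $M$ or $N$), and by construction the differentiated action is $\psi$. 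The passage from the $\dif$-affine case to general $X$ is done by a bootstrap via Sumihiro's theorem: first $G$-stable opens of projective space (pushing forward to the $\dif$-affine $\mathbb{P}(V)$), then quasi-projective $G$-varieties via Kashiwara's equivalence, then arbitrary smooth $X$ by covering with $G$-stable quasi-projective opens and gluing the equivariant structures.
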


\begin{proof}
By taking an equivariant closed embedding to a smooth $G$-variety, we may assume that $X$ is itself smooth. Take an exact sequence 
\begin{gather}\label{eqn-MQN}
  0\to M \to Q \to N\to 0,
\end{gather}
where $M,N \in \op{Mod}_G(\dif_X)$, and $Q \in \op{Mod}(\dif_X)$. We want to show that $Q \in \op{Mod}_G(\dif_X)$.

First, assume that $X$ is $\dif$-affine. As $\lie$-modules, we can
write $M=\bigoplus_{i\in I} M_i$ and $N=\bigoplus_{j\in J} N_j$, where
$M_i$ and $N_j$ are finite-dimensional simple $\lie$-modules. The
semi-simple Lie algebra $\lie$ acts on $Q$ in the usual way via
$U(\lie)\to \dif_X$, and this map is compatible with the morphisms in \eqref{eqn-MQN}. In order to integrate the action to the group $G$, it is enough to show that the sequence above splits as $\lie$-modules, \emph{i.e.} that $Q$ is a semi-simple $\lie$-module with finite-dimensional summands. This follows if we show that the action of $\lie$ on $Q$ is locally finite.

We show that the action on $\tilde{x}$ is locally finite, where $\tilde{x}$ is a lift of an arbitrary element $x$ in $N$.  We can find a finite-dimensional representation $V\subseteq N$ containing $x$ with a basis, say, $n_1=x, n_2, \dots , n_k$. Take lifts of the basis elements $\tilde{n}_j$ in $Q$ (with $\tilde{x}=\tilde{n}_1$), and denote their span by $\tilde{V}$. Take a basis $\xi_1,\dots, \xi_l$ for the Lie algebra $\lie$. Then for all $i=1,\dots, l$, $j=1\dots k$, we can write
$$\xi_i \tilde{n}_j = a^{ij}_1 \tilde{n}_1 + \dots a^{ij}_k \tilde n_{k}+ m_{ij},$$
where $a^{ij}_1,\dots,a^{ij}_k$ are scalars and $m_{ij}\in M$. Since $M$ is a locally finite $\lie$-module, for each $m_{ij}$ we can pick a finite-dimensional representation $M_{ij}$ containing it. Then the space
$$\tilde{V} + \sum_{i=1}^{l} \sum_{j=1}^{k} M_{ij}$$
is a finite-dimensional $\lie$-representation containing $\tilde{x}$, hence the desired conclusion follows in this case.

Now assume that $X$ is a $G$-stable open subset of a projective space
$X'=\mathbb{P}(V)$, where $V$ is a finite-dimensional rational $G$-module, and let $j:X\to X'$ denote the embedding. Applying $j_*$ we get an exact sequence 
$$0\to j_*M \to j_*Q \to j_*N,$$
with $j_*M,j_*N \in \op{Mod}_G(\dif_{X'})$. Since $\op{Mod}_G(\dif_{X'})$ is closed under taking submodules, we can write an exact sequence
$$0\to j_*M \to j_*Q \to N'\to 0,$$
for some $N'\in \op{Mod}_G(\dif_{X'})$. Since the projective space $X'$ is $\dif$-affine  (see \cite[Theorem 1.6.5]{htt}), the previous argument implies $j_*Q\in\op{Mod}_G(\dif_{X'})$.  Then $j^*j_*Q = Q \in \op{Mod}_G(\dif_X)$.

Now assume that $X$ is a quasi-projective $G$-variety. By \cite[Theorem 1]{sumi} we have a $G$-equivariant embedding of $X$ into a projective space $X'$. Then $S=\ove{X}\setminus X$ is a closed $G$-stable subset of $X'$. Let $U=X'\setminus S$ and $i:X \to U$ the closed embedding.  By Kashiwara's equivalence (see \cite[Section 1.6]{htt}), $i_+$ induces an equivalence of categories between $\op{Mod}_G(\dif_X)$ and the full subcategory $\op{Mod}_G^X(\dif_U)$ of equivariant $\dif_U$-modules whose support is contained in $X$. By the previous argument, $i_+ Q \in \op{Mod}_G(\dif_U)$, hence $Q\in \op{Mod}_G(\dif_X)$.

Now we consider the general case. Since $X$ is smooth, by (\cite[Lemma
  8]{sumi}) we can cover $X$ with $G$-stable quasi-projective open
subsets $\{U_i\}_{i\in I}$. The previous argument implies that $Q_{|
  U_i}$ is a $G$-equivariant $\dif_{U_i}$-module, for all $i\in
I$. Hence, for each $i\in I$ we have an isomorphism $\tau_i : p^*(Q_{|
  U_i}) \rightarrow m^*(Q_{| U_i})$. Since $G$-equivariant structures
are unique, induced by $U(\lie)\to\dif_X$, the maps $\tau_i$ and $\tau_j$ must coincide on the the intersection $U_i \cap U_j$. Hence we can glue them to get an isomorphism $\tau: p^*Q \rightarrow m^* Q$.
\end{proof}

\subsection{Quivers}\label{subsec:quiv}
%%%%%%%%%%%%%%%%%%%%%%%%%%%%%%%%%%%%%%%%

We establish some notation and review some basic results regarding the representation theory of quivers, following \cite{elements}.  A \textit{quiver} $Q$ is an oriented graph, \emph{i.e.} a pair $Q=(Q_0,Q_1)$ formed by a finite set of vertices $Q_0$ and a finite set of arrows $Q_1$. An arrow $a\in Q_1$ has a head (or target) $ha$ and a tail (or source) $ta$ which are elements of $Q_0$:
\[\xymatrix{
ta \ar[r]^{a} & ha
}\]
The complex vector space with basis given by the (directed) paths in $Q$ has a natural multiplication induced by concatenation of paths. The corresponding $\CC$-algebra is called the \textit{path algebra} of the quiver $Q$ and is denoted $\CC Q$. 

A \textit{relation} in $Q$ is a $\CC$-linear combination of paths of length at least two having the same source and target. We define a \textit{quiver with relations} $\widehat{Q}:=(Q,I)$ to be a quiver $Q$ together with a finite set of relations $I$. The \textit{quiver algebra} of $\widehat{Q}$ is the quotient $\CC Q/\langle I \rangle$ of the path algebra by the ideal generated by the relations. We always assume that the ideal of relations $\langle I\rangle$ contains any path whose length is large enough, so that the corresponding quiver algebra is finite-dimensional (see \cite[Section II.2]{elements}). We will often use the word quiver to refer to a quiver with relations.

A \textit{(finite-dimensional) representation} $V$ of a quiver $\widehat{Q}$ is a family of (finite-dimensional) vector spaces $\{V_x\,|\, x\in Q_0\}$ together with linear maps $\{V(a) : V_{ta}\to V_{ha}\, | \, a\in Q_1\}$ satisfying the relations induced by the elements of $I$. A morphism $\phi:V\to V'$ of two representations $V,V'$ of $\widehat{Q}$ is a collection of linear maps  $\phi = \{\phi(x) : V_x \to V'_x\,| \,x\in Q_0\}$, such that for each $a\in Q_1$ we have $\phi(ha)\circ V(a)=V'(a)\circ \phi(ta)$. We note that the data of a representation of $\widehat{Q}$ is equivalent to that of a module over the quiver algebra, and in fact, the category $\rep(\widehat{Q})$ of finite-dimensional representations of $\widehat{Q}$ is equivalent to that of the finitely generated $\CC Q/\langle I \rangle$-modules \cite[Section~III.1, Thm.~1.6]{elements}. This category is Abelian, Artinian, Noetherian, has enough projectives and injectives, and contains only finitely many simple objects, seen as follows.

The (isomorphism classes of) simple objects in $\rep(\widehat{Q})$ are in bijection with the vertices of $Q$. For each $x\in Q_0$, the corresponding simple $S^x$ is the representation with $(S^x)_x=\CC,\ (S^x)_y=0\mbox{ for all }y\in Q_0\setminus\{x\}$.
A (non-zero) representation of $\widehat{Q}$ is called \textit{indecomposable} if it is not isomorphic to a direct sum of two non-zero representations. For each $x\in Q_0$, we let $P^x$ (resp.\ $I^x$) denote the \textit{projective cover} (resp.\ \textit{injective envelope}) of $S^x$, as constructed in \cite[Section III.2]{elements}. In particular, for $y\in Q_0$, the dimension of $(P^x)_y$ (resp.\ $(I^x)_y$) is given by the number of paths from $x$ to $y$ (resp.\ from $y$ to $x$), considered up to the relations in $I$.

A quiver $\widehat{Q}$ is said to be of \textit{finite representation type} if it has finitely many indecomposable representations (up to isomorphism). The quiver $\widehat{Q}$ is of \textit{tame representation type} if all but a finite number of indecomposable representations of $\widehat{Q}$ of a given dimension belong to finitely many one-parameter families \cite[XIX.3, Definition 3.3]{sim-sko3}, and it is of \textit{wild representation type} otherwise. When the number of one-parameter families of indecomposables is bounded as the dimension of the representations grows, we say that $\widehat{Q}$ is of \textit{domestic tame representation type} \cite[XIX.3, Definitions 3.6, 3.10 and Theorem 3.12]{sim-sko3}.

Given two quivers with relations, $\widehat{Q} = (Q,I)$ and $\widehat{Q}' = (Q',I')$, we say that $\widehat{Q}'$ is a \textit{subquiver} of $\widehat{Q}$, if $Q'$ is a (oriented) subgraph of $Q$, and the relations generated by $I'$ contain the relations from $I$ that are induced from $Q$ to $Q'$. Clearly, this gives a surjective map $\CC Q/\langle I \rangle \to \CC Q'/\langle I' \rangle$  and $\rep(\widehat{Q}')$ is naturally a full subcategory of $\rep(\widehat{Q}).$

Any finite-dimensional $\CC$-algebra is Morita-equivalent to the quiver algebra $\CC Q/\langle I \rangle$ of some quiver with relations $\widehat{Q}=(Q,I)$ (e.g. see \cite[Theorem 3.7]{elements}). This means that the category of finitely generated modules over an algebra is equivalent to the category $\rep(\widehat{Q})$ of finite-dimensional representations of a quiver $\widehat{Q}$.

We introduce the following quiver $\AAA_n$ (for $n\geq 1$), which will be of special importance later (see Section \ref{sec:examples}):

\begin{equation}\label{eq:double}
\AAA_n: \,\,\,  \xymatrix{
(1) \ar@<0.5ex>[r]^{\alpha_1} & \ar@<0.5ex>[l]^{\beta_1} (2)
\ar@<0.5ex>[r]^{\alpha_2} & \ar@<0.5ex>[l]^{\beta_2} \dots
\ar@<0.5ex>[r]^{\alpha_{n-2}} &\ar@<0.5ex>[l]^{\beta_{n-2}} (n-1)
\ar@<0.5ex>[r]^-{\alpha_{n-1}} & \ar@<0.5ex>[l]^-{\beta_{n-1}} (n)},
\end{equation}
with all $2$-cycles zero (\emph{i.e.} relations $\alpha_i \beta_i = 0 = \beta_i \alpha_i$ for all $i=1,\dots,n-1$). By convention, $\AAA_1$ is just a vertex.

We introduce the following indecomposable representations of $\AAA_n$. Given an interval $[i,j]$ with $1\leq i\leq j \leq n$, we choose a binary $(j-i)$-tuple $\Sigma$ of the signs $+$ or $-$. Then we define the representation $I_{i,j}^\Sigma$ of $\AAA_n$ by putting $\complex$ to each vertex $(k)$ with $i\leq k \leq j$, and $0$ at the other vertices, together with the linear maps chosen as follows: if $\Sigma_l = +$ \,(resp.\ $\Sigma_l = -$), for some $1\leq l \leq j-i$, then we choose the map on the arrow $(i+l-1)\rightarrow (i+l)$ to be the identity map (resp.\ $0$), and the map on the arrow $(i+l-1)\leftarrow (i+l)$ to be $0$ (resp.\ the identity map).

\begin{example}
With the above notation, we have the following indecomposable representation $I_{2,5}^{+-+}$ of $\AAA_6$:
\[I_{2,5}^{+-+}: \vcenter{\xymatrix{
0 \ar@<0.5ex>[r]^0 & \ar@<0.5ex>[l]^0 \CC \ar@<0.5ex>[r]^1 & \CC  \ar@<0.5ex>[l]^0 \ar@<0.5ex>[r]^0 & \ar@<0.5ex>[l]^1 \ar@<0.5ex>[r]^1 \CC & \ar@<0.5ex>[l]^0 \CC \ar@<0.5ex>[r]^0 & \ar@<0.5ex>[l]^0 0
}}\]
\end{example}

\begin{theorem}\label{thm:AA}
All indecomposable representations of $\AAA_n$ are (up to isomorphism) of the form $I_{i,j}^\Sigma\,$, for some $1\leq i \leq j \leq n$ and $(j-i)$-tuple of signs $\Sigma$. In particular, the quiver $\AAA_n$ is of finite representation type.
\end{theorem}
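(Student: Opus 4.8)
The plan is to recognize the quiver algebra $A:=\CC\AAA_n/\langle\alpha_i\beta_i,\beta_i\alpha_i\rangle$ as a \emph{string algebra} and then read off its indecomposables from the Butler--Ringel classification (see \cite{sim-sko3}). First I would check the defining axioms of a string algebra: every vertex of $\AAA_n$ is the source of at most two arrows and the target of at most two arrows; the ideal of relations is generated by the monomials $\alpha_i\beta_i,\beta_i\alpha_i$; and for each arrow there is at most one legal way to prolong it on either side by a further arrow without landing in the ideal. For $\alpha_i$, the only legal prolongations are by $\alpha_{i+1}$ on one side and by $\alpha_{i-1}$ on the other, since prolongation by any $\beta$ produces a $2$-cycle; the situation for $\beta_i$ is symmetric. (The paper's convention for composing paths is immaterial here, because the set of relations is symmetric under path reversal.) This identifies $A$ as a string algebra; in fact it is gentle.

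Next I would classify the strings. A reduced walk in $\AAA_n$ is a vertex sequence together with, at each step between adjacent vertices $k$ and $k+1$, a choice of letter: the direct arrow ($\alpha_k$ when moving right, $\beta_k$ when moving left) or the formal inverse of the oppositely oriented arrow ($\beta_k^{-1}$ when moving right, $\alpha_k^{-1}$ when moving left). The key point is that the underlying vertex sequence of a reduced walk must be \emph{monotone}. Indeed, at any local extremum $v_{t-1}=v_{t+1}=v_t\pm1$ of the vertex sequence, the two incident letters either both are direct arrows and form one of the $2$-cycle relations $\alpha_i\beta_i,\beta_i\alpha_i$, or both are inverse letters whose inverse is such a relation, or they constitute a backtrack $cc^{-1}$ or $c^{-1}c$; all four combinations of direct/inverse letters are thus excluded. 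Hence a string is supported on an interval $[i,j]$, traversed monotonically, with a free binary choice at each of the $j-i$ steps; up to replacing a string by its inverse (which yields an isomorphic module) this data is precisely a pair $(i\le j,\Sigma)$ with $\Sigma$ a $(j-i)$-tuple of signs, the sign at step $k$ being $+$ for the direct letter $\alpha_k$ and $-$ for the inverse letter $\beta_k^{-1}$. Unwinding the definition of the string module $M(w)$, one sees it has $\CC$ at each vertex of $[i,j]$ and $0$ elsewhere, with $\alpha_k$ the identity and $\beta_k=0$ when $\Sigma_k=+$, and $\beta_k$ the identity and $\alpha_k=0$ when $\Sigma_k=-$; that is, $M(w)\cong I_{i,j}^\Sigma$.

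It then remains to rule out band modules. A band is in particular a reduced \emph{cyclic} walk; but reduced walks are monotone, hence acyclic, so $A$ has no bands. By the Butler--Ringel classification the indecomposable $A$-modules are therefore exactly the string modules, i.e.\ exactly the $I_{i,j}^\Sigma$ for $1\le i\le j\le n$ and $\Sigma$ a $(j-i)$-tuple of signs. Since there are only $\sum_{1\le i\le j\le n}2^{\,j-i}$ of these, $\AAA_n$ is of finite representation type.

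The main obstacle is essentially bookkeeping: carefully verifying the string-algebra axioms and, above all, establishing the monotonicity of reduced walks — this is the step that forces every indecomposable to be thin and supported on an interval — and then matching the string-module combinatorics with the sign tuples $\Sigma$. One could instead try a direct induction on $n$, peeling off the vertex $(n)$ and using the normal form of a pair of linear maps $f\colon U\to W$, $g\colon W\to U$ with $fg=gf=0$, which splits as copies of the four standard pieces $S^1$, $S^2$, $\CC\xrightarrow{\ 1\ }\CC$, $\CC\xleftarrow{\ 1\ }\CC$; the difficulty there is that one cannot renormalize the interface vertex $(n-1)$ freely while keeping the remaining representation decomposed, and resolving this is precisely the functorial-filtration argument underlying the string-algebra classification, so the route above seems preferable.
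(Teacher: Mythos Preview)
Your proof is correct and follows essentially the same route as the paper: both recognize $\AAA_n$ as a string algebra, invoke the Butler--Ringel classification of indecomposables into string and band modules, identify the strings with the interval-plus-sign data $(i,j,\Sigma)$, and rule out bands because there are no cyclic strings. You spell out the verification of the string-algebra axioms and the monotonicity argument that the paper leaves implicit, but the strategy is identical.
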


\begin{proof}
The quiver $\AAA_n$ is a string algebra as defined in \cite[Section 3]{butring}. Hence, the indecomposables are given by either string modules or band modules (see \cite[Section 3]{butring}). The strings of $\AAA_n$ correspond precisely to the tuples $\Sigma$, and the respective string modules are the modules of the form $I_{i,j}^\Sigma\,$. Note that there are no cyclic strings, hence there are no band modules.
\end{proof}

Another quiver that we use in Section \ref{sec:examples} is a subquiver of the $\AAA_3$ quiver, defined as
\begin{equation}\label{eq:doubb}
\AAA_3^c : \xymatrix{(1) \ar@<0.5ex>[r] & \ar@<0.5ex>[l] (2)
\ar@<0.5ex>[r] & \ar@<0.5ex>[l] (3)}
\end{equation}
with all compositions of arrows zero. Clearly, the indecomposables of $\AAA_3^c$ are of the form $I_{i,j}^\Sigma$ by requiring additionally that $\Sigma$ is not equal to $++$ or $--$.

Another quiver that will appear in Section \ref{sec:examples} is the following:

\begin{equation}\label{eq:e6}
\EEE: \,\,\ \vcenter{\xymatrix{
& & (6) \ar@<0.5ex>[d]^{\alpha} & & \\
(1) \ar@<0.5ex>[r] & \ar@<0.5ex>[l] (2) \ar@<0.5ex>[r] & \ar@<0.5ex>[u]^{\beta} \ar@<0.5ex>[l] \ar@<0.5ex>[r] (3) & \ar@<0.5ex>[l] (4) \ar@<0.5ex>[r] & \ar@<0.5ex>[l] (5)
}}
\end{equation}
with all 2-cycles zero, and all compositions with the arrows $\alpha$ or $\beta$ equal to zero.

\begin{theorem}\label{thm:EE}
The quiver $\EEE$ is of domestic tame representation type.
\end{theorem}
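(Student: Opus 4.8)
I would begin by flagging the structural difference with $\AAA_n$: the algebra $\EEE$ is \emph{not} a string algebra, since the vertex $(3)$ has three incoming and three outgoing arrows and hence violates the valency hypothesis of \cite{butring}; so the short argument of Theorem \ref{thm:AA} cannot be recycled. What does persist is that every relation of $\EEE$ is a monomial (a single zero-path), that away from $(3)$ the algebra looks like $\AAA_5$, and that the relations ``all compositions through $(3)$ with $\alpha$ or $\beta$ vanish'' are severe enough that for each arrow into $(3)$ at most one arrow out of $(3)$ composes nontrivially with it. These are exactly the features under which covering theory is effective, and that is the route I would take.

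Since all relations are monomial, any $\integ$-grading of the arrows yields a covering, and I would pass to the universal Galois covering $\widetilde{\EEE}\to\EEE$, obtained by unrolling the five independent $2$-cycles of $\EEE$ (so $\pi_1(\EEE)$ is free of rank $5$). The underlying graph of $\widetilde{\EEE}$ is then a tree, so $\widetilde{\EEE}$ is a locally bounded, triangular, simply connected $k$-category, and the lifted $2$-cycle relations obstruct the ``long interval'' modules. The technical heart is to analyze the finite convex subcategories of $\widetilde{\EEE}$: each is simply connected, and one checks, via weak non-negativity of its Tits form and Bongartz's classification of critical algebras, that it is tame, the only borderline pieces being the extended-Dynkin-type configurations occurring around the lifts of $(3)$ (these genuinely appear, which is what makes $\EEE$ representation-infinite). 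Granting this, the push-down functor $F_\lambda\colon\rep(\widetilde{\EEE})\to\rep(\EEE)$ is dense on indecomposables and transports the representation type downstairs by Dowbor--Skowro\'nski, so $\EEE$ is tame; to sharpen ``tame'' to \emph{domestic} one observes that the critical subcategories of $\widetilde{\EEE}$ fall into finitely many $\pi_1(\EEE)$-orbits, so the push-down yields only boundedly many (essentially one) one-parameter families of $\EEE$-modules. As a cross-check, and an alternative more computational route, one can classify $\EEE$-modules directly: restricting to the subquiver $\AAA_5$ gives the string modules $I_{i,j}^\Sigma$ of Theorem \ref{thm:AA}, and the extra data at $(3)$ (a subspace $\operatorname{im}\alpha$ and a quotient through $\ker\beta$, forced by the relations to be nested with the images and kernels coming from the chain) becomes a peak-poset/matrix problem recognizable as of domestic tame type, with the single one-parameter family visible explicitly as a pencil of representations concentrated near $(3)$.

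The main obstacle is exactly the vertex $(3)$: it is simultaneously the reason $\EEE$ fails to be a string algebra, the source of its representation-infiniteness, and the place where the tame/wild and domestic/non-domestic dichotomies must be settled. In the covering approach the delicate steps are verifying that $\widetilde{\EEE}$ is locally support-finite (so that the push-down is defined and dense) and that it contributes no unexpected one-parameter families; in the direct approach the delicate step is the bookkeeping at $(3)$ needed to prove both that the associated matrix problem is of domestic tame type and that the resulting list of indecomposables is complete. A useful consistency check throughout is that each finite convex subcategory of $\widetilde{\EEE}$ has weakly non-negative Tits form, while at least one is not weakly positive, as befits a representation-infinite domestic tame situation.
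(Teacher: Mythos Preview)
Your proposal takes a genuinely different route from the paper. The paper avoids covering theory entirely and instead argues by a direct case split on whether $V(\alpha)$ and $V(\beta)$ vanish for an indecomposable $V$. The key lemma---which you do not isolate---is that an indecomposable cannot have \emph{both} $V(\alpha)\neq 0$ and $V(\beta)\neq 0$; this is proved by splitting the node at $(6)$ and using the $\AAA_5$ classification. With that in hand, every indecomposable lives either on $\AAA_5$ (when $V(\alpha)=V(\beta)=0$) or, after splitting the nodes $(1)$ and $(5)$, on a single explicit tree quiver $\widehat{B}_8$ with eight vertices and two monomial relations (or its opposite). The Tits form of $\widehat{B}_8$ is then computed by hand, shown to be positive semidefinite with a one-dimensional radical, and the domestic tame conclusion follows from a theorem of Bongartz on tree algebras \cite{tits}. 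So the paper's argument is elementary and self-contained: one case split, two node-splittings, and one Tits-form computation.

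Your covering-theory strategy is plausible in outline, but as written it is a plan rather than a proof: the assertions that every finite convex subcategory of $\widetilde{\EEE}$ has weakly non-negative Tits form, that $\widetilde{\EEE}$ is locally support-finite, and that the critical subcategories fall into finitely many $\pi_1$-orbits are exactly the content of the theorem and are not verified. Your alternative ``direct'' route (restrict to $\AAA_5$, then solve a matrix problem at $(3)$) is closer in spirit to what the paper does, but you frame it as bookkeeping a subspace/quotient pair at $(3)$, whereas the paper's insight is that the $\alpha$/$\beta$ dichotomy collapses the problem to a \emph{single} tree quiver, eliminating the matrix-problem step altogether. If you want a complete proof along your lines, the missing piece is precisely an argument forcing $V(\alpha)V(\beta)=0$ on indecomposables; once you have that, the covering machinery is unnecessary.
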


\begin{proof}
Let $V$ be an indecomposable representation of $\EEE$. If $V(\alpha)=V(\beta)=0$, then $V$ is supported on a quiver of type $\AAA_5$, which is of finite representation type by Theorem \ref{thm:AA}. 

Next, assume that $V(\beta) \neq 0$ and $V(\alpha) = 0$. Disregarding vertex $(6)$, by Theorem \ref{thm:AA} we can decompose the $\AAA_5$ part of $V$ into indecomposables $I_{i,j}^\Sigma$, with $i\leq 3 \leq j$ (since $V$ is indecomposable). If among these there is an indecomposable $I_{i,j}^\Sigma$ such that a map to vertex $(3)$ is non-zero, then we extend the representation $I_{i,j}^\Sigma$ to a representation of $\EEE$ by placing the zero space at vertex $(6)$. By construction, this representation is a summand of $V$, which is a contradiction, since $V$ is indecomposable with $V(\beta) \neq 0$. This shows that all maps of $V$ pointing to $(3)$ are zero. Furthermore, the vertices $(1)$ and $(5)$ are \textit{nodes}, since the paths of length $2$ passing through them are zero (see \cite[pg. 12]{binary}). By splitting these nodes as in \cite[Lemma 2.7]{binary}, we conclude that $V$ can be viewed as a representation of the following quiver
\begin{equation}\label{eq:blow}
\widehat{B}_8:\, \, \,\vcenter{\xymatrix{
(7) \ar[dr] & & (6)  & & (8) \ar[dl] \\
(1) & (2) \ar[l] & (3) \ar[l] \ar[u] \ar[r] & (4) \ar[r] & (5)
}}
\end{equation}
with the compositions $(7)\to (2) \to (1)$ and $(8) \to (4) \to (5)$ zero. 

Dually, if we assume that $V(\alpha)\neq 0$ and $V(\beta)=0$, then $V$ can be realized as a representation of a quiver of type $\widehat{B}_8$ as above, but with all arrows reversed, which we denote by $\widehat{B}_8^o$.

Now we show that if $V$ is any indecomposable representation of $\EEE$ then we cannot have both $V(\alpha) \neq 0$ and $V(\beta)\neq 0$. Assume without loss of generality that we have $V(\alpha)\neq 0$. As vertex $(6)$ of $\EEE$ is a node, we can split it as in \cite[Lemma 2.7]{binary}, and view $V$ as a representation of the quiver 
\begin{equation}\label{eq:blow6}
\vcenter{\xymatrix{
& (6) &  & (6') \ar[dl]^{\alpha} & \\
(1) \ar@<0.5ex>[r] & \ar@<0.5ex>[l] (2) \ar@<0.5ex>[r] & \ar@<0.5ex>[ul]^{\beta} \ar@<0.5ex>[l] \ar@<0.5ex>[r] (3) & \ar@<0.5ex>[l] (4) \ar@<0.5ex>[r] & \ar@<0.5ex>[l] (5)
}}\end{equation}
Disregarding vertex $(6)$ of the quiver \eqref{eq:blow6}, we decompose the corresponding part of $V$ into indecomposables. Since $V(\alpha)\neq 0$, there is an indecomposable $I$ with $I(\alpha)\neq 0$ coming from the quiver $\widehat{B}_8^o$, as discussed above. Clearly, for any such indecomposable $I$ the sum of all maps to vertex $(3)$ is surjective (otherwise it would have a summand isomorphic to the simple representation at vertex $(3)$). Since all maps composed with $\beta$ are zero, we can extend $I$ to the quiver \eqref{eq:blow6} by placing the zero space at vertex $6$. The obtained representation is a summand of $V$, showing that $V(\beta)=0$.

We have shown  that if $V$ is any indecomposable of $\EEE$, then it comes from an indecomposable supported on either $\AAA_5$, $\widehat{B}_8$ or $\widehat{B}_8^o$. Hence, we are left to show that $\widehat{B}_8$ is of domestic tame representation type.

For this, we consider the \textit{Tits form} $\hat{q}$ of  $\widehat{B}_8$ as   in \cite{tits}:
\[
\hat{q}(x)=\ds_{i=1}^8 x_i^2 - x_1 x_2 - x_2 x_3 -x_3 x_4 - x_4 x_5 - x_3 x_6 - x_2 x_7 - x_4 x_8 + x_1 x_7 + x_5 x_8.
\]
By an elementary consideration (e.g. computing eigenvalues), we see that $\hat{q}$ is a positive semi-definite quadratic form, that is $\hat{q}(x) \geq 0$, for all $x\in \integ^8$. Moreover, $\hat{q}(x) = 0$ if and only if $x \in \integ \cdot (1,3,4,3,1,2,1,1)$. Since the graph of $\widehat{B}_8$ is tree, we conclude by \cite[Theorem 3.3]{tits}.
\end{proof}

\begin{remark}
In fact, using \cite[Theorem 2.3]{tits} we see from the proof above that the vectors in $\integ_{>0} \cdot (2,3,4,3,2,2)$ are precisely those dimension vectors for which there exists an infinite number of (pairwise non-isomorphic) indecomposable representations of $\EEE$.
\end{remark}

%%%%%%%%%%%%%%%%%%%%%%%%%%%%%%%%%%%%%%%%%%%%%%%%
\section{The case of finitely many orbits}
\label{sec:finite}
%%%%%%%%%%%%%%%%%%%%%%%%%%%%%%%%%%%%%%%%%%%%%%%%

In this section, assume that $X$ has \textit{finitely many orbits} under the action of a connected linear algebraic group $G$. In Subsection \ref{subsec-linred}, we assume in addition that $G$ is \emph{reductive}. 

\subsection{The quiver of the category}
%%%%%%%%%%%%%%%%%%%%%%%%%%%%%%%%%%%%%%%

For an orbit $O\subset X$, we denote by $T^*_O X$ the conormal bundle. Let $\Lambda=\Lambda(X,G)$ be 
\begin{equation}\label{eq:moment}
\Lambda = \bigcup_{O \subseteq X} \ove{T^*_O X} \subseteq T^* X.
\end{equation}
The moment map
\begin{eqnarray}\label{eqn-momentmap}
\mu\colon T^*X \to \lie^*
\end{eqnarray}
is induced by the map $\lie\to \calO_{T^*X}$ that sends a vector field to its symbol under the order filtration. Then $\Lambda = \mu^{-1}(0)$, where  $\mu^{-1}(0)$ denotes the set-theoretic fiber at $0$ of the moment map. Denote by $\op{mod}^{rh}_{\Lambda}(\dif_X)$ the full subcategory of $\op{mod}(\dif_X)$ of regular holonomic $\dif$-modules whose characteristic varieties are subsets of $\Lambda$. This is a Serre subcategory of $\op{mod}(\dif_X)$, also preserved under holonomic duality $\dual$.

\begin{theorem}\label{thm:eqfin}
\cite[Theorem 11.6.1]{htt} Let $G$ act on a (not necessarily smooth) variety $X$ with finitely many orbits. Then $\op{mod}_G(\dif_X)$ is a full subcategory of $\op{mod}^{rh}_{\Lambda}(\dif_X)$, preserved under holonomic duality $\dual$. The simple equivariant $\dif_X$-modules are labeled by pairs $(O , L)$, where $O\cong G/H$ is an orbit of $G$ in $X$ and $L$ is a finite-dimensional irreducible representation of the finite component group $H / H^0$ (here $H^0$ is the identity component subgroup of $H$).\qed
\end{theorem}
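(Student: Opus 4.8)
The statement is \cite[Theorem 11.6.1]{htt}, so the plan is to reconstruct its proof from standard regular holonomic $\dif$-module theory. First I would reduce to the smooth case: choosing an equivariant closed embedding $X\hookrightarrow X'$ into a smooth $G$-variety and invoking Kashiwara's equivalence, everything (the categories, the duality $\dual$, the simple objects) transports between $\op{mod}_G(\dif_X)$ and $\op{mod}_G^X(\dif_{X'})$, so I may assume $X$ is smooth. Fullness of $\op{mod}_G(\dif_X)$ in $\op{mod}(\dif_X)$ requires nothing new: it was recorded in Subsection \ref{subsec:equiv}, since for connected $G$ every $\dif$-module morphism between equivariant modules is automatically equivariant.

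The core of the argument is to show $\op{mod}_G(\dif_X)\subseteq\op{mod}^{rh}_\Lambda(\dif_X)$. Here I would argue by Noetherian induction on the support of $M\in\op{mod}_G(\dif_X)$. The key local input is that a coherent equivariant $\dif$-module on a single orbit $O\cong G/H$ is necessarily $\calO$-coherent, i.e. a $G$-equivariant integrable connection; its solution sheaf is then a $G$-equivariant local system on $O$ whose monodromy factors through the finite component group $H/H^0$, hence in particular a connection with regular singularities along any compactification. Consequently, if $O\subseteq\op{supp}(M)$ is open in the support, the part of $\op{Ch}(M)$ lying over $O$ sits inside $\overline{T^*_O X}$; replacing $M$ by the kernel and cokernel of the adjunction maps relating it to the minimal extension of $M|_O$, and using Kashiwara's equivalence to remove $O$ from the picture, the inductive hypothesis applies to modules of strictly smaller support. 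Reassembling, $\op{Ch}(M)\subseteq\Lambda=\mu^{-1}(0)$ and $M$ is regular holonomic. Stability of $\op{mod}_G(\dif_X)$ under $\dual$ I would obtain from the fact that holonomic duality commutes with the smooth pullbacks $p^*,m^*$ along $p,m\colon G\times X\to X$, so applying $\dual$ to the isomorphism $\tau\colon p^*M\to m^*M$ (and to the cocycle condition on $G\times G\times X$) produces an equivariant structure on $\dual M$.

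For the classification of simples, let $M$ be simple in $\op{mod}_G(\dif_X)$. Its support is an irreducible $G$-stable closed set, hence the closure $\overline O$ of a unique orbit $O\cong G/H$, and $M|_O$ is a nonzero simple equivariant connection on $O$; conversely $M$ is recovered as the intermediate extension $j_{!*}(M|_O)$ along the locally closed embedding $j\colon O\hookrightarrow X$. Thus $M\mapsto (O, M|_O)$ is a bijection between simple equivariant $\dif_X$-modules and pairs consisting of an orbit $O$ together with a simple equivariant connection on it. Finally, a simple $G$-equivariant connection on $G/H$ becomes trivial after pullback along the \'etale cover $G/H^0\to G/H$ with Galois group $H/H^0$, so these connections are classified by irreducible representations $L$ of $H/H^0$; composing the two bijections gives the asserted labeling by pairs $(O,L)$.

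I expect the main obstacle to be the regular-holonomicity step — precisely, proving that coherent equivariant $\dif$-modules on a homogeneous space are $\calO$-coherent, and then running the d\'evissage cleanly along the (finite) orbit stratification so that the characteristic variety is controlled at every stage. Regularity, by contrast, should come essentially for free once one knows the relevant equivariant local systems have finite monodromy.
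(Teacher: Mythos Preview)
The paper does not supply its own proof of this statement: it is quoted verbatim from \cite[Theorem 11.6.1]{htt} and marked with a terminal \qed, so there is nothing in the paper to compare your argument against. Your outline is a faithful sketch of the standard proof one finds in the cited reference (reduction to the smooth case, d\'evissage along the orbit stratification to get holonomicity with characteristic variety in $\Lambda$, regularity from finiteness of the monodromy on each orbit, stability under $\dual$ via compatibility with smooth pullback, and the classification of simples through intermediate extension and the equivalence between equivariant connections on $G/H$ and representations of $H/H^0$); in that sense your proposal is correct and essentially the intended argument, just not one the present paper itself carries out.
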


By the Riemann-Hilbert correspondence (see \cite[Chapter 7]{htt}), the category
$\op{mod}_G(\dif_X)$ is equivalent to the category of equivariant
perverse sheaves on $X$. The category $\op{mod}_G(\dif_X)$ is Artinian
and Noetherian with finitely many simples; moreover, it has enough
projectives, by \cite[Theorem 4.3]{vilon}. We give a constructive and
more elementary proof of this fact.

\begin{theorem}
\label{thm:findim}
With the assumptions as in Theorem \ref{thm:eqfin}, the category $\op{mod}_G(\dif_X)$ is equivalent to the category of finite-dimensional modules of a finite-dimensional algebra.
\end{theorem}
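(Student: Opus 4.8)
The plan is to reduce the statement to the standard fact that a $\Hom$-finite $\complex$-linear abelian category in which every object has finite length, there are finitely many isomorphism classes of simple objects, and there are enough projectives, is equivalent to $\op{mod}(A)$ for a finite-dimensional $\complex$-algebra $A$ --- concretely, if $S^1,\dots,S^n$ are the simples, $P^i$ the projective cover of $S^i$, and $P=\bigoplus_{i=1}^n P^i$, then $A:=\End_{\dif_X}(P)^{\op{op}}$ works, with equivalence $\Hom_{\dif_X}(P,-)$. The bulk of the work is thus to check these hypotheses for $\mathcal{A}:=\op{mod}_G(\dif_X)$. By Theorem~\ref{thm:eqfin}, $\mathcal{A}$ is a full abelian subcategory of $\op{mod}^{rh}_{\Lambda}(\dif_X)$; its objects are holonomic and hence of finite length, and the same theorem lists the simple objects, of which there are only finitely many since there are finitely many orbits and each orbit $O\cong G/H$ contributes one simple for every irreducible representation of the finite group $H/H^0$. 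Finite-dimensionality of $\Hom_{\dif_X}(M,N)$ for $M,N\in\mathcal{A}$ is a general feature of holonomic $\dif$-modules --- via the Riemann--Hilbert correspondence it is a $\Hom$ space between constructible perverse sheaves --- and for connected $G$ this coincides with the $G$-equivariant $\Hom$ space. Finally $\End_{\dif_X}(S^i)=\complex$ by Schur's lemma, which is what will make $A$ basic and permit the explicit quiver descriptions of Sections~\ref{sec:tech}--\ref{sec:examples}.

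The one substantive hypothesis is that $\mathcal{A}$ has enough projectives. This may be invoked from \cite[Theorem~4.3]{vilon}; for a self-contained argument I would induct on the number of orbits by a gluing (recollement) argument. Choose a chain $\emptyset=U_0\subset U_1\subset\cdots\subset U_r=X$ of open $G$-stable subsets with $U_i\setminus U_{i-1}$ a single orbit $O_i$, and write $j_i\colon U_{i-1}\hookrightarrow U_i$, $k_i\colon O_i\hookrightarrow U_i$ for the inclusions. When $r=1$ the space is homogeneous, and by Theorem~\ref{thm:eqfin} together with connectedness of $G$ the category is semisimple (the simples are the irreducible equivariant local systems, and there are no self-extensions once the $\lie$-action is integrated to $G$), so every object is projective. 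For the inductive step note that $j_i^*$ is exact on the equivariant holonomic categories, so its left adjoint, the perverse extension-by-zero functor $(j_i)_!$, sends projectives of $\op{mod}_G(\dif_{U_{i-1}})$ to projectives of $\op{mod}_G(\dif_{U_i})$; applying it to projective covers $P'\twoheadrightarrow S'$ and composing with $(j_i)_!S'\twoheadrightarrow(\text{intermediate extension of }S')$ yields, for every simple of $\op{mod}_G(\dif_{U_i})$ with nonzero restriction to $U_{i-1}$, a projective object surjecting onto it. The remaining simples are those supported on $O_i$, of the form $k_{i,+}L$; for these I would use that holonomic duality $\dual$ is an exact anti-autoequivalence of $\op{mod}_G(\dif_{U_i})$ (Theorem~\ref{thm:eqfin}), so that it suffices to build an injective envelope of $\dual(k_{i,+}L)$, and enough injectives is obtained by the dual recollement (using that exactness of $j_i^*$ makes $(j_i)_*$ preserve injectives), again reducing to the semisimple homogeneous case. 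The main obstacle is exactly this gluing step: one must verify that the recollement functors restrict correctly to the \emph{equivariant} holonomic categories, and that the objects produced are projective in the ambient category rather than merely in a Serre subcategory.

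Granting enough projectives, set $P=\bigoplus_{i=1}^n P^i$. Since every object of $\mathcal{A}$ has finite length with semisimple top, it is a quotient of a finite direct sum of copies of $P$, so $P$ is a projective generator; hence $\Hom_{\dif_X}(P,-)\colon\mathcal{A}\to\op{Mod}(A)$ is exact (as $P$ is projective) and faithful (as $P$ is a generator), and the standard projective-generator argument makes it an equivalence of $\mathcal{A}$ with the full subcategory of $A$-modules that are finite-dimensional over $\complex$. As each $P^i$ is holonomic, $A=\bigoplus_{i,j}\Hom_{\dif_X}(P^i,P^j)$ is finite-dimensional over $\complex$, so that subcategory is all of $\op{mod}(A)$, completing the proof.
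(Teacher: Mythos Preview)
Your reduction to the standard projective-generator argument is exactly what the paper does, and your citation of \cite[Theorem~4.3]{vilon} for enough projectives is also what the paper invokes. So the overall proposal is sound.

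The gap is in your ``self-contained'' inductive sketch, at precisely the point you flag as the main obstacle. Your induction adds a single closed orbit $O_i$ at each step; for simples restricting nonzero to $U_{i-1}$ you correctly use $(j_i)_!$ of their projective covers. For the simples supported on $O_i$, however, your duality argument is circular: $\dual(k_{i,+}L)$ is again a simple supported on $O_i$, and the functor $(j_i)_*$ only produces injectives containing simples from $U_{i-1}$, not these. Knowing that $k_{i,+}L$ is projective-injective in the semisimple category over $O_i$ does not by itself give an injective in $\op{mod}_G(\dif_{U_i})$; there is no adjunction that does this for you, because $k_i^!$ and $k_i^*$ are not $t$-exact. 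This is the genuine content of the theorem.

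The paper runs the induction in the opposite direction --- peeling off an \emph{open} orbit $U$ and setting $Z=X\setminus U$ --- so that the inductive hypothesis supplies injective envelopes $I_1,\dots,I_m$ in $\op{mod}_G(\dif_Z)$. The crucial step is then an explicit elementary construction: starting from $I=\bigoplus I_j$, one builds by universal extensions an object $Q$ with $\Gamma_Z Q=I$ and $\Ext^1(L_i,Q)=0$ for all simples $L_i$ with full support, then iteratively extends $Q$ by simples from $Z$ to kill the remaining $\Ext^1$'s, showing the process terminates in $\dim\Ext^1(\dual I,Q)$ steps. This hands-on construction is what replaces the missing recollement adjunction and is the paper's main contribution here; your sketch would need something of this kind to close.
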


\begin{proof}
We proceed by induction on the number of orbits. If $X$ itself is an orbit $O\cong G/H$, then the result is trivial, since the correspondence in Theorem \ref{thm:eqfin} gives an equivalence of categories between $\op{mod}_G(\dif_X)$ and the category of finite-dimensional representations of the finite group $H/H^0$, which is a semi-simple category.

Now we turn to the general case. To prove the theorem, it is enough to show that $\op{mod}_G(\dif_X)$ has enough projectives (or injectives, by duality $\dual$), since then one can obtain the algebra by taking the (opposite) algebra of endomorphisms of the direct sum of all projective covers (see \cite[Lemma 1.2]{pervbgg}). Let $U$ be an open orbit in $X$, and write $Z=X\setminus U$. We know that $\op{mod}_G (\dif_U)$ is semi-simple, and by induction $\op{mod}_G (\dif_Z)$ has enough projectives. 

As usual, we assume we have an equivariant embedding $\phi\colon X\hookrightarrow Y$ with $Y$ smooth. Write $U' = Y\setminus Z$ and let $j:U' \to Y$ denote the open embedding. Recall the functor $j_! := \dual \circ j_* \circ \dual$ and the middle extension functor $j_{!*}$ which is the image of the natural map $j_!\to j_*$ (for more details, see \cite[Section 3.4]{htt}). Let $L'_1,L'_2,\dots, L'_k$ denote the simples in $\op{mod}_G (\dif_U)=\op{mod}_G^U (\dif_{U'})$. Then $L_i:=j_{!*} L'_i$ are precisely the simples in $\op{mod}_G^X (\dif_Y)$  with support equal to $X$ (see \cite[Theorem 3.4.2]{htt}). Using adjointness, we can see that $j_! L'_i$ is the projective cover of $L_i$, and $j_* L'_i$ is the injective envelope of $L_i$ in $\op{mod}_G ^X (\dif_Y)$ (see \cite[Lemma 2.4]{binary}). For each $i$ there is an exact sequence
\begin{equation}\label{eq:ex1}
0\to A_i \to j_! L'_i \to L_i \to 0,
\end{equation}
where $A_i$ is supported on $Z$. The following result shows that there are no extensions between the simples $L_1,\dots, L_k$:

\begin{lemma}\label{lem-noext}
  If $M_1', M_2'$ are simple objects in $\op{mod}_G(\dif_U)$, $U$ the open orbit, then the middle extensions $M_1=j_{!*}M_1'$ and $M_2=j_{!*}M_2'$ have $\Ext^1(M_1,M_2)=0$ in $\op{mod}_G(\dif_X)$.
\end{lemma}
\begin{proof}[Proof of Lemma \ref{lem-noext}]
 By the embedding of categories in Lemma \ref{lem:restrict} (with $H=G$), it is enough to show that $\Ext^1(M_1',M_2')=0$  in $\op{mod}_G(\dif_U)$. This follows as the category $\op{mod}_G(\dif_U)$ is semi-simple; see the start of the proof of the theorem.
  \end{proof}
We return to the proof of the theorem. For $M,N\in \op{mod}_G^X (\dif_Y)$ one can define the (Yoneda) extension groups $\Ext^i(M,N)$ in the Abelian category $\op{mod}_G^X(\dif_Y)$ for all $i\geq 0$. Since $\op{mod}_G^X (\dif_Y)$ is a fully faithful subcategory of $\op{mod}(\dif_Y)$, and since $M,N$ are holonomic by Theorem \ref{thm:eqfin}, both $\Ext^0(M,N)=\Hom_{\dif_Y}(M,N)$ and $\Ext^1(M,N)$ are finite-dimensional vector spaces  (see \cite[Theorem 4.45]{kashi}).

Now let $S_1,S_2,\dots,S_m$ denote the simples in $\op{mod}_G(\dif_Z)=\op{mod}_G^Z (\dif_X)$, and let $I_1,I_2,\dots,I_m$ denote their respective injective envelopes in $\op{mod}_G^Z (\dif_Y)$. By duality, we are left to show that each $S_i$ has an injective envelope in $\op{mod}_G^X (\dif_Y)$.

Put $I:=\bigoplus_{i=1}^m I_i$. Since $I$ is supported in $Z$, $j^*(I)=0$. Thus, $0=\Hom(j^*(\dual I),\dual L_i')=\Hom(\dual I,j_*\dual L_i)=\Hom(\dual I,\dual j_!L_i')=\Hom(j_!L_i',I)$. It follows, that \eqref{eq:ex1} induces $\Hom(A_i,I)=\Ext^1(L_i,I)$.

For each $i=1,2,\dots, k$, we choose basis elements $e^i_1,e^i_2,\dots, e^i_{a_i}$ of $\Ext^1(L_i,I)$, where \\$\dim \Ext^1(L_i,I)=a_i$, and represent each $e^i_j$ by an exact sequence
\[0\to I \to E_j^i \to L_i \to 0,\]
for some $E_j^i\in \op{mod}_G^X (\dif_Y)$. Taking the direct sum of all these sequences, we take its pushout via the diagonal projection onto $I$, obtaining the following diagram:
\[\xymatrix@R-1pc{
0 \ar[r] & \displaystyle\bigoplus_{i,j} I \ar[r] \ar[d]& \displaystyle\bigoplus_{i,j} E_j^i \ar[r] \ar[d]& \displaystyle\bigoplus_{i,j} L_i \ar[r]\ar@{=}[d] & 0 \\
0 \ar[r] & I \ar[r] & Q \ar[r]  & \displaystyle\bigoplus_{i,j} L_i \ar[r]  & 0.
}\]
For an arbitrary $i$ with $1\leq i \leq k$, we apply the functor $\Hom(L_i, - )$ to the bottom row, which yields the exact sequence
\[0\to \Hom(L_i, Q) \to \Hom(L_i,  \displaystyle\bigoplus_{j=1}^{a_i} L_i)\xrightarrow{d} \Ext^1(L_i, I) \to \Ext^1(L_i,Q) \to 0,\]
since there are no extensions between $L_1,\dots,L_k$. However, the map $d$ is an isomorphism by construction, hence $\Hom(L_j,Q)=\Ext^1(L_j,Q)=0$ for all $j=1,\dots, k$.

Also, applying the functor $\Gamma_Z$ to the bottom row, we obtain that $\Gamma_Z Q = I$.

Now, we construct inductively a sequence of $\dif$-modules $Q_0,Q_1, Q_2 \dots $ in $\op{mod}_G^X (\dif_Y)$ in the following way. Put $Q_0:=Q$. Assume we defined $Q_{p-1}$. If there is any simple $S:=S_j$ for some $1\leq j \leq m$, such that $\Ext^1(S,Q_{p-1})\neq 0$, then we define $Q_p$ by choosing a non-split exact sequence
\begin{equation}\label{eq:ex2}
0\to Q_{p-1} \to Q_p \to S \to 0.
\end{equation}
First, we claim that for any $p$ we have $\Gamma_Z Q_p=I$ and  $\Hom(L_i,Q_p)=\Ext^1(L_i,Q_p)=0$ for all $i=1,\dots k$. We have already shown  the claim for $p=0$. Now assume the claim is true for $Q_{p-1}$. Applying $\Gamma_Z$ to \eqref{eq:ex2} we get an exact sequence
$$0\to I \to \Gamma_Z Q_p \xrightarrow{\phi} S.$$
Assume the map $\phi$ is non-zero. Then $\phi$ must be surjective, since $S$ is simple. Hence we get an exact sequence in $\op{mod}_G^Z (\dif_Y)$, and since $I$ is injective in this category, the sequence must split. Choose a splitting $\psi: S \to \Gamma_Z Q_p$, so $\phi \circ \psi = id$. Then composing $\psi$ with the inclusion $\Gamma_Z Q_p \hookrightarrow Q_p$ yields a splitting of \eqref{eq:ex2}, which is a contradiction. Hence $\phi$ must be zero, \emph{i.e.} $\Gamma_Z Q_p = I$.

Applying $\Hom(L_i,-)$ to \eqref{eq:ex2} we get $\Hom(L_i,Q_p)=0$, for any $i=1,\dots,k$. Now we apply $\Hom(-,Q_p)$ to the sequence \eqref{eq:ex1} and obtain an exact sequence
$$0\to \Hom(j_! L_i', Q_p) \to \Hom(A_i, Q_p) \to \Ext^1(L_i,Q_p) \to 0,$$
since $j_! L_i'$ is projective. By construction, $\dim \Hom(j_! L_i',
Q_p) = \dim\Hom(j_!L_i',Q)=a_i$ and $\dim \Hom(A_i,Q_p)=\dim \Hom(A_i,\Gamma_Z Q_p) = \dim \Hom(A_i, I)=a_i$. This implies $\Ext^1(L_i,Q_p)=0$, giving the last part of the claim.

Now let $P:= \dual(I)$. We show that the process of constructing $Q_0,Q_1,Q_2,\dots$ stops after $\dim \Ext^1(P,Q)$ steps, \emph{i.e.} $Q_{\dim \Ext^1(P,Q)}$ is injective in $\op{mod}_G^X (\dif_Y)$. More precisely, applying $\Hom(P,-)$ to the sequence \eqref{eq:ex2}, we get the exact sequence
$$0\to\Hom(P,Q_{p-1})\to \Hom(P,Q_p)\to \Hom(P,S) \to \Ext^1(P,Q_{p-1}) \to \Ext^1(P,Q_{p})\to 0,$$
since $P$ is projective in $\op{mod}_G^Z (\dif_Y)$. We have
$\,\dim\Hom(P,S) = 1$, and $\Hom(P,Q_{p-1})=\Hom(P,\Gamma_Z Q_{p-1}) = \Hom(P,I)=\Hom(P,Q_p)$, where the last equality follows since $I=\Gamma_Z Q_p$ and $P$ has support in $Z$. Hence $\dim \Ext^1(P,Q_{p}) = \dim \Ext^1(P,Q_{p-1})-1$, and the process stops precisely when $Q_p$ is injective, so when $p=\dim \Ext^1(P,Q)$ and $\Ext^1(P,Q_{p})=0$. 
\end{proof}

\begin{remark}\label{rem:fundfin}
If $X$ is $\dif$-affine and $G$ reductive, Theorem \ref{thm:findim} follows also from Proposition \ref{prop:proj}.

In the non-equivariant setting, suppose there is a Whitney stratification on an algebraic variety $X$ such that each stratum has a finite fundamental group, and let $\Lambda$ denote the union of the closures of the conormal bundles of the strata. Then $\op{mod}^{rh}_{\Lambda}(\dif_X)$ is equivalent to the category of finite-dimensional modules of a finite-dimensional algebra. This generalizes \cite[Theorem 1.1]{pervbgg}.
\end{remark}

\vspace{0.1in}

By the considerations in Section \ref{subsec:quiv}, the categories considered in Theorem \ref{thm:findim} are in turn equivalent to the category $\rep(\widehat{Q})$ of finite-dimensional representations of a quiver $\widehat{Q}$ . One of the general goals of the paper is the following:

\begin{prb}Determine the quiver with relations $\widehat{Q}$ such that $\op{mod}_G(\dif_X)\cong \rep(\widehat{Q})$. 
\end{prb}

Such a quiver $\widehat{Q}$ is called the \emph{quiver of $\op{mod}_G(\dif_X)$}. For the construction of the quiver $\widehat{Q}$ of a finite-dimensional algebra (hence of the category $\op{mod}_G(\dif_X)$), we refer the reader to \cite{elements}. In particular, the vertices of $\widehat{Q}$ correspond to the simple equivariant $\dif_X$-modules, and the number of arrows from vertex $x$ to vertex $y$ is equal to $\dim_\complex \Ext^1 (S_x, S_y)$, the dimension of the extension group (in $\op{mod}_G(\dif_X)$) between the corresponding simple $\dif_X$-modules $S_x,S_y$. Since simples have no self-extensions in $\op{mod}_G(\dif_X)$ (see Lemma \ref{lem-noext}) the quiver has no loops.

\begin{corollary}\label{cor:relations}
Let $\widehat{Q}$ be the quiver of $\op{mod}_G(\dif_X)$ as in Theorem \ref{thm:findim}. Suppose $\ove{O}$ is an irreducible component of $X$, for some $G$-orbit $O$, and assume that $S$ is a simple in $\op{mod}_G(\dif_X)$ with support $\ove{O}$. If there exits (up to relations) a non-trivial path from $S$ to another simple $S'$ in $\widehat{Q}$, then the support of $S'$ is contained in $\ove{O}\setminus O$.
\end{corollary}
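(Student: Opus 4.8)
The plan is to use the geometric description of the simple modules as middle extensions together with the adjointness properties of the open-embedding functors, exactly as in the proof of Theorem \ref{thm:findim}. Let $j\colon O\hookrightarrow \ove{O}$ denote the open embedding, so that $S = j_{!*}\, S|_O$ and $S|_O$ is a simple equivariant $\dif$-module on $O$ (irreducible local system on $O$, since $\ove{O}$ is an irreducible component and thus locally closed smooth on its open part). The key point is that a non-trivial path from $S$ to $S'$ means $\Ext^1$ between intermediate simples is non-zero; since paths factor through arrows, it suffices to understand when there is an arrow out of $S$, i.e.\ when $\Ext^1(S, S'')\neq 0$ for a simple $S''$, and then iterate. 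So I would first reduce to the claim: if $\Ext^1_{\op{mod}_G(\dif_X)}(S, S'')\neq 0$ with $S$ supported on an irreducible component $\ove{O}$ and $S = j_{!*}S|_O$, then $\op{supp} S'' \subseteq \ove{O}\setminus O$.

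\textbf{Key steps.} First I would embed $X$ equivariantly into a smooth $Y$ and work inside $\op{mod}_G^X(\dif_Y)$, as in Theorem \ref{thm:findim}; write $U' = Y\setminus(\ove{O}\setminus O)$ (an open set whose intersection with $X$ is $O$ — here one uses that $\ove{O}$ is a component, so $O$ is open in $X$ away from the lower-dimensional strata), and let $j\colon U'\to Y$ be the open embedding, so $S = j_{!*}S'_0$ with $S'_0 = S|_{U'}$ simple on $U'$. Recall from the proof of Theorem \ref{thm:findim} that $j_!S'_0$ is the projective cover of $S$ in $\op{mod}_G^X(\dif_Y)$, fitting in $0\to A \to j_! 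S'_0 \to S \to 0$ with $A$ supported on $\ove{O}\setminus O$. Now suppose $S''$ is a simple with $\op{supp}S''\not\subseteq \ove{O}\setminus O$; since $S''$ is simple and $\ove{O}$ is a component, either $\op{supp} S'' = \ove{O}$ and then $S''\cong S$ (as the local system recovering it restricts to the simple on $O$ — but then there are no self-extensions by Lemma \ref{lem-noext}, using that $\op{mod}_G(\dif_O)$ is semisimple), or $\op{supp}S''$ is not contained in $\ove{O}$, in which case $S''|_{U'}$ is a simple on $U'$ supported on $O$ — again, no extensions with $S'_0$ inside the semisimple category $\op{mod}_G(\dif_O)$, hence $\Ext^1(S,S'')$ injects (via $j_!$-adjointness and $j_*$-adjointness, as in Lemma \ref{lem-noext}'s use of Lemma \ref{lem:restrict}) into $\Ext^1_{\op{mod}_G(\dif_O)}(S'_0, S''|_O) = 0$. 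Either way $\Ext^1(S,S'')=0$, a contradiction. Thus the only arrows out of $S$ go to simples supported inside $\ove{O}\setminus O$.

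\textbf{Iterating.} Finally I would bootstrap from arrows to paths: a non-trivial path from $S$ to $S'$ decomposes as $S \to S_1 \to \cdots \to S_r = S'$ with each step a single arrow (modulo relations). The first step forces $\op{supp}S_1\subseteq \ove{O}\setminus O$, and since supports only shrink along arrows among equivariant perverse sheaves on a space with finitely many orbits (an arrow $S_a\to S_b$ with $\Ext^1\neq 0$ forces one of $\op{supp}S_a$, $\op{supp}S_b$ to contain the other, and the no-self-extension / semisimplicity argument on the open stratum rules out equality of the supporting orbit — more precisely, if $O_a$ is the open orbit of $\op{supp}S_a$ then $S_b$ restricted there has no extension with $S_a$'s restriction), we conclude $\op{supp}S' \subseteq \op{supp}S_1 \subseteq \ove{O}\setminus O$.

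\textbf{Main obstacle.} The delicate point is the support-monotonicity of arrows: showing that whenever $\Ext^1(S_a,S_b)\neq 0$ between two simple equivariant $\dif$-modules, one support contains the other and, when the open orbits coincide, the modules agree (so no arrow, by no-self-extensions). This is where the full strength of the local-system description of simples on orbits and the semisimplicity of $\op{mod}_G(\dif_O)$ on a single orbit is used, together with the adjunction $j_!\dashv j^* \dashv j_*$ to compare extensions upstairs with extensions on the open stratum — essentially the mechanism already deployed in Lemma \ref{lem-noext} and Theorem \ref{thm:findim}, but here applied relative to the embedding $O\hookrightarrow \ove{O}$ rather than $U\hookrightarrow X$.
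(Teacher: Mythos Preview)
Your basic setup is the right one and is essentially dual to the paper's: you use the projective cover $P^S=j_!\,(S|_{U'})$, while the paper uses the injective envelope $I^S=j_*\,(S|_{U'})$ and then invokes holonomic duality. In either version the crucial observation is that every composition factor of $P^S$ (resp.\ $I^S$) other than $S$ itself is supported on $\ove{O}\setminus O$; this is exactly the content of your exact sequence $0\to A\to j_!S'_0\to S\to 0$ with $A$ supported on $\ove{O}\setminus O$.

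The gap is in your ``iterating'' step. You reduce first to single arrows and then try to bootstrap along a path $S\to S_1\to\cdots\to S'$ by asserting that ``supports only shrink along arrows''. That assertion is false in the categories under consideration: already for $n\times n$ matrices the quiver $\AAA_{n+1}$ has arrows in both directions between adjacent vertices, so there are arrows from a simple with smaller support to one with strictly larger support. What rescues the situation is not a property of individual arrows but of \emph{paths up to relations}: the number of such nontrivial paths from $S$ to $S'$ equals $[P^S:S']$ (this is the standard description of projective covers recalled in Section~\ref{subsec:quiv}), and you have already shown that all composition factors of $P^S$ other than $S$ lie in $\ove{O}\setminus O$. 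So the conclusion follows in one stroke, with no induction on the length of the path. In short: drop the iteration, and instead use directly that $A=\ker(j_!S'_0\to S)$ carries \emph{all} composition factors of $P^S$ other than $S$; this is exactly what the paper does on the injective side.
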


\begin{proof}
By duality $\dual$, it is enough to show that if the support of $S'$ is not contained in $\ove{O}\setminus O$, then there are no non-trivial paths from $S'$ to $S$. This follows by the construction of the injective envelope $j_*j^*S$ from the proof of Theorem \ref{thm:findim} (we use the same notation, replacing $Z$ in that proof with $\ove{O}\setminus O$ here), since all the simple composition factors of $j_*j^*S/S$ are supported on  $\ove{O}\setminus O$. Hence, if $S'\cong S$, there is only the trivial path (of zero length) in $\widehat{Q}$ between $S$ and $S$, and if $S' \not\cong S$ then there are no non-zero paths between $S'$ and $S$ up to relations.
\end{proof}

Recall the moment (conormal) variety $\Lambda(X,G)$ from \eqref{eq:moment}.
\begin{lemma}\label{lem:simplyconn}
Additionally to the assumptions in Theorem \ref{thm:eqfin}, suppose that $G$ is semi-simple and simply connected. Then $\op{mod}_G(\dif_X) = \op{mod}_\Lambda^{rh} (\dif_X)$.
\end{lemma}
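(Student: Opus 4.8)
The plan is to show both inclusions of categories, viewing each as a full subcategory of $\op{mod}(\dif_X)$. The inclusion $\op{mod}_G(\dif_X) \subseteq \op{mod}_\Lambda^{rh}(\dif_X)$ is already given by Theorem \ref{thm:eqfin} (and holds for any connected linear algebraic group with finitely many orbits). So the content is the reverse inclusion: every regular holonomic $\dif_X$-module with characteristic variety inside $\Lambda$ is $G$-equivariant. First I would reduce to the smooth affine case via an equivariant closed embedding $X\hookrightarrow X'$ into a smooth $G$-variety, as in the earlier proofs, and then (as in the proof of Proposition \ref{prop:serre}) further reduce to $X$ smooth $\dif$-affine by the same chain of arguments—Kashiwara's equivalence, passage to quasi-projective $G$-stable opens, covering, and gluing of equivariance structures (which are unique). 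Indeed the key technical input that makes all of these reductions go through at once is exactly the statement that $\op{mod}_\Lambda^{rh}(\dif_X)$-membership is preserved under the same operations, and that on each piece strong equivariance is an extension-closed condition—this is where Proposition \ref{prop:serre} (semisimplicity of $\lie$) enters crucially.

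The heart of the argument is then: on a smooth $\dif$-affine $X$, if $M\in\op{mod}_\Lambda^{rh}(\dif_X)$ then $M$ is $G$-equivariant. I would argue by induction on the length of $M$. If $M$ is simple, it is one of the simple regular holonomic modules with characteristic variety in $\Lambda$; by the Riemann–Hilbert correspondence these correspond to simple perverse sheaves of the form $\op{IC}(\ove O, L)$ for $O$ an orbit and $L$ an irreducible local system on $O$. The hypothesis that $G$ is \emph{simply connected} forces each orbit $O\cong G/H$ to have $H$ \emph{connected} (the fibration $G\to G/H$ gives a surjection $\pi_1(G/H)\twoheadrightarrow \pi_0(H)$ when $G$ is connected simply connected; more precisely the long exact sequence of homotopy groups yields $\pi_0(H)=0$), hence $\pi_1(O)$ has no nontrivial finite quotients relevant here and the only irreducible equivariant local system is trivial. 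Thus every simple object of $\op{mod}_\Lambda^{rh}(\dif_X)$ already appears in the list of simple equivariant $\dif_X$-modules from Theorem \ref{thm:eqfin}, i.e. every simple object is $G$-equivariant. For the inductive step, take $0\to M'\to M\to M''\to 0$ with $M',M''$ of smaller length, hence $G$-equivariant by induction, and $M',M''\in\op{mod}_\Lambda^{rh}(\dif_X)$ since that category is Serre in $\op{mod}(\dif_X)$; then Proposition \ref{prop:serre} (using $G$ semisimple) gives $M\in\op{mod}_G(\dif_X)$.

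The main obstacle I expect is the clean identification, for simple objects, of the two labelings: showing that the simple regular holonomic modules with $\op{Char}\subseteq\Lambda$ are \emph{exactly} the $\op{IC}$-extensions supported on orbit closures with a local system, and that simple-connectedness of $G$ kills all nontrivial component groups $H/H^0$ so that only trivial local systems survive. The first point is standard microlocal geometry—$\op{Char}\subseteq\Lambda=\bigcup_O \ove{T^*_O X}$ forces the singular support of the corresponding perverse sheaf to be a union of conormals, so it is constructible with respect to the orbit stratification, hence an iterated extension of $\op{IC}$'s of the strata—but it should be cited rather than reproved. The second point is the genuinely new ingredient here and is where the hypotheses on $G$ are used; once it is in place, the reductions plus Proposition \ref{prop:serre} assemble the result. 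A remark worth flagging: without simple-connectedness one only gets that $\op{mod}_G(\dif_X)$ is the Serre subcategory generated by those simples whose local system is $G$-equivariant, which can be strictly smaller than $\op{mod}_\Lambda^{rh}(\dif_X)$.
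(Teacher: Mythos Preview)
Your overall architecture matches the paper's proof: one inclusion is Theorem~\ref{thm:eqfin}, and for the other you reduce to simples via Proposition~\ref{prop:serre} (extension-closure, using semisimplicity of $G$), then argue that every simple object of $\op{mod}_\Lambda^{rh}(\dif_X)$ is already $G$-equivariant. The induction on length and the identification of simples with $\op{IC}$-extensions of local systems on orbits are exactly what the paper does. The extra reductions you sketch (to $\dif$-affine, gluing, etc.) are harmless but unnecessary here.

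However, there is a genuine error in your treatment of the simples. From the fibration $H\to G\to G/H$ with $G$ simply connected, the long exact sequence gives
\[
0=\pi_1(G)\to \pi_1(G/H)\to \pi_0(H)\to \pi_0(G)=0,
\]
so $\pi_1(O)\cong H/H^0$; it does \emph{not} give $\pi_0(H)=0$. Stabilizers in orbits of a simply connected group can be disconnected (e.g.\ $\SL_2$ acting on $\mathbb{P}(\Sym_2\complex^2)$ has a stabilizer with two components), and correspondingly there can be nontrivial irreducible local systems on $O$. Your conclusion that ``the only irreducible local system is trivial'' is therefore false, and the argument as written does not close.

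The correct use of simple-connectedness is the one the paper gives: it yields the isomorphism $\pi_1(O)\cong H/H^0$, so that \emph{every} irreducible local system on $O$ (an irreducible representation of $\pi_1(O)$) is exactly an irreducible representation of the component group $H/H^0$. By Theorem~\ref{thm:eqfin} these are precisely the labels for simple \emph{equivariant} $\dif_X$-modules supported on $\ove{O}$. Hence every simple in $\op{mod}_\Lambda^{rh}(\dif_X)$ is $G$-equivariant, and your induction on length then goes through. In short: replace ``$H$ is connected, so only the trivial local system appears'' with ``$\pi_1(O)=H/H^0$, so every local system is equivariant.''
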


\begin{proof}
  Since $G$ is semi-simple, $\op{mod}_G(\dif_X)$ is a subcategory of  $\op{mod}_\Lambda^{rh} (\dif_X)$ that is closed under extensions  by Proposition \ref{prop:serre}. Hence, it is enough to show that any simple $\dif$-module in $\op{mod}_\Lambda^{rh} (\dif_X)$ is $G$-equivariant. 

Recall that regular simple $\dif_X$-modules come from Deligne systems: irreducible local systems $L$ on locally closed submanifolds $C$; these are in 1-to-1 correspondence with the simple representations of the fundamental group $\pi_1(C)$. The characteristic variety of the $\dif_X$-module corresponding to $L$ contains as one component the conormal to $C$. Since this is supposed to be in $\Lambda$, only $G$-orbits qualify for $C$. It remains to show that $L$ is $G$-equivariant. Since $G$ is connected and simply connected, the fundamental group $\pi_1(C)$ of an orbit $C=G/H$ is isomorphic to the component group $H/H^0$. The claim follows by Theorem \ref{thm:eqfin}. 
\end{proof}

\medskip

We return to $X$ being smooth and formulate the following result that we use in Section \ref{sec:examples}. 

\begin{lemma}\label{lem:local}
Suppose $X$ is affine and $O\subset X$ a $G$-orbit of codimension $c$. Let $Z=\ove{O}\setminus O$ and denote $U=X\setminus Z$. Then the $\dif_X$-module $\displaystyle{H^{c}_O(U,\orb_U)}$ is the injective envelope in $\op{mod}_G^{\ove{O}}(\dif_X)$ of the simple $\dif_X$-module corresponding to the trivial connection on $O$. Moreover, we have an exact sequence of $\dif_X$-modules
\[ 0 \to H^c_{\ove{O}} (X,\orb_X) \to H_O^c(U,\orb_U) \to H_Z^{c+1}(X,\orb_X) \to  H^{c+1}_{\ove{O}} (X,\orb_X).\]
\end{lemma}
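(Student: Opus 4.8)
The plan is to combine the standard local cohomology long exact sequences with the $\dif$-affineness of $X$ and the characterization of injective envelopes in categories of equivariant $\dif$-modules that was already used in the proof of Theorem \ref{thm:findim}. First I would set up notation: let $j\colon U=X\setminus Z \hookrightarrow X$ be the open embedding (note $O$ is closed in $U$, so $\ove O\cap U=O$), and let $i\colon O\hookrightarrow U$ be the closed embedding of the orbit. Since $O$ is a $G$-orbit it is smooth, and the trivial connection $\orb_O$ on $O$ is the (unique up to isomorphism) simple equivariant $\dif_O$-module attached to the trivial local system. By Kashiwara's equivalence $i_+\orb_O$ is the corresponding simple $\dif_U$-module supported on $O$, and because $O$ has codimension $c$ in $U$ one has $i_+\orb_O \cong H^c_O(U,\orb_U)$ as $\dif_U$-modules (the only nonvanishing local cohomology sheaf of $\orb_U$ along the smooth closed subvariety $O$ sits in degree $c$, and equals $i_+\orb_O$; it is the simple with support $O$ by Kashiwara). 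The first point to establish is therefore that $H^c_O(U,\orb_U)=j^*(H^c_{\ove O}(X,\orb_X))$ away from $Z$, i.e. that the $\dif_X$-module $H^c_O(U,\orb_U)$ I write down is really $j_*j^*$ applied to the simple with support $\ove O$ --- this is exactly the module denoted $j_*L_i'$ in the proof of Theorem \ref{thm:findim}, which is shown there to be the injective envelope of $L_i=j_{!*}\orb_O$ in the relevant category of equivariant $\dif$-modules supported on $\ove O$ (living inside $\dif_U$-modules on $U'=X\setminus Z$, which here is simply $X$ itself since $X$ is affine hence $\dif$-affine). So the injectivity statement will follow by citing that argument almost verbatim, once I identify $j_*j^* (j_{!*}\orb_O)$ with $j_*(\orb_O$-simple on $U)$, which in turn is $j_* i_+\orb_O = j_* H^c_O(U,\orb_U)$; and since $X$ is affine, $\Gamma$ identifies this with the $\dif_X$-module $H^c_O(U,\orb_U)$ literally (global sections of local cohomology on an affine). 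I should be careful that the injective envelope is taken in $\op{mod}_G^{\ove O}(\dif_X)$ and that this is the right ambient category: $\ove O$ need not be smooth, but $H^c_O(U,\orb_U)$ is supported on $\ove O$, and the embedding $\ove O \hookrightarrow X$ into the smooth affine $X$ lets us apply Theorem \ref{thm:findim}'s construction directly.

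For the exact sequence I would use the two long exact sequences of local cohomology. Writing $\ove O = O \sqcup Z$ with $Z$ closed in $X$ and $O$ open in $\ove O$, the triangle relating $R\Gamma_Z$, $R\Gamma_{\ove O}$ and $R\Gamma_{O}(U,-)=Rj_* R\Gamma_O(U,-)$ applied to $\orb_X$ gives a long exact sequence of local cohomology $\dif_X$-modules. Concretely: $R\Gamma_{\ove O}\orb_X \to R\Gamma_O(U,\orb_U) \to R\Gamma_Z(\cdot)[1] \xrightarrow{+1}$, where the middle term has cohomology concentrated in degree $c$ (since $O$ is smooth of codimension $c$ in the smooth $U$), and the outer terms are the local cohomologies along $\ove O$ and along $Z$. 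Taking the long exact cohomology sequence and using $H^k_{\ove O}(X,\orb_X)=0$ for $k<c$ (since $\ove O$ has codimension $c$) and the degree-$c$ concentration of $H^\bullet_O(U,\orb_U)$, the sequence collapses to precisely
\[
0 \to H^c_{\ove O}(X,\orb_X) \to H^c_O(U,\orb_U) \to H^{c+1}_Z(X,\orb_X) \to H^{c+1}_{\ove O}(X,\orb_X),
\]
which is the claimed four-term exact sequence. (I should double-check the degree shift: $H^k_O(U,\orb_U)$ as a sheaf on $U$ is $i_+\orb_O$ if $k=c$ and $0$ otherwise, so $R\Gamma_O(U,\orb_U)$ contributes in degree $c$; the connecting map in the triangle shifts $R\Gamma_Z$ by one, producing $H^{c+1}_Z$ in the sequence; and since $X$ is affine, all these $R\Gamma$'s compute honest local cohomology modules with no higher cohomology of sheaves intervening. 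The maps are all $\dif_X$-module maps since the local cohomology functors and connecting maps are $\dif_X$-linear.)

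The main obstacle I anticipate is not the homological algebra but the careful identification in the first part: verifying that the concrete module $H^c_O(U,\orb_U)$ really coincides with the module $j_*j^* L$ (for $L$ the simple with support $\ove O$ and trivial monodromy) that the proof of Theorem \ref{thm:findim} shows to be injective, rather than with $j_! j^* L$ or the middle extension. The key facts making this work are: (i) $i_+\orb_O = H^c_O(U,\orb_U)$ is simple in $\op{mod}_G^O(\dif_U)$ (Kashiwara, plus the orbit being smooth and $\orb_O$ being the simple with trivial local system), so its $j_*$ from $U$ to $X$ is, up to the usual identification, the injective envelope in $\op{mod}_G^{\ove O}(\dif_X)$ of the middle extension $j_{!*}\orb_O$ by the argument in Theorem \ref{thm:findim}; and (ii) that $j_{!*}\orb_O$ is precisely "the simple $\dif_X$-module corresponding to the trivial connection on $O$" referenced in the statement. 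Once these identifications are pinned down the statement follows, so I would spend most of the writeup making them explicit and keep the rest brief. A minor technical point to handle: $\op{mod}_G^{\ove O}(\dif_X)$ uses the (possibly singular) $\ove O$, but since we have the equivariant closed embedding $\ove O \hookrightarrow X$ into the smooth affine variety $X$ this is covered by the conventions fixed in Section \ref{sec:prelim}, and no extra work is needed.
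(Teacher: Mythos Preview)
Your approach is essentially the same as the paper's: identify $\mathcal H^c_O(\orb_U)\cong i_+\orb_O$ as the simple $\dif_U$-module for the trivial connection on the smooth closed $O\subset U$ (the paper cites \cite[Proposition~1.7.1]{htt}), then recognize its $j_*$ to $X$ as the injective envelope via \cite[Lemma~2.4]{binary} (which is exactly the ingredient invoked in the proof of Theorem~\ref{thm:findim} that you reference). The paper's writeup actually leaves the four-term exact sequence to the reader, so your triangle argument is a welcome addition.

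One small correction in that part: to get the zero on the left of
\[
0 \to H^c_{\ove O}(X,\orb_X) \to H^c_O(U,\orb_U) \to \cdots
\]
the relevant vanishing is $H^c_Z(X,\orb_X)=0$, which holds because $Z=\ove O\setminus O$ has codimension strictly greater than $c$. The vanishings you cite ($H^k_{\ove O}=0$ for $k<c$ and the degree-$c$ concentration of $H^\bullet_O(U,\orb_U)$) only show that $H^c_Z\hookrightarrow H^c_{\ove O}$, which is the wrong direction; you need $H^c_Z=0$ so that $H^c_{\ove O}\hookrightarrow H^c_O(U,\orb_U)$.
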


\begin{proof}
Since $O$ is smooth and closed in $U$, $\mathcal{H}_O^c (\orb_U)$ is the simple $\dif_U$-module corresponding to the trivial connection on $O$ (see \cite[Proposition 1.7.1]{htt}), and $\mathcal{H}_O^i (\orb_U)=0$ for  $i\neq c$. In particular, the spectral sequence $H^i (\mathcal{H}_O^j (\orb_U)) \Rightarrow H^{i+j}_O (U,\orb_U)$ degenerates. By \cite[Lemma 2.4]{binary}, this shows that $\displaystyle{H_O^c(U,\orb_U)}$ is the injective envelope in $\op{mod}_G^{\ove{O}}(\dif_X)$.
\end{proof}

We need the following result on the geometry of the zero-fiber of the moment map  $\mu : T^* X \to \lie^*$ in \eqref{eqn-momentmap}. Recall that in our situation of finitely many orbits, the irreducible components of the zero-fiber \eqref{eqn-momentmap} are just closures of the conormal bundles of the orbits. We have the following: %(compare with \cite[Proposition 4.7]{skko}):

\begin{lemma}\label{lem:moment}
Let $Z$ be an irreducible component of the scheme-theoretic fiber $\mu^{-1} (0)$, and $x\in Z$ a closed point. Then $x$ is a smooth point of $\mu^{-1}(0)$ if and only if $\ove{G\cdot x}=Z$.
\end{lemma}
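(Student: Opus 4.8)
The statement asserts that a closed point $x$ in an irreducible component $Z$ of $\mu^{-1}(0)$ is a smooth point of the scheme $\mu^{-1}(0)$ precisely when $\ove{G\cdot x}=Z$, i.e. when the orbit through $x$ is dense in $Z$; since the irreducible components of $\mu^{-1}(0)$ are the closures $\ove{T^*_O X}$, this says the smooth points of $\mu^{-1}(0)$ are exactly the points lying over the open orbit of their component (counted with no extra components through them). The plan is to reduce everything to an infinitesimal (tangent-space dimension) computation at $x$ and to compare it with the geometric dimension of the unique component through $x$, using that $\mu$ is (by construction, via the order filtration on $\dif_X$) a moment map for the Hamiltonian $G$-action on $T^*X$.

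First I would recall the standard moment-map identity: for the cotangent lift of the $G$-action on $X$, the differential $d\mu_x\colon T_x(T^*X)\to\lie^*$ has, for each $\xi\in\lie$, the property that $\langle d\mu_x(v),\xi\rangle$ pairs $v$ against the Hamiltonian vector field $\xi_{T^*X}$ at $x$ via the symplectic form; consequently the image of $d\mu_x$ is the annihilator of $\lie_x:=\{\xi\in\lie : \xi_{T^*X}(x)=0\}$, the Lie algebra of the stabilizer of $x$ in $T^*X$, and hence $\op{rank}(d\mu_x)=\dim\lie-\dim\lie_x=\dim(G\cdot x)$. Dually, the kernel of $d\mu_x$ is the symplectic-orthogonal of the tangent space $T_x(G\cdot x)$ to the orbit, so $\dim\ker d\mu_x=\dim(T^*X)-\dim(G\cdot x)=2\dim X-\dim(G\cdot x)$. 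This is the Zariski tangent space to the scheme $\mu^{-1}(0)$ at $x$ only when $\mu^{-1}(0)$ is cut out set-theoretically the way $d\mu$ predicts, so care is needed: in general $T_x\mu^{-1}(0)=\ker d\mu_x$ holds as schemes when we take $\mu^{-1}(0)$ with its natural scheme structure (the zero-scheme of $\mu$), which is exactly the convention in the statement.

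Next I would put in the geometric half. If $O$ is the orbit of $G$ on $X$ through the image of $x$, then $\ove{T^*_O X}$ has dimension $\dim X$ (a conormal bundle to a locally closed smooth subvariety in a smooth variety always has dimension equal to that of the ambient variety), so every irreducible component of $\mu^{-1}(0)$ has dimension exactly $\dim X$. Now: $x$ is a smooth point of $\mu^{-1}(0)$ iff it lies on a unique component and $\dim T_x\mu^{-1}(0)=\dim X$, i.e. iff $2\dim X-\dim(G\cdot x)=\dim X$, i.e. iff $\dim(G\cdot x)=\dim X$. Here $G\cdot x$ is the $G$-orbit in $T^*X$ through $x$; since the projection $T^*X\to X$ is $G$-equivariant and $\ove{G\cdot x}$ is contained in $\ove{T^*_{O'}X}$ where $O'$ is the orbit in $X$ below $x$, and that conormal variety has dimension $\dim X$, the condition $\dim(G\cdot x)=\dim X$ is equivalent to $\ove{G\cdot x}=\ove{T^*_{O'}X}$ being a full component, equivalently $\ove{G\cdot x}=Z$ (forcing $Z=\ove{T^*_{O'}X}$ and $x$ to lie on no other component, as a point of a single orbit closure of maximal dimension cannot be in the closure of a different conormal of the same dimension). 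Conversely if $\ove{G\cdot x}=Z$ then $\dim(G\cdot x)=\dim Z=\dim X$ and the same numerology gives smoothness; one also needs that $x$ then lies on no other component, which follows because a point of an orbit that is dense in its component cannot simultaneously lie in another $\dim X$-dimensional conormal closure — each such closure is a single orbit closure in $T^*X$, and distinct orbit closures of the same dimension meet only in lower-dimensional loci, so a point whose orbit is dense in one of them is not in the other.

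\textbf{Main obstacle.} The crux — and the step I expect to require the most care — is the clean identification $T_x\mu^{-1}(0)=\ker d\mu_x$ as schemes, i.e. that the scheme-theoretic fiber $\mu^{-1}(0)$ is ``as reduced/transverse as possible'' at $x$ exactly governed by $d\mu$, together with the bookkeeping that rules out $x$ lying on a second component when its orbit is dense in $Z$. The first point is really the statement that $\mu$ is flat over $0$ along the smooth locus, or more elementarily that $\dim_{\complex}(\mathfrak m_x/\mathfrak m_x^2)$ for the local ring of $\mu^{-1}(0)$ equals $\dim\ker d\mu_x$; this is standard for moment maps (the components of $\mu$ form a regular sequence exactly at points where $d\mu$ is surjective onto $\lie^*$, but here we only need the tangent-space count), yet it must be invoked with the correct scheme structure. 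The second point is purely combinatorial-geometric: among the finitely many components $\ove{T^*_OX}$, all of the same dimension $\dim X$, a point lying in two of them is automatically a non-smooth point, and I would simply observe that if $\ove{G\cdot x}=Z$ then $G\cdot x$ is open dense in $Z$, so $x\notin\ove{T^*_{O'}X}$ for any other orbit $O'$ because that would put $x$ in $Z\cap\ove{T^*_{O'}X}$, a proper closed subset of $Z$ missing the dense orbit. Everything else is the dimension arithmetic $2\dim X-\dim(G\cdot x)$ versus $\dim X$.
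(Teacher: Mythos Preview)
Your proposal is correct and follows essentially the same approach as the paper: compute $\op{rank}(d\mu_x)=\dim(G\cdot x)$ via the symplectic identification $dH_{\xi,x}\leftrightarrow\xi_{M,x}$ on $M=T^*X$, then use that all components of $\mu^{-1}(0)$ have dimension $\dim X$ to conclude that smoothness at $x$ is equivalent to $\dim(G\cdot x)=\dim Z$, i.e.\ $\ove{G\cdot x}=Z$. Your worry about $T_x\mu^{-1}(0)=\ker d\mu_x$ is unnecessary---this holds by definition for the scheme-theoretic fiber---and the paper handles the unique-component issue implicitly via the Jacobian criterion rather than your explicit orbit-closure argument, but the substance is identical.
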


\begin{proof} We denote temporarily $T^*X$ by $M$. Differentiating the action of $G$ on $M$ induces a map $\lie \to \op{Vect}(M)$, denoted $\xi \mapsto \xi_M$. 

The moment map $\mu$ is induced from the map $\lie \to \Gamma(M,\orb_{M})$, denoted $\xi \mapsto H_{\xi}$. (Vector fields are functions on $T^*X$, hence elements of $\calO_M$). Take a point $x\in Z$. Then $\mu^{-1}(0)$ is smooth at $x$ if and only if $\dim \op{span} \{dH_{\xi,x} \}_{\xi \in \lie} = \dim M - \dim Z = \dim Z$. On the other hand, the canonical non-degenerate symplectic form on $M$ induces an isomorphism $T_x^* M \to T_x M$, sending $dH_{\xi,x}\mapsto\xi_{M,x}$ (since the action of $G$ on $M$ is Hamiltonian). Hence, $\dim \op{span} \{dH_{\xi,x} \}_{\xi \in \lie}= \dim \op{span} \{\xi_{M,x}\}_{\xi \in \lie} = \dim T_{x} (G\cdot x)$, finishing the proof.
\end{proof}

\subsection{The linear reductive case}
\label{subsec-linred}
%%%%%%%%%%%%%%%%%%%%%%%%%%%%%%%%%%%%%%%

We assume for the rest of the section that
$G$ is a \emph{connected linear reductive group} acting on an irreducible smooth variety $X$ with finitely many orbits.

Recall that the characteristic cycle $\charC(M)$ of a $\dif$-module $M$ is the formal linear combination of the irreducible components of its characteristic variety counted with multiplicities (see \cite[Section 2.2]{kashi}).
\begin{notation}
  For an irreducible subvariety $Z$ of $T^* X$, and a $\dif_X$-module (or a $\orb_{T^*X}$-module) $M$, let \textit{multiplicity} $\op{mult}_Z M\in \nat$ of $M$ at $Z$ be the multiplicity of $Z$ in the characteristic cycle $\charC(M)$.

If $M$ is a holonomic $\dif_X$-module, and $S$ is a simple $\dif_X$-module, then we denote by $[M:S]$ the number of factors isomorphic to $S$ in a composition series of $M$. 
\end{notation}
  
Note that Lemma \ref{lem:moment} implies that $Z$ has a dense $G$-orbit if and only if $\op{mult}_Z \orb_{ \mu^{-1} (0)}=1$. 

Now let $S_1,\dots S_n$ be all the (pair-wise non-isomorphic) simple equivariant $\dif_X$-modules. Suppose that the support of $S_i$ is the orbit closure $\ove{O_i}$, and denote $Z_i = \ove{T^*_{O_i} X}$.

\begin{proposition}\label{prop:multfinite}
Let $M\in \op{mod}_G(\dif_X)$. Then $\Gamma(X,M)$ is a multiplicity-finite $G$-module. Moreover, for any irreducible $G$-module $V_\lambda$ corresponding to a dominant weight $\lambda\in \Pi$, we have the bound
\[m_{\lambda}(\Gamma(X,M))\leq  \left(\mathlarger{\displaystyle\sum_{i=1}^n} [M:S_i] \cdot \left\lfloor\dfrac{\dim V_\lambda\cdot\op{mult}_{Z_i} \orb_{\mu^{-1}(0)}}{\op{mult}_{Z_i} S_i}\right\rfloor\right).\]
\end{proposition}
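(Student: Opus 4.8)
The plan is to reduce the statement to a filtration argument using the composition series of $M$, combined with a bound on each simple factor that comes from the characteristic cycle and the moment map. First I would observe that since $\Gamma(X,-)$ is exact on $\op{Mod}_G(\dif_X)$ when $X$ is $\dif$-affine — and more generally, since taking a composition series $0 = M_0 \subset M_1 \subset \dots \subset M_N = M$ with each $M_p/M_{p-1} \cong S_{i_p}$ produces long exact sequences of $G$-modules on global sections — the multiplicity function is subadditive: $m_\lambda(\Gamma(X,M)) \leq \sum_{p} m_\lambda(\Gamma(X, S_{i_p})) = \sum_{i=1}^n [M:S_i]\, m_\lambda(\Gamma(X,S_i))$. (Here I should be careful if $X$ is not $\dif$-affine; one passes to an embedding into a smooth $\dif$-affine variety, or works with the cohomology long exact sequence and notes that higher cohomology does not affect the inequality in the direction we want — I would check that $m_\lambda(H^0) \le \sum_p m_\lambda(H^0(\text{factor}))$ still holds from the long exact sequence, which it does since $H^0$ of a subobject injects.) Thus it suffices to prove, for each simple $S_i$ with support $\ove{O_i}$ and $Z_i = \ove{T^*_{O_i}X}$, the single-module bound
\[
m_\lambda(\Gamma(X,S_i)) \leq \left\lfloor \dfrac{\dim V_\lambda \cdot \op{mult}_{Z_i}\orb_{\mu^{-1}(0)}}{\op{mult}_{Z_i} S_i}\right\rfloor.
\]

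For this single-simple bound I would use the map $P(\lambda) \to S_i$ and Lemma \ref{lem:rep}(a): $\Hom_{\dif_X}(P(\lambda), S_i) \cong \Hom_G(V_\lambda, \Gamma(X,S_i))$, whose dimension is exactly $m_\lambda(\Gamma(X,S_i))$ by Schur. So I must bound $\dim \Hom_{\dif_X}(P(\lambda), S_i)$. The key idea is to pass to characteristic cycles: if $f\colon P(\lambda) \to S_i$ is a nonzero map, then its image $N_f \subseteq S_i$ is a nonzero holonomic submodule; since $S_i$ is simple, $N_f = S_i$, so $f$ is surjective. Given $m := m_\lambda(\Gamma(X,S_i))$ linearly independent such maps $f_1,\dots,f_m$, assembling them gives a surjection $P(\lambda)^{\oplus m} \twoheadrightarrow S_i$ — but that only uses one copy. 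Instead the right move is: the sum of the images of the $f_j$ inside $S_i$ — no. Let me reconsider: I want a lower bound on $\op{mult}_{Z_i} P(\lambda)$ in terms of $m \cdot \op{mult}_{Z_i} S_i$. The natural statement is that the evaluation map $\mathrm{ev}\colon P(\lambda) \otimes_\complex \Hom_{\dif_X}(P(\lambda),S_i)^* \to$ ... this is getting complicated; the cleaner route is via the characteristic variety of $P(\lambda)$ itself.

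The decisive step is therefore to control $\charC(P(\lambda))$ and relate it to $\orb_{\mu^{-1}(0)}$. From the presentation $P(\lambda) \cong \dif_X/(\op{Ann}_{U(\lie)}v_\lambda)$ in \eqref{eq:highest}, the associated graded is a quotient of $\orb_{T^*X}/(\text{symbols of } \op{Ann}_{U(\lie)}v_\lambda)\cdot \dim V_\lambda$ — more precisely, the characteristic variety of $P(\lambda)$ is contained in $\mu^{-1}(0)$ (because the image of $\lie$ in $\op{gr}\dif_X = \orb_{T^*X}$ kills $\op{gr}P(\lambda)$ up to the finite-dimensional "block" structure of $V_\lambda$), and one gets $\op{mult}_{Z_i} P(\lambda) \leq \dim V_\lambda \cdot \op{mult}_{Z_i}\orb_{\mu^{-1}(0)}$ by comparing the good filtration on $P(\lambda)$ induced from $\dif_X \otimes_{U(\lie)} V_\lambda$ with the order filtration, using that $\op{gr} P(\lambda)$ is a subquotient of $\orb_{T^*X}^{\oplus \dim V_\lambda}/(\text{ideal of } \mu^{-1}(0))$. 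Granting this, for each surjection $f_j\colon P(\lambda)\twoheadrightarrow S_i$ one has $\charC(P(\lambda)) \geq \charC(\ker f_j) + \charC(S_i)$ componentwise; iterating over a flag $P(\lambda) \supseteq \ker f_1 \cap \dots$ — actually one argues that $P(\lambda)$ surjects onto $S_i$, and that in the filtration of $P(\lambda)$ by kernels of generic combinations of the $f_j$, the factor $S_i$ appears at least $m = \dim\Hom_{\dif_X}(P(\lambda),S_i)/\dim\End_{\dif_X}(S_i) = m_\lambda$ times — this is the standard fact $[\,P(\lambda) : S_i\,] \geq \dim\Hom(P(\lambda),S_i)/\dim\End(S_i)$ for the multiplicity of a simple in a composition series (valid since $\End_{\dif}(S_i) = \complex$ as $S_i$ is simple with $\complex$ algebraically closed). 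Hence $\op{mult}_{Z_i} P(\lambda) \geq m_\lambda \cdot \op{mult}_{Z_i} S_i$, since characteristic cycles add over composition series and $\op{mult}_{Z_i}$ of any factor is $\geq 0$. Combining the two inequalities: $m_\lambda \cdot \op{mult}_{Z_i} S_i \leq \op{mult}_{Z_i} P(\lambda) \leq \dim V_\lambda \cdot \op{mult}_{Z_i}\orb_{\mu^{-1}(0)}$, and since $m_\lambda$ is an integer we may take the floor, giving the single-simple bound. Feeding this into the subadditivity over the composition series of $M$ finishes the proof.

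The main obstacle I expect is the clean justification that $\charC(P(\lambda)) \subseteq \mu^{-1}(0)$ with the multiplicity bound $\op{mult}_{Z_i}P(\lambda) \leq \dim V_\lambda \cdot \op{mult}_{Z_i}\orb_{\mu^{-1}(0)}$: one must choose a good filtration on $P(\lambda)$ compatible with the $\dif_X$-generation by $\under V$ (e.g. $F_k P(\lambda) = \text{image of } F_k\dif_X \otimes \under V$), verify this is a good filtration, identify $\op{gr}$ with a quotient of $\orb_{T^*X}\otimes_\complex V$ on which the $\lie$-symbols act and which is therefore annihilated (as an $\orb_{T^*X}$-module, after the finite-rank twist) by the ideal cutting out $\mu^{-1}(0)$, and then read off multiplicities via generic localization at the generic point of $Z_i$. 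This is essentially a Bernstein-filtration / symbol-ideal computation, routine in spirit but requiring care about the rank-$\dim V_\lambda$ bookkeeping. Everything else — subadditivity of $m_\lambda$ under exact sequences, $\End_{\dif}(S_i)=\complex$, additivity of characteristic cycles in short exact sequences, and the $\Hom$-vs-composition-multiplicity inequality — is standard.
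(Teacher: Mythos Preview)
Your proposal is correct and follows essentially the same route as the paper: reduce to simples by subadditivity of $m_\lambda$ over a composition series, identify $m_\lambda(\Gamma(X,S_i)) = \dim\Hom_{\dif_X}(P(\lambda),S_i)$ via Lemma~\ref{lem:rep}(a), bound this by $[P(\lambda):S_i]$ using $\End_{\dif}(S_i)=\complex$, and then control $\op{mult}_{Z_i}P(\lambda)$ by the good filtration $F_kP(\lambda)=F_k\dif_X\cdot(1\otimes V_\lambda)$, whose associated graded is a quotient of $\orb_{\mu^{-1}(0)}\otimes_\complex V_\lambda$. The paper's write-up is more direct (it states the chain $m_\lambda \leq [P(\lambda):S] \leq \lfloor \op{mult}_Z P(\lambda)/\op{mult}_Z S\rfloor$ in one line rather than via your iterated-kernels detour), but the content is identical.
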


\begin{proof}
Take an arbitrary $\lambda \in \Pi$. By Lemma \ref{lem:rep} (c) and Theorem \ref{thm:eqfin} both $P(\lambda)$ and $M$ are regular holonomic, hence $\Hom_{\dif_X}(P(\lambda), M)$ is finite-dimensional (see \cite[Theorem 4.45]{kashi}).  This implies by Lemma \ref{lem:rep} (a) that $m_\lambda(\Gamma(X,M))$ is finite-dimensional. Clearly, it is enough to prove the result on the bound when $M=S_i$ is simple, for some $i$. For simplicity, put $S=S_i$ and $Z=Z_i$. By Schur's lemma we have $\End_{\dif_X}(S)=\complex$, and so
\begin{equation}\label{eq:multhom}
m_\lambda(\Gamma(X,S)) = \dim \Hom_{\dif_X} (P(\lambda),S)\leq [P(\lambda):S] \leq \left\lfloor\dfrac{\op{mult}_Z P(\lambda)}{\op{mult}_Z S}\right\rfloor.
\end{equation}
We are left to show that $\op{mult}_Z P(\lambda) \leq \dim V_\lambda\cdot\op{mult}_{Z} \orb_{\mu^{-1}(0)}$. Take the following filtration on $P(\lambda)$:
$$F_k (P(\lambda)) := F_k \dif_X \cdot (1 \otimes V_\lambda) \subset \dif_X \otimes_{U(\lie)} V_\lambda, \text{ for } k\in \nat,$$
where the filtration $F_k \dif_X$ is given by the usual order-filtration on $\dif_X$. This is a coherent (or good) filtration in the sense of \cite[Section 2.2]{kashi}, and let $\op{gr}(P(\lambda))$ denote of the associated graded sheaf, viewed as an $\orb_{T^* X}$-module via pulling back from $X$ to $T^* X$.

The symbol of each Lie algebra element under the order filtration
acts on $\op{gr}(P(\lambda))$ as zero (see (\ref{eq:action2})). Since the scheme $\mu^{-1}(0)$ is defined by these symbols, the associated graded of the natural map $\dif_X \otimes_\complex V_\lambda \twoheadrightarrow P(\lambda)$ factors through a surjective map of $\orb_{T^* X}$-modules 
$$ \orb_{\mu^{-1}(0)}\otimes_{\complex} V_\lambda \twoheadrightarrow \op{gr}(P(\lambda)).$$
Since $Z$ is an irreducible component of $\mu^{-1}(0)$, this shows  $\op{mult}_Z P(\lambda) \leq \dim V_\lambda\cdot\op{mult}_{Z} \orb_{\mu^{-1}(0)}$. 
\end{proof}

\medskip

\begin{definition}
For a (smooth, irreducible) $G$-variety $X$, we say that $X$ is
\textit{spherical}, if $X$ has an open $B$-orbit, for a Borel subgroup
$B$ of $G$. This is equivalent to $X$ having finitely many $B$-orbits (see \cite{brion} or \cite[Theorem 1]{Vinberg1986}).
\end{definition}
%Also, this implies that $\Gamma(X,\orb_X)$ is a
%multiplicity-free $G$-module. %On the other hand,
If $X$ is quasi-affine and $\Gamma(X,\orb_X)$ is multiplicity-bounded, then $X$ must be spherical by \cite[Proposition 2.4.]{alebri}; we provide the following result in the reverse direction:

\begin{theorem}\label{thm:multfreee}
Let $X$ be a spherical $G$-variety and $S$ be a simple equivariant
$\dif_X$-module. Then: 
\begin{itemize}
\item[(a)] $\Gamma(X,S)$ is a multiplicity-free $G$-module.
\item[(b)] The characteristic cycle of $P(\lambda)$ is multiplicity-free, for any $\lambda\in \Pi$.
\item[(c)] If $\Gamma(X,S)\neq 0$, then the characteristic cycle of $S$ is multiplicity-free.
\end{itemize}
\end{theorem}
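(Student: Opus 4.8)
The plan is to exploit the connection between multiplicity-freeness of $G$-modules and the sphericity condition, feeding it into the multiplicity bound from Proposition \ref{prop:multfinite}. The crucial observation is that when $X$ is spherical, the moment map variety $\Lambda = \mu^{-1}(0)$ should itself have all components appearing with multiplicity one, i.e. $\op{mult}_{Z_i}\orb_{\mu^{-1}(0)} = 1$ for every orbit $O_i$; combined with an analysis that forces $\op{mult}_{Z_i}S_i \geq 1$ whenever $S_i$ is simple and $\dim V_\lambda$ being replaced by the correct "weight multiplicity" quantity, the bound in Proposition \ref{prop:multfinite} collapses to $\leq 1$. Let me sketch the three parts.

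For part (b), I would apply the filtration argument from the proof of Proposition \ref{prop:multfinite}: there is a surjection of $\orb_{T^*X}$-modules $\orb_{\mu^{-1}(0)}\otimes_\complex V_\lambda \twoheadrightarrow \op{gr}(P(\lambda))$. The key refinement needed is that, for $X$ spherical, $P(\lambda)$ as a $\dif_X$-module actually has its characteristic cycle controlled: the number of paths / the multiplicity picks up only the $B$-invariant part. Concretely, I expect the sharper statement that $\op{gr}(P(\lambda))$ is generated over $\orb_{\mu^{-1}(0)}$ by the image of $1\otimes v_\lambda$ alone (using the presentation $P(\lambda)\cong \dif_X/(\op{Ann}_{U(\lie)}v_\lambda)$ from \eqref{eq:highest}), and then sphericity forces $\op{mult}_{Z_i}\orb_{\mu^{-1}(0)}=1$, which yields $\op{mult}_{Z_i}P(\lambda)\leq 1$. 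The statement $\op{mult}_{Z_i}\orb_{\mu^{-1}(0)}=1$ itself should follow from the remark after Lemma \ref{lem:moment} that $Z$ has a dense $G$-orbit iff $\op{mult}_Z\orb_{\mu^{-1}(0)}=1$, since every component $Z_i = \ove{T^*_{O_i}X}$ is by definition the closure of a single conormal bundle and hence $G$-homogeneous over its dense orbit — so every component has multiplicity one, giving (b) cleanly.

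For part (a): using Lemma \ref{lem:rep}(a), $m_\lambda(\Gamma(X,S)) = \dim\Hom_{\dif_X}(P(\lambda),S) \leq [P(\lambda):S]$, and this last quantity is $\leq \op{mult}_{Z}P(\lambda)/\op{mult}_Z S$ as in \eqref{eq:multhom}. Since $S$ is a nonzero simple with support $\ove{O}$, its characteristic cycle contains $Z = \ove{T^*_O X}$ with $\op{mult}_Z S \geq 1$. By part (b), $\op{mult}_Z P(\lambda)\leq 1$. Hence $m_\lambda(\Gamma(X,S)) \leq 1$, which is exactly multiplicity-freeness. For part (c): if $\Gamma(X,S)\neq 0$, I want to run the above in reverse. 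Pick $\lambda$ with $m_\lambda(\Gamma(X,S))=1$; then by Lemma \ref{lem:rep}(a) there is a nonzero, hence surjective, map $P(\lambda)\twoheadrightarrow S$ (surjective because $S$ is simple and the image is a nonzero submodule — one should check the image is all of $S$, which holds if $S$ is generated by its $\lambda$-isotypic component, and one can arrange this by choosing $\lambda$ minimal/appropriately, or argue via $\Gamma$ being faithful when $X$ is $\dif$-affine; in general I would reduce to a $\dif$-affine situation by the usual embedding tricks). A surjection $P(\lambda)\twoheadrightarrow S$ gives $\charC(S)\leq \charC(P(\lambda))$ coefficient-wise, so every multiplicity in $\charC(S)$ is $\leq 1$ by (b), giving (c).

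The main obstacle, as I see it, is pinning down exactly why $X$ spherical forces the strong form of part (b) — namely why the characteristic multiplicities of $P(\lambda)$ are $\leq 1$ rather than just $\leq \dim V_\lambda$. The naive bound from Proposition \ref{prop:multfinite} gives $\dim V_\lambda$, which is useless. The resolution must be that, over each component $Z_i$, the generic rank of $\op{gr}(P(\lambda))$ is bounded by the dimension of a space of $B$-semiinvariants or the multiplicity of $V_\lambda$ restricted to a stabilizer, and sphericity makes this at most one; alternatively one bounds the holonomic rank of $P(\lambda)$ at a generic point of $O_i$ by a dimension count of invariants $V_\lambda^{\lie_x}$ where $\lie_x$ is the isotropy, using that the open $B$-orbit meets $O_i$ and the corresponding infinitesimal stabilizer has a one-dimensional space of invariants in each irreducible. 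Making this local rank computation rigorous — relating $\op{mult}_{Z_i}P(\lambda)$ to a representation-theoretic invariant of the stabilizer of a point of $O_i$ and then invoking sphericity — is the technical heart, and is where I would expect to invoke results of Brion or Vinberg on the structure of spherical orbits. I would also need to be careful that the $\dif$-affineness hypotheses used implicitly (to identify $\dif$-modules with global sections and to get surjectivity of $P(\lambda)\to S$) are either available or circumvented by passing to an equivariant embedding, as in the proof of Proposition \ref{prop:serre}.
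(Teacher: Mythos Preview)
Your overall architecture is right: parts (a) and (c) reduce cleanly to (b) via the chain $m_\lambda(\Gamma(X,S)) \leq [P(\lambda):S] \leq \op{mult}_Z P(\lambda)$ and a surjection $P(\lambda) \twoheadrightarrow S$ (which is automatic once the map is nonzero, since $S$ is simple --- your worry there is unnecessary). The crux is (b), and you have correctly identified the first move: filter $P(\lambda)$ cyclically by $F_k(P(\lambda)) = F_k \dif_X \cdot (1 \otimes v_\lambda)$ using the highest weight vector alone, via the presentation \eqref{eq:highest}.

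The gap is in the second move. You claim that $\op{gr}(P(\lambda))$ is a cyclic quotient of $\orb_{\mu^{-1}(0)}$, so that $\op{mult}_{Z_i} P(\lambda) \leq \op{mult}_{Z_i} \orb_{\mu^{-1}(0)}$. But with the cyclic filtration, the relations on $\op{gr}(P(\lambda))$ come from the symbols of $\op{Ann}_{U(\lie)} v_\lambda$, and in degree one these are exactly the symbols of $\mathfrak{b}$: the nilpotent radical kills $v_\lambda$, and each Cartan element $h$ contributes $h - \lambda(h)$, whose symbol is $h$. The negative root vectors do \emph{not} annihilate $v_\lambda$ --- only certain powers of them do --- and passing to symbols of those powers does not give the linear relation $\sigma(f_\alpha)=0$. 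So $\op{gr}(P(\lambda))$ is a quotient of $\orb_{\mu_{\mathfrak{b}}^{-1}(0)}$, where $\mu_{\mathfrak{b}}\colon T^*X \to \mathfrak{b}^*$ is the moment map for the \emph{Borel}, not of $\orb_{\mu^{-1}(0)}$. This is precisely where sphericity enters in the paper's argument: since $B$ acts on $X$ with finitely many orbits, each $Z_i$ is a component of $\mu_{\mathfrak{b}}^{-1}(0)$, and by Panyushev \cite[Corollary~2.4]{pany} $B$ has a dense orbit on each $Z_i$; then Lemma~\ref{lem:moment} applied to $B$ (not $G$) gives $\op{mult}_{Z_i} \orb_{\mu_{\mathfrak{b}}^{-1}(0)} = 1$. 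Your final paragraph is circling this idea (``$B$-semiinvariants'', ``open $B$-orbit meets $O_i$'') without landing on it.

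A secondary issue: your justification that ``$Z_i$ is the closure of a single conormal bundle and hence $G$-homogeneous over its dense orbit'' is circular. A conormal bundle $T^*_{O_i}X$ is $G$-stable, but it is a vector bundle over $O_i$ whose fibers need not form a single orbit under the isotropy group; whether $Z_i$ has a dense $G$-orbit is exactly the nontrivial content of Lemma~\ref{lem:moment}, not something one reads off the definition. For spherical $X$ it is true (a dense $B$-orbit is a fortiori a dense $G$-orbit), but that comes from the Panyushev input above.
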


\begin{proof}
The support of $S$ is the closure $\ove{O}$ of some $G$-orbit
$O$. We put $Z=\ove{T^*_O X}$; so $\op{mult}_Z S\geq 1$.  Fix any
$\lambda\in\Pi$. By Lemma \ref{lem:rep} (a), we have $m_\lambda(\Gamma(X,S))= \dim
\Hom_{\dif_X} (P(\lambda),S)$. As in (\ref{eq:multhom}) this number is bounded
above by  $\leq \op{mult}_Z P(\lambda)$. Hence, part (b) implies part (a), and we show now that $\op{mult}_Z P(\lambda)\leq 1$.

 Recall that the highest weight vector $v_\lambda$ generates $P(\lambda)$ globally as in \eqref{eq:highest}. We consider the following filtration on $P(\lambda)$:
\[
F_k (P(\lambda)) := F_k \dif_X \cdot (1 \otimes v_\lambda), \text{ for } k\in
\nat,
\]
where $F_k \dif_X$ denotes the usual order-filtration on $\dif_X$. Fix
now a concrete $B$ that witnesses the sphericality of $X$. Let $\op{gr}(P(\lambda))$ denote the associated graded sheaf (pulled back from $X$ to $T^* X$) and let $\orb_{\mu^{-1}_{\mathfrak{b}}(0)}$ be the structure sheaf of the scheme-theoretic fiber of the moment map $\mu_{\mathfrak{b}}: T^*X \to \mathfrak{b}$ at $0$, where $\mathfrak{b}=\op{Lie}(B)$. 

Since $v_\lambda$ is a highest weight vector, the symbol of each Lie algebra element of $\mathfrak{b}$ under the order filtration
acts on $\op{gr}(P(\lambda))$ as zero (see (\ref{eq:action2})). Since the scheme $\mu_{\mathfrak{b}}^{-1}(0)$ is defined by these symbols, the associated graded of the map $\dif_X \twoheadrightarrow P(\lambda)$ (sending $1 \mapsto 1\otimes v_\lambda$) factors through a surjective map of $\orb_{T^* X}$-modules 
$$ \orb_{\mu_{\mathfrak{b}}^{-1}(0)}\twoheadrightarrow \op{gr}(P(\lambda)).$$
Since $B$ acts on $X$ with finitely many orbits, $Z$ is an irreducible component of $\mu_{\mathfrak{b}}^{-1}(0)$ and $\op{mult}_Z P(\lambda) \leq  \op{mult}_Z \orb_{\mu_{\mathfrak{b}}^{-1}(0)}$. Moreover, $B$ acts on $Z$ with a dense orbit (see \cite[Corollary 2.4]{pany}), so Lemma \ref{lem:moment} yields $\op{mult}_Z \orb_{\mu_{\mathfrak{b}}^{-1}(0)}=1$, finishing the proof of (b).

For part (c), since $\Gamma(X,S)\neq 0$, there is a $\lambda\in \Pi$ such that $m_\lambda(\Gamma(X,S))=1$. By Lemma (\ref{lem:rep}) (a), this gives a surjection $P(\lambda)\twoheadrightarrow S$, hence the claim follows from (b).
\end{proof}

Theorem \ref{thm:multfreee} (a) implies that  if $M$ is an equivariant $\dif$-module on a spherical $X$, then $\Gamma(X,M)$ is a multiplicity-bounded $G$-module (say, with the uniform bound equal to the length of $M$).

\begin{remark}
When $X$ is additionally $\dif$-affine, one can give a somewhat simpler proof of part (a) of the above theorem using the Jacobson Density Theorem on the simple $\dif$-module $S$ (since $\End_{\dif}(S)=\CC$), and that $\Gamma(X,\dif_X)^G$ is a commutative algebra (this follows as in \cite[Proposition 7.1]{howumed}). For a proof in the same spirit, see Theorem \ref{thm:capelli} or \cite[Proposition 7.1]{howumed}.
\end{remark}

When $X$ is a $\dif$-affine spherical variety, we obtain several additional results.

\begin{corollary}\label{cor:projmult}
Assume $X$ is a $\dif$-affine spherical variety, and let $P$ be an indecomposable projective (resp.\ injective) in $\op{mod}_G(\dif_X)$. Then the characteristic cycle of $P$ is multiplicity-free.
\end{corollary}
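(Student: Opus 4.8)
The plan is to reduce the statement to Theorem \ref{thm:multfreee}(b)-(c). I would first treat the projective case, as the injective case follows by duality $\dual$ (which preserves the characteristic cycle up to the canonical involution on $T^*X$, and in particular preserves multiplicities, and which interchanges projectives and injectives in $\op{mod}_G(\dif_X)$ by Theorem \ref{thm:findim} together with standard facts from \cite{elements}). So let $P$ be an indecomposable projective in $\op{mod}_G(\dif_X)$. Since $X$ is $\dif$-affine and $G$ is linearly reductive, Proposition \ref{prop:proj} tells us that the modules $P(V)$ are projective and that $\op{mod}_G(\dif_X)$ has enough projectives; moreover, by Lemma \ref{lem:rep}(a) we have $\Hom_{\dif_X}(P(V),M)\cong \Hom_G(V,\Gamma(X,M))$, so decomposing $V$ into irreducibles shows that $P(\lambda)=P(V_\lambda)$ is, for each $\lambda\in\Pi$, a finite direct sum of indecomposable projectives. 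Conversely, since $P$ is projective and finitely generated, it is a summand of some $P(V)$, hence a summand of some $P(\lambda)$ (after decomposing $V$).

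Now the key step: the characteristic cycle is additive on short exact sequences, hence on direct sums, so if $P$ is a direct summand of $P(\lambda)$ then $\charC(P)\leq \charC(P(\lambda))$ coefficient-wise. By Theorem \ref{thm:multfreee}(b), $\charC(P(\lambda))$ is multiplicity-free, i.e. all its coefficients are $0$ or $1$. Therefore every coefficient of $\charC(P)$ is likewise $0$ or $1$, which is exactly the assertion that $\charC(P)$ is multiplicity-free. For the injective case, if $I$ is an indecomposable injective in $\op{mod}_G(\dif_X)$ then $\dual I$ is an indecomposable projective, so $\charC(\dual I)$ is multiplicity-free by what we just proved; since $\charC(\dual I)$ and $\charC(I)$ have the same multiplicities (holonomic duality acts on the characteristic cycle by the antipodal map on the cotangent fibers, preserving irreducible components and their multiplicities, as $\op{mod}^{rh}_\Lambda(\dif_X)$ is preserved under $\dual$), we conclude $\charC(I)$ is multiplicity-free as well.

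I do not expect a serious obstacle here; the statement is essentially a formal consequence of Theorem \ref{thm:multfreee}(b) once one observes that indecomposable projectives are summands of the $P(\lambda)$. The one point requiring a little care is making sure that $P$ really does appear as a summand of a \emph{single} $P(\lambda)$ rather than merely of a finite direct sum of distinct $P(\lambda_i)$'s --- but this is not actually needed: it suffices that $P$ is a summand of $\bigoplus_i P(\lambda_i)$, whose characteristic cycle is the sum $\sum_i \charC(P(\lambda_i))$, and one only needs that for each irreducible component $Z_i$ the coefficient of $Z_i$ in $\charC(P)$ is at most its coefficient in $\charC(\bigoplus_i P(\lambda_i))$. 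That latter coefficient could a priori exceed $1$. To get the sharp bound, I would instead argue that $P$ is the projective cover $P^x$ of a simple $S=S_x$, and that $P(\lambda)\twoheadrightarrow S$ for a suitable $\lambda$ with $m_\lambda(\Gamma(X,S))=1$ (as in the proof of Theorem \ref{thm:multfreee}(c)), whence $P^x$ is a summand of that particular $P(\lambda)$; then $\charC(P^x)\leq \charC(P(\lambda))$ and Theorem \ref{thm:multfreee}(b) applies directly. Alternatively, and perhaps most cleanly, one notes $[P(\lambda):S_i]\leq \op{mult}_{Z_i}P(\lambda)/\op{mult}_{Z_i}S_i\leq 1$ from \eqref{eq:multhom} and Theorem \ref{thm:multfreee}(b), so the multiplicities of composition factors of $P(\lambda)$ are controlled, and the characteristic cycle of any summand is bounded accordingly.
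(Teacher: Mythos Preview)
Your proposal is correct and, once you arrive at the sharpened argument in your final paragraph, it is essentially identical to the paper's proof: reduce to the projective case by duality $\dual$, observe that $P$ is the projective cover of a simple $S$, pick $\lambda$ with $m_\lambda(\Gamma(X,S))=1$ (using $\dif$-affinity so that $\Gamma(X,S)\neq 0$), deduce that $P$ is a direct summand of $P(\lambda)$, and apply Theorem~\ref{thm:multfreee}(b). Your initial detour through a general $P(V)=\bigoplus_i P(\lambda_i)$ is unnecessary and, as you correctly note, insufficient on its own; the paper goes straight to the single $P(\lambda)$ argument.
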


\begin{proof}
By duality $\dual$, it is enough to consider the case when $P$ projective, in which case it is the projective cover of a simple equivariant $\dif$-module $S$, say. Since $X$ is $\dif$-affine, there is a $\lambda\in \Pi$ with $m_\lambda(S)=1$. Then by Lemma \ref{lem:rep} and Proposition \ref{prop:proj}, we have that $P$ is a direct summand of $P(\lambda)$. This concludes the proof by Theorem \ref{thm:multfreee} (b).
\end{proof}

We record a consequence of Corollary \ref{cor:projmult} and the construction of $\widehat{Q}$. Section \ref{sec:examples} is a case-by-case study of this class of quivers.
\begin{corollary}\label{cor:quivnice}
Assume $X$ is a $\dif$-affine spherical variety, and let $\widehat{Q}$ be the quiver such that $\op{mod}_G(\dif_X)\cong \rep(\widehat{Q})$. Then the number of paths (up to relations)  from vertex $i$ to vertex $j$  in $\widehat{Q}$ is at most 1.
\end{corollary}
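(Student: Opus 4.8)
The plan is to deduce Corollary \ref{cor:quivnice} from Corollary \ref{cor:projmult} together with the general recipe for recovering a quiver with relations from a finite-dimensional algebra. Recall that if $P^i$ denotes the indecomposable projective cover of the simple $S_i$ in $\op{mod}_G(\dif_X) \cong \rep(\widehat{Q})$, then the number of paths (up to relations) from vertex $i$ to vertex $j$ equals $\dim_\complex (P^i)_j$, the dimension of $P^i$ at vertex $j$; equivalently it equals the multiplicity $[P^i : S_j]$ of $S_j$ as a composition factor of $P^i$ (see \cite[Section III.2]{elements}). So the statement to prove is precisely that $[P^i : S_j] \leq 1$ for all $i,j$.

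First I would fix $i$ and set $P := P^i$, the projective cover of $S := S_i$. Since $X$ is $\dif$-affine and spherical, Corollary \ref{cor:projmult} tells us that the characteristic cycle $\charC(P)$ is multiplicity-free. Now I would compare $[P:S_j]$ with multiplicities in the characteristic cycle: for each $j$, letting $Z_j = \ove{T^*_{O_j}X}$ be the conormal variety attached to the support of $S_j$, we have $\op{mult}_{Z_j} S_j \geq 1$, and characteristic cycles are additive on short exact sequences, so
\[
\op{mult}_{Z_j} P \;=\; \sum_{k} [P:S_k]\cdot \op{mult}_{Z_j} S_k \;\geq\; [P:S_j]\cdot \op{mult}_{Z_j} S_j \;\geq\; [P:S_j].
\]
Since $\charC(P)$ is multiplicity-free, $\op{mult}_{Z_j} P \leq 1$, and therefore $[P:S_j] \leq 1$. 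As this holds for every $i$ and every $j$, the number of paths up to relations from vertex $i$ to vertex $j$ in $\widehat{Q}$ is at most $1$, which is the claim.

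The only genuinely delicate point is making sure the dictionary between (i) the number of paths from $i$ to $j$ in $\widehat{Q}$, (ii) the dimension $\dim(P^i)_j$, and (iii) the composition multiplicity $[P^i:S_j]$ is the right one — but this is standard quiver theory (the paper already cites \cite[Section III.2]{elements} for exactly this), and the distinct simples $S_k$ have distinct conormal supports $Z_k$ only up to the caveat that several simples can share a support when stabilizers are disconnected. That caveat is harmless here: the inequality $\op{mult}_{Z_j}P \geq [P:S_j]\cdot\op{mult}_{Z_j}S_j$ only uses that $\op{mult}_{Z_j}S_j \geq 1$, which holds because $Z_j$ is by definition the conormal to (a dense orbit in) the support of $S_j$. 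So no case analysis on disconnected stabilizers is needed. The main obstacle, then, is purely expository: invoking the projective-cover/composition-factor correspondence cleanly and matching it against the additivity of characteristic cycles; once that is in place the argument is a one-line consequence of Corollary \ref{cor:projmult}.
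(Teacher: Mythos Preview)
Your argument is correct and follows the same route as the paper: identify the number of paths from $i$ to $j$ with the composition multiplicity $[P^i:S_j]$, and then bound this by $\op{mult}_{Z_j} P^i \leq 1$ via Corollary \ref{cor:projmult}. You have simply made explicit the additivity inequality and the potential issue with non-connected stabilizers that the paper leaves to the reader.
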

\begin{proof}
  The number of such paths (up to relations) agrees with the number of copies of the simple at vertex $j$ in a composition chain for the projective cover of the simple at vertex $i$.
  \end{proof}

Denote by $\mathcal{Z}U(\mathfrak{g})$ the center of the universal enveloping algebra of $\mathfrak{g}$, and let $\dif_X^G$ denote the algebra of invariant differential operators. Recall the map $U(\mathfrak{g}) \to \dif_X$. Clearly, this induces a map
\[\rho: \mathcal{Z}U(\mathfrak{g})  \to \dif_X^G.\]
Following \cite{howumed}, we introduce the following.
\begin{definition}\label{def:capelli}
For a $\dif$-affine spherical $G$-variety $X$, we say that $X$ of \textit{Capelli type} if the map $\rho$ considered above is surjective.
\end{definition}

The irreducible representations $X$ of a reductive group $G$ that are of Capelli type were classified in \cite{howumed}. 

When $X$ is of Capelli type, we have the following is stronger version of Theorem \ref{thm:multfreee} (a).

\begin{theorem}\label{thm:capelli}
Assume $X$ is of Capelli type, and let $S$ be a simple equivariant $\dif_X$-module. Take any $\lambda \in \Pi$ such that $m_{\lambda}(\Gamma(X,S)) \neq 0$. Then $P(\lambda)$ is the projective cover of $S$ in $\op{mod}_G(\dif_X)$, and $\Gamma(X,P(\lambda))$ is a multiplicity-free $G$-module.
\end{theorem}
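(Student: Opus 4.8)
The plan is to reduce the whole statement to the single assertion that $\Gamma(X,P(\lambda))$ is a multiplicity-free $G$-module, and then to extract that from the Capelli hypothesis. For the reduction: since $m_\lambda(\Gamma(X,S))\neq0$ and $\Gamma(X,S)$ is multiplicity-free by Theorem~\ref{thm:multfreee}(a), we get $m_\lambda(\Gamma(X,S))=1$, so by Lemma~\ref{lem:rep}(a) there is, up to scalar, a unique nonzero map $\pi\colon P(\lambda)\to S$, and it is surjective because $S$ is simple; also $P(\lambda)$ is projective by Proposition~\ref{prop:proj}. Now suppose we already know $\Gamma(X,P(\lambda))$ is multiplicity-free. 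Then $P(\lambda)$ is globally generated by the highest weight vector $v_\lambda$ (see \eqref{eq:highest}) with $m_\lambda(\Gamma(X,P(\lambda)))=1$, so Lemma~\ref{lem:irrind} gives $\End_{\dif}(P(\lambda))=\complex$ and a unique simple quotient of $P(\lambda)$, which must be $S$ since $\pi$ is surjective. As $\End_\dif(P(\lambda))=\complex$, $P(\lambda)$ is indecomposable, hence an indecomposable projective with simple top $S$, i.e.\ the projective cover of $S$. Thus the theorem follows once multiplicity-freeness of $\Gamma(X,P(\lambda))$ is established.

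To prepare for that, put $D=\Gamma(X,\dif_X)$ and identify $\op{mod}_G(\dif_X)$ with the category of equivariant $D$-modules ($X$ is $\dif$-affine). Since $X$ is spherical, $\dif_X^G=D^G$ is commutative, and by the Capelli hypothesis $D^G=\rho(\mathcal{Z}U(\lie))$. The basic observation, valid for any equivariant $M$, is that the $\lie$-action on $\Gamma(X,M)$ induced by $\lie\to\dif_X$ integrates to the $G$-action, so on any $G$-submodule of $\Gamma(X,M)$ isomorphic to $V_\nu$ the center $\mathcal{Z}U(\lie)$ acts through the infinitesimal character $\chi_\nu$. Hence $D^G=\rho(\mathcal{Z}U(\lie))$ acts on the entire $V_\nu$-isotypic component $\Gamma(X,M)_{(\nu)}$ by a single scalar character $\theta_\nu$ with $\theta_\nu\circ\rho=\chi_\nu$, and the $\theta_\nu$ are pairwise distinct, because $\chi_\nu$ determines the dominant weight $\nu$. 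Thus $D^G$ separates the $G$-isotypic types occurring in $\Gamma(X,M)$, for every equivariant $M$.

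Next reduce the multiplicity-freeness of $\Gamma(X,P(\lambda))$ to a statement about composition factors. Exactness of $\Gamma$ together with semisimplicity of rational $G$-modules gives, from a composition series of $P(\lambda)$, a $G$-module isomorphism $\Gamma(X,P(\lambda))\cong\bigoplus_{S'}\Gamma(X,S')^{\oplus[P(\lambda):S']}$, so $m_\nu(\Gamma(X,P(\lambda)))=\sum_{S'}[P(\lambda):S']\,m_\nu(\Gamma(X,S'))$. Here each $m_\nu(\Gamma(X,S'))\le1$ by Theorem~\ref{thm:multfreee}(a), and $[P(\lambda):S']\le1$ for every simple $S'$: indeed $P(\lambda)$ is a direct sum of indecomposable projectives whose characteristic cycles are pairwise disjoint by Theorem~\ref{thm:multfreee}(b), so the conormal $\ove{T^*_{O'}X}$ of the dense orbit in the support of $S'$ lies in at most one of them, there with multiplicity one, which forces $[P(\lambda):S']\le1$. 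Consequently $\Gamma(X,P(\lambda))$ is multiplicity-free if and only if, for every dominant $\nu$, at most one composition factor $S'$ of $P(\lambda)$ has $m_\nu(\Gamma(X,S'))\neq0$.

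The main obstacle is precisely this last step, and this is where the Capelli hypothesis is used essentially (for a general spherical $X$ the module $P(\lambda)$ may decompose, so some such assumption is needed). The idea: each $\Gamma(X,S')$ is a \emph{simple} $D$-module with $\End_D(\Gamma(X,S'))=\complex$, so the Jacobson density theorem applies; combined with the Reynolds operator of the reductive group $G$, this shows that $D^G$ surjects onto $\End_G\bigl(\Gamma(X,S')_{(\nu)}\bigr)$, whence—$D^G$ being commutative—$m_\nu(\Gamma(X,S'))\le1$ and $D^G$ acts on $\Gamma(X,S')_{(\nu)}$ by the character $\theta_\nu$ of the second paragraph. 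If two composition factors $S'\neq S''$ of $P(\lambda)$ both carried the $G$-type $V_\nu$, then by Lemma~\ref{lem:rep}(a) both would be simple quotients of $P(\lambda)$, so $P(S')\oplus P(S'')$ would be a direct summand of $P(\lambda)$; one rules this out by comparing, via the density description, the simple $D$-modules $\Gamma(X,S')$ and $\Gamma(X,S'')$: since under the Capelli hypothesis $D^G=\rho(\mathcal{Z}U(\lie))$ is large enough to distinguish the simple equivariant $\dif_X$-modules through the family of characters $\theta_\nu$ they carry, one gets $S'\cong S''$, a contradiction. This is the argument "in the same spirit" as \cite[Proposition~7.1]{howumed}. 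Granting it, $\Gamma(X,P(\lambda))$ is multiplicity-free, and by the first paragraph the theorem follows.
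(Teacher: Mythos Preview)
Your reduction in the first paragraph is fine, and the observation in the second paragraph---that under the Capelli hypothesis every $d\in D^G$ acts on each isotypic component $\Gamma(X,M)_{(\nu)}$ by the single scalar $\chi_\nu(\rho^{-1}(d))$---is correct and useful. Likewise, your bound $[P(\lambda):S']\le 1$ from Theorem~\ref{thm:multfreee}(b) is correct. The gap is in the last paragraph, which is the heart of the matter.

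First, a slip: if $m_\nu(\Gamma(X,S'))\neq 0$, Lemma~\ref{lem:rep}(a) makes $S'$ a quotient of $P(\nu)$, not of $P(\lambda)$. More seriously, even after this correction your conclusion ``$S'\cong S''$'' is not justified. Applying Jacobson density to the simple $D$-module $\Gamma(X,S')$ together with the Reynolds operator does give that $D^G$ surjects onto $\End_G(\Gamma(X,S')_{(\nu)})$, and since $D^G$ is commutative this yields $m_\nu(\Gamma(X,S'))\le 1$---but that is just Theorem~\ref{thm:multfreee}(a) again. It says nothing about whether two \emph{different} simples can share the type $V_\nu$. The phrase ``$D^G$ is large enough to distinguish the simple equivariant $\dif_X$-modules through the family of characters $\theta_\nu$ they carry'' is precisely the statement you are trying to prove, so the argument is circular. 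Your ``Granting it'' is honest, but it concedes exactly the step that needs the Capelli hypothesis in an essential way.

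What the paper does instead is prove the single equality $m_\lambda(\Gamma(X,P(\lambda)))=1$ directly, and everything else follows from that (indecomposability of $P(\lambda)$ by Lemma~\ref{lem:irrind}, hence uniqueness of its simple quotient, hence separation of simples by $G$-types, hence multiplicity-freeness). The missing idea is this: apply Jacobson density not to the $D$-module $\Gamma(X,S')$ but to the finite-dimensional $U(\lie)$-module $V_\lambda$. Given any copy $W\cong V_\lambda$ inside $\Gamma(X,P(\lambda))$ with $G$-isomorphism $\phi\colon 1\otimes V_\lambda\to W$, one manufactures $d\in D$ with $d\cdot(1\otimes v_i)=\phi(1\otimes v_i)$ for a basis $(v_i)$ of $V_\lambda$, averages to get $d\in D^G$, and then---here is where Capelli enters---writes $d=\rho(u)$ with $u\in\mathcal Z U(\lie)$. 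Because the tensor in $P(\lambda)=\dif_X\otimes_{U(\lie)}V_\lambda$ is over $U(\lie)$, one has $\rho(u)\cdot(1\otimes v)=\rho(u)\otimes v=1\otimes(u\cdot v)=c\cdot(1\otimes v)$ for the scalar $c=\chi_\lambda(u)$. Thus $\phi=c\cdot\mathrm{id}$ and $W=1\otimes V_\lambda$. Your scalar observation from paragraph two tells you that $d$ acts by a scalar on the whole $\lambda$-isotypic component, but without the density construction you do not know that $d$ carries $1\otimes V_\lambda$ onto $W$; that is the content.
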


\begin{proof}
By Lemma \ref{lem:rep} and Proposition \ref{prop:proj}, the projective cover of $S$ is a direct summand of $P(\lambda)$. To show that $P(\lambda)$ is indecomposable, by Lemma \ref{lem:rep} (a) it is enough to show that $m_\lambda(P(\lambda))= 1$.

Pick a basis $v_1,\dots, v_k$ of the vector space $V_\lambda$ such that, say, $v_1 = v_\lambda$ is the highest weight vector. Since $V_\lambda$ is a simple $U(\lie)$-module, for any $i=1,\dots,k$ we have by the Jacobson Density Theorem an element $\xi_i \in U(\lie)$ such that $\xi_i \cdot v_i = v_1$ and $\xi_i \cdot v_j = 0$, for $j\neq i$ (since the elements $v_1,\dots, v_k$ are linearly independent over $\End_{U(\lie)}(V_\lambda)=\CC$).

Let $W$ be a $G$-stable subspace of $P(\lambda)$ that is $G$-isomorphic to the irreducible $V_\lambda$, and let $\phi : V_\lambda \to W$ denote this isomorphism. Put $w_i := \phi (v_i)$, for $i=1,\dots, k$. As in \ref{eq:highest}, the element $1\otimes v_\lambda$ generates the $\dif$-module $P(\lambda)$. Therefore, for any $i =1,\dots, k$, there is an element $d_i \in \dif_X$ such that $d_i \cdot (1 \otimes v_\lambda) = w_i$. Take the element $d'= \ds_{i=1}^k d_i \xi_i \in \dif_X$. By construction, we have $d' \cdot (1\otimes v_i) = w_i$, for all $i=1,\dots, k$, hence $d' |_{V_\lambda} = \phi$. Because $G$ is reductive, by averaging we can produce an element $d\in \dif_X^G$, with $d |_{V_\lambda} = \phi$.  Since $X$ is of Capelli type, there is an element $u \in \mathcal{Z}U(\mathfrak{g})$ such that $\rho(u)=d$. But then there is a constant $c\in \CC$ such that for any $i=1,\dots,k$ we have, since $u\in\mathcal{Z}(U(\lie))$,
\[w_i = d \otimes v_i = \rho(u) \otimes v_i = 1 \otimes (u \cdot v_i) = c \cdot (1\otimes v_i),\]
which gives $W=V_\lambda, \phi = c \cdot \op{id}$, and so $m_\lambda(P(\lambda)) = 1$.

Now, since $P(\lambda)$ is the projective cover of $S$, by Lemma \ref{lem:rep} we have that for any other simple $G$-equivariant $\dif$-module $S'$ not isomorphic to $S$, we must have $m_\lambda(S')=0$. This shows that the space of sections of non-isomorphic simple equivariant $\dif$-modules admit no common irreducible $G$-modules. By Theorem \ref{thm:multfreee} (a)--(b), this implies that $P(\lambda)$ is a multiplicity-free $G$-module.
\end{proof}

In particular, we see from the above that if $m_{\lambda'}(\Gamma(X,S))\neq 0$ for another $\lambda'\in \Pi$, then $P(\lambda)\cong P(\lambda')$ as $\dif_X$-modules. 

\begin{corollary}\label{cor:nocommon}
Let $X$ be a spherical variety, and $S$ and $S'$ be non-isomorphic simple equivariant $\dif_X$-modules with projective covers $P,P'\in \op{mod}_G(\dif_X)$, respectively. Assume that one of the following holds:
\begin{itemize}
\item[(a)]  $\charC(S)$ and $\charC(S')$ have a common irreducible component,
\item[(b)] $X$ is $\dif$-affine and $\charC(P)$ and  $\charC(P')$ have a common irreducible component, or
\item[(c)] $X$ is of Capelli type.
\end{itemize}

Then there are no common irreducible $G$-modules in $\Gamma(X,S)$ and $\Gamma(X,S')$.
\end{corollary}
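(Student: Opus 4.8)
The plan is to reduce all three cases to a single mechanism: finding a dominant weight $\lambda$ such that $P(\lambda)$ (the projective $\dif_X$-module from Lemma \ref{lem:rep}) simultaneously surjects onto both $S$ and $S'$, forcing $P$ and $P'$ to be common summands of one $P(\lambda)$, and then invoking the multiplicity-freeness results to conclude that a shared irreducible $G$-module in $\Gamma(X,S)$ and $\Gamma(X,S')$ is impossible. More precisely, suppose toward a contradiction that $V_\lambda$ occurs (with multiplicity one, by Theorem \ref{thm:multfreee}(a)) in both $\Gamma(X,S)$ and $\Gamma(X,S')$. By Lemma \ref{lem:rep}(a) this yields nonzero — hence surjective, since $S,S'$ are simple — maps $P(\lambda)\twoheadrightarrow S$ and $P(\lambda)\twoheadrightarrow S'$, so both composition factors $S$ and $S'$ appear in $P(\lambda)$, and in fact (since $X$ is $\dif$-affine in case (b), or using the projective-cover description in general) $P$ and $P'$ are direct summands of $P(\lambda)$. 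Case (c) is then immediate from Theorem \ref{thm:capelli}: there, $P(\lambda)$ is itself a multiplicity-free $G$-module, so it cannot contain two summands each having $V_\lambda$ in its sections, and $m_\lambda(\Gamma(X,S'))=0$ once $P(\lambda)$ is the projective cover of $S$ — contradiction.

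For cases (a) and (b) the extra input is the characteristic cycle. First I would handle (b): by Corollary \ref{cor:projmult}, $\charC(P(\lambda))$ is multiplicity-free; if $P$ and $P'$ are distinct summands of $P(\lambda)$ then $\charC(P)+\charC(P')\leq \charC(P(\lambda))$ as effective cycles, so $\charC(P)$ and $\charC(P')$ share no component, contradicting the hypothesis of (b). Hence no such $\lambda$ exists. For (a), note that $\op{mult}_{Z} S\geq 1$ and $\op{mult}_{Z} S'\geq 1$ for the common component $Z$; since $S$ and $S'$ are composition factors of $P(\lambda)$ and characteristic cycles are additive on short exact sequences, $\op{mult}_Z P(\lambda)\geq \op{mult}_Z S+\op{mult}_Z S'\geq 2$, contradicting Theorem \ref{thm:multfreee}(b), which says $\charC(P(\lambda))$ is multiplicity-free. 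So again no common $V_\lambda$ can occur.

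The one routine point to get right is that a nonzero $G$-summand common to $\Gamma(X,S)$ and $\Gamma(X,S')$ really does force $P$ and $P'$ to be summands of the \emph{same} $P(\lambda)$ rather than merely of $P(\lambda)$ and $P(\lambda')$ for possibly different weights; this is exactly the statement that the occurring irreducible $V_\lambda$ is the same on both sides, which is what ``common irreducible $G$-module'' means, so no subtlety arises. The main obstacle — really the only place any care is needed — is the bookkeeping in case (b): one must know that being a summand of $P(\lambda)$ gives an inequality of characteristic cycles, which follows since $\charC$ is additive over direct sums and all modules in sight are holonomic (Theorem \ref{thm:eqfin}), so $\charC(P)$ and $\charC(P')$ are subcycles of the multiplicity-free cycle $\charC(P(\lambda))$ with disjoint supports. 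With that in hand all three cases close uniformly, and the proof is short.
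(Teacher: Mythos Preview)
Your proposal is correct and follows essentially the same approach as the paper: assume a common $V_\lambda$, get surjections $P(\lambda)\twoheadrightarrow S,S'$ via Lemma~\ref{lem:rep}(a), and then in cases (a) and (b) derive $\op{mult}_Z P(\lambda)\geq 2$ contradicting Theorem~\ref{thm:multfreee}(b), while case (c) is immediate from Theorem~\ref{thm:capelli}. One small correction: in case (b) you should cite Theorem~\ref{thm:multfreee}(b) directly for the multiplicity-freeness of $\charC(P(\lambda))$, not Corollary~\ref{cor:projmult}, since the latter concerns \emph{indecomposable} projectives and $P(\lambda)$ need not be indecomposable.
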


\begin{proof}
\begin{itemize}
\item[(a)] By contradiction, assume that there is a weight $\lambda \in \Pi$ such that $m_\lambda(\Gamma(X,S)) = m_\lambda(\Gamma(X,S')) = 1$. Let $Z$ be a common irreducible component of $\charC(S)$ and $\charC(S')$. By Lemma (\ref{lem:rep}), both $S$ and $S'$ are composition factors of $P(\lambda)$, giving $\op{mult}_{Z} P(\lambda)\geq 2$, contradicting Theorem \ref{thm:multfreee} (b).
\item[(b)] Let $Z$ be the common irreducible component of $\charC(P)$ and  $\charC(P')$ and assume by contradiction that there is a $\lambda\in\Pi$ such that $m_\lambda(S)=m_\lambda(S')=1$. Since $X$ is $\dif$-affine,  Lemma \ref{lem:rep} implies that $P(\lambda)$ surjects onto $S,S'$ respectively. By Proposition \ref{prop:proj}, $P(\lambda)$ is projective,  and so the projections from $P(\lambda)$ to $S$ and $S'$ must pass through the projective covers $P,P'$, which hence are direct summands of  $P(\lambda)$. This implies $\op{mult}_{Z} P(\lambda)\geq 2$, contradicting Theorem \ref{thm:multfreee} (b).
\item[(c)] Follows from the proof of Theorem \ref{thm:capelli}.
\end{itemize}
\end{proof}

In particular, if $X$ is of Capelli type this implies that for a simple equivariant $\dif_X$-module $S$ and $\lambda \in \Pi$ such that $m_\lambda(\Gamma(X,S))\neq 0$, we have $[M:S] = m_\lambda(\Gamma(X,M))$, for any equivariant $\dif_X$-module $M$. This shows that for spherical $\dif_X$-affine varieties of Capelli type, the $G$-decomposition of an equivariant $\dif_X$-module completely determines its composition series as a $\dif_X$-module.

For the explicit $G$-decompositions of the simple equivariant $\dif_X$-modules in the case of  $m\times n$ matrices, skew-symmetric matrices and symmetric matrices (all three of Capelli type) and applications to local cohomology, we refer the reader to the papers \cite{perlmana,lyubez,claudiu1,claudiu2,claudiu4,claudiu3}.

%%%%%%%%%%%%%%%%%%%%%%%%%%%%%%%%%%%%%%%%%%%%%%%%%%%%%%%%%%%%%%%%%%
\section{Some techniques for equivariant $\dif$-modules}
%%%%%%%%%%%%%%%%%%%%%%%%%%%%%%%%%%%%%%%%%%%%%%%%%%%%%%%%%%%%%%%%%%
\label{sec:tech}

Unless specified otherwise, we assume in this section that a connected linear algebraic group $G$ acts on an irreducible smooth variety $X$. In order to describe the categories of equivariant $\dif$-modules in some concrete cases, we develop some useful techniques.

\subsection{Reduction methods} \label{subsec:reduc}

For a closed $G$-stable subset $Y$ of $X$, we denote by $\op{mod}_G(\dif_X)_Y$ the full subcategory of $\op{mod}_G(\dif_X)$ consisting of equivariant $\dif_X$-modules $M$ that do not have as composition factors any simple $\dif_X$-modules $S$ with $\op{supp} S\subset Y$. Clearly, for any such $M$ we have $\Gamma_Y(M)=0$.

\begin{lemma}\label{lem:restrict}
Let $H$ be a connected closed algebraic subgroup of $G$, and Z an $H$-stable closed subset of $X$. Let $Y$ be the maximal $G$-stable closed subset of $Z$, put $U= X\setminus  Z$, and denote by $j:U \to X$ the open embedding.
Then $j^*$ induces an embedding of categories (\emph{i.e.} fully faithful exact functor)
$$j^*: \op{mod}_G(\dif_X)_Y\to \op{mod}_H(\dif_{U}).$$
\end{lemma}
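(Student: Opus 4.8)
The plan is to verify the three required properties of $j^*$ — exactness, fullness, and faithfulness — on the subcategory $\op{mod}_G(\dif_X)_Y$. Exactness of $j^*$ is immediate since restriction to an open subset is exact on $\dif$-modules; and the image lands in $\op{mod}_H(\dif_U)$ because $U$ is $H$-stable (being the complement of the $H$-stable set $Z$), and $j^*$ of a $G$-equivariant module is a fortiori $H$-equivariant after restricting the equivariance structure along $H\hookrightarrow G$ and along the open embedding. So the substance is full faithfulness. For $M,N\in\op{mod}_G(\dif_X)_Y$, I would like to show $\Hom_{\dif_X}(M,N)\xrightarrow{\sim}\Hom_{\dif_U}(j^*M,j^*N)$. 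The key point is that neither $M$ nor $N$ has a composition factor supported on $Y$; I claim this forces the analogous vanishing of sections supported on $Z$, so that the adjunction unit $M\to j_*j^*M$ is injective and, more to the point, the cokernel is controlled.

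First I would reduce to understanding $\Gamma_Z M$ and $\Gamma_Z N$. The subtlety is that $Z$ is only $H$-stable, not $G$-stable, so a priori $\Gamma_Z M$ need not be a $\dif_X$-submodule compatible with the $G$-structure. However, $Y$ — the maximal $G$-stable closed subset of $Z$ — \emph{is} $G$-stable, and $\Gamma_Y M$ is an equivariant $\dif_X$-submodule of $M$. Since $M\in\op{mod}_G(\dif_X)_Y$, every simple subquotient of $M$ has support not contained in $Y$; in particular $\Gamma_Y M=0$, because any nonzero section supported on $Y$ would generate a nonzero equivariant submodule all of whose composition factors are supported on $Y$. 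Now I claim $\Gamma_Z M=0$ as well: if $0\neq s\in\Gamma_Z M$, then the $\dif_X$-submodule $\dif_X\cdot s$ is holonomic (Theorem \ref{thm:eqfin}) with support inside $Z$ but the $G$-saturation of that support is a $G$-stable closed subset of $Z$ hence contained in $Y$; yet $\dif_X\cdot s$ is \emph{not} equivariant in general, so I must argue differently. Instead: let $M'\subseteq M$ be the sum of all $\dif_X$-submodules supported on $Z$. This is a $\dif_X$-submodule, and it is $G$-stable because $G$ permutes such submodules (a translate of a submodule supported on $Z$ is supported on $gZ$, but as $M$ is a fixed equivariant module the translate is again a submodule of $M$ supported on some closed subset, and one shows the sum over all $G$ is supported on a $G$-stable closed subset of $\overline Z$; careful bookkeeping using that $X$ has finitely many orbits — so only finitely many closed subsets arise as supports — shows $M'$ is supported on $Y$). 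Then $M'$ is an equivariant submodule with all composition factors supported on $Y$, so $M'=0$, giving $\Gamma_Z M=0$, and likewise $\Gamma_Z N=0$.

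With $\Gamma_Z M=\Gamma_Z N=0$ in hand, the unit map $N\to j_*j^*N$ has kernel $\Gamma_Z N=0$, so it is injective, and its cokernel $\mathcal{C}$ satisfies $j^*\mathcal{C}=0$, i.e. $\mathcal{C}$ is supported on $Z$; applying $\Hom_{\dif_X}(M,-)$ and using $\Gamma_Z M = 0$ (which gives $\Hom_{\dif_X}(M,\mathcal C)=0$ since any such homomorphism has image a submodule of $\mathcal C$ supported on $Z$, hence a quotient of $M$ supported on $Z$, hence zero as $M\in\op{mod}_G(\dif_X)_Y$ — again via the $G$-saturation argument applied to the image) together with $\Hom_{\dif_X}(M,\Gamma_Z N)=0$, one gets
\[
\Hom_{\dif_X}(M,N)\xrightarrow{\ \sim\ }\Hom_{\dif_X}(M,j_*j^*N)\cong\Hom_{\dif_U}(j^*M,j^*N),
\]
the last isomorphism being the $(j^*,j_*)$-adjunction. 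This proves full faithfulness. The main obstacle I anticipate is exactly the bookkeeping in the previous paragraph: making precise that the sum of all $\dif_X$-submodules of an equivariant module $M$ supported on the merely $H$-stable set $Z$ is in fact an equivariant submodule supported on the $G$-stable set $Y$. I expect this to follow cleanly from the finiteness of the orbit stratification (so supports range over a finite poset of $G$-stable or $H$-stable closed sets) combined with the uniqueness of equivariant structures (Proposition \ref{prop:serre} and the discussion in Subsection \ref{subsec:equiv}), but it is the step that needs genuine care rather than formal nonsense.
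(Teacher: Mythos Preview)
Your overall architecture matches the paper's: use the adjunction $\Hom_{\dif_U}(j^*M,j^*N)\cong\Hom_{\dif_X}(M,j_*j^*N)$, then show $\Gamma_Z N=0$ so that $N\hookrightarrow j_*j^*N$ with cokernel supported on $Z$, and finally show $\Hom_{\dif_X}(M,-)$ kills that cokernel.

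The specific error is your assertion that ``$\dif_X\cdot s$ is \emph{not} equivariant in general.'' This is false in the present setting: since $G$ is connected, $\op{mod}_G(\dif_X)$ is closed under subquotients in $\op{mod}(\dif_X)$ (this is recorded in Subsection~\ref{subsec:equiv}, citing \cite[Proposition 3.1.2]{vander}). Hence \emph{any} $\dif_X$-submodule of the equivariant module $M$, in particular $\Gamma_Z(M)$ itself, is automatically $G$-equivariant. Its support is therefore $G$-stable and contained in $Z$, hence contained in $Y$; so $\Gamma_Z(M)=\Gamma_Y(M)=0$ in one line. All of your ``careful bookkeeping'' with $G$-saturations, finiteness of orbits, and sums of submodules is unnecessary, and the doubt that drove it was misplaced. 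The paper simply writes $\Gamma_Z(B)=\Gamma_Y(B)=0$ and moves on.

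For the vanishing of $\Hom_{\dif_X}(M,\mathcal{C})$ with $\mathcal{C}$ supported on $Z$, your direct argument also works once you use the same fact: the image of any such map is a $\dif_X$-quotient of $M$, hence $G$-equivariant, with $G$-stable support contained in $Z$, hence in $Y$, hence zero. The paper instead passes through holonomic duality, writing $\Hom(A,\mathcal{H}^1_Z(B))\cong\Hom(\dual\mathcal{H}^1_Z(B),\dual A)=\Hom(\dual\mathcal{H}^1_Z(B),\Gamma_Z\dual A)=0$, using that $\dual A$ again lies in $\op{mod}_G(\dif_X)_Y$; either route is fine.
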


\begin{proof}
The only thing we need to prove is that $j^*$ is fully faithful. Take $A,B\in \op{mod}_G(\dif_X)_Y$. By adjunction, we have
$$\Hom_{\dif_U}(j^*A,j^*B)\cong \Hom_{\dif_X}(A,j_* j^* B).$$
Since $\Gamma_Z(B) = \Gamma_Y(B)=0$, we have an exact sequence
\begin{gather}\label{B-sequence}
0\to B\to j_* j^*B \to \mathcal{H}^1_Z(B)\to 0.
\end{gather}
Since $\op{supp} \dual\mathcal{H}^1_Z(B)=\op{supp} \mathcal{H}^1_Z(B) \subset Z$, we have 
\[
\Hom_{\dif_X}(A,\mathcal{H}^1_Z(B))\cong\Hom_{\dif_X}(\dual\mathcal{H}^1_Z(B),\dual
A)= \Hom_{\dif_X}(\dual\mathcal{H}^1_Z(B),\Gamma_Z\dual A)=0.
\]
Applying the functor $\Hom_{\dif_X}(A,-)$ to the sequence
\eqref{B-sequence} we get
$$\Hom_{\dif_X}(A, B)\cong \Hom_{\dif_X}(A,j_* j^* B),$$
proving the claim.
\end{proof}

\begin{remark}\label{rem:subquiv}
Assume that $G$ (resp.\ $H$) acts on $X$ (resp.\ $U$) with finitely many orbits (so that the associated quiver is finite). Since the embedding $j^*$ above sends simples to simples, in this case Lemma \ref{lem:restrict} implies that the quiver corresponding to $\op{mod}_G(\dif_X)_Y$ is a subquiver (as defined in Section \ref{subsec:quiv})  of the quiver corresponding to $\op{mod}_H(\dif_{U})$.
\end{remark}

\begin{lemma}\label{lem:faithful}
Let $G$ be a (not necessarily connected) linear algebraic group acting on a $\dif$-affine variety $X$. Let $K$ denote the kernel of the action map (so $K$ acts on $X$ trivially), with identity component $K^0$. Then $\op{mod}_G (\dif_X)\cong \op{mod}_{G/K^0}(\dif_X)$.

Furthermore, denote by $\chi_1, \dots, \chi_r$ all the distinct isomorphism classes of irreducible representations of $K/K_0$ . Then $\op{mod}_G (\dif_X)$ is equivalent to the product of categories
\[\op{mod}_{G}(\dif_X) \cong \prod_{i=1}^r \op{mod}^{\chi_i}_{G}(\dif_X),\]
where $\op{mod}^{\chi_i}_{G}(\dif_X)$ denotes the full subcategory of $\op{mod}_G (\dif_X)$ consisting of $\dif$-modules $M$ for which $M=M_{(\chi_i)}$ (the $\chi_i$-isotopic component). 
\end{lemma}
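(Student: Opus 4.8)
The plan is to treat the two statements in turn, reducing everything to general facts about $\dif$-modules over a group acting through a finite quotient. For the first claim, the key observation is that the action of $G$ on $X$ factors through $G/K$, but equivariance is an infinitesimal-plus-group-integration condition, and the infinitesimal part sees only $K^0$: since $K^0$ acts trivially, the Lie algebra $\mathfrak{k}^0 = \operatorname{Lie}(K) = \operatorname{Lie}(K^0)$ maps to zero in $\Gamma(X,\dif_X)$, so the strong equivariance condition for $G$ imposes nothing on the $K^0$-action beyond it being the (necessarily trivial, since $X$ is $\dif$-affine and connected — see the discussion around Lemma \ref{lem:irrind}) integration of the zero Lie algebra action. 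Concretely, I would argue that a strongly $G$-equivariant structure on $M$ is the same datum as a strongly $G/K^0$-equivariant structure: the $G$-action on $M$ obtained by differentiating must kill $\mathfrak{k}^0$, hence (as $K^0$ is connected) $K^0$ acts trivially on $M$, so the $G$-action descends to $G/K^0$; conversely any $G/K^0$-equivariant structure pulls back. Functoriality of this bijection in morphisms is immediate since, by \cite[Proposition 3.1.2]{vander}, morphisms automatically preserve equivariance for a connected group, and for the possibly-disconnected case the argument is the same once one notes that the category only depends on $M$ together with its $G$-action, and that action is determined. This gives the equivalence $\op{mod}_G(\dif_X) \cong \op{mod}_{G/K^0}(\dif_X)$.

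For the second claim I would replace $G$ by $\ove{G} := G/K^0$ using the first part, so that now $\ove{K} := K/K^0$ is a \emph{finite} normal subgroup of $\ove{G}$ acting trivially on $X$. Fix any $M \in \op{mod}_{\ove{G}}(\dif_X)$. The subgroup $\ove{K}$ acts on $M$ by $\dif_X$-linear automorphisms (it acts trivially on $\calO_X$ and on the vector fields coming from $\mathfrak{g}$, and its Lie algebra is zero), so $M$ decomposes canonically as an $\ove{K}$-representation object in $\op{mod}(\dif_X)$: writing $M = \bigoplus_{i=1}^r M_{(\chi_i)}$ for the isotypic decomposition with respect to the finite group $\ove{K}$, each $M_{(\chi_i)}$ is a $\dif_X$-submodule because the $\ove{K}$-action is $\dif_X$-linear, and it is $\ove{G}$-stable because $\ove{K}$ is normal in $\ove{G}$ (conjugation by $\ove{G}$ permutes $\ove{K}$-isotypic pieces, but since $\ove{K}$ is central-in-action — it acts trivially on $X$ — one checks the permutation is trivial; more carefully, $\ove{G}$ connected acts on the finite set of isomorphism classes $\{\chi_i\}$, hence fixes each). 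Therefore the decomposition $M = \bigoplus M_{(\chi_i)}$ is functorial and exact, yielding the product decomposition $\op{mod}_{\ove{G}}(\dif_X) \cong \prod_{i=1}^r \op{mod}^{\chi_i}_{\ove{G}}(\dif_X)$, and each summand is a Serre subcategory (it is cut out by a central idempotent of the relevant endomorphism algebra, or simply because $\operatorname{Hom}$ between objects of different isotypic type vanishes).

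The one point that needs care — and which I expect to be the main (mild) obstacle — is justifying that the continuous/algebraic action of the \emph{connected} group $\ove{G}$ on the finite set $\{\chi_1,\dots,\chi_r\}$ of irreducible characters of $\ove{K}$ is trivial, so that each isotypic component $M_{(\chi_i)}$ is genuinely $\ove{G}$-stable and not merely permuted. This follows because $\ove{G}$ is connected while the set is discrete and finite, so the orbit map $\ove{G} \to \{\chi_i\}$ (well-defined once one fixes a base point $M$ with $M_{(\chi_i)} \neq 0$) is constant; alternatively, since $\ove{K}$ acts trivially on $X$ it is centralized by the image of $\mathfrak{g}$ in $\dif_X$, and the $\ove{G}$-action on $M$ integrates this, so $\ove{G}$ commutes with $\ove{K}$ in $\Aut_\complex(M)$ and hence preserves each $\ove{K}$-eigenspace. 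Once this is in place, exactness and faithfulness of the product decomposition are formal, since taking an isotypic component with respect to a finite group (over a field of characteristic zero) is an exact additive functor, and no nonzero morphism can mix distinct isotypic types.
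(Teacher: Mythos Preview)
Your argument follows the same route as the paper's. The paper's proof is brief: for the first claim it just notes that the Lie algebra of $K^0$ maps to zero in $\dif_X$, so the pullback functor $\op{mod}_{G/K^0}(\dif_X)\to\op{mod}_G(\dif_X)$ is an equivalence; for the second it takes the $K/K^0$-isotypic decomposition of $M$, observes that each piece is a $\dif_X$-submodule since $K$ acts trivially on $X$, and that morphisms in $\op{mod}_G(\dif_X)$ (being $G$- and hence $K$-equivariant) respect the decomposition. Your more detailed version spells out why the $K^0$-action on $M$ is forced to be trivial and why the isotypic pieces are $\dif_X$-stable, which the paper leaves implicit.

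One caution: the lemma explicitly allows $G$ to be disconnected, but your justification that each $M_{(\chi_i)}$ is $G$-stable (rather than merely permuted by $G$) appeals to connectedness of $\ove{G}=G/K^0$. The paper's proof is silent on this point as well. In fact, without some such hypothesis the second claim can fail: conjugation by $G$ can permute the irreducibles of $K/K^0$ nontrivially (for a concrete instance, take $G=S_3$ acting on $\mathbb{A}^1$ through the sign character, so $K=A_3$ and any transposition swaps the two nontrivial characters of $A_3$), and then $M_{(\chi_i)}$ need not be $G$-stable while $\op{mod}_G^{\chi_i}(\dif_X)$ is forced to be zero. In every application in the paper the kernel is a central direct factor (e.g.\ $G'_{n-1}=G_{n-1}\times\integ_2$ in the proof of Theorem \ref{thm:symm}), so your connectedness assumption, while not matching the lemma's stated generality, is harmless there.
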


\begin{proof}
The Lie algebra of $K^0$ acts trivially on any $\dif_X$-module. This shows that the natural functor $\op{mod}_{G/K^0}(\dif_X) \to \op{mod}_G (\dif_X)$ is an equivalence of categories.

For any $M\in \op{mod}_{G}(\dif_X)$ we can consider its $K/K_0$-isotypical decomposition $M\cong \oplus_{i=1}^r M_{(\chi_i)}$. Since $M$ is in particular a $K$-equivariant $\dif$-module and $K$ acts on $X$ trivially, each $M_{(\chi_i)}$ is a $\dif_X$-module. As a morphism in $\op{mod}_{G}(\dif_X)$ is $G$-equivariant, it respects $K$-isotypical components. This completes the proof.
\end{proof}

The following result is well-known. For the case of finitely many $G$-orbits, this follows directly according to Theorem \ref{thm:eqfin} from the corresponding result on perverse sheaves \cite[Proposition 5.1]{berlun}). %(or maybe cite smth else??)

\begin{proposition}\label{prop:induct}
Let $H$ be a closed algebraic subgroup of $G$, with $G,H$ not necessarily connected.
\begin{itemize}
\item[(a)] If $H\subset G$ is a normal subgroup acting on $X$ freely with $\pi: X\to Y$ a principal $H$-bundle, then we have an equivalence of categories
\[\op{mod}_G(\dif_{X})\cong \op{mod}_{G/H}(\dif_Y).\]
\item[(b)] If $Z$ is a smooth $H$-variety, then we have an equivalence of categories $$\op{mod}_H (\dif_ Z)\cong \op{mod}_{G}(\dif_{G\times_H Z}).$$
\end{itemize}
\end{proposition}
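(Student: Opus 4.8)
The plan is to reduce both statements to standard facts about equivariant $\dif$-modules under descent and induction, treating the smooth and non-smooth cases uniformly via equivariant closed embeddings. For part (a), since $\pi\colon X\to Y$ is a principal $H$-bundle with $H$ acting freely, the quotient $Y=X/H$ exists as a variety and $\pi$ is faithfully flat and smooth. First I would recall the descent equivalence between $\dif_X$-modules carrying a compatible $H$-action and $\dif_Y$-modules: a quasi-coherent $\dif_X$-module $M$ with an $H$-equivariant structure $\tau$ descends to a unique $\dif_Y$-module $N$ with $\pi^*N\cong M$, and conversely $\pi^*$ of a $\dif_Y$-module is canonically $H$-equivariant; this is the $\dif$-module analogue of faithfully flat descent and appears in \cite[Section 11.5]{htt} (for $H$ acting freely, the Borel construction collapses). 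Because $H$ is normal in $G$, the residual $G/H$-action on $Y$ makes sense, and an $H$-equivariant structure on $M$ together with the $G$-action is the same datum as a $G/H$-equivariant structure on the descended $N$; one checks the cocycle conditions on $G\times G\times X$ translate to those on $(G/H)\times(G/H)\times Y$. Coherence is preserved in both directions since $\pi$ is smooth and surjective. This yields the equivalence $\op{mod}_G(\dif_X)\cong\op{mod}_{G/H}(\dif_Y)$.

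For part (b), I would use the fact that the associated bundle $G\times_H Z$ (the quotient of $G\times Z$ by the free $H$-action $h\cdot(g,z)=(gh^{-1},hz)$) is a smooth $G$-variety, and the first projection induces a principal $H$-bundle $p\colon G\times Z\to G\times_H Z$, while the second projection $q\colon G\times Z\to Z$ is $H$-equivariant and smooth with the $G$-action on the source giving an isomorphism $G\times Z\cong G\times(G\times_H Z)$ compatible with projections. Concretely, apply part (a): $H$ acts freely on $G\times Z$ (it does so on the $G$-factor), $H\subset G\times H$ is normal (embedded as $\{1\}\times H$ inside $G\times H$ acting on $G\times Z$ with $G$ acting by left translation and $H$ acting diagonally via $h\cdot(g,z)=(gh^{-1},hz)$), and the quotient is $G\times_H Z$ with residual $G=(G\times H)/H$-action. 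Thus $\op{mod}_{G\times H}(\dif_{G\times Z})\cong\op{mod}_G(\dif_{G\times_H Z})$. On the other hand, $q\colon G\times Z\to Z$ exhibits $G\times Z$ as (a trivial bundle, but more to the point) a free $G$-space with $q$ the quotient by $G$ acting on the first factor, so again by part (a), applied with the roles of the two factors swapped, $\op{mod}_{G\times H}(\dif_{G\times Z})\cong\op{mod}_H(\dif_Z)$. Composing the two equivalences gives the claim. Alternatively one can invoke the induction equivalence directly as in \cite[Section 11.5--11.6]{htt} and \cite[Proposition 5.1]{berlun}; I would cite these for the perverse-sheaf incarnation and transport via Riemann--Hilbert in the finite-orbit case, but the direct $\dif$-module argument above works in general.

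The main obstacle I anticipate is bookkeeping the equivariant structures carefully enough to see that the various $\tau$'s and cocycle conditions match up after descent, particularly in part (b) where one must identify the $G\times H$-equivariant $\dif$-modules on $G\times Z$ from two different quotient perspectives and check the two resulting equivalences are compatible (i.e. that no twist is introduced). A secondary subtlety is making sure everything is stated for possibly non-connected $G,H$: the uniqueness of equivariant structures used elsewhere (via $U(\lie)\to\dif_X$) fails for disconnected groups, so here equivariance genuinely is extra data, and the descent/induction functors must be constructed at the level of the isomorphisms $\tau$ rather than deduced from the Lie algebra action. Once one is disciplined about this, both parts follow formally; no hard geometry or analysis is involved beyond the existence of the quotients, which is guaranteed by the freeness hypotheses.
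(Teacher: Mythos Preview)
Your proposal is correct and follows essentially the same route as the paper. For part (a) the paper constructs the explicit inverse functors $\pi^*$ and $\pi_*^H$ and verifies they are mutually inverse by reducing to $G=H$ and then to the trivial bundle via \'etale-local triviality, which is the concrete content behind your appeal to faithfully-flat descent; for part (b) the paper uses exactly your chain $\op{mod}_G(\dif_{G\times_H Z})\cong\op{mod}_{G\times H}(\dif_{G\times Z})\cong\op{mod}_H(\dif_Z)$, applying (a) twice.
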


\begin{proof}
For part (a), we note that the map $\pi$ is $G$-equivariant. Since $H$ acts on $Y$ trivially, this implies that we have a well-defined functor $\pi^*:  \op{mod}_{G/H}(\dif_Y) \to \op{mod}_G(\dif_{X})$. Similarly, we have a well-defined functor $\pi_*^H:\op{mod}_G(\dif_{X})\to \op{mod}_{G/H}(\dif_Y)$ by taking $H$-invariant sections. We claim that the functors $\pi^*$ and $\pi_*^H$ are inverses to each other, for which it is enough to consider the case $G=H$. The claim is clear if $\pi$ is trivial, when  $X = H\times Y$. The general case can be reduced to the trivial one, as $\pi$ is locally trivial in the \'etale topology \cite[Section 4.8]{popvin}.

Part (b) follows from part (a) by the equivalences
\[\op{mod}_{G}(\dif_{G\times_H Z}) \cong \op{mod}_{G\times H} (\dif_{G\times Z}) \cong \op{mod}_H (\dif_Z).\]
\end{proof}

In nice cases, we have can make a reduction as follows.

\begin{proposition}\label{prop:redaff}
Assume $X$ is affine and $G$ reductive acting on $X$ with finitely many orbits. Then there is a (not necessarily connected) reductive subgroup $H$ of $G$ and an $H$-module $V$ such that 
$$\op{mod}_G (\dif_X) \cong \op{mod}_H (\dif_V).$$
\end{proposition}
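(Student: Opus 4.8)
The plan is to use Luna's \'etale slice theorem together with the reduction results already established. Since $X$ is affine and $G$ is reductive acting with finitely many orbits, there is a unique closed orbit $O_0 \subseteq X$ (it is the orbit lying in the closure of every other orbit, which exists since the GIT quotient $X/\!/G$ is a point when there are finitely many orbits — indeed every fiber of $X \to X/\!/G$ contains a unique closed orbit, and here all closed orbits must coincide with the unique closed orbit, forcing $X/\!/G$ to be a point). Pick $x \in O_0$ and let $H$ be its stabilizer; since $O_0$ is closed and $G$ reductive, $H$ is reductive. By Luna's slice theorem there is an $H$-stable locally closed affine subvariety (the slice) $S \ni x$ and a $G$-equivariant \'etale map from $G \times_H S$ to a neighborhood of $O_0$ in $X$; moreover, because $x$ is in the \emph{unique} closed orbit and everything is affine, one can arrange this so that $G \times_H S \to X$ is actually an isomorphism onto all of $X$ (this is the ``global'' form of the slice theorem that holds when $X/\!/G$ is a point).

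First I would establish this global slice isomorphism $X \cong G \times_H S$ carefully, citing Luna; the key inputs are reductivity of $G$, affineness of $X$, and the fact that the closed orbit is unique so the slice can be taken globally. Next, since the slice representation: $H$ acts on the tangent space $T_x X / T_x O_0 =: V$, and a further application of Luna's theorem (now at the fixed point $0 \in V$, or by the structure of the slice) gives an $H$-equivariant isomorphism between $S$ and (an open subset of, or all of) the $H$-module $V$. In the cleanest situation $S \cong V$ as $H$-varieties; in general $S$ is an $H$-stable affine subvariety of $V$, and one reduces to $V$ by another slice argument or by noting $S$ is itself a cone. Then Proposition \ref{prop:induct}(b) gives
\[
\op{mod}_G(\dif_X) \cong \op{mod}_G(\dif_{G \times_H S}) \cong \op{mod}_H(\dif_S),
\]
and combined with $S \cong V$ (or the reduction from $S$ to $V$) we obtain $\op{mod}_H(\dif_S) \cong \op{mod}_H(\dif_V)$, finishing the proof. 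Note $H$ is reductive but need not be connected, which is why the statement allows a non-connected $H$; Proposition \ref{prop:induct} was stated without connectedness hypotheses precisely for this application, and Lemma \ref{lem:faithful} can be used to replace $H$ by $H/K^0$ if one wants the action on $V$ to have trivial-identity-component kernel.

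The main obstacle I expect is justifying the \emph{global} slice isomorphism $X \cong G \times_H S$ rather than merely an \'etale-local one, and then identifying $S$ with a genuine $H$-module $V$ rather than just an $H$-stable subvariety of $V$. The first point hinges on $X/\!/G$ being a point (equivalently, a unique closed orbit), which follows from finitely many orbits: the GIT quotient is an affine variety each of whose points corresponds to a unique closed orbit, and if there were two closed orbits the quotient would have at least two points, but a positive-dimensional quotient would force infinitely many closed orbits (generic fibers), contradicting finiteness — so $X/\!/G$ is a single point and the closed orbit is unique. Granting this, Luna's theorem in the form for a unique closed orbit gives the global isomorphism onto $X$. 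The second point — replacing the slice $S$ by the linear $H$-module $V$ — is handled by observing that $S$ is an $H$-stable closed subvariety of $V = T_xX/T_xO$ through the origin, and applying the slice theorem again (or a direct argument) at $0 \in S$: since $0$ is an $H$-fixed point and $S$ has finitely many $H$-orbits (pulled back from the finitely many $G$-orbits on $X$), the same reasoning shows $S/\!/H$ is a point, and iterating the construction stabilizes, allowing us to take the ambient $H$-module $V$ itself. If one prefers to avoid this iteration, an alternative is to keep $S$ as the affine $H$-variety and phrase the result with $\op{mod}_H(\dif_S)$, but since $S$ embeds $H$-equivariantly into the $H$-module $V$ as a closed subset and the categories only depend on such embeddings (as discussed in Section \ref{subsec:equiv}), one still lands inside $\op{mod}_H(\dif_V)$ in the appropriate sense.
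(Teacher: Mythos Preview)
Your approach is the paper's: show there is a unique closed orbit, take $H$ to be the stabilizer of a point there, identify $X\cong G\times_H V$, and apply Proposition~\ref{prop:induct}(b). The paper does this in three lines, and is cleaner than your write-up in two places.

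First, for uniqueness of the closed orbit the paper simply observes that $G$-invariant regular functions are constant on the dense open orbit, hence on all of $X$; thus $\complex[X]^G=\complex$ and $X/\!/G$ is a point. Your GIT-fiber count is correct but circuitous. Second, your worry about the slice $S$ being a proper subvariety of the linear $H$-module $V$ is misplaced: $X$ is \emph{smooth} (a standing hypothesis in this section), and in that case the global form of Luna's theorem at a point of the unique closed orbit yields $X\cong G\times_H V$ with $V$ the full normal representation directly---the paper just cites \cite[Theorem~6.7]{popvin} for this. No iteration is needed. Note also that your final fallback (embed $S$ closed in $V$ and invoke the embedding-independence from Section~\ref{subsec:equiv}) does not work as stated: that would only exhibit $\op{mod}_H(\dif_S)$ as a full subcategory of $\op{mod}_H(\dif_V)$, not an equivalence.
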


\begin{proof}
On the open $G$-orbit, $G$-invariant functions are constant. So $\complex[X]^G=\complex$, and $X$ has a unique closed orbit. Let $H$ be the stabilizer of an element of the closed orbit. Then we have a $G$-equivariant isomorphism $X\cong G\times_H V$, for some $G$-module $V$ (see \cite[Theorem 6.7]{popvin}). The result follows by Proposition \ref{prop:induct}. 
\end{proof}

Hence, for the rest of the paper we focus on connected reductive groups acting linearly on vector spaces.

\subsection{Bernstein--Sato polynomials}\label{subsec:bs}
%%%%%%%%%%%%%%%%%%%%%%%%%%

Let $X$ be a vector space with a linear action of a connected reductive group $G$. In this section, we assume that $X$ is a \textit{prehomogeneous} vector space, that is, $X$ has an open dense $G$-orbit (see \cite{saki}); this holds for example if $X$ has finitely many $G$-orbits. We discuss the notion of Bernstein--Sato polynomials in this setting. For more details, see \cite{gyoja} or \cite[Chapter 6 and Section 9.5]{kashi}.

Let $f\in \complex[X]$ be a non-zero polynomial. Then there is a differential operator $P(s)\in \dif_X[s]:=\dif_X\otimes_\complex \complex[s]$ and a non-zero polynomial $b(s)\in \complex[s]$ such that 
$$P(s)\cdot f^{s+1}(x) = b(s)\cdot f^s(x).$$
We call such a function $b_f(s)$ the $b$-\textit{function} of $f$ if it is monic of minimal degree. All roots of $b_f(s)$ are negative rational numbers \cite[Theorem 6.9]{kashi}.

For any $r\in\complex$, we can consider the $\dif_X$-module $\dif_X f^r$ that is the $\dif_X$-submodule generated by $f^r$ of the $\dif_X$-module $\complex[X]_f \cdot f^r$ consisting of (multi-valued) functions of the form $a f^r$, where $a\in \complex[X]_f$. Then $\dif_X f^r$ is regular holonomic (this follows as in \cite[Lemma 2.8.6]{gyoja}). Clearly, $\dif_X f^{r+1} \subseteq \dif_X f^r$, and when $r$ is not a root of $b_f(s)$ then equality holds. Equality may hold even when $r$ is a root,  as shown in \cite{saito}. Moreover, if none of $r,r+1,r+2,\dots$ is a root of $b_f(s)$, then $\dif_X f^r$ is irreducible by \cite[Corollary 6.25]{kashi}. For more about the modules $\dif_X f^r$, see \cite{uli} for a survey.

We call a non-zero polynomial $f\in \complex[X]$ a \itshape
semi-invariant\normalfont, if there is an algebraic character
$\sigma\in \Hom(G,\complex^*)$ such that $g\cdot f = \sigma(g)
f$ (that is, $f(gx)=\sigma(g)^{-1}f(x)$) and in this case we call $\sigma$ the weight of $f$.

The $\dif_X$-module $\dif_X f^{r}$ is $G$-equivariant if and only if $\sigma^r$ is an algebraic character of $G$.

Since $X$ is prehomogeneous, the multiplicities
$m_{\sigma^n}(\complex[X]) = 1$, for all $n\in\nat$ (see
\cite[\S 4. Proposition 3]{saki}). We have a non-zero dual semi-invariant
\[
f^*(\partial)\in \complex[X^*]
\]
of weight $\sigma^{-1}$, which we view as a differential operator. We have (see \cite[Corollary 2.5.10]{gyoja}):

\begin{theorem}[{\cite[Corollary 2.5.10]{gyoja}}]
Let $f$ be semi-invariant and take $f^*$ as above. Then we have the following equation
\begin{equation}\label{eq:good}
f^*(\partial)f(x)^{s+1}=b(s)f(x)^s,
\end{equation}
and $b(s)$ coincides with the $b$-function $b_f(s)$ of $f$ up to a non-zero constant factor. Moreover, $\deg b_f(s) = \deg f$.
\end{theorem}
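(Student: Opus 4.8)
The plan is to establish the functional equation \eqref{eq:good} with $b(s)\in\complex[s]$ by a direct equivariance argument using prehomogeneity, and then to identify $b(s)$ with $b_f(s)$ up to a constant and to pin down its degree. Write $d=\deg f$ and let $d^{*}=\deg f^{*}$, the order of the operator $f^{*}(\partial)$. I would analyse $u(x,s):=f^{*}(\partial)\cdot f(x)^{s+1}$. Since $f^{*}(\partial)$ is a constant-coefficient operator of order $d^{*}$, the Leibniz rule gives $u(x,s)=Q(x,s)\,f(x)^{s+1-d^{*}}$ for a polynomial $Q(x,s)$, polynomial in $x$ and of degree $\le d^{*}$ in $s$. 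The crucial point is equivariance: under the $G$-action on $\dif_{X}$ induced from the action on $X$, the operator $f^{*}(\partial)$ is $G$-semi-invariant, transforming by the same character $\sigma^{-1}$ by which $G$ acts on $f^{*}\in\complex[X^{*}]$ (the assignment sending $\partial_{x_{i}}$ to the $i$-th linear coordinate on $X^{*}$ identifies the constant-coefficient operators with $\complex[X^{*}]$ $G$-equivariantly, $\partial$ carrying the contragredient action). Hence $u(x,s)$ and $f(x)^{s}$ transform under $G$ by one and the same character ($\sigma^{s}$), so the rational function $u(x,s)/f(x)^{s}=Q(x,s)/f(x)^{d^{*}-1}$ is $G$-invariant. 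As $X$ is prehomogeneous, $\complex(X)^{G}=\complex$, hence this quotient is a rational function of $s$ alone; evaluating it at one point $x_{0}$ with $f(x_{0})\neq0$ shows it equals a polynomial $b(s)\in\complex[s]$ and that $Q(x,s)=b(s)\,f(x)^{d^{*}-1}$ identically. This is precisely \eqref{eq:good}, $f^{*}(\partial)f(x)^{s+1}=b(s)f(x)^{s}$, and moreover $\deg b\le d^{*}$.

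Next I would compare $b(s)$ with $b_f(s)$. The set of $\beta\in\complex[s]$ admitting some $P(s)\in\dif_{X}[s]$ with $P(s)f^{s+1}=\beta(s)f^{s}$ is an ideal of the principal ideal domain $\complex[s]$ whose monic generator is $b_f(s)$ by definition; since $f^{*}(\partial)\in\dif_{X}\subset\dif_{X}[s]$ witnesses $b(s)$, we get $b_f\mid b$, so $\deg b_f\le\deg b\le d^{*}$. Invoking the standard fact from the theory of prehomogeneous vector spaces that the dual semi-invariant has the same degree, $d^{*}=d$ (this can also be read off here from the coefficient of $s^{d^{*}}$ in $Q$, which equals $f^{*}(\nabla f(x))$: comparing its degree in $x$ with that of $f(x)^{d^{*}-1}$ in the identity $Q=b(s)f^{d^{*}-1}$ forces $d^{*}=d$, provided the gradient map $\nabla f\colon X\to X^{*}$ is dominant), the statement reduces to the single remaining inequality $\deg b_f(s)\ge d$. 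Granting it, $d\le\deg b_f\le\deg b\le d^{*}=d$ forces $\deg b_f=\deg b=d$, and then $b_f\mid b$ with equal degrees yields $b=c\cdot b_f$ for a nonzero constant $c$, which finishes the proof.

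The main obstacle is this lower bound $\deg b_f(s)\ge\deg f$: everything before it is formal once one has $\complex(X)^{G}=\complex$, whereas the lower bound is the genuinely non-formal content of the theorem. I would prove it along classical lines, passing to the dual prehomogeneous space $(G,X^{*})$ with relative invariant $f^{*}$ of degree $d$ and invoking the Sato--Shintani functional equation relating the local zeta functions of $f$ and $f^{*}$, whose $\Gamma$-factor is a product of exactly $d$ (suitably shifted) Gamma functions; this forces $b_f$ to have $d$ roots, counted with multiplicity. An alternative would be a microlocal computation of the generic multiplicity, along the conormal bundle of $\{f=0\}$, of the characteristic cycle of the holonomic $\dif_{X}$-module $\dif_{X}f^{s}$. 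I expect the verification that this multiplicity — equivalently, that the number of $\Gamma$-factors — equals exactly $\deg f$ to be where the real work lies.
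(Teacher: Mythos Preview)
The paper does not give its own proof of this theorem; it is quoted as background from Gyoja \cite[Corollary~2.5.10]{gyoja}, so there is no ``paper's proof'' to compare against.

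On its own merits, your sketch is sound and follows the classical line. The derivation of \eqref{eq:good} via $G$-invariance of $u(x,s)/f(x)^{s}$ on a prehomogeneous space is the standard argument and is complete as written. Your identification of the leading $s$-coefficient with $f^{*}(\nabla f(x))$, together with dominance of the gradient map $X\dashrightarrow X^{*}$ (a basic fact for regular prehomogeneous vector spaces), correctly yields $\deg b=d^{*}$, and matching $x$-degrees in the resulting identity $f^{*}(\nabla f(x))=c\,f(x)^{d^{*}-1}$ then forces $d^{*}=d$. The divisibility $b_f\mid b$ is immediate from the definition of $b_f$. You are also right that the genuine content is the remaining inequality $\deg b_f\ge d$, and that this is not formal: one needs either Sato's analytic theory of local zeta functions (where the number of $\Gamma$-factors in the functional equation is exactly $\deg f$) or a $\dif$-module/microlocal argument computing the multiplicity of $\dif_X[s]f^{s}/\dif_X[s]f^{s+1}$ along the conormal to the hypersurface. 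Both routes are standard in the prehomogeneous literature---the former is the Sato--Kimura approach, the latter is closer to what Gyoja actually does---and either completes the proof, though neither is a one-liner.
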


We have the following result (which holds more generally for semi-invariants with multiplicity-free weights, as defined in \cite[Definition 1.1]{bub2}):

\begin{proposition}\label{prop:root}
Assume $X$ is prehomogeneous and $f\in \complex[X]$ is an irreducible semi-invariant. Then the $\dif_X$-module $\dif_X f^{r}/\dif_X f^{r+1}$ is non-zero if and only if $r$ is a root of the $b$-function $b_f(s)$, in which case it has a unique irreducible quotient $L_f^r$. Moreover, if $r_1,r_2,\dots,r_d$ denote the distinct roots of $b_f(s)$, then the irreducibles $L_f^{r_k}$ are pairwise non-isomorphic, for $k=1,\dots,d$. %(Question: when do we have $L_f^r \cong \dif_X f^{r}/\dif_X f^{r+1}$???)
\end{proposition}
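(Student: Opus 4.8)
The plan is to analyze the descending chain $\dif_X f^r \supseteq \dif_X f^{r+1} \supseteq \dif_X f^{r+2} \supseteq \cdots$ together with the operator equation \eqref{eq:good}. First I would recall that by \eqref{eq:good} we have $f^*(\partial) f^{r+1} = b_f(r) f^r$ (up to the nonzero constant factor, which I absorb). If $r$ is \emph{not} a root of $b_f(s)$, then $b_f(r)\ne 0$, so $f^r = b_f(r)^{-1} f^*(\partial) f^{r+1} \in \dif_X f^{r+1}$, giving $\dif_X f^r = \dif_X f^{r+1}$ and hence the quotient is zero. Conversely, suppose $r$ is a root of $b_f(s)$. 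Here I would use that $f$ is irreducible and prehomogeneous: on the open orbit $U = X \setminus V(f)$ all the modules $\dif_U f^r$ are isomorphic to $\orb_U$ (the connection $f^r \mapsto$ multivalued function on $U$), so $\dif_X f^{r}/\dif_X f^{r+1}$ is supported on $V(f)$. To see it is nonzero exactly when $r$ is a root, I would invoke the standard fact (as in \cite[Section 6.2]{kashi} or \cite[Lemma 2.8.6]{gyoja}) that $b_f(s)$ is, up to sign of roots, the minimal polynomial of the action of $s = \sum x_i \partial_i$-type Euler operator on $\dif_X[s] f^s / \dif_X[s] f^{s+1}$; specializing $s = r$, the quotient $\dif_X f^r/\dif_X f^{r+1}$ is nonzero precisely when $b_f(r) = 0$. (Alternatively one argues directly: if the quotient were zero then $f^r \in \dif_X f^{r+1}$, say $f^r = Q \cdot f^{r+1}$, and comparing this with \eqref{eq:good} after clearing the $f$-adic order forces $b_f(r) \ne 0$ via a weight/order argument.)

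Next, for $r$ a root, I would show $\dif_X f^r/\dif_X f^{r+1}$ has a unique irreducible quotient. The module is cyclic, generated by the image $\bar{f^r}$ of $f^r$. I would observe that $f^r$ is a semi-invariant of weight $\sigma^r$, so $\bar{f^r}$ spans a one-dimensional space transforming under the derived $\lie$-action (Euler operator plus the semisimple part of $\mathfrak{g}$) by a character; combined with the fact that $m_{\sigma^n}(\complex[X]) = 1$ for all $n$ (cited from \cite[\S4 Prop.~3]{saki}) and, more to the point, that the weight $\sigma^r$ occurs with multiplicity one in $\dif_X f^r$ since $X$ is prehomogeneous, one sees $\bar{f^r}$ generates a module all of whose proper submodules avoid this weight. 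Summing all proper submodules gives a unique maximal one, hence a unique irreducible quotient $L_f^r$. (This is exactly the mechanism of Lemma~\ref{lem:irrind}, which I would cite; the statement there is in the $\dif$-affine equivariant setting, but the same "multiplicity-one generator" argument applies here on the vector space $X$.)

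For the last claim — that the $L_f^{r_k}$ for distinct roots $r_1,\dots,r_d$ are pairwise non-isomorphic — I would distinguish $L_f^{r_k}$ by the eigenvalue of the Euler operator $\theta = \sum_i x_i\partial_i$ on its distinguished weight vector. The image of $f^{r_k}$ in $L_f^{r_k}$ satisfies $\theta \cdot f^{r_k} = (r_k \cdot \deg f)\, f^{r_k}$ (since $f$ is homogeneous of degree $\deg f$), and $\theta$ acts on the whole equivariant module $L_f^{r_k}$ with eigenvalues in a single coset of $\integ$ — in fact, because $f$ is an irreducible semi-invariant, I expect the relevant weight occurs with multiplicity one and pins down $r_k \bmod \integ$... but this is too weak since different roots can be congruent mod $\integ$. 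The sharper invariant is the following: $L_f^{r_k}$ is the unique simple quotient of $\dif_X f^{r_k}$, and the $b$-function of $f$ has each root with multiplicity one for an irreducible semi-invariant (a known fact — e.g.\ via Theorem \ref{thm:multfreee}-type multiplicity-one considerations, or directly from \cite{kashi}), so the composition factors of $\dif_X f^r$ for varying real part of $r$ are genuinely different simple modules; I would identify $L_f^{r_k}$ with the simple equivariant $\dif_X$-module attached to the open orbit twisted by the rank-one local system of monodromy $e^{2\pi i r_k}$ restricted appropriately, and two such agree iff $r_k = r_l$. I anticipate this final non-isomorphism step is the main obstacle, since it requires either the multiplicity-one property of roots of $b_f$ for irreducible semi-invariants or a careful Euler-operator bookkeeping across the chain; the cleanest route is probably to show directly that $[\dif_X f^{r} : L_f^{r_k}] \leq 1$ and $[\dif_X f^r : L_f^{r_l}] = 0$ when $\mathrm{Re}(r_l) < \mathrm{Re}(r)$ is impossible, using \eqref{eq:good} to filter $\dif_X f^r / \dif_X f^{r+N}$ and reading off that the successive quotients (one per root in the window $[r, r+N)$) are the distinct $L_f^{r_k}$.
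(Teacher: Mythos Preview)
Your treatment of the unique-irreducible-quotient step (citing Lemma~\ref{lem:irrind}) matches the paper and is fine. There are real gaps in the other two parts.

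For the ``if'' direction of nonvanishing, your specialization argument (take the minimal polynomial of $s$ on $\dif_X[s]f^s/\dif_X[s]f^{s+1}$ and set $s=r$) does not work as stated: the specialization of that generic quotient at $s=r$ is not $\dif_X f^r/\dif_X f^{r+1}$ in general, so you cannot read off nonvanishing this way. Your alternative ``weight/order argument'' is on the right track but you have not identified the mechanism. The point is: if $Q\cdot f^{r+1}=f^r$, decompose $Q$ into $\lie$-isotypical pieces; since $f^{r+1}$ and $f^r$ are semi-invariants of weights $\sigma^{r+1}$ and $\sigma^r$, only the $\sigma^{-1}$-component of $Q$ contributes, so one may assume $Q$ is a semi-invariant operator of weight $\sigma^{-1}$. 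Then $Q\cdot f^{s+1}$ is a semi-invariant of weight $\sigma^s$ for symbolic $s$, hence equals $b'(s)f^s$ for some polynomial $b'$ (by multiplicity one of semi-invariants on a prehomogeneous space). Minimality of $b_f$ forces $b_f\mid b'$, so $b'(r)=0$ and $Q\cdot f^{r+1}=0$, a contradiction. This is exactly the paper's argument; your sketch omits the isotypical reduction, which is the whole content.

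Your last step is where the proposal genuinely fails. The Euler-eigenvalue idea only sees $r_k\bmod\integ$, as you noticed; the monodromy/local-system identification has the same defect; and invoking ``roots of $b_f$ are simple'' is neither assumed nor needed. The invariant that actually separates the $L_f^{r_k}$ is the \emph{maximal} $\sigma$-power occurring as a $\lie$-semi-invariant weight. The image of $f^{r_k}$ in $L_f^{r_k}$ is a nonzero semi-invariant of weight $\sigma^{r_k}$; on the other hand, any $\lie$-semi-invariant in $\dif_X f^{r_k}\subset\complex[X]_f\cdot f^{r_k}$ of weight $\sigma^{r_k+j}$ with $j\geq 1$ is a scalar multiple of $f^{r_k+j}\in\dif_X f^{r_k+1}$, hence dies in the quotient and in $L_f^{r_k}$. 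Thus the top semi-invariant weight of $L_f^{r_k}$ is exactly $\sigma^{r_k}$, and distinct roots give non-isomorphic simples. This is an elementary bookkeeping of $\sigma$-weights, not a filtration or multiplicity argument.
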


\begin{proof}
Fix $r=r_k$ and let $\sigma$ be the weight of $f$. First, we show that $f^{r}\notin \dif_X f^{r+1}$. Assume the contrary, \emph{i.e.} 
$$Q\cdot f^{r+1} = f^{r},$$
for some $Q\in \dif_X$. Decompose $Q$ into $\lie$-isotypical components $Q=\ds_{i=1}^l Q_j$, with $Q_j\in(\dif_X)_{(\lambda_j)}$, for some pairwise different dominant weights $\lambda_1,\dots,\lambda_l$. Then $Q_j \cdot  f^{r+1}$ lies in the isotypical component of weight $\sigma^{r+1}\lambda_j$. Hence, $Q_j$ annihilates $f^{r+1}$ unless $\lambda_j=\sigma^{-1}$. So we can assume without loss of generality that $Q$ is a semi-invariant differential operator of weight $\sigma^{-1}$. As in equation \eqref{eq:good}, this implies an equation
\[
Q\cdot f^{s+1} = b'(s)f^s,
\]
for some polynomial $b'(s)$. Since $b_f(s)$ is minimal, we have $b_f(s)|b'(s)$. This implies that $Q\cdot f^{r+1}=0$, a contradiction. Hence $\dif_X f^{r}/\dif_X f^{r+1}$ is non-zero.

Since $\complex[X]_f \cdot f^r$ is a semi-simple $\lie$-module,  $\dif_X f^{r}/\dif_X f^{r+1}$ is semi-simple as well, and using the argument in Lemma \ref{lem:irrind} (with $v_\lambda = f^r$) we get that $\dif_X f^{r}/\dif_X f^{r+1}$ has a unique irreducible quotient, say $L_f^r$.

Lastly, if we look at semi-invariants of the action of $\mathfrak{g}$ on $L_f^{r_k}$, we see that $f^{r_k}$ has weight $\sigma^{r_k}$, and $L_f^{r_k}$ has no semi-invariants with a weight of a larger power of $\sigma$. Hence the $\dif_X$-modules $L_f^{r_k}$ are pairwise non-isomorphic.
\end{proof}

\begin{remark}
  If $U$ is an open subset in a vector space such that its complement has codimension $\geq 2$, then $U$ is simply connected (see \cite[Chapter 4]{dimca}). Hence, in that case $\complex[U]$ is the only irreducible integrable connection on $U$.
\end{remark}

\begin{lemma}\label{lem:generic}
Let $f$ be an irreducible homogeneous polynomial on $X$ such that the hypersurface $Z=f^{-1}(0)$ is normal, and denote the complement by $U=X\setminus Z$. Then $\pi_1(U)=\integ$ and the irreducible integrable connections on $U$ are (up to isomorphism) all of the form $\complex[U] \cdot f^r$, for $r\in \CC/\integ$.
\end{lemma}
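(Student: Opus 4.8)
The statement has two parts: the computation $\pi_1(U)\cong\integ$, and the classification of irreducible connections. The second is a formal consequence of the first via the Riemann--Hilbert correspondence, so the plan is to concentrate on $\pi_1(U)\cong\integ$ and then read off the connections.

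For $\pi_1(U)$ I would first dispose of the case where $f$ is a linear form --- which is automatic when $\dim X\le 2$, since a homogeneous irreducible polynomial in at most two variables is linear --- because then, after a linear change of coordinates, $U\cong\complex^{\dim X-1}\times\complex^*$ and both claims are immediate. So assume $N:=\dim X\ge 3$ and $d:=\deg f\ge 2$. Since $Z$ is normal it is regular in codimension one, so $\dim\operatorname{Sing}(Z)\le\dim Z-2$; as $0\in\operatorname{Sing}(Z)$ (the differential of a homogeneous polynomial of degree $\ge 2$ vanishes at the origin), the integer $s:=\dim\operatorname{Sing}(Z)$ satisfies $0\le s\le N-3$. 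Let $F:=f^{-1}(1)$ be the global Milnor fibre. Because $f$ is homogeneous, $f\colon U\to\complex^*$ is a locally trivial fibre bundle with fibre $F$ (the global Milnor fibration), and by the connectivity theorem for Milnor fibres --- Milnor's in the isolated case, and the Kato--Matsumoto $(N-s-2)$-connectedness bound in general (for all of this see \cite{dimca}) --- the fibre $F$ is $(N-s-2)$-connected. As $N-s-2\ge 1$, the fibre $F$ is connected and simply connected, so the long exact homotopy sequence of $F\hookrightarrow U\xrightarrow{f}\complex^*$, using $\pi_1(F)=0$ and $\pi_0(F)=\ast$, produces an isomorphism $f_*\colon\pi_1(U)\xrightarrow{\sim}\pi_1(\complex^*)\cong\integ$. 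In particular $\pi_1(U)$ is infinite cyclic, generated by a small loop $\gamma$ about the smooth locus of $Z$, for which $f\circ\gamma$ is a generator of $\pi_1(\complex^*)$ (local model $f=z_1$ at a smooth point of $Z$).

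For the connections, by the Riemann--Hilbert correspondence the irreducible integrable connections on $U$ are the irreducible representations of $\pi_1(U)\cong\integ$; since $\integ$ is abelian these are one-dimensional and are classified by their monodromy $\lambda\in\complex^*$ along $\gamma$. Writing $\lambda=e^{2\pi i r}$ with $r\in\complex$ unique modulo $\integ$, the rank-one connection $\complex[U]\cdot f^{r}=(\calO_U,\,d+r\,df/f)$ has monodromy $e^{2\pi i r}$ about $\gamma$; it does not change if $r$ is replaced by another representative of its coset, because $f\in\complex[U]^{\times}$, and $\complex[U]f^{r}\cong\complex[U]f^{r'}$ exactly when $r\equiv r'\pmod{\integ}$. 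Hence every irreducible integrable connection on $U$ is isomorphic to a unique $\complex[U]\cdot f^{r}$ with $r\in\complex/\integ$.

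The one genuinely nonformal step is the simple-connectivity of the Milnor fibre $F$: this is precisely where normality of $Z$ enters, through the bound $s\le N-3$, and it rests on the classical connectivity estimates for Milnor fibres of possibly non-isolated hypersurface singularities. The remaining ingredients --- local triviality of the Milnor fibration for homogeneous $f$, the homotopy exact sequence, and the identification of rank-one connections with powers of $f$ --- are routine.
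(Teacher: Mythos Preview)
Your proof is correct and takes a genuinely different route from the paper's. The paper argues in two steps: first, since $Z$ is normal it is smooth (hence normal crossing) in codimension one, so by the theorem of L\^e and K.~Saito the fundamental group $\pi_1(U)$ is abelian; second, a standard computation (Dimca, \cite[Cor.~4.1.4]{dimca}) identifies the abelianization $H_1(U;\integ)$ with $\integ$ for the complement of an irreducible hypersurface, whence $\pi_1(U)=\integ$. You instead bypass the abelianness step entirely by exploiting homogeneity: the map $f\colon U\to\complex^*$ is a locally trivial bundle whose fibre is the global Milnor fibre, which for homogeneous $f$ coincides (up to homotopy) with the local Milnor fibre at the origin; the Kato--Matsumoto bound then gives $(N-s-2)$-connectedness, and normality enters exactly to force $s\le N-3$ so that the fibre is simply connected. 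Your argument is arguably more direct and keeps all the ingredients inside a single reference (Dimca's book treats both the global Milnor fibration for weighted-homogeneous polynomials and the Kato--Matsumoto theorem), while the paper's route makes the role of the L\^e--Saito theorem explicit and would generalize more readily to non-conical divisors that are normal crossing in codimension one. The treatment of the connections via Riemann--Hilbert is the same in both.
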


\begin{proof}
By the Riemann-Hilbert correspondence, it is enough to see that the fundamental group of $U$ is $\pi_1(U) = \integ$. As proved by Kyoji Saito and L\^ e D\~ung Tr\'ang, the complements of divisors that are normal crossing in codimension $1$ have Abelian fundamental groups \cite{SaitoLe}. In particular, this applies to normal homogeneous hypersurfaces. Then \cite[Cor.~4.1.4]{dimca} gives $\pi_1(U)=\integ$.
\end{proof}

\begin{remark}\label{rem:spherical}
 By a result of T. Vust and H. Kraft, if $f$ is an irreducible semi-invariant of weight $\sigma$ and $Z=f^{-1}(0)$ is $G$-spherical, then $Z$ is normal with rational singularities, see \cite[Theorem 5.1]{pany}. 

The connection $\complex[U] \cdot f^r$ is equivariant if and only if $\sigma^r$ is an algebraic character of $G$. In particular, writing $\sigma = \chi^d$ for some character $\chi: G\to \complex^*$ with $d\in\integ_{>0}$ maximal, the irreducible equivariant integrable connections on $U$ are $\complex[U] \cdot f^{i/d}$, for $i=0,\dots, d-1$. In case $U$ is a $G$-orbit, say $U\cong G/H$, this gives an isomorphism $H/H^0 \cong \integ/d\integ$ induced by the restriction of $\chi$ to $H$.
\end{remark}

As usual, let $B$ be a Borel subgroup of $G$ containing a maximal torus $T$.

\begin{lemma}\label{lem:highest}
Let $G$ be a connected reductive group, $V_\lambda$ an irreducible $G$-module and $v\in V_\lambda$ a highest weight vector of weight $\lambda$. Take $\tilde{G}=G \times \CC^*$, where $\CC^*$ acts on $V_\lambda$ by scalars in the usual way. Then the stabilizer $\tilde{G}_v$ is connected. Moreover, the stabilizer $G_v$ is connected if and only if $\lambda$ is not a non-trivial power of a highest weight (\emph{i.e.} of a polynomial function on $T$). 
\end{lemma}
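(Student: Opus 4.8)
The plan is to realize both stabilizers in terms of the parabolic subgroup attached to the highest weight line, which reduces connectedness to an indivisibility property of a single character.

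Fix a Borel subgroup $B = T\cdot R_u(B)$ of $G$. Since $v$ is a highest weight vector, $B$ stabilizes the line $\CC v$ and acts on it through $\lambda$ (trivially on $R_u(B)$), so $P := \op{Stab}_G(\CC v)$ contains $B$; hence $P$ is a parabolic subgroup of $G$, in particular connected, and $P$ acts on $\CC v$ through a character $\lambda_P\in X^*(P)$ with $\lambda_P|_T = \lambda$. I will use the classical description of $P$ as the standard parabolic whose Levi has simple roots $\{\alpha: \langle\lambda,\alpha^\vee\rangle = 0\}$, together with the fact that restriction identifies $X^*(P)$ with the sublattice $\Lambda_P := \{\chi\in X^*(T): \langle\chi,\alpha^\vee\rangle = 0\ \text{ whenever }\ \langle\lambda,\alpha^\vee\rangle = 0\}$ of $X^*(T)$.

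For the first assertion, if $(g,t)\in\tilde G_v$ then $g\cdot v = t^{-1}v\in\CC v$, which forces $g\in P$ and $t = \lambda_P(g)^{-1}$; conversely every pair $(g,\lambda_P(g)^{-1})$ with $g\in P$ lies in $\tilde G_v$. Thus $g\mapsto(g,\lambda_P(g)^{-1})$ is an isomorphism of algebraic groups from $P$ onto $\tilde G_v$, and $\tilde G_v$ is connected because $P$ is. For the second assertion, note $G_v = \ker(\lambda_P\colon P\to\CC^*)$, and recall the elementary fact that for a connected linear algebraic group $H$ and a character $\chi\in X^*(H)$, the kernel $\ker\chi$ is connected if and only if $\chi$ is zero or indivisible in $X^*(H)$ (that is, $\chi\neq k\psi$ for all $\psi\in X^*(H)$ and all integers $k\geq 2$): $\chi$ factors through the maximal quotient torus $T'$ of $H$, the quotient $H\to T'$ has connected kernel, and a character of a torus has connected kernel exactly when it is zero or indivisible; conversely, if $\chi = k\psi$ with $k\geq 2$ then $\psi\colon H\to\CC^*$ is surjective with $(\ker\chi)^0\subseteq\ker\psi$ while $\ker\chi/\ker\psi$ is the group of $k$-th roots of unity, so $\ker\chi$ has at least $k$ components. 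Applying this with $H = P$ gives that $G_v$ is connected if and only if $\lambda_P$ is zero or indivisible in $X^*(P)\cong\Lambda_P$.

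Finally I translate this into the stated condition. If $\lambda = k\mu$ for a highest (equivalently, dominant) weight $\mu$ and an integer $k\geq 2$, then $\langle\mu,\alpha^\vee\rangle = 0$ whenever $\langle\lambda,\alpha^\vee\rangle = 0$, so $\mu\in\Lambda_P$ and $\lambda_P$ equals the divisible element $k\mu$ there, so $G_v$ is disconnected. Conversely, if $\lambda_P = k\psi$ in $\Lambda_P$ with $k\geq 2$, then $\psi\in X^*(T)$ satisfies $\lambda = k\psi$ and $k\langle\psi,\alpha^\vee\rangle = \langle\lambda,\alpha^\vee\rangle\geq 0$ for every simple root $\alpha$, so $\psi$ is dominant and $\lambda$ is a nontrivial power of the highest weight $\psi$; finally $\lambda_P = 0$ occurs exactly when $\lambda = 0$, which is not a nontrivial power. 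This proves the equivalence. I expect the only step needing care to be the bookkeeping that $G_v$ is the kernel of a character of the \emph{full} parabolic $P$, so the indivisibility test lives in $X^*(P)$; matching this test with the stated condition on $\lambda$ uses both the geometric identification of $\op{Stab}_G(\CC v)$ with a standard parabolic and the dominance of $\lambda$.
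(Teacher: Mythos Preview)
Your proof is correct and follows essentially the same approach as the paper: both identify the stabilizer $P$ of the line $\CC v$ as a parabolic subgroup, observe that $\tilde{G}_v \cong P$ (hence connected), realize $G_v$ as the kernel of the character $\lambda_P\colon P \to \CC^*$, and then reduce connectedness of this kernel to indivisibility of the character. Your argument is more explicit than the paper's---in particular, you spell out the identification $X^*(P)\cong\Lambda_P\subset X^*(T)$, give a self-contained reason why the kernel of a character of a connected group is connected exactly when the character is zero or indivisible, and verify that indivisibility in $X^*(P)$ matches the stated condition on $\lambda$ (including the edge case $\lambda=0$)---whereas the paper argues more tersely by noting that both $P/G_v^0$ and $P/G_v$ are copies of $\CC^*$ and that the natural map between them is a $k$-th power map, so $\lambda=\chi^k$.
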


\begin{proof}
It is well-known that $G \cdot [v]$ is the unique closed $G$-orbit in $\mathbb{P}(V)$, hence the stabilizer of $[v]$ in $\mathbb{P}(V_\lambda)$ is a parabolic subgroup $P$ of $G$. In particular, $P$ is connected. Clearly, $\tilde{G}_v \cong P$, and $G_v$ is the kernel of $\lambda: P \to \CC^*$. %By the Levi decomposition, we can write $P$ as a semi-direct product of its unipotent radical with a Levi subgroup $L$. Let $H$ be the kernel of $\lambda: L \to \CC^*$, so 
We have $P/G_v \cong \CC^*$, and since $P/G_v^0$ is 1-dimensional, we must have $P/G^0_v \cong \complex^*$ induced by a map $\chi: P \to \CC^*$. The map $P/G_v^0 \to P/G_v$ is then equivalent to a map $\complex^* \to \complex^*$, which must be a $k$-th power map, for some $k\in\integ$. Then $\lambda = \chi^k$. Hence, $H=H^0$ if and only if $k\in\{\pm 1\} $ if and only if $\lambda$ is not a non-trivial power.
\end{proof}

\subsection{Twisted Fourier transform}\label{subsec:four}
%%%%%%%%%%%%%%%%%%%%%%%%%%%%%%%%%%%%%%%%%%%%%%%%%%%%%%%%%%%

Let $X=\CC^n$ be an affine space with the linear action of a connected reductive group $G$. First, we introduce the Fourier transform of a $\dif$-module. Let $X^*$ denote the dual space of $X$. We denote $\mathcal F$ the Fourier automorphism
\begin{gather*}
\mathcal{F}\colon \dif_X\to \dif_{X^*}
\end{gather*}
for which  $\mathcal{F}(x_i)=\partial_i$ and $\mathcal{F}(\partial_i)=-x_i$ for  $i=1,\dots, n$.
It induces an involution  $M \mapsto \mathcal{F}(M)$ on
$\dif_X$-modules that
preserves $G$-equivariance, and there
is an isomorphism of $G$-modules 
\[
\mathcal{F}(M)=M\otimes\det(X)^{-1},
\]
where $\det(X)=\bigwedge^n X$ is the character of $G$ induced by the composition $G\to \GL(X) \stackrel{\det}{\longrightarrow} \complex^*$. So $\mathcal{F}$ is an involutive (covariant) functor giving equivalences of categories
\[\mathcal{F} : \op{Mod}_G(\dif_{X}) \xrightarrow{\sim} \op{Mod}_G(\dif_{X^*})\,\, \text{ and }\,\, \mathcal{F} : \op{mod}_G(\dif_X) \xrightarrow{\sim} \op{mod}_G(\dif_{X^*}).\]

Now let $T\subset G$ be a maximal torus of $G$ and $B\subset G$ a Borel subgroup containing $T$. It is well-known that there is an involution $\theta \in \op{Aut}(G)$ such that $\theta(t)=t^{-1}$ for all $t\in T$ and $B\cap \theta(B)=T$ (see \cite[Section 1.2]{pany}). If $V$ is any $G$-module, we can twist the action of $G$ by $\theta$ to obtain a $G$-module $V^*$, which is isomorphic to the usual dual representation of $V$. 

Twisting the action of $G$ on $X$ by $\theta$ gives another functor $\op{Mod}_G(\dif_{X}) \xrightarrow{\sim} \op{Mod}_G(\dif_{X^*})$ that sends an equivariant $\dif_X$-module $M$ to an equivariant $\dif_{X^*}$-module $M^*$. Moreover, if $M$ has a decomposition $M\cong \oplus M_i$ into irreducibles as a $G$-module, then $M^*$ decomposes as $M^*\cong \oplus M_i^*$. Composing this functor with the Fourier transform $\mathcal{F} : \op{Mod}_G(\dif_{X^*}) \xrightarrow{\sim} \op{Mod}_G(\dif_{X})$ we obtain an involutive (covariant) functor $\tilde{\mathcal{F}}$ giving self-equivalences
\[\tilde{\mathcal{F}} : \op{Mod}_G(\dif_{X}) \xrightarrow{\sim} \op{Mod}_G(\dif_{X})\,\, \text{ and }\,\, \tilde{\mathcal{F}} : \op{mod}_G(\dif_X) \xrightarrow{\sim} \op{mod}_G(\dif_{X}).\]
We call $\tilde{\mathcal{F}}$  the \textit{twisted Fourier transform}. By the above, if $M\in \op{Mod}_G(\dif_{X})$ has a $G$-decomposition $M\cong \bigoplus M_i$, then we have a $G$-decomposition (see also \cite[Section 2.5]{claudiu1})
\begin{equation}\label{eq:fourier}
\tilde{\mathcal{F}}(M)\cong \bigoplus (M_i^* \otimes \det(X)).
\end{equation}

Now assume for the rest of the section that $G$ acts on $X$ with finitely many orbits.  Then each $G$-orbit is conic, i.e. $\CC^*$-stable. In fact, the following discussion holds more generally for conic subvarieties.

We recall a relationship between the characteristic cycles $\charC(M)$ and $\charC(\tilde{\mathcal{F}}(M))$. For a $G$-orbit $O \subset X$, there exists a $G$-orbit in $O^* \subset X^*$ (the projective dual of $O$) such that under the natural identification $T^*X \cong T^* X^*$ we have (see \cite{pya},\cite[Section 2]{tevelev})
\[\ove{T^*_O X} \cong \ove{T^*_{O^*} X^*}.\]
This establishes a bijection between the $G$-orbits of $X$ and the $G$-orbits of $X^*$ (for $G$ not necessarily reductive), called the \emph{Pyasetskii pairing}.

The automorphism $\theta$ gives another such bijection between the orbits of $X$ and $X^*$. Composing the two, we obtain an involution (also referred to as Pyasetskii pairing) on the $G$-orbits of $X$, which we will denote by  $O \to O^{\vee}$.

By Theorem  \ref{thm:eqfin}, for $M \in \op{mod}_G(\dif_X)$ there exist $G$-orbits $O_i$ and positive integers $m_i$ ($1 \leq i \leq r$) such that 
\[\charC(M) = \ds_{i=1}^r m_i \cdot [\ove{T^*_{O_i} X}].\]
Then we have (see \cite[Theorem 3.2]{hotkash})
\begin{equation} \label{eq:four-char}
\charC(\tilde{\mathcal{F}}(M)) = \ds_{i=1}^r m_i \cdot [\ove{T^*_{O_i^{\vee}} X}].
\end{equation}

Since in general it does not preserve inclusions of orbit closures (so it is not an automorphism of the corresponding Hasse diagram), the Pyasetskii pairing is difficult to describe explicitly (for examples, see \cite[Chapter 2]{tevelev}). On the other hand, the Fourier transform is an (involutive) automorphism of the quiver $\widehat{Q}$ corresponding to the category $\op{mod}_G(\dif_X)$. As can be seen in the next section, often one can obtain implicitly the Pyasetskii pairing by determining the Fourier involution on the quiver $\widehat{Q}$. 

Although defined only for an affine space, the (equivariant) twisted Fourier transform can be lifted for a smooth affine variety using Proposition \ref{prop:redaff}

%%%%%%%%%%%%%%%%%%%%%%%%%%%%%%%%%%%%%%%%%%%%%%%%%%%%%%%%%%%%%%%%%%%%%%%%%%
\section{Categories of equivariant $\dif$-modules for irreducible\\ spherical vector spaces}
\label{sec:examples}
%%%%%%%%%%%%%%%%%%%%%%%%%%%%%%%%%%%%%%%%%%%%%%%%%%%%%%%%%%%%%%%%%%%%%%%%%%

Throughout in this section let $X$ be a vector space which is spherical with respect to the linear action of a connected reductive group $G$; such a space $X$ is called a \textit{multiplicity-free} $G$-space in the literature (see \cite{howumed}). To avoid possible confusion with Definition \ref{def:multfree}, we will call such $X$ a $G$-\textit{spherical vector space}. In this section we describe the categories of $G$-equivariant coherent $\dif$-modules using quivers with relations $\widehat{Q}$ as in Theorem \ref{thm:findim} for the $G$-spherical vector spaces that are irreducible representations. Such spaces were classified (up to geometric equivalence) by Kac \cite[Theorem 3.3]{kac}, and include the spaces of $m\times n$ matrices, skew-symmetric matrices and symmetric matrices. 

We say two rational representations $\rho_1: G_1 \to \GL(V_1)$ and $\rho_2: G_2 \to \GL(V_2)$ are \textit{geometrically equivalent}, if $\rho_1(G_1)$ coincides with $\rho_2(G_2)$ under an isomorphism $V_1 \to V_2$. For example, any representation is geometrically equivalent to its dual representation (see Section \ref{subsec:four}).

Now we recall the above-mentioned list, and refer to \cite{kac} for the notation.

\begin{theorem}\label{thm:list}

Up to geometric equivalence, a complete list of irreducible spherical vector spaces of connected reductive groups is as follows:

\begin{enumerate}
\item $\GL_n\otimes \SL_n\,$, $\SL_m \otimes \SL_n$ for $m\neq n$, $\Sym_2 \GL_n\, , \bigwedge^2 \SL_n$ for $n$ odd, $\bigwedge^2 \GL_n$ for $n$ even,\smallskip\\
$\SO_n \otimes \CC^*$ for $n\geq 3\,$, $\spin_7 \otimes \CC^*\,$, $\spin_9 \otimes \CC^*\,$, $\spin_{10}\,, \Gtwo \otimes \CC^*\,$, $\E\otimes \CC^*\,, \Sp_{2n}\,$,\smallskip\\
$ \Sp_{2n} \otimes \GL_2\,$ , $\Sp_{2n} \otimes \SL_3 \,$ , $\Sp_{4} \otimes \GL_4 \,$ , $\Sp_{4} \otimes \SL_m$ for $m > 4$.

\item $G\otimes \CC^*$ for the semi-simple groups $G$ from list 1.
\end{enumerate}

\end{theorem}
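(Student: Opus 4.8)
The plan is to obtain Theorem~\ref{thm:list} as a restatement of Kac's classification \cite[Theorem~3.3]{kac}; the only work to be done on our side is to recast that result in the language of spherical varieties and to account for the passage to geometric equivalence. First I would note that, by the definition of sphericality together with the classical equivalence between open Borel orbits on an affine $G$-variety and multiplicity-freeness of its coordinate ring, an irreducible representation $X$ of a connected reductive group $G$ is spherical exactly when $\complex[X]$ is a multiplicity-free $G$-module, i.e.\ when $X$ is a \emph{multiplicity-free space} in the sense of \cite{howumed} (cf.\ also \cite{alebri} and the references accompanying the Definition above). This is precisely the class of representations enumerated in \cite[Theorem~3.3]{kac}, so the substance of the theorem is a transcription of that list; in particular the classical examples of $m\times n$, symmetric and skew-symmetric matrices fall out immediately.

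Next I would carry out the two organizational reductions that turn Kac's raw enumeration into the compact form stated here. \emph{(i) Geometric equivalence:} since both the category $\op{mod}_G(\dif_X)$ and the sphericality condition depend only on the image $\rho(G)\subseteq\GL(X)$, one may replace $(G,\rho)$ by a minimal connected reductive group with that image, and one may discard the dual of any representation already recorded (every representation is geometrically equivalent to its dual, Section~\ref{subsec:four}); this collapses Kac's enumeration into list~1. \emph{(ii) The central scalars:} because $X$ is conic, adjoining $\complex^*$ to $G$ only refines the isotypic decomposition of $\complex[X]$, separating the graded pieces, so sphericality is preserved when passing from a semi-simple $G$ to $G\otimes\complex^*$; this produces list~2, and conversely Kac's analysis shows that these enlargements yield nothing outside the two lists. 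Note that, in the other direction, stripping $\complex^*$ from a reductive but non-semi-simple entry of list~1 need \emph{not} preserve sphericality --- for instance $\SO_n$ acting on $\complex^n$ is not multiplicity-free although $\SO_n\otimes\complex^*$ is --- which is exactly why the scalars are kept in those entries of list~1. It then remains to verify that list~2 as written is itself closed under geometric equivalence and free of redundancies, a short finite check.

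The genuine mathematical content --- that the two lists together are exhaustive --- is the case-by-case analysis over Dynkin types carried out in \cite{kac}, which I would not reproduce here. Accordingly, the only real hazard in writing up Theorem~\ref{thm:list} is bookkeeping: matching Kac's notation for each entry to the $G\otimes\complex^*$ presentation used here, and confirming that every irreducible spherical representation of a connected reductive group appears exactly once up to geometric equivalence.
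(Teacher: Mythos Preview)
Your proposal is correct and matches the paper's own treatment: Theorem~\ref{thm:list} is stated in the paper without proof, simply as a recollection of Kac's classification \cite[Theorem~3.3]{kac}, so there is nothing to prove beyond the transcription and bookkeeping you describe. Your added remarks on why sphericality coincides with multiplicity-freeness and on the role of geometric equivalence and central scalars are accurate and more explicit than what the paper provides, but the substance is the same --- defer to \cite{kac} for the exhaustive case-by-case analysis.
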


The following is our main result.

\begin{theorem}\label{thm:main}
Let $G$ be a connected reductive group and $X$ an irreducible $G$-spherical vector space. Then $\op{mod}_G(\dif_X) \cong \rep(\widehat{Q})$, where $\widehat{Q}$ is a quiver given (up to some isolated vertices) by:
\begin{itemize}
\item[(a)] The quiver $\AAA_3^c$ as in \eqref{eq:doubb}, if $G\to \GL(X)$ is geometrically equivalent to $\Sp_4 \otimes \GL_3\,$, or 
\item[(b)] The quiver $\EEE$ as in \eqref{eq:e6}, if $G \to \GL(X)$ is geometrically equivalent to $\Sp_{4} \otimes \GL_4 \,$, or 
\item[(c)] A disjoint union of quivers of type $\AAA$ as in \eqref{eq:double}, otherwise. 
\end{itemize}
In particular, if $G \to \GL(X)$ is not equivalent to $\Sp_{4} \otimes \GL_4$,  then there are (up to isomorphism) only finitely many indecomposable $G$-equivariant coherent $\dif_X$-modules.
\end{theorem}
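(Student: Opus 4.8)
The strategy is to combine the general structure theory of Sections \ref{sec:prelim}--\ref{sec:finite} with a case-by-case examination of Kac's list in Theorem \ref{thm:list}. Since a vector space is $\dif$-affine and $G$ is reductive, Theorem \ref{thm:findim} (see also Remark \ref{rem:fundfin}) already gives, for each such $X$, an equivalence $\op{mod}_G(\dif_X)\cong\rep(\widehat{Q})$ for a finite quiver with relations $\widehat{Q}$ whose vertices are the simple equivariant $\dif_X$-modules and whose number of arrows from $x$ to $y$ equals $\dim_\complex\Ext^1(S_x,S_y)$; what remains is to identify $\widehat{Q}$ in each case. The alternatives (a)/(b)/(c) are exactly the quivers obtained, and the final sentence then follows immediately: all quivers occurring in cases (a) and (c) — namely $\AAA_3^c$ and finite disjoint unions of the $\AAA_n$ — are of finite representation type by Theorem \ref{thm:AA} and the observation following \eqref{eq:doubb}, and a finite disjoint union of finite-representation-type quivers is again of finite representation type, whereas $\EEE$ in case (b) is only tame by Theorem \ref{thm:EE}.

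For a fixed family the computation would proceed as follows. First one lists the (finitely many) $G$-orbits on $X$, recorded in the prehomogeneous-vector-space literature; this produces the stabilizers $H$ of orbit representatives and their component groups $H/H^0$, hence, by Theorem \ref{thm:eqfin} together with Remark \ref{rem:spherical}, the set of vertices of $\widehat{Q}$ (one per orbit per irreducible character of $H/H^0$). Next one realizes the simple modules explicitly: the generic ones are the equivariant rank-one connections $\complex[U]\cdot f^{i/d}$ on the open orbit $U$ (Lemma \ref{lem:generic} and Remark \ref{rem:spherical}), while the others are the subquotients $L_f^r$ cut out by the roots of the Bernstein--Sato polynomials of the relevant semi-invariants (Proposition \ref{prop:root}), or else pulled back from smaller spaces via the reduction functors of Propositions \ref{prop:induct} and \ref{prop:redaff} and the categorical embedding of Lemma \ref{lem:restrict}. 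The arrows and relations are then read off from the projective covers: by Proposition \ref{prop:proj} and Lemma \ref{lem:rep} each projective cover is a summand of some $P(\lambda)$, whose associated graded is a multiplicity-free $G$-module (Theorem \ref{thm:multfreee}(b), and Theorem \ref{thm:capelli} in the Capelli-type cases), so that composition multiplicities — equivalently, numbers of paths up to relations, by Corollary \ref{cor:quivnice} — can be extracted from $G$-representation data alone. Throughout, the twisted Fourier transform $\tilde{\mathcal{F}}$ of Section \ref{subsec:four} acts as an involutive automorphism of $\widehat{Q}$, compatibly with the Pyasetskii pairing on orbits via \eqref{eq:four-char}; this halves the work and pins down most of the arrow orientations.

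Several general constraints then force $\widehat{Q}$ into the stated shape. The quiver has no loops (Lemma \ref{lem-noext}); by Corollary \ref{cor:relations} every non-trivial path strictly drops the support along the orbit-closure order; and by Corollary \ref{cor:quivnice} there is at most one path, up to relations, between any two vertices. In the families with no semi-invariant, the complement of the big orbit has codimension $\geq 2$, there are no non-trivial equivariant local systems, and these constraints already collapse the category to something very near semi-simple — a disjoint union of isolated vertices and short $\AAA_n$'s. When a semi-invariant $f$ is present, the descending chain $\dif_X f^r\supseteq\dif_X f^{r+1}\supseteq\cdots$ indexed by the roots of $b_f(s)$, together with its Fourier image, produces precisely a "doubled $A_n$" pattern $\AAA_n$ with the $2$-cycles zero (forced by Corollary \ref{cor:quivnice}); non-connected stabilizers contribute parallel disjoint copies, one per character of the component group. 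The only entry of Kac's list whose orbit Hasse diagram and Pyasetskii involution admit a vertex of in- and out-degree $3$ is $\Sp_4\otimes\GL_4$ on $4\times4$ matrices, which yields $\EEE$; the closely related $\Sp_4\otimes\GL_3$ yields the quotient $\AAA_3^c$ of $\AAA_3$.

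The main obstacle is the most concrete step of the case analysis: for the "exotic" entries ($\SO_n\otimes\complex^*$, $\spin_7\otimes\complex^*$, $\spin_9\otimes\complex^*$, $\spin_{10}$, $\Gtwo\otimes\complex^*$, $\E\otimes\complex^*$, and the symplectic families $\Sp_{2n}\otimes\GL_2$, $\Sp_{2n}\otimes\SL_3$, $\Sp_4\otimes\GL_4$, $\Sp_4\otimes\SL_m$), one has to actually compute the Bernstein--Sato polynomials of the semi-invariants, describe the Pyasetskii pairing, and trace enough of the $\dif$-module maps between the candidate simples to certify which $\Ext^1$'s vanish and which do not, and to determine every relation. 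This is where the bulk of the effort in Section \ref{sec:examples} is spent; once each such computation is in hand, assembling the pieces into (a)/(b)/(c) and deducing the finiteness statement is routine.
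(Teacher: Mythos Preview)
Your proposal is correct and follows essentially the same approach as the paper: the proof of Theorem \ref{thm:main} in the paper is exactly the case-by-case analysis of Kac's list carried out in Sections \ref{subsec:gener}--5.5, using the same toolkit you describe (Theorem \ref{thm:findim} for the quiver, Proposition \ref{prop:root} and the $b$-function roots for the simples, Lemma \ref{lem:restrict} and Proposition \ref{prop:induct} for the inductive reduction, Corollaries \ref{cor:relations} and \ref{cor:quivnice} to bound the arrows and relations, and the twisted Fourier transform as an involutive symmetry). Your outline is faithful to what the paper actually does, including the acknowledgment that the substance lies in the individual computations of Section \ref{sec:examples}; the only caveat is that a couple of your heuristic summaries (e.g., that non-connected stabilizers always contribute ``parallel disjoint copies'', or that the $\Sp_4\otimes\GL_4$ case is singled out purely by the Hasse diagram) are slight oversimplifications of arguments that in the paper require explicit local-cohomology and even a Macaulay2 computation (Lemma \ref{lem:m2}).
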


\vspace{0.1in}

In this section, we give a proof of this theorem by a case-by-case consideration according to Theorem \ref{thm:list}. We note that the classification above is only up to geometric equivalence, and by Lemma \ref{lem:faithful} the categories of equivariant $\dif_X$-modules are not necessarily the same for two geometrically equivalent representations. Hence, in each case $\rho : G \to \GL(X)$ in Theorem \ref{thm:list} we mention how the categories can change with the choice of a different group $\tilde{G} \to \rho(G)$. By Lemma \ref{lem:faithful}, we can assume $\tilde{G}$ to be a covering group of $\rho(G)$. We will see that for each of these cases the quiver of $\op{mod}_{\tilde{G}}(\dif_X)$ differs only by some isolated vertices from a union of connected components of the quiver of $\op{mod}_G(\dif_X)$. To this end, we will compute most of the fundamental groups of the orbits and use Proposition \ref{prop:semiorbit} below.

For some of these cases, the quivers corresponding to $\op{mod}_\Lambda^{rh} (\dif_X)$ were described in \cite{bragri,nang4,nang,nang3,nang5,nang2}.
We note however that the description of the category $\op{mod}_\Lambda^{rh} (\dif_X)$ does not give immediately a description of the category $\op{mod}_{G}(\dif_X)$, since the latter in general is not closed under extensions (see Proposition \ref{prop:serre}). However, we have the following result, where  $(G,G)$ denotes the semi-simple part of $G$.

\begin{proposition}\label{prop:semiorbit}
Let $X$ be one of the irreducible $G$-spherical vector spaces in Theorem \ref{thm:list} distinct from $\spin_9 \otimes \CC^*$ and $\Gtwo \otimes \CC^*$. If there are no non-constant $G$-semi-invariants in $\complex[X]$, then each $G$-orbit is a $(G,G)$-orbit, and $\op{mod}_{(G,G)}(\dif_X) = \op{mod}_\Lambda^{rh} (\dif_X)$. Otherwise, let $Z$ denote the hypersurface defined by a non-constant $G$-semi-invariant. Then each $G$-orbit in $Z$ is a $(G,G)$-orbit, and $\op{mod}_{(G,G)}^Z(\dif_X)=\op{mod}_\Lambda^{rh, Z} (\dif_X)$.
\end{proposition}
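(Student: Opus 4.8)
The plan is to combine two ingredients: the previously established fact that $\op{mod}_H(\dif_X)$ is closed under extensions when $H$ is semisimple (Proposition \ref{prop:serre}, applied to $H=(G,G)$), together with a case-by-case analysis of orbits showing that all the relevant orbits are already $(G,G)$-orbits. Concretely, I would first reduce the second assertion (with a semi-invariant $Z$) to the first by restricting attention to $\op{mod}^Z$, since an irreducible $(G,G)$-submodule supported on $Z$ is what we need to match up against $\op{mod}_\Lambda^{rh,Z}$. The core of the proof is to establish the inclusion $\op{mod}_\Lambda^{rh}(\dif_X)\subseteq \op{mod}_{(G,G)}(\dif_X)$ (the reverse inclusion is immediate from Theorem \ref{thm:eqfin}). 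Since $\op{mod}_{(G,G)}(\dif_X)$ is closed under extensions in $\op{mod}(\dif_X)$ by Proposition \ref{prop:serre}, it suffices to show that every \emph{simple} object of $\op{mod}_\Lambda^{rh}(\dif_X)$ is $(G,G)$-equivariant.

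Next I would invoke the description of regular holonomic simples with characteristic variety inside $\Lambda$: as in the proof of Lemma \ref{lem:simplyconn}, each such simple corresponds to an irreducible local system on some $G$-orbit $O$, hence is determined by an irreducible representation of $\pi_1(O)\cong H/H^0$ where $O\cong G/H$. To get $(G,G)$-equivariance I need two things: (i) the orbit $O$ is a $(G,G)$-orbit, equivalently $(G,G)$ acts transitively on $O$; and (ii) the relevant local system is $(G,G)$-equivariant, which (since $(G,G)$ is simply connected — for all groups on Kac's list the semisimple part is simply connected) follows from $\pi_1(O)$ being identified with the component group of the $(G,G)$-stabilizer, exactly as in Lemma \ref{lem:simplyconn}. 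So the real content is (i): under the hypothesis that $\complex[X]$ has no non-constant $G$-semi-invariants, every $G$-orbit is a $(G,G)$-orbit (and in the semi-invariant case, every orbit inside $Z$ is a $(G,G)$-orbit).

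For (i) I would argue as follows. Write $G=(G,G)\cdot T_0$ where $T_0$ is a central torus (of rank $0$ or $1$ for the groups in Theorem \ref{thm:list}). If there are no non-constant $G$-semi-invariants, then $\complex[X]^{(G,G)}$ has no non-constant $T_0$-semi-invariants, i.e. $\complex[X]^{(G,G)}=\complex$; since $X$ is prehomogeneous, this forces the generic $(G,G)$-orbit to be dense, and moreover the $(G,G)$-orbit structure to coincide with the $G$-orbit structure on the whole space. (Concretely: any $G$-orbit $O$ is a union of $(G,G)$-orbits permuted by $T_0/(T_0\cap(G,G))\cong\complex^*$; if this $\complex^*$-action on the set of $(G,G)$-orbits in $O$ were nontrivial one could build a non-constant invariant or semi-invariant, contradicting $\complex[X]^{(G,G)}=\complex$ together with finiteness of the orbit set.) In the presence of a semi-invariant $f$ of weight $\sigma$, the same reasoning applied on $Z=f^{-1}(0)$ shows $\complex[Z]^{(G,G)}=\complex$ (the only $(G,G)$-invariants on $X$ are powers of $f$ up to scalars, which vanish on $Z$), so every orbit inside $Z$ is a $(G,G)$-orbit. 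I expect this orbit-combinatorics step to be the main obstacle: one must verify it uniformly, or case by case using Kac's list, and this is precisely why $\spin_9\otimes\complex^*$ and $\Gtwo\otimes\complex^*$ are excluded — there the $(G,G)$-action has strictly fewer orbits (the open $G$-orbit splits off extra $(G,G)$-invariants), so the statement genuinely fails and must be handled separately elsewhere. With (i) in hand, (ii) and the extension-closedness give $\op{mod}_{(G,G)}(\dif_X)=\op{mod}_\Lambda^{rh}(\dif_X)$, and restricting everything to modules supported on $Z$ gives the second equality $\op{mod}_{(G,G)}^Z(\dif_X)=\op{mod}_\Lambda^{rh,Z}(\dif_X)$.
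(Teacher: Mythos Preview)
Your overall architecture matches the paper's: reduce to simples via extension-closedness (Proposition \ref{prop:serre}), identify simples of $\op{mod}_\Lambda^{rh}$ with local systems on $G$-orbits, and then check that each such orbit is a $(G,G)$-orbit and each such local system is $(G,G)$-equivariant. The paper does exactly this, citing Kac for the orbit claim and invoking Lemma \ref{lem:simplyconn} for the equivariance of local systems.

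However, your proof contains a genuine error. You assert that ``for all groups on Kac's list the semisimple part is simply connected,'' and use this to conclude that every irreducible local system on an orbit is $(G,G)$-equivariant. This is false: for $\SO_n\otimes\CC^*$ one has $(G,G)=\SO_n$, which is \emph{not} simply connected. In that case Lemma \ref{lem:simplyconn} does not apply, and a priori there could be local systems on an orbit $O$ coming from representations of $\pi_1(O)$ that do not factor through the component group of the $\SO_n$-stabilizer. The paper handles this case separately: it observes that the only non-zero orbit inside $Z$ is the highest weight orbit, and uses Lemma \ref{lem:highest} (applied with the simply connected cover $\spin_n$) to show that this orbit is itself simply connected, so the only local system is the trivial one and equivariance is automatic. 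Your argument needs this patch.

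A secondary point: your conceptual argument for step (i) (that $\complex[X]^{(G,G)}=\complex$ forces the $(G,G)$-orbit structure to coincide with the $G$-orbit structure) is suggestive but not a proof as written; the passage from ``no invariants'' to ``same orbits on all of $X$'' requires more. The paper simply cites \cite[Proposition 3.3]{kac} for this, and you correctly anticipate that a case-by-case check against Kac's list is the fallback. Your explanation of the excluded cases is also slightly off: both $\spin_9\otimes\CC^*$ and $\Gtwo\otimes\CC^*$ \emph{do} have a semi-invariant, so the issue is not with the open $G$-orbit but with whether Kac's Proposition 3.3 covers the orbit structure inside $Z$ for these groups.
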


\begin{proof}
The claim about $G$-orbits that are also $(G,G)$-orbits follows by \cite[Proposition 3.3]{kac}. If $(G,G)$ is also simply connected, the claim about the categories follows by Lemma \ref{lem:simplyconn}. The only case when $(G,G)$ is not simply connected is $\SO_n \otimes \CC^*$. However, in this case $Z$ has only one non-zero orbit, which is the orbit of the highest weight vector. By working with the group $\spin_n$, we see using Lemma \ref{lem:highest} that the stabilizer is connected, hence the orbit is simply connected.
\end{proof}

In general, $\op{mod}_G^{Z}(\dif_X)$ is a subcategory of $\op{mod}_{(G,G)}^{Z}(\dif_X)$, but in our cases we will see that often equality holds.

The quivers that we describe resemble to some extent the holonomy diagrams obtained in \cite{kimu}. The reduction techniques we use for the spaces of matrices are similar to the slice methods used in \cite{bub2} to compute the $b$-functions of their semi-invariants.

\subsection{The space of $m\times n$ matrices}\label{subsec:gener}
%%%%%%%%%%%%%%%%%%%%%%%%%%%%%%%%%%%%%%%

Let $X=X_{m,n}$ be the space of $m\times n$ matrices together with the action of $G=G_{m,n}:=\GL_m(\CC)\times \GL_n(\CC)$ defined  by $(g_1,g_2)\cdot M = g_1  M g_2^{-1}$, where $M\in X$ and $(g_1,g_2)\in G$. Assume without loss of generality that $m\geq n$. Then we have $n+1$ orbits $O_0, O_1,\dots,O_n$, where $O_i$ is the subset of matrices of rank $i$. Moreover, each orbit has a connected stabilizer. By Theorem \ref{thm:eqfin} we have $n+1$ irreducible equivariant $\dif_X$-modules, all fixed under duality $\dual$. We denote the corresponding vertices of the quiver $\AAA_{n+1}$ by $(0),(1),\dots, (n)$ (from left to right), where $\AAA_{n+1}$ the obvious extension of the quiver  in \eqref{eq:double} with additional vertex $(0)$.

\begin{theorem}\label{thm:gener}
Take $X=X_{m,n}$ and $G=\GL_m(\CC)\times \GL_n(\CC)$ as above. Then we have:
\begin{itemize}
\item[(a)]  If $m=n$, the category $\op{mod}_G(\dif_X)=\op{mod}_{\GL_n \times \SL_n}(\dif_X)$ is equivalent to $\rep(\AAA_{n+1})$.

\item[(b)] If $m\neq n$, the categories $\op{mod}_{\GL_m\times \GL_n}(\dif_X)=\op{mod}_{\SL_m\times \SL_n}(\dif_X) = \op{mod}_{\Lambda}^{rh}(\dif_X)$  are semi-simple.
\end{itemize}
\end{theorem}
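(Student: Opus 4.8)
The plan is to analyze the two cases separately, using the $b$-function machinery of Section \ref{subsec:bs} together with the projective-cover results of Section \ref{subsec-linred}. First consider case (b), the non-square case $m\neq n$. Here the determinantal hypersurfaces are not $G$-stable divisors: indeed $\complex[X_{m,n}]^{\SL_m\times\SL_n}=\complex$ since there are no nonconstant semi-invariants (the only candidates, minors, do not have a one-dimensional span of fixed weight). By Proposition \ref{prop:semiorbit} applied to $G=\SL_m\times\SL_n$ (which is semisimple and simply connected), we get $\op{mod}_{\SL_m\times\SL_n}(\dif_X)=\op{mod}^{rh}_\Lambda(\dif_X)$. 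To see this category is semisimple, I would argue that $\Ext^1$ between any two of the $n+1$ simples vanishes: by Corollary \ref{cor:relations}, a nontrivial arrow from the simple supported on $\overline{O_i}$ can only go to a simple supported on $\overline{O_i}\setminus O_i$, and by the Fourier transform $\tilde{\mathcal F}$ (Section \ref{subsec:four}) together with the Pyasetskii pairing one checks the arrows would also have to respect the reversed order — forcing them all to vanish. Concretely, $H^{c}_{\overline{O_i}}(X,\orb_X)$ is already simple for each $i$ in the non-square case (no root of the relevant $b$-function produces a nonsplit extension, since the $b$-functions of the minors $b(s)=\prod_{k=1}^{i}(s+k)(s + k + m - n)\cdots$ — more precisely the reduced $b$-function of the $i$-th determinantal variety — has all its roots producing the same simple, as in \cite{perlmana}), so the local cohomology modules give the simples directly and $\op{Mod}_G(\dif_X)$ decomposes as a product of copies of $\mathbf{Vect}_\complex$. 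The equality $\op{mod}_{\GL_m\times\GL_n}(\dif_X)=\op{mod}_{\SL_m\times\SL_n}(\dif_X)$ follows from Lemma \ref{lem:faithful}: since the orbits all have connected stabilizers under $G_{m,n}$ and under $\SL_m\times\SL_n$, passing between the two groups changes nothing (the kernel of the $\SL\times\SL$-action is finite, and the relevant component groups are trivial by the remark after Lemma \ref{lem:highest} and \cite[Proposition 3.3]{kac}).

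For case (a), the square case $m=n$, the determinant $f=\det$ is an irreducible $G$-semi-invariant of weight $\sigma=\det_m\otimes\det_n^{-1}$, with $\pi_1(X\setminus Z)=\integ$ by Lemma \ref{lem:generic} (the determinantal hypersurface is normal). Here the strategy is: (i) identify the $n+1$ simples explicitly, (ii) show all the needed $\Ext^1$'s are one-dimensional, giving the arrows of $\AAA_{n+1}$, and (iii) check the relations $\alpha_i\beta_i=\beta_i\alpha_i=0$. For step (i): the simple on the open orbit is $\complex[X]$ (since over $\GL_n\times\GL_n$, $\sigma^r$ is an algebraic character only for $r\in\integ$, so there is only the trivial equivariant connection on $O_n$, by Remark \ref{rem:spherical}); the simple on $O_{n-1}$'s closure is $L^{-1}_f$ (or rather $\dif_X f^{-1}/\dif_X$), and inductively the remaining simples are obtained as irreducible quotients of local cohomology modules $H^{c_i}_{\overline{O_i}}(X,\orb_X)$, each pair related by $\tilde{\mathcal F}$. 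For step (ii): by Lemma \ref{lem:local} the injective envelope of the simple on $\overline{O_i}$ is $H^{c_i}_{O_i}(U_i,\orb_{U_i})$ with $U_i = X\setminus(\overline{O_i}\setminus O_i)$, and the exact sequence there shows exactly which simples appear just above $S_i$ — namely the ones supported on $\overline{O_i}\setminus O_i = \overline{O_{i-1}}$, and the Fourier-transformed version pins down the ones supported "below" — yielding precisely two arrows at each vertex except the ends. The multiplicity-one statement $\dim\Ext^1(S_i,S_j)\le 1$ is guaranteed by Corollary \ref{cor:quivnice} since $X_{n,n}$ is $\dif$-affine and spherical. For step (iii): since matrices of rank $i$ are $\dif$-affine spherical of Capelli type, Theorem \ref{thm:capelli} and Corollary \ref{cor:quivnice} force the composition series of $P(\lambda)$ (the projective cover of $S_i$) to have each simple with multiplicity at most one; counting the simples that actually appear — which by the local cohomology computation are $S_{i-1}, S_i, S_{i+1}$ only — shows any length-two path $S_{i-1}\to S_i\to S_{i-1}$ or $S_{i}\to S_{i\pm 1}\to S_i$ must already be zero, which is exactly the $2$-cycle relations of $\AAA_{n+1}$. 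Finally $\op{mod}_G(\dif_X)=\op{mod}_{\GL_n\times\SL_n}(\dif_X)$ by Lemma \ref{lem:faithful}, since $\SL_n$ acting diagonally on the second factor has the same orbits and connected stabilizers.

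I expect the main obstacle to be step (ii) of case (a): pinning down that the $\Ext^1$'s are nonzero (not just $\le 1$) at exactly the "adjacent" vertices and zero elsewhere. The $\le 1$ direction and the vanishing of long-range $\Ext$'s are handed to us by Corollary \ref{cor:relations} and Corollary \ref{cor:quivnice}; the positive direction requires producing genuine nonsplit extensions. The cleanest way is to read them off the two exact sequences in Lemma \ref{lem:local}: the four-term sequence $0\to H^{c_i}_{\overline{O_i}}\to H^{c_i}_{O_i}(U_i,\orb_{U_i})\to H^{c_{i}+1}_{\overline{O_{i-1}}}(X,\orb_X)\to H^{c_i+1}_{\overline{O_i}}$ shows the injective envelope of $S_i$ (after quotienting by $S_i$) surjects onto something whose socle involves $S_{i-1}$, giving $\Ext^1(S_{i-1},S_i)\neq 0$; applying $\tilde{\mathcal F}$ (which reverses the Pyasetskii pairing, \eqref{eq:four-char}) then supplies $\Ext^1(S_{i+1},S_i)\neq 0$. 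The bookkeeping of codimensions $c_i = (m-i)(n-i)$ and of which $b_f$-roots are integral versus genuinely rational (controlling whether a putative extension is killed or not) is where the care is needed, but it is routine once the local cohomology modules are in hand, and overlaps with computations already done for this case in \cite{lyubez} which this paper cites as an application.
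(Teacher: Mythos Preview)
Your approach differs substantially from the paper's main proof, which is an \emph{induction on $n$} via a slice/reduction functor: one restricts to the open set $\{x_{11}\neq 0\}$, identifies it with $H\times_{G'_{m-1,n-1}}X_{m-1,n-1}$ for a suitable parabolic-type subgroup $H$, and uses Lemma~\ref{lem:restrict} together with Proposition~\ref{prop:induct} to embed $\op{mod}_{G_{m,n}}(\dif_{X_{m,n}})_0$ into $\op{mod}_{G_{m-1,n-1}}(\dif_{X_{m-1,n-1}})$. This reduction is what drives both parts simultaneously. The paper does note (in the Remark immediately following the proof) that one can bypass the induction: for (a) using Corollary~\ref{cor:quivnice}, and for (b) using Strickland's codimension result on conormal intersections plus \cite[Theorem~6.7]{macvil}. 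Your plan for (a) is essentially this alternative, but your plan for (b) is neither the paper's argument nor the alternative, and it has a gap.

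\medskip

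\textbf{Case (a).} Your outline is workable, but the cleanest route is already implicit in what you wrote about the $b$-function: the filtration $\complex[X]\subset\dif_X f^{-1}\subset\cdots\subset\complex[X]_f$ has successive quotients $L_f^{0},\dots,L_f^{-n}$, pairwise non-isomorphic by Proposition~\ref{prop:root}, and the associated non-split extensions produce the full path $(0)\to(1)\to\cdots\to(n)$ with no relation among the $\alpha_i$. Duality $\dual$ gives the reverse $\beta$-path. Then Corollary~\ref{cor:quivnice} alone (no Capelli needed) finishes: since $[P_i:S_j]\le 1$ for all $i,j$, an extra arrow $(i)\to(j)$ with $|i-j|\ge 2$ would, together with the nonzero $\alpha$- or $\beta$-composition, force $[P_i:S_j]\ge 2$; and the 2-cycle relations $\alpha_i\beta_i=\beta_i\alpha_i=0$ follow because otherwise $[P_i:S_i]\ge 2$. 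Your detour through Lemma~\ref{lem:local} is unnecessary and actually awkward here, since $c_i+1\neq c_{i-1}$ in general, so the four-term sequence does not directly exhibit $S_{i-1}$ as the next layer.

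\medskip

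\textbf{Case (b).} Here there is a genuine error. You invoke Corollary~\ref{cor:relations} to say that an arrow out of $S_i$ can only land in a simple supported on $\ove{O_i}\setminus O_i$, but that corollary requires $\ove{O}$ to be an \emph{irreducible component of $X$}; for a vector space $X$ this means $O=O_n$ only. (Indeed the conclusion is false for intermediate orbits: already in the square $2\times 2$ case the arrow $(0)\to(1)$ goes from $S_0$ to a simple with strictly larger support.) Without this, your ``arrows respect both orders, hence vanish'' argument collapses, and you would also need the Pyasetskii pairing $O_i\leftrightarrow O_{n-i}$ as independent input. Your fallback, simplicity of $H^{c_i}_{\ove{O_i}}(X,\orb_X)$, is true but does \emph{not} by itself force $\Ext^1(S_i,S_j)=0$ in the category; it only controls one particular extension. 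The paper's inductive slice argument handles this uniformly: since $\complex[X]$ is simple injective (codimension $\ge 2$ complement), vertex $(n)$ is isolated, Fourier isolates $(0)$, and the embedding into the $(m-1)\times(n-1)$ category (semisimple by induction) isolates the rest. The non-inductive alternative the paper cites is the Strickland--MacPherson--Vilonen route, which is also different from what you sketch.
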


\begin{proof}
We prove the result by induction on $n$. For $n=0$ (\emph{i.e.} $X=\{0\}$) the result is clear. Now take any $n>0$. Recall the full category $\op{mod}_G(\dif_{X})_0$ as in Lemma \ref{lem:restrict} with $Y=\{0\}$. First, we construct a functor
\begin{gather}\label{functor-F}
  F\colon \op{mod}_{G_{m,n}}(\dif_{X_{m,n}})_0 \to
  \op{mod}_{G_{m-1,n-1}}(\dif_{X_{m-1,n-1}}).
\end{gather}
Let $X_{x_{11}}$ denote the principal open set $x_{11}\neq 0$. This is an $H$-stable subset, where $H$ is the subgroup of $G$ of matrices of the form
$$H=\begin{bmatrix} 
x & 0\\
v & A
\end{bmatrix} \times
\begin{bmatrix} 
y & w\\
0 & B
\end{bmatrix},$$
where $x,y \in \complex^*$, $v$ (resp.\ $w$) is a column (resp.\ row) vector in $\complex^{m-1}$ (resp.\ in $\complex^{n-1}$), and $(A,B)\in G_{m-1,n-1}$.
By Lemma \ref{lem:restrict}, the restriction $j^*: \op{mod}_G(\dif_{X})_0\to  \op{mod}_H(\dif_{X_{x_{11}}})$ is an embedding of categories. 

We embed $X_{m-1,n-1}$ into $X_{x_{11}}$ by placing an $(m-1) \times (n-1)$ matrix $M$ as $\begin{bmatrix} 1 & 0 \\ 0 & M\end{bmatrix}.$

Let $G'_{m-1,n-1}= G_{m-1,n-1} \times \complex^*$, which we embed into $H$ by placing a tuple $(A,B,c)$ as 
$$\left(\begin{bmatrix} c & 0 \\ 0 & A \end{bmatrix},\begin{bmatrix} c & 0 \\ 0 & B \end{bmatrix}\right).$$
Note that the factor $\complex^*$ is acting trivially on $X_{m-1,n-1}$, and since $\CC^*$ is connected, we have $\op{mod}_{G_{m-1,n-1}}(\dif_X) \cong \op{mod}_{G'_{m-1,n-1}}(\dif_X)$. It is easy to see that the natural map 
\[H\times_{G'_{m-1,n-1}} X_{m-1,n-1}\to X_{x_{11}}\]
is an isomorphism of $H$-varieties. Hence we define $F$ in (\ref{functor-F}) to be the composition of $j^*$ with the isomorphism from Proposition \ref{prop:induct}. In particular, $F$ is an embedding of categories. 

Now take the open orbit $U=O_n$ with the inclusion $j': U \to X$. The category $\op{mod}_{G}(\dif_{U})$ is equivalent to that of vector spaces since the stabilizer is connected. In particular, $\orb_{U}$ is a simple injective in $\op{mod}_{G}(\dif_{U})$, and by adjunction, we see that $j'_*\orb_{U}$ is injective and indecomposable (but perhaps not simple) in $\op{mod}_{G}(\dif_{X})$. Now we discuss according to the two cases of the theorem: square and non-square.

(a) $j'_* \orb_{U}= \complex[X]_f$, where $f$ denotes the determinant. Since the roots of the $b$-function of $f$ are $-1,-2,\dots, -n$ (see \cite[Section 2.1]{kimu}), we have $n+1$ pair-wise non-isomorphic irreducible $\dif_X$-modules $\complex[X], L^{-1}_f, \dots, L^{-n}_f$ by Proposition \ref{prop:root}. Hence this is a complete list of all irreducibles (we will see that the simple $L^{-k}_f$ corresponds to quiver vertex $(n-k)$). Since $\complex[X]_f$ is multiplicity-free (since $U$ is spherical), we have in fact $L_f^r \cong \dif_X f^r/ \dif_X f^{r+1}$ for $r=-1,-2,\dots, -n$. Put $L_f^0 = \complex[X]$. For all $r=0,-1,\dots, -n+1$, we have exact sequences 
\[ 0\to L_f^r \to \dif_X f^{r-1}/\dif_X f^{r+1} \to L_f^{r-1}\to 0.\]
Applying Lemma \ref{lem:irrind} to $ \dif_X f^{r-1}/\dif_X f^{r+1}$, we see that these exact sequences do not split.

Since $\complex[X]_f$ is the injective envelope of $\complex[X]$, its composition series as above gives us a path $(0)\xrightarrow{\alpha_1} 
(1) \xrightarrow{\alpha_2} (2) \xrightarrow{\alpha_3}\dots\xrightarrow{\alpha_{n-1}} (n-1) \xrightarrow{\alpha_n} (n)$ in the quiver $\widehat{Q}$ of $\op{mod}_G(\dif_X)$ with no relation involving only the arrows $\{\alpha_i\}$. By duality $\dual$, we must have an opposite path as well. Hence the quiver $\AAA_{n+1}$ must be a subquiver of $Q$ (at least on the level of graphs).

Note that since $\complex[X]$ appears in the injective $\complex[X]_f$ as a composition factor only once, we must have a relation $\alpha_n
\beta_n=0$, hence also $\beta_n \alpha_n = 0$. Moreover, there are no more arrows starting or ending in $(n)$. Applying a twisted Fourier
transform, $\tilde{\mathcal{F}}(\complex[X]) = L_f^{-n}$. Proceeding similarly at the vertex $(0)$, one deduces $\alpha_1 \beta_1 = 0 = \beta_1 \alpha_1$ and there are no other arrows in $Q$ starting or ending in $(0)$. 

Let $\widehat{Q}'$ be the quiver obtained from $\widehat{Q}$ by erasing vertex $(0)$ together with the arrows $\alpha_1,\beta_1$. Then $\widehat{Q}'$ corresponds to the category $\op{mod}_{G_{m,n}}(\dif_{X_{m,n}})_0$. By induction, we know that the quiver $\widehat{Q}_{n-1}$ of $\op{mod}_{G_{m-1,n-1}}(\dif_{X_{m-1,n-1}})$ is
\[
Q_{n-1} : \xymatrix{
(0) \ar@<0.5ex>[r]^{\alpha_1} & \ar@<0.5ex>[l]^{\beta_1} (1)
  \ar@<0.5ex>[r]^{\alpha_2} & \ar@<0.5ex>[l]^{\beta_2} \dots
  \ar@<0.5ex>[r]^{\alpha_{n-1}} &\ar@<0.5ex>[l]^{\beta_{n-1}} (n-1)},
\]
with relations $\alpha_i \beta_i = 0 = \beta_i \alpha_i$ for all $i=1,\dots,n-1$.
Since the functor $F$ in \eqref{functor-F} is an embedding, $\widehat{Q}'$ is a subquiver of the quiver $\widehat{Q}_{n-1}$ (see Remark \ref{rem:subquiv}). This implies $\widehat{Q}'=\widehat{Q}_{n-1}$ (with vertex $(i)$ of $Q'$ corresponding to vertex $(i-1)$ of $Q_{n-1}$) and imposes the relations $\alpha_i \beta_i = 0 = \beta_i \alpha_i$ on $\widehat{Q}'$ as well. Since we have no relations involving only the arrows $\{\alpha_i\}$ (resp.\ $\{\beta_i\}$), there are no other relations on $\widehat{Q}'$. In particular, $F$ is an equivalence of categories. 

Since $\widehat{Q}'$ was obtained from $\widehat{Q}$ by erasing $(0)$, and we know $\alpha_1$ and $\beta_1$ are the only arrows of $(0)$ with no other relations involved, the inductive step is complete.  The entire argument works by replacing the group $G$ with the group $\GL_n \times \SL_n$.

Now let $\rho(G)$ be the image of the action map $\rho: G \to \GL(X)$ (which coincides with the image of the action map $\GL_n\times \SL_n \to \GL(X)$). By Lemma \ref{lem:faithful}, we have $\op{mod}_{\rho(G)} (\dif_X) = \op{mod}_{G} (\dif_X)$. Since each non-open $G$-orbit is also a $(G,G)=\SL_n\times \SL_n$-orbit, it follows that each non-open orbit is simply connected, since $\SL_n$ is so and the $(G,G)$-stabilizers are connected. Let $\tilde{G} \to \rho(G)$ be a covering group of $\rho(G)$. If $M$ is a $\tilde{G}$-equivariant $\dif_X$-module that is not $G$-equivariant, then it must come from an integrable connection on the open orbit $U$, hence by Lemma \ref{lem:generic} it corresponds to $\complex[U] f^r$, for some $r$ that is not an integer. %Its injective envelope in $\op{mod}_{\tilde{G}} (\dif_X)$ is $j_*(\complex[U] f^r)$ which is simple, since the $b$-function of $f$ has no roots $r$ modulo $\integ$. 
By Lemma \ref{lem:faithful}, this shows that the quivers of $\op{mod}_{G} (\dif_X)$ and $\op{mod}_{\tilde{G}} (\dif_X)$ coincide up to some isolated vertices in the latter.

(b) Here we have $j'_* \orb_{U} = \complex[X]$. This implies that $\complex[X]$ is a simple injective $\dif_X$-module, and  by duality also projective. Thus, $(n)$ is an isolated vertex of the quiver, and by the twisted Fourier transform, $(0)$ is isolated as well. Deleting vertex $(0)$ we get the quiver of $\op{mod}_{G_{m,n}}(\dif_{X_{m,n}})_0$ that embeds via $F$ into $\op{mod}_{G_{m-1,n-1}}(\dif_{X_{m-1,n-1}})$. By induction, the latter is a semi-simple category, hence so is $\op{mod}_{G_{m,n}}(\dif_{X_{m,n}})_0$ and $\op{mod}_G(\dif_X)$ as well. We note that all orbits in this case are also $\SL_{m}\times \SL_{n}$-orbits (with trivial stabilizers), and the entire argument above works, \emph{mutatis mutandis}, replacing $G$ with this group. Hence $\op{mod}_G(\dif_X)=\op{mod}_{\SL_m\times \SL_n}(\dif_X)$. By Proposition \ref{prop:semiorbit}, this category is coincides with $\op{mod}_{\Lambda}^{rh}(\dif_X)$.
\end{proof}

\begin{remark}
We can avoid the reduction method from Section \ref{subsec:reduc} in the proof of the above theorem. In part (a) one can use Corollary \ref{cor:quivnice} instead of the argument by induction, while part (b) follows also from the fact that intersections of irreducible components of $\Lambda$ have codimension $\geq 2$ (see \cite[Proposition 2.10]{strick}), together with \cite[Theorem 6.7]{macvil} (see also \cite{kk}).
\end{remark}

\subsection{The space of skew-symmetric matrices}\label{subsec:skew}
%%%%%%%%%%%%%%%%%%%%%%%%%%%%%%%%%%%%%

Now let $X=\bigwedge^2 \complex^n$ be the space of $n\times n$
skew-symmetric matrices with the action of $G=\GL_n(\CC)$ defined by $g \cdot M = g  M  g^t$, where $M\in X, g\in G$. Let $r=\lfloor n/2\rfloor$. There are $r+1$ orbits $O_0,O_1,\dots, O_r$, where $O_i$ is the set of skew-symmetric matrices of rank $2i$. All the stabilizers are connected. If $n$ is even, then there is a semi-invariant (the Pfaffian) and the roots of its $b$-function are $-1,-3,\dots,-(n-1)$ (see \cite[Section 2.3]{kimu}) that give $r+1$ simple $\dif_X$-modules (including $\complex[X]$) as in Proposition \ref{prop:root}. We have the following result, whose proof is analogous to Theorem \ref{thm:gener}:

\begin{theorem}
Take $X=\bigwedge^2 \complex^n$ and $G=\GL_n(\CC)$ as above. Then we have:
\begin{itemize}
\item[(a)]  If $n=2r$ is even, then the category $\op{mod}_G(\dif_X)$ is equivalent to $\rep(\AAA_{r+1})$.
\item[(b)] If $n=2r+1$ is odd, then the categories $\op{mod}_G(\dif_X)=\op{mod}_{\SL_n}(\dif_X) = \op{mod}_{\Lambda}^{rh}(\dif_X)$  are semi-simple.
\end{itemize}
\end{theorem}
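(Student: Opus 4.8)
The plan is to mirror the inductive proof of Theorem \ref{thm:gener}, the rank now dropping in steps of two. I would induct on $n$; for $n\in\{0,1\}$ we have $X=\{0\}$, so $\op{mod}_G(\dif_X)$ is the category of vector spaces with quiver $\AAA_1$, which settles the base case of (a) (and (b) is then vacuous). For the inductive step the key object is a reduction functor
\[
F\colon \op{mod}_{\GL_n}\!\bigl(\dif_{\bigwedge^2\complex^n}\bigr)_0 \longrightarrow \op{mod}_{\GL_{n-2}}\!\bigl(\dif_{\bigwedge^2\complex^{n-2}}\bigr),
\]
constructed exactly as in Theorem \ref{thm:gener}. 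Let $X_{x_{12}}$ be the principal open set where the $(1,2)$-entry does not vanish; it is stable under the (connected) subgroup $H\subset\GL_n$ of matrices that are block lower-triangular for the splitting $\complex^n=\complex^2\oplus\complex^{n-2}$, acting by congruence, since $(g\cdot M)_{12}=\det(g_{11})\cdot M_{12}$ for such $g$. Embedding $\bigwedge^2\complex^{n-2}$ into $X_{x_{12}}$ as the block-diagonal slice $M'\mapsto J_0\oplus M'$ with $J_0$ the standard $2\times 2$ symplectic block, and letting $\GL_{n-2}\times\complex^*$ act on the slice ($\GL_{n-2}$ by congruence, $\complex^*$ scaling $J_0$ only, hence trivially on $M'$), one checks that the natural map $H\times_{\GL_{n-2}\times\complex^*}\bigwedge^2\complex^{n-2}\to X_{x_{12}}$ is an isomorphism of $H$-varieties; surjectivity is the classical block-diagonalization of a skew form whose $J_0$-corner is invertible, realized by the unipotent radical of $H$ (Gram--Schmidt against the nondegenerate form on $\complex^2$). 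Then $F$ is $j^*$ followed by the equivalence of Proposition \ref{prop:induct}(b) (together with $\op{mod}_{\GL_{n-2}}\cong\op{mod}_{\GL_{n-2}\times\complex^*}$, the extra $\complex^*$ acting trivially). By Lemma \ref{lem:restrict}, applied with $Z=\{x_{12}=0\}$ whose largest $G$-stable closed subset is $Y=\{0\}$, the functor $j^*$, hence $F$, is an embedding of categories, so by Remark \ref{rem:subquiv} the quiver of $\op{mod}_{\GL_n}(\dif_X)_0$ is a subquiver of that of $\op{mod}_{\GL_{n-2}}(\dif_{\bigwedge^2\complex^{n-2}})$.

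Next I would analyse the open orbit $U=O_r$ with inclusion $j'\colon U\hookrightarrow X$; its stabilizer is connected, so $\op{mod}_G(\dif_U)$ is the category of vector spaces, $\orb_U$ is its only simple, and $j'_*\orb_U$ is an indecomposable injective of $\op{mod}_G(\dif_X)$. In case (a), $n=2r$, the complement of $U$ is the Pfaffian divisor and $j'_*\orb_U=\complex[X]_f$ with $f$ the (irreducible) Pfaffian; since the roots of $b_f(s)$ are $-1,-3,\dots,-(n-1)$, Proposition \ref{prop:root} produces exactly the $r+1$ pairwise non-isomorphic simples $\complex[X]=L_f^0,L_f^{-1},L_f^{-3},\dots,L_f^{-(2r-1)}$, which by Theorem \ref{thm:eqfin} are all of them; multiplicity-freeness of $\complex[X]_f$ (sphericality of $U$) forces $L_f^{s}\cong\dif_X f^{s}/\dif_X f^{s+1}$ at each root $s$, and the composition chain of $\complex[X]_f$ yields an unbroken path $(r)\to(r-1)\to\dots\to(0)$ in the quiver $\widehat{Q}$, the reverse path being supplied by $\dual$, and the $2$-cycle relations at the two ends forced because $\complex[X]$ (resp.\ its twisted-Fourier image $\tilde{\mathcal{F}}(\complex[X])=L_f^{-(2r-1)}$, the simple at $(0)$) occurs only once in $\complex[X]_f$ (resp.\ in $\tilde{\mathcal{F}}(\complex[X]_f)$) --- exactly as in Theorem \ref{thm:gener}(a). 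Erasing vertex $(0)$ and its two arrows gives the quiver of $\op{mod}_{\GL_n}(\dif_X)_0$, which, via the embedding $F$ into $\op{mod}_{\GL_{n-2}}(\dif_{\bigwedge^2\complex^{n-2}})$ (whose quiver is $\AAA_r$ by the inductive hypothesis), must equal $\AAA_r$ with precisely the $2$-cycle relations; hence $\widehat{Q}=\AAA_{r+1}$ and, incidentally, $F$ is an equivalence.

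In case (b), $n=2r+1$, there is no non-constant semi-invariant and the complement of $U$ has codimension $\ge 3$, so $j'_*\orb_U=\complex[X]$ is a simple injective, hence by $\dual$ also projective; thus the vertex carrying $\complex[X]$ is isolated, and applying $\tilde{\mathcal{F}}$ shows the vertex $(0)$ (carrying $\tilde{\mathcal{F}}(\complex[X])$, supported at $\{0\}$) is isolated as well. Deleting $(0)$, the embedding $F$ and the inductive hypothesis (the target being semi-simple, with $\lfloor(n-2)/2\rfloor=r-1$) show that the quiver of $\op{mod}_{\GL_n}(\dif_X)_0$ has no arrows, so $\op{mod}_{\GL_n}(\dif_X)_0$, and hence $\op{mod}_G(\dif_X)$, is semi-simple.

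Finally I would compare with the $\SL_n$-picture and with covering groups. Every non-open orbit is an $\SL_n$-orbit with connected stabilizer by \cite[Proposition 3.3]{kac}, hence simply connected since $\SL_n$ is simply connected; for $U=O_r$, in case (b) its complement has codimension $\ge 2$ so $U$ is simply connected, while in case (a) the Pfaffian hypersurface is normal (being $G$-spherical, Remark \ref{rem:spherical}), so Lemma \ref{lem:generic} classifies the (equivariant) integrable connections on $U$. Combining with Lemma \ref{lem:faithful} shows that for any covering group $\tilde{G}\to\rho(G)$ the quiver of $\op{mod}_{\tilde{G}}(\dif_X)$ differs from that of $\op{mod}_G(\dif_X)$ only by isolated vertices; and rerunning the whole argument with $\SL_n$ in place of $\GL_n$ (all orbits being $\SL_n$-orbits with connected stabilizers) gives $\op{mod}_G(\dif_X)=\op{mod}_{\SL_n}(\dif_X)$, which in case (b) equals $\op{mod}_\Lambda^{rh}(\dif_X)$ by Proposition \ref{prop:semiorbit}. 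The step I expect to be the real work is the first one: verifying that $X_{x_{12}}\cong H\times_{\GL_{n-2}\times\complex^*}\bigwedge^2\complex^{n-2}$ as $H$-varieties, i.e.\ that block-diagonalization of a skew form with invertible $J_0$-corner is an everywhere-defined algebraic operation on $X_{x_{12}}$ realized inside $H$. Once that is in place, the remaining arguments transfer from the proof of Theorem \ref{thm:gener} essentially verbatim, with ``rank drops by $1$'' and ``consecutive negative integers'' replaced by ``rank drops by $2$'' and ``odd negative integers.''
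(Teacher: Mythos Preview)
Your proposal is correct and follows exactly the route the paper intends; indeed the paper's own proof reads, in full, ``whose proof is analogous to Theorem \ref{thm:gener}''. One small correction is needed in your slice construction: the setwise stabilizer in $H$ of the slice $\{J_0\oplus M'\}$ is $\SL_2\times\GL_{n-2}$, not $\GL_{n-2}\times\complex^*$. Since $A\,J_0\,A^t=\det(A)\,J_0$ for $A\in\GL_2$, preserving the $J_0$-corner forces $\det A=1$, and a dimension count shows $H\times_{\GL_{n-2}\times\complex^*}\bigwedge^2\complex^{n-2}$ is two dimensions too large to equal $X_{x_{12}}$. With $G'=\SL_2\times\GL_{n-2}$ the isomorphism $H\times_{G'}\bigwedge^2\complex^{n-2}\cong X_{x_{12}}$ goes through; $\SL_2$ is connected and acts trivially on the slice, so Lemma \ref{lem:faithful} still gives $\op{mod}_{G'}(\dif_{\,\text{slice}})\cong\op{mod}_{\GL_{n-2}}(\dif_{\bigwedge^2\complex^{n-2}})$ and nothing else in your argument changes. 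A second minor point: the clause ``rerunning the whole argument with $\SL_n$'' applies only to part (b), since in part (a) the open orbit is not a single $\SL_n$-orbit (the Pfaffian is $\SL_n$-invariant); the theorem itself makes no $\SL_n$-claim in (a), so this does not affect what you need to prove.
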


\subsection{The space of symmetric matrices}\label{subsec:symm}

Now let $X=\Sym_2 \complex^n$ be the space of $n\times n$ symmetric matrices with the action of $G=\GL_n(\CC)$ defined by $g \cdot M = g  M  g^t$, where $M\in X, g\in G$. There are $n+1$ orbits $O_0,O_1,O_2,\dots, O_n$ given again by ranks. What makes this case more interesting than the previous ones is that the stabilizers of each non-zero orbit have $2$ connected components (although this can be avoided when $n$ is odd by choosing the group $\GL_n/\{\pm I_n\}$ instead, hence making the action faithful). The semi-invariant is the symmetric determinant $f=\det Y$, where $Y=Y^t$ is a generic symmetric $n\times n$ matrix of variables. The degree of $f$ is $n$ with $G$-character $\det^2$. Moreover, the roots of the $b$-function of $f$ are $-1,-3/2,-2,\dots,-(n+1)/2$ (see \cite[Section 2.2]{kimu} or \cite[Example 2.8]{bub2}). By Proposition \ref{prop:root}, the roots give $n+2$ simples
\[\complex[V], L^{-1}_f, L^{-2}_f \dots, L_f^{-\lfloor (n+1)/2 \rfloor}, \mbox{ and } \dif f^{-1/2}, L^{-3/2}_f, \dots , L^{-1/2-\lfloor n/2 \rfloor}_f,\]
out of a total $2n+1$ simple equivariant $\dif_X$-modules. From the proof of Theorem \ref{thm:symm}, it turns out that we have equality $\dif f^r/\dif f^{r+1}=L^r_f$ for all roots $r$, and these will correspond precisely to the non-isolated vertices of the quiver $\widehat{Q}$. The other $n-1$ simples will correspond to isolated vertices in $\widehat{Q}$ (\emph{i.e.} no arrows connected to them), moreover, they have no $\mathfrak{sl}_n(\CC)$-invariant (non-zero) sections. This was shown in \cite{claudiu1, claudiu5} by explicit computations, and so it provides a counterexample to a conjecture of T. Levasseur \cite[Conjecture 5.17]{leva}. We give a simple conceptual proof of this.

\begin{proposition}\label{prop:leva}
Up to isomorphism, there are precisely $n-1$ simple $G$-equivariant $\dif_X$-modules that have no (non-zero) $\mathfrak{sl}_n(\CC)$-invariant sections.
\end{proposition}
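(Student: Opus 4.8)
The plan is to recast ``$S$ carries a nonzero $\mathfrak{sl}_n(\complex)$-invariant section'' as ``$S$ is a quotient of $P(\det^j)$ for some $j\in\integ$'', and then enumerate those quotients. Here $X=\Sym_2\complex^n$, $G=\GL_n(\complex)$, $f=\det Y$ is the irreducible semi-invariant (of weight $\det^2$) with $f^{-1}(0)=\ove{O_{n-1}}=:Z$, and $\det^j$ denotes the one-dimensional rational $G$-module on which $g$ acts by $\det(g)^j$, so that $P(\det^j)=P(V_\lambda)$ for $\lambda=(j,\dots,j)\in\Pi$. Since $G$ is connected and $\mathfrak{sl}_n(\complex)=[\mathfrak{gl}_n,\mathfrak{gl}_n]$ is an ideal, stable under the adjoint action of $G$ and complementary to the center, the $\mathfrak{sl}_n(\complex)$-invariants in any rational $G$-module form a $G$-submodule whose irreducible constituents are precisely the one-dimensional ones, i.e.\ the modules $\det^j$, $j\in\integ$. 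Hence a simple $S\in\op{mod}_G(\dif_X)$ has a nonzero $\mathfrak{sl}_n(\complex)$-invariant section if and only if $\Hom_G(\det^j,\Gamma(X,S))\neq0$ for some $j$, which by Lemma \ref{lem:rep}(a) means $\Hom_{\dif_X}(P(\det^j),S)\neq0$ for some $j$, i.e.\ $S$ is a quotient of some $P(\det^j)$. So it suffices to prove that exactly $n+2$ simples arise this way; subtracting from the total of $2n+1$ simple equivariant $\dif_X$-modules (recalled above) then leaves $n-1$.

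\emph{Key step: each $P(\det^j)$ has a unique simple quotient.} Fix $j$. As we will see, $P(\det^j)$ surjects onto $\dif_X f^{j/2}\ni f^{j/2}\neq0$, so $P(\det^j)\neq0$; it lies in $\op{mod}_G(\dif_X)$ by Lemma \ref{lem:rep}(c), hence is of finite length (Theorem \ref{thm:eqfin}), so it has a simple quotient $S$, and then $m_{\det^j}(\Gamma(X,S))\neq0$ by Lemma \ref{lem:rep}(a). Since $X$ is of Capelli type, Theorem \ref{thm:capelli} applies to give that $P(\det^j)$ is the projective cover of $S$ in $\op{mod}_G(\dif_X)$, which by Theorem \ref{thm:findim} is the module category of a finite-dimensional algebra; a projective cover of a simple object has that simple as its top, hence as its unique simple quotient. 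Writing $S_j$ for this simple, the simples with $\mathfrak{sl}_n(\complex)$-invariant sections are exactly $\{S_j:j\in\integ\}$.

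\emph{Identification and count.} For $j=2\ell$ even, $f^\ell\in\complex[X]_f$ has weight $\det^{2\ell}$, so Lemma \ref{lem:rep}(a) yields $P(\det^j)\twoheadrightarrow\dif_X f^\ell$. For $j=2\ell+1$ odd, $\complex[X]_f\cdot f^{1/2}$ is the nontrivial equivariant rank-one local system on $U=O_n=X\setminus Z$ (Remark \ref{rem:spherical}, as $\det$ is a character of $G$), the section $f^{j/2}=f^\ell f^{1/2}$ has weight $\det^j$, and likewise $P(\det^j)\twoheadrightarrow\dif_X f^{j/2}$. Now I bring in the roots of $b_f(s)$, namely $-\tfrac{k+1}{2}$ for $1\le k\le n$ (so the integer roots are $-1,\dots,-\lceil n/2\rceil$ and the half-integer roots are $-\tfrac32,\dots,-\lfloor n/2\rfloor-\tfrac12$), together with the standard facts of Section \ref{subsec:bs} and Proposition \ref{prop:root}: $\dif_X f^r=\dif_X f^{r+1}$ when $r$ is not a root; $\dif_X f^r/\dif_X f^{r+1}$ is nonzero with unique irreducible quotient $L_f^r$ when $r$ is a root; and $\dif_X f^r$ is irreducible when none of $r,r+1,r+2,\dots$ is a root. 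A short case check on the parity and sign of $j$ — the only genuine input being that $-\tfrac12$ is \emph{not} a root — then shows that $\{S_j:j\in\integ\}$ consists exactly of $\complex[X]$ (from $j$ even $\ge0$), the modules $L_f^r$ for $r$ a root of $b_f$ (from negative $j$), and $\dif_X f^{-1/2}$ (from $j\in\{-1,1,3,\dots\}$): that is $1+n+1=n+2$ modules. They are pairwise non-isomorphic: the $L_f^r$ are distinct by Proposition \ref{prop:root}; $\complex[X]$ and the $L_f^r$ with $r\in\integ$ are subquotients of $\complex[X]_f$ and so have only semi-invariant weights that are powers of $\det^2$, while $\dif_X f^{-1/2}$ and the $L_f^r$ with $r\notin\integ$ are subquotients of $\complex[X]_f\cdot f^{1/2}$ and so have only odd powers of $\det$; and within each of these two groups, $\complex[X]$ (resp.\ $\dif_X f^{-1/2}$) is the only one admitting a semi-invariant of weight $\det^0$ (resp.\ $\det^{-1}$), since by the last paragraph of the proof of Proposition \ref{prop:root} each $L_f^r$ has no semi-invariant of weight exceeding $\det^{2r}$. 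Hence $n+2$ simples carry a nonzero $\mathfrak{sl}_n(\complex)$-invariant section and $2n+1-(n+2)=n-1$ do not. The main obstacle is precisely the uniqueness in the Key step: it is the Capelli-type property, via Theorem \ref{thm:capelli}, that keeps each $P(\det^j)$ indecomposable with a single simple quotient and lets us index the relevant simples cleanly by $\integ$; the remainder is bookkeeping with the root list of $b_f$.
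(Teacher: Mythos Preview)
Your proof is correct and follows essentially the same approach as the paper: both arguments hinge on the Capelli-type property of $\Sym_2\complex^n$ to guarantee that each one-dimensional weight $\det^j$ appears in at most one simple (you invoke Theorem \ref{thm:capelli} to get that $P(\det^j)$ is an indecomposable projective cover, while the paper cites Corollary \ref{cor:nocommon}(c), which is the same statement in dual form), and then identify the resulting $n+2$ simples with the list coming from the roots of $b_f$. Your identification step is carried out in more explicit detail than the paper's, but the substance is the same.
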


\begin{proof}
An $\mathfrak{sl}_n(\CC)$-invariant section of a $\GL_n(\CC)$-equivariant $\dif_X$-module is $\GL_n(\CC)$-semi-invariant of weight $\det^k$, for some $k\in \integ$. Using Proposition \ref{prop:root}, we see that for all the possible powers $k\in \integ$, there is exactly one simple $\dif_X$-module with a non-zero $\GL_n(\CC)$-semi-invariant section of weight $\det^k$ among the $n+2$ simples
\[\complex[V], L^{-1}_f, L^{-2}_f \dots, L_f^{-\lfloor (n+1)/2 \rfloor}, \mbox{ and } \dif f^{-1/2}, L^{-3/2}_f, \dots , L^{-1/2-\lfloor n/2 \rfloor}_f.\]
Since $\Sym_2 \complex^n$ is of Capelli type (see \cite[Section 15]{howumed}), we conclude by Corollary \ref{cor:nocommon} (c) that the other $n-1$ simples do not contain (non-zero) $\mathfrak{sl}_n(\CC)$-invariant sections.
\end{proof}

Now we proceed with the determination of the quiver $\widehat{Q}$, which has $2n+1$ vertices. We introduce the following notation. We label by vertex $(i)$ (resp.\ $(i)'$) the simple in $\op{mod}_G(\dif_X)$ with support $\ove{O_i}$ corresponding to the trivial (resp.\ non-trivial) $G$-equivariant simple local system on $O_i$, which in turn corresponds to the trivial (resp.\ sign) representation of the two-element group $\integ_2=\{\pm 1\}$. By convention, $(0) = (0)'$.  Let $\epsilon$ denote $0$ if $n$ is even and $1$ if $n$ is odd.

\begin{theorem}\label{thm:symm}
Take $X=\Sym_2 \complex^n$ and $G=\GL_n(\CC)$ with the notation as above. The category $\op{mod}_G(\dif_X)$ is equivalent to $\rep(\widehat{Q})$, where the following are connected components of $\widehat{Q}$
\[\xymatrix{
(1-\epsilon) \ar@<0.5ex>[r] & \ar@<0.5ex>[l]  (3-\epsilon)
\ar@<0.5ex>[r] & \ar@<0.5ex>[l] \dots
\ar@<0.5ex>[r] &\ar@<0.5ex>[l]  (n-3)
\ar@<0.5ex>[r] &\ar@<0.5ex>[l]  (n-1)
\ar@<0.5ex>[r] & \ar@<0.5ex>[l] (n)},\]
\[\hspace{1.5pc}\xymatrix{
(\epsilon)' \ar@<0.5ex>[r] & \ar@<0.5ex>[l]  (\epsilon+2)'
\ar@<0.5ex>[r] & \ar@<0.5ex>[l] \dots
\ar@<0.5ex>[r] &\ar@<0.5ex>[l]  (n-4)'
\ar@<0.5ex>[r] &\ar@<0.5ex>[l]  (n-2)'
\ar@<0.5ex>[r] & \ar@<0.5ex>[l] (n)'}, 
\]
with all $2$-cycles zero, and the other $n-1$ vertices of $\widehat{Q}$ are isolated.
\end{theorem}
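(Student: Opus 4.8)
The plan is to mirror the proof of Theorem \ref{thm:gener}: first enumerate the simple objects, then realize the ``visible'' ones explicitly through the Bernstein--Sato polynomial of the determinant, assemble the two arrow-chains from their injective envelopes together with holonomic duality $\dual$ and the twisted Fourier transform, and finally use Proposition \ref{prop:leva}, the Capelli-type consequences of Section \ref{subsec-linred}, and an inductive reduction to show the remaining $n-1$ vertices are isolated and that no further arrows or relations occur. First I would record the list of simples: the $G$-orbits are $O_0,\dots,O_n$ by rank, the stabilizer of a rank-$i$ form for $i\ge 1$ has component group $\integ_2$ (its reductive part is $\op{O}_i(\complex)$), and $O_0=\{0\}$ has connected stabilizer, so Theorem \ref{thm:eqfin} produces exactly the claimed $2n+1$ simples $(i),(i)'$ ($1\le i\le n$) and $(0)=(0)'$, all fixed by $\dual$ since the relevant $\integ_2$-representations are self-dual.

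Next I would produce the $n+2$ explicit simples. Take $f=\det$, the unique irreducible semi-invariant, of weight $\det^2$ and degree $n$, with $b$-function roots $-1,-\tfrac32,-2,\dots,-\tfrac{n+1}2$ by \cite[Section 2.2]{kimu}. Proposition \ref{prop:root} then yields the pairwise non-isomorphic simples $\complex[X],L_f^{-1},\dots,L_f^{-\lfloor(n+1)/2\rfloor}$ and $\dif_X f^{-1/2},L_f^{-3/2},\dots,L_f^{-1/2-\lfloor n/2\rfloor}$, where $\complex[X]=\dif_X f^{0}$ and $\dif_X f^{-1/2}$ are simple because none of $0,-\tfrac12$ and their positive translates are roots. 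Using that $\complex[X]_f=\Gamma(O_n,\orb_{O_n})$ is multiplicity-free ($O_n$ is spherical), Theorem \ref{thm:multfreee}, and the argument of Lemma \ref{lem:irrind} applied to the semi-invariant generator $f^{r}$, one gets $\dif_X f^{r}/\dif_X f^{r+1}=L_f^{r}$ for every root $r$. Since $\orb_X$ is the simple injective on the open orbit, $j'_*\orb_{O_n}=\complex[X]_f$ is the indecomposable injective envelope of $(n)=\complex[X]$ (its socle is $\complex[X]$ by adjunction); reading off its composition series through the filtration $\complex[X]=\dif_X f^{0}\subset\dif_X f^{-1}\subset\cdots\subset\complex[X]_f$, with the non-splittings supplied by Lemma \ref{lem:irrind}, gives one of the two chains, and $\dual$ gives the reverse arrows. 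The same analysis applied to the indecomposable injective $j'_*(\text{sign system on }O_n)=\complex[X]_f\cdot f^{1/2}$ produces the second chain, whose bottom vertex is the delta-module $(0)$ at the origin; the twisted Fourier transform $\tilde{\mathcal F}$ interchanges the two tops and preserves the relations. As in Theorem \ref{thm:gener}, the multiplicity-one occurrence of $\complex[X]$ (resp.\ $\dif_X f^{-1/2}$) in its own injective envelope forces the $2$-cycle relations $\alpha_i\beta_i=\beta_i\alpha_i=0$, the composition series using only the $\alpha$'s rules out any relation among them, and Corollary \ref{cor:quivnice} shows there is nothing further once all vertices are accounted for.

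Finally I would show the remaining $n-1$ simples are isolated vertices. By Proposition \ref{prop:leva} these are precisely the $G$-equivariant simples with no nonzero $\mathfrak{sl}_n(\complex)$-invariant section, and they are exactly the simples missing from the list above (each $f^{r}$, being $\GL_n$-semi-invariant, is an $\mathfrak{sl}_n$-invariant section of the corresponding $L_f^{r}$ or of $\dif_X f^{-1/2}$). Since $X=\Sym_2\complex^n$ is of Capelli type (\cite[Section 15]{howumed}), Corollary \ref{cor:nocommon}(c) shows the sections of these simples share no irreducible $G$-module with the sections of any other simple; combining this with the inductive reduction on the principal affine open $\{y_{11}\ne 0\}$ (as in \eqref{functor-F}: the Schur complement reduces to $\Sym_2\complex^{n-1}$, dropping the rank by one), one shows by induction that these simples have no extensions in either direction with any other simple, hence are projective-injective, i.e.\ isolated. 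This simultaneously produces the ``skip-by-$2$'' pattern of the chains, since an arrow $(i)\to(i-2)$ appears precisely because the rank-$(i-1)$ simple of the opposite local-system sheet is isolated; the dependence on the parity $\epsilon$ is then automatic, as $L_f^{-k}$ is supported on the orbit of corank $2k-1$ and $L_f^{-(2j+1)/2}$ on the orbit of corank $2j$. The statement about covering groups $\tilde G\to\rho(G)$ follows as in Theorem \ref{thm:gener} from the fundamental-group computations of Lemma \ref{lem:highest} and Proposition \ref{prop:semiorbit}.

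I expect the main obstacle to be establishing that the $n-1$ ``extra'' simples are genuinely isolated, equivalently that $\dif_X f^{r}/\dif_X f^{r+1}$ skips a corank at each step of the filtration of $\complex[X]_f$. Because $\op{mod}_G(\dif_X)$ is not closed under extensions in general (cf.\ Proposition \ref{prop:serre}), the relevant $\Ext^1$-vanishing is not a purely local statement: one must combine the Capelli-type vanishing of Corollary \ref{cor:nocommon}(c) with the inductive reduction, and carefully track how the two $\integ_2$ local-system sheets are interchanged upon restricting to $\{y_{11}\ne 0\}$ and absorbing the extra $\CC^*$-factor of the reduced group. Getting this matching right is exactly what pins down the connected components of $\widehat{Q}$ and which $n-1$ simples end up isolated.
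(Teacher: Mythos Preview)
Your overall strategy matches the paper's: enumerate simples via component groups, realize the non-isolated ones through the $b$-function filtration of $\complex[X]_f$ and $\complex[X]_f\cdot f^{1/2}$, read off the two chains from these injective envelopes, use $\dual$ and $\tilde{\mathcal F}$ for the reverse arrows and the attachment of $(0)$, and finish by an inductive reduction on $\{y_{11}\neq 0\}$.

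There is, however, a concrete gap in your reduction step. You write that one must ``absorb the extra $\CC^*$-factor of the reduced group'', but in the symmetric case the slice stabilizer is \emph{not} $\GL_{n-1}\times\CC^*$: the element $\left[\begin{smallmatrix} x & 0\\ v & A\end{smallmatrix}\right]$ fixes $\left[\begin{smallmatrix}1&0\\0&M\end{smallmatrix}\right]$ only if $x^2=1$, so the reduced group is $G'_{n-1}=\GL_{n-1}\times\integ_2$ with $\integ_2$ acting trivially on $X_{n-1}$. This is the crux of the argument. By Lemma \ref{lem:faithful} the target category splits as $\op{mod}_{G_{n-1}}(\dif_{X_{n-1}})\oplus\op{mod}_{G_{n-1}}(\dif_{X_{n-1}})$, i.e.\ $\rep(\widehat{Q}_{n-1})\oplus\rep(\widehat{Q}_{n-1}^{-})$, and the component groups of the $H$-stabilizers on $X_{y_{11}}$ are $\integ_2\times\integ_2$, mapping to the $G$-component group $\integ_2$ by multiplication $(a,b)\mapsto ab$. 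Chasing which $\integ_2\times\integ_2$-characters are pulled back from $\integ_2$ tells you exactly which vertices of $\widehat{Q}_{n-1}\cup\widehat{Q}_{n-1}^{-}$ are hit by the embedding $F$; the $n-1$ isolated vertices and the skip-by-$2$ pattern then fall out mechanically from the inductive hypothesis together with Remark \ref{rem:subquiv}.

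Relatedly, your invocation of Corollary \ref{cor:nocommon}(c) does not do what you want. That corollary says non-isomorphic simples have no common $G$-types in their global sections; it does not give $\Ext^1$-vanishing, and the paper does not use it here. Isolation of the $n-1$ extra simples is obtained purely from the subquiver embedding into $\widehat{Q}_{n-1}\cup\widehat{Q}_{n-1}^{-}$, not from Capelli-type section arguments. Once you correct the reduced group to $\GL_{n-1}\times\integ_2$ and track the component-group map as above, the rest of your outline goes through exactly as in the paper.
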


\begin{proof}
The proof is similar to the proof of Theorem \ref{thm:gener} and we proceed by induction on $n$. The case $n=1$ holds, by inspection. Now take any $n>0$, and put $X_n:=X$, $G_n :=  G$. We construct a functor $F$ as in (\ref{functor-F})
\[
  F\colon \op{mod}_{G_{n}}(\dif_{X_{n}})_0 \to
  \op{mod}_{G_{n-1}\times \integ_2}(\dif_{X_{n-1}}).
\]
Let $X_{x_{11}}$ denote the principal open set $x_{11}\neq 0$. This is an $H$-stable subset, where $H$ is the subgroup of $G$ of matrices of the form $H=\begin{bmatrix} 
x & 0\\
v & A
\end{bmatrix}$,
where $x\in\complex^*$, $v$ is a column vector in $\complex^{n-1}$, and $A\in G_{n-1}$.
By Lemma \ref{lem:restrict}, the restriction $j^*: \op{mod}_G(\dif_{X})_0\to  \op{mod}_H(\dif_{X_{x_{11}}})$ is an embedding of categories. 

We embed $X_{n-1}$ into $X_{x_{11}}$ by placing an $(n-1) \times (n-1)$ symmetric matrix $M$ as $\begin{bmatrix} 1 & 0 \\ 0 & M\end{bmatrix}.$

Let $G'_{n-1}= G_{n-1} \times \integ_2$, which we embed into $H$ by placing a tuple $(A,\pm 1)$ as 
$\begin{bmatrix} \pm 1 & 0 \\ 0 & A \end{bmatrix}.$
Note that the factor $ \integ_2$ is acting trivially on $X_{n-1}$, hence 
\[\op{mod}_{G'_{n-1}}(\dif_{X_{n-1}})\cong \op{mod}_{G_{n-1}}(\dif_{X_{n-1}})\oplus \op{mod}_{G_{n-1}}(\dif_{X_{n-1}}),\]
by Lemma \ref{lem:faithful}.
The natural map 
\[H\times_{G'_{n-1}} X_{n-1}\to X_{x_{11}}\] 
is an isomorphism of $H$-varieties. Hence we let $F$ be the composition of $j^*$ with the isomorphism from Proposition \ref{prop:induct}. In particular, $F$ is an embedding of categories. 

By induction, we know that the quiver $\widehat{Q}_{n-1}$ of $\op{mod}_{G_{n-1}}(\dif_{X_{n-1}})$ is
\[\xymatrix{
(\epsilon) \ar@<0.5ex>[r] & \ar@<0.5ex>[l]  (\epsilon+2)
\ar@<0.5ex>[r] & \ar@<0.5ex>[l] \dots
\ar@<0.5ex>[r] &\ar@<0.5ex>[l]  (n-2)
\ar@<0.5ex>[r] & \ar@<0.5ex>[l] (n-1)},\]
\[\xymatrix{
(1-\epsilon)' \ar@<0.5ex>[r] & \ar@<0.5ex>[l]  (3-\epsilon)'
\ar@<0.5ex>[r] & \ar@<0.5ex>[l] \dots
\ar@<0.5ex>[r] &\ar@<0.5ex>[l]  (n-3)'
\ar@<0.5ex>[r] & \ar@<0.5ex>[l] (n-1)'}, 
\]
with all $2$-cycles zero. Hence, we can write $\op{mod}_{G'_{n-1}}(\dif_{X_{n-1}})\cong \op{mod}_H(\dif_{X_{x_{11}}}) \cong \rep(\widehat{Q}_{n-1}) \oplus \rep(\widehat{Q}_{n-1}^{-})$. Here $\widehat{Q}_{n-1}^{-}$ is just another copy of the quiver $\widehat{Q}_{n-1}$. The reason for this extra copy is that the component groups of the $H$-stabilizers of the orbits in $X_{x_{11}}$ are isomorphic to $\integ_2 \times \integ_2$. These component groups $\integ_2 \times \integ_2$ map naturally to the corresponding component groups $\integ_2$ of the $G$-stabilizers of the non-zero orbits in $X$ via $(a,b)\mapsto ab$, for $a,b\in \{\pm 1\}$. Under this map $\integ_2 \times \integ_2 \to \integ_2$, the trivial representation of $\integ_2$ induces (by \lq\lq restriction'') the trivial representation of $\integ_2\times \integ_2$, and the sign representation $\op{sgn}$ of $\integ_2$ induces the representation $\op{sgn} \otimes \op{sgn}$ of $\integ_2\times \integ_2$. This gives which of the simple $H$-equivariant $\dif_{X_{x_{11}}}$-modules are actually restrictions of $G$-equivariant $\dif_X$-modules.

Let $\widehat{Q}'$ be the quiver obtained from $\widehat{Q}$ by erasing vertex $(0)$ together with the arrows connected to it. Then $\widehat{Q}'$ corresponds to the category $\op{mod}_{G_{n}}(\dif_{X_{n}})_0$. Since the functor $F$ is an embedding, $\widehat{Q}'$ is a subquiver of the quiver $\widehat{Q}_{n-1} \cup \widehat{Q}_{n-1}^-$ (see Remark \ref{rem:subquiv}). By the discussion above regarding the component groups, the embedding of quivers is achieved as follows: for any $i=1,\dots,n$, the vertex $(i)$ of $Q'$ is sent to vertex $(i-1)$ of $\widehat{Q}_{n-1}$, while the vertex $(i)'$ of $\widehat{Q}'$ is sent to vertex $(i-1)'$ of $\widehat{Q}^{-}$. By erasing the other vertices of $\widehat{Q}_{n-1} \cup \widehat{Q}_{n-1}^-$ (that is, disregarding the $H$-equivariant $\dif_{X_{x_{11}}}$-modules that do not come from $G$-equivariant $\dif_X$-modules), we obtain that $\widehat{Q}'$ must be a subquiver of the following quiver (with all 2-cycles zero)

\[\xymatrix{
(1+\epsilon) \ar@<0.5ex>[r] & \ar@<0.5ex>[l]  (3+\epsilon)
\ar@<0.5ex>[r] & \ar@<0.5ex>[l] \dots
\ar@<0.5ex>[r] &\ar@<0.5ex>[l]  (n-1)
\ar@<0.5ex>[r] & \ar@<0.5ex>[l] (n)},\]
\[\hspace{0.2pc}\xymatrix{
(2-\epsilon)' \ar@<0.5ex>[r] & \ar@<0.5ex>[l]  (4-\epsilon)'
\ar@<0.5ex>[r] & \ar@<0.5ex>[l] \dots
\ar@<0.5ex>[r] &\ar@<0.5ex>[l]  (n-2)'
\ar@<0.5ex>[r] & \ar@<0.5ex>[l] (n)'}, 
\]
together with $n-1$ isolated vertices.

As in the proof of Theorem \ref{thm:gener}, due to the composition series of $\complex[X]_f$ and $\complex[X]_{f}\cdot f^{1/2}$ we can see that in the above quiver there are no other relations among the arrows. Applying the twisted Fourier transform, we see that $\tilde{\mathcal{F}}(\complex[X]) = L_f^{-(n+1)/2}$ is a composition factor in $\complex[X]_f$ (resp.\ $\complex[X]_{f}\cdot f^{1/2}$) when $n$ is even (resp.\ odd), which shows that the vertex $(0)$ attaches to the bottom (resp.\ top) component of $\widehat{Q}'$ in the desired way.

It is easy to see that the $\SL_n$-stabilizers of the non-open orbits have two connected components, hence their fundamental groups are equal to $\integ_2$. Moreover, by Proposition \ref{prop:semiorbit} we have $\op{mod}_{\GL_n}^Z(\dif_X)=\op{mod}_{\SL_n}^Z(\dif_X)=\op{mod}_\Lambda^{rh,Z}(\dif_X)$.   

Now if $\rho: G \to \GL(X)$ denotes the action map, we have $\rho(G) \cong G/\{\pm I_n\}$. When $n$ is even, $\op{mod}_{\rho(G)} (\dif_X)\cong \op{mod}_{G}(\dif_X)$, while when $n$ is odd, the quiver of $\op{mod}_{\rho(G)} (\dif_X)$ is only the top component of $\widehat{Q}$ (together with the respective isolated vertices).

Now let $\tilde{\rho}:\tilde{G} \to \rho(G)$ be a covering group with kernel $K$. When $n$ is even, then a simple $\tilde{G}$-equivariant $\dif_X$-module that is not $G$-equivariant must have full support, and correspond to an isolated vertex (see similar argument in the proof of Theorem \ref{thm:gener}). When $n$ is odd, note that $\pi_1(\rho(G)) = \integ$ and so $K=\integ / k \integ$ for some $k\in \nat$. If $k$ is even, then $\tilde{\rho}$ factors through $\rho$, and as before, a simple $\tilde{G}$-equivariant $\dif_X$-module that is not $G$-equivariant must correspond to an isolated vertex. If $k$ is odd, then all stabilizers of the non-open orbits must be connected, so again the quiver of $\op{mod}_{\tilde{G}} (\dif_X)$ can have only isolated vertices in addition to the quiver of $\op{mod}_{\rho(G)} (\dif_X)$.
\end{proof}

\subsection{The cases of type $\Sp_{2n} \otimes \GL_m$}\label{subsec:sp}

Here, we discuss the cases $\Sp_{2n}\otimes \GL_2 \, , \Sp_{2n} \otimes \GL_3\,$ (both cases with $n\geq 2$), and $\Sp_{4} \otimes \GL_m$ for $m\geq 4$.

First, let us recall some general facts about the representations of type $\Sp_{2n} \otimes \GL_m$. Here the group $G=\Sp_{2n} \times \GL_m$ acts on the space $X$ of $2n \times m$ matrices in the obvious way. In other words, the action is the restriction of the action from Section \ref{subsec:gener}, by restricting the group $\GL_{2n}$ to $\Sp_{2n}$. There are finitely many orbits, given as follows. Let $Y$ be a generic $2n\times m$ matrix with variable entries, and let $Y_0$ be any $2n\times m$ matrix. If $J$ is the $2n\times 2n$ skew-symmetric invertible matrix defining $\Sp_{2n}$ then each $G$-orbit is determined by two data (see \cite[Section 13]{howumed} or \cite{lovett}): rank and isometry type. These data are described by $r=\op{rank} Y_0$ and $s=\op{rank} Y_0^t J Y_0$, respectively. We will denote by $O_{r,s}$ the orbit corresponding to the data $(r,s)$. There are some restrictions on such pairs of non-negative integers: $2r-2n \leq s\leq r \leq m$, and $s$ must be even. We have $O_{r,s} \subset \ove{O}_{r',s'}$ if and only if both $r\leq r', \, s\leq s'$ hold. The codimension is given by \cite[Corollary 2.4]{lovett}
\begin{equation}\label{eq:codim}
\op{codim} O_{r,s} = (2n-r)(m-r)+ \frac{(r-s)(r-s-1)}{2}.
\end{equation}

There is a non-constant semi-invariant $f$ on $X$ if and only if $m\leq 2n$ and $m$ is even, in which case it is given by the Pfaffian of $Y^t J  Y$ (recall, $Y$ is generic). In this case, $f$ has degree $m$ and the roots of the $b$-function of $f$ are $-1, -3, \dots, -m+1$ and $-2n, -2n+2, \dots, -2n+m-2$ (see \cite[Proposition 3.1]{kimu} or \cite[Example 2.10]{bub2}). In particular, the hypersurface $f=0$ is normal by \cite[Theorem 0.4]{saito2} and we can apply Lemma \ref{lem:generic}. This implies that the stabilizer of the open $G$-orbit is always connected.

\begin{lemma}\label{lem:compint}
Let $m\leq n$, and denote by $f_{ij} \,\, (1\leq i < j \leq m)$ the entries of the upper triangular part of the $m\times m$ skew-symmetric matrix $Y^t J Y$ where $Y$ is a $2n\times m$ matrix of indeterminates. Then the orbit closure $\ove{O}_{m,0}$ is a (set-theoretic) complete intersection defined by $\{f_{ij}\}_{1\leq i < j \leq m}$. Moreover, they generate the ring of invariants
\[\complex[X]^{\Sp_{2n}} = \complex [f_{ij}]_{1\leq i<j \leq m}.\]
\end{lemma}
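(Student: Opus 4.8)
The plan is to prove the three assertions in turn: that the common zero locus $V$ of the functions $f_{ij}$, $1\le i<j\le m$, equals $\overline{O}_{m,0}$ as a set; that this exhibits $\overline{O}_{m,0}$ as a set-theoretic complete intersection; and that the $f_{ij}$ generate $\complex[X]^{\Sp_{2n}}$. For the first point, note that for $Y_0\in X$ with columns $Y_0^{(1)},\dots,Y_0^{(m)}$ the equations $f_{ij}(Y_0)=0$ for all $i<j$ are equivalent to $(Y_0^{(i)})^tJ\,Y_0^{(j)}=0$ for all $i,j$ (the diagonal entries of $Y_0^tJY_0$ vanish automatically, $J$ being skew-symmetric), that is, to $Y_0^tJY_0=0$, i.e.\ to the orbit datum $s=\op{rank}(Y_0^tJY_0)$ being $0$. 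Hence $Y_0\in V$ precisely when $Y_0$ lies in one of the orbits $O_{r,0}$ with $0\le r\le m$; here the hypothesis $m\le n$ guarantees these orbits are all non-empty (isotropic $r$-subspaces of $(\complex^{2n},J)$ exist for $r\le n$), and by the closure order ($O_{r,s}\subseteq\overline{O}_{r',s'}$ iff $r\le r'$ and $s\le s'$) we conclude $V=\bigcup_{r=0}^{m}O_{r,0}=\overline{O}_{m,0}$.

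For the complete-intersection claim I would then simply count. By the codimension formula \eqref{eq:codim} with $(r,s)=(m,0)$,
\[
\op{codim}_X \overline{O}_{m,0}=(2n-m)(m-m)+\tfrac{m(m-1)}{2}=\binom{m}{2},
\]
which is exactly the number of functions $f_{ij}$. Since $\overline{O}_{m,0}$ is irreducible and is cut out set-theoretically by these $\binom{m}{2}$ functions while having codimension $\binom{m}{2}$ in $X$, it is by definition a set-theoretic complete intersection; in passing this also shows the $f_{ij}$ are algebraically independent in $\complex[X]$.

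For the ring of invariants I would invoke the First Fundamental Theorem for the symplectic group, applied to the action of $\Sp_{2n}$ on $X=\op{Mat}_{2n\times m}$ by left multiplication: the invariant algebra is generated by the pairwise symplectic products of the columns of $Y$, i.e.\ by the entries of the skew-symmetric matrix $Y^tJY$. Discarding the identically-zero diagonal entries and the lower-triangular ones (which agree up to sign with the upper-triangular entries), the remaining generators are exactly $\{f_{ij}\}_{1\le i<j\le m}$, so $\complex[X]^{\Sp_{2n}}=\complex[f_{ij}]_{1\le i<j\le m}$.

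I do not anticipate a genuine obstacle: the substantive inputs are the FFT for $\Sp_{2n}$ and the orbit classification together with the codimension formula of \cite[Corollary 2.4]{lovett} (see also \cite[Section 13]{howumed}), all of which are available. The only points needing a modicum of care are the non-emptiness of the intermediate orbits $O_{r,0}$ for $0\le r\le m$ (precisely where the hypothesis $m\le n$ enters) and the elementary observation that $Y_0^tJY_0=0$ is equivalent to $s=0$, so that $V$ meets exactly the orbits $O_{r,0}$ and is therefore irreducible of the predicted dimension.
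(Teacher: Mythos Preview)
Your proof is correct and follows the same approach as the paper's own proof, which is extremely terse: the paper simply cites the codimension formula \eqref{eq:codim} to get $\op{codim}\overline{O}_{m,0}=\binom{m}{2}$ and invokes the First Fundamental Theorem for $\Sp_{2n}$ for the invariant ring, leaving the identification of the zero locus of the $f_{ij}$ with $\overline{O}_{m,0}$ as implicit from the orbit description $(r,s)$ with $s=\op{rank}(Y_0^tJY_0)$. Your write-up fills in these details carefully, including the role of the hypothesis $m\le n$ in ensuring the orbits $O_{r,0}$ are non-empty, but the substance and the cited inputs are identical.
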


\begin{proof}
The first part follows since $m(m-1)/2=\op{codim} O_{m,0}$ by \eqref{eq:codim}. The second part is the First Fundamental Theorem for $\Sp_{2n}$.
\end{proof}

The following result will be sufficient to establish the simply connectedness of most of the encountered $G$-orbits.

\begin{lemma}\label{lem:simply}
Let $(r,s)$ be with either $s=0$ or $s=r <m$. Then the orbit $O_{r,s}$ is simply connected.
\end{lemma}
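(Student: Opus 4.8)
The plan is to exhibit an explicit point of $O_{r,s}$ and compute its stabilizer, showing that the stabilizer is connected (or at least that $O_{r,s}$ is simply connected, which is weaker but suffices). I would split into the two cases. For $s=0$: pick the point $Y_0$ whose columns span an isotropic $r$-dimensional subspace of the symplectic space $\CC^{2n}$ and which has rank $r$; concretely, with a symplectic basis $e_1,\dots,e_n,f_1,\dots,f_n$, let $Y_0=(e_1\mid\cdots\mid e_r\mid 0\mid\cdots\mid 0)$. The stabilizer $G_{Y_0}$ in $\Sp_{2n}\times\GL_m$ consists of pairs $(g,h)$ with $gY_0=Y_0h$. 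Writing this out: $h$ must preserve the column space structure, so $h$ is block upper-triangular of the shape $\begin{bmatrix} A & B\\ 0 & C\end{bmatrix}$ with $A\in\GL_r$, $C\in\GL_{m-r}$; and $g$ must send the isotropic flag appropriately, realizing $g$ as living in the parabolic of $\Sp_{2n}$ stabilizing the isotropic subspace $\langle e_1,\dots,e_r\rangle$, with the induced action on that subspace matching $A$. The key point is that the parabolic subgroup of $\Sp_{2n}$ stabilizing an isotropic subspace is connected, as is $\GL_r$ and $\GL_{m-r}$ and the vector-group of off-diagonal blocks; so the whole stabilizer, being built from connected pieces by extensions, is connected, and hence $O_{r,s}\cong G/G_{Y_0}$ is simply connected because $G$ is (here $G$ is not simply connected, but $\Sp_{2n}$ is and the $\GL_m$-factor acts with connected stabilizer on the $\GL_m$-part, so one reduces to $\SL$/$\Sp$ or invokes that $\pi_1(G/H)$ surjects onto $H/H^0$ as in Lemma \ref{lem:highest}, which is trivial).

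For $s=r<m$: now the columns of $Y_0$ span an $r$-dimensional \emph{non-degenerate} (symplectic) subspace $W\subset\CC^{2n}$, and $Y_0$ has rank $r$; this forces $r$ even, say $r=2k$. Take $Y_0=(e_1\mid\cdots\mid e_k\mid f_1\mid\cdots\mid f_k\mid 0\mid\cdots\mid 0)$. The stabilizer condition $gY_0=Y_0h$ now forces $g$ to preserve the symplectic splitting $\CC^{2n}=W\oplus W^{\perp}$, so $g\in\Sp(W)\times\Sp(W^{\perp})=\Sp_{2k}\times\Sp_{2n-2k}$, with the $\Sp_{2k}$-component having to be compatible with $h$; and $h$ again is block upper-triangular $\begin{bmatrix} A & B\\ 0 & C\end{bmatrix}$ with the constraint that $A$ (acting on the first $r$ columns) corresponds to an element of $\Sp_{2k}$ in the chosen basis — i.e. $A\in\Sp_{2k}\subset\GL_r$. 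So $G_{Y_0}\cong \Sp_{2k}\times\Sp_{2n-2k}\times(\text{block group})\times\GL_{m-r}$, again assembled from connected groups, hence connected, and $O_{r,s}$ is simply connected.

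The main obstacle I anticipate is bookkeeping the stabilizer precisely — making sure the off-diagonal blocks $B$ (maps from the last $m-r$ columns into the first $r$) are correctly matched against a corresponding unipotent/Levi piece in the $\Sp_{2n}$-parabolic, so that no disconnected "diagonal" $\mu_2$-type subgroup sneaks in (the symmetric-matrix case in Theorem \ref{thm:symm} shows such $\integ_2$'s do appear in nearby situations, so one has to check it genuinely does not happen here when $s=0$ or $s=r<m$). The cleanest way to avoid this is probably not to compute the full stabilizer but to argue directly: $O_{r,s}$ fibers $G$-equivariantly over a generalized flag variety (isotropic or symplectic Grassmannian) parametrizing the subspace $\op{col}(Y_0)\subseteq\CC^{2n}$ together with its image structure, with fiber an affine-bundle-like homogeneous space whose structure group is a product of $\GL$'s and $\Sp$'s; each base and fiber being simply connected, so is the total space by the homotopy exact sequence of the fibration. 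I would use whichever of these two routes turns out to have the shorter write-up, and I expect the fibration argument to be the more robust one, deferring the condition $s=r<m$ (as opposed to $s=r=m$) to the point where it guarantees the last $m-r\geq 1$ columns are genuinely zero so the flag-variety base is the honest Grassmannian rather than a point.
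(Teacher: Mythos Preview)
Your stabilizer computations are essentially correct (modulo a harmless orientation slip: the condition $gY_0=Y_0h$ with the last $m-r$ columns of $Y_0$ equal to zero forces the \emph{upper-right} block of $h$ to vanish, not the lower-left). The genuine gap is the passage from ``$H=G_{Y_0}$ connected'' to ``$O_{r,s}$ simply connected.'' Since $\pi_1(G)=\pi_1(\GL_m)=\integ$, the homotopy sequence $\pi_1(H)\to\pi_1(G)\to\pi_1(G/H)\to\pi_0(H)$ only yields $\pi_1(G/H)$ as the cokernel of $\pi_1(H)\to\integ$; your appeal to ``$\pi_1(G/H)\twoheadrightarrow H/H^0=0$'' is the wrong direction and gives nothing. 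What you must check is that $\pi_1(H)\to\integ$ is surjective. With your explicit descriptions this is easy: when $r<m$ the $\GL_{m-r}$-block of $h$ surjects via $\det$; when $s=0$ and $r=m$ the Levi $\GL_r$ of the symplectic parabolic does. (Your alternative ``reduce to $\SL_m$'' also works, but only after verifying that each $O_{r,s}$ is still a single $\Sp_{2n}\times\SL_m$-orbit and that the stabilizer there remains connected---which amounts to the same calculation.) Note that the hypothesis $r<m$ in the case $s=r$ is used precisely here: if $s=r=m$ the stabilizer collapses to $\Sp_r\times\Sp_{2n-r}$, which is simply connected, and then $\pi_1(O_{m,m})=\integ$.

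For comparison, the paper takes your second (fibration) route rather than computing stabilizers. For $s=0$ it fibers $O_{r,0}$ over the isotropic Grassmannian $\op{IGr}(r,F)$ with fiber the rank-$r$ locus in $\Hom(E,\complex^r)$; when $r<m$ both base and fiber are simply connected, while when $r=m$ the fiber has $\pi_1=\integ$ and the paper compares with the analogous fibration over the ordinary Grassmannian to show the boundary map $\pi_2(\op{IGr})\to\pi_1(\text{fiber})$ is onto. For $s=r<m$ it fibers $O_{r,r}$ over $\op{Gr}(m-r,E)$ via the kernel, with fiber the open $(\GL_r\times\Sp_{2n})$-orbit in $\Hom(\complex^r,F)$, and again compares homotopy sequences. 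Once you fill the $\pi_1$-surjectivity gap, your stabilizer approach is a clean alternative that sidesteps the $\pi_2$-comparison; the paper's approach trades that for avoiding explicit matrix bookkeeping.
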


\begin{proof}
We identify $X$ with the space of linear maps $\Hom(E,F)$, where $\dim E= m$ and $\dim F=2n$. Let $\omega$ be the symplectic form on $F$.

First, consider the case $s=0$. Then we must have $r\leq n$. Notice that the image of an element $\phi \in O_{r,0}$ is isotropic with respect to $\omega$, hence an element of the isotropic Grassmannian $\op{IGr}(r,F)$ (of all isotropic subspaces). This gives a fiber bundle
\begin{equation}\label{eqn-bundle-IGr}
  Z \to O_{r,0} \to \op{IGr}(r,F),
\end{equation}
where the fiber $Z$ can be identified with maps in $\Hom(E, \complex^r)$ of rank $r$. The space $\op{IGr}(r,F)$ is simply connected, as (like $\op{Gr}(r,F)$) it is the quotient of a simply-connected group by a connected group.  If $r<m$, then $Z$ is simply connected too, in which case the exact sequence of homotopy groups attached to the fibration gives that $O_{r,0}$ is simply connected as well. So we can assume $r=m$, where $\pi_1(Z)=\integ$. Then take $O_r \subseteq X$ to be the maps of rank $r$. Since $m=r\leq n <2n$, $O_r$ is simply connected. The fiber bundle \eqref{eqn-bundle-IGr} can be realized as the pull-back of the fiber bundle $Z \to O_r \to \op{Gr}(r,F)$ on the Grassmannian. Note that the natural map $\pi_2(\op{IGr}(r,F))\to \pi_2(\op{Gr}(r,F))$ is an isomorphism, identifying both with $\integ$; via the Hurewicz theorem this reduces to checking that the appropriate Schubert variety is isotropic.  This gives the following diagram of homotopy groups:
\[
\xymatrix{
  & \pi_2(\op{IGr}(r,F))  \ar[r]\ar[d]_{=} & \pi_1(Z) \ar[r]\ar@{=}[d] & \pi_1(O_{0,r}) \ar[r]\ar[d] & \pi_1(\op{IGr}(r,F))=1 \\
  & \pi_2(\op{Gr}(r,F))  \ar[r] & \pi_1(Z) \ar[r] & \pi_1(O_r)=1 &
}\]
where the left four groups are all $\integ$.
The lower left map is an isomorphism since the cokernel is trivial, thus $\pi(O_{r,0})$ is trivial.

Now consider the second case when $s=r<m$. Then $O_{r,r}$ is also a $G'=\Sp_{2n}\times \SL_m$-orbit. Note that the kernel of a map $\phi \in O_{r,r}$ can be any element in $\op{Gr}(m-r,E)$. This gives a fiber bundle $Z' \to O_{r,r} \to \op{Gr}(m-r,E)$, where $Z'$ can be identified with the maps $\Hom(\complex^r, F)$ of maximal isometry $r$. Now $Z'$ is the open $\GL_r \times \Sp_{2n}$-orbit with connected stabilizer and so $\pi_1(Z')\cong\integ$ by Remark \ref{rem:spherical}. We write $\op{Gr}(m-r,E)$ as a homogeneous space $G' / (\Sp_{2n} \times P)$, where $P$ is the corresponding parabolic subgroup of $\SL_m$ and $\pi_1(P)\cong \integ$. This gives another fiber bundle $P \times \Sp_{2n} \to G' \to \op{Gr}(m-r,E)$ that maps to the previous bundle. We get the following diagram of homotopy groups:
\[
\xymatrix{
& \pi_2(\op{Gr}(m-r,F))  \ar[r]\ar@{=}[d] & \pi_1(P\times \Sp_{2n}) \ar[r]\ar[d] & 1 \ar[r]\ar[d] & 1 \\
1 \ar[r] & \pi_2(\op{Gr}(m-r,F))  \ar[r] & \pi_1(Z') \ar[r] & \pi_1(O_{r,r}) \ar[r] & 1
}\]
In order to show that $O_{r,r}$ is simply connected, we have to show that the map $P\times \Sp_{2n} \to Z'$ induces an isomorphism on the level of fundamental groups. It is easy to see that this map factors through the quotient map $P \times \Sp_{2n} \to \GL_r  \times \Sp_{2n}$ and the quotient map $\GL_r \times \Sp_{2n} \to Z'$, both inducing isomorphism on the level of fundamental groups (the latter since the stabilizer of $Z'$ is connected, as mentioned before).
\end{proof}

In the cases below, all non-open orbits will be either of the form as in Lemma \ref{lem:simply}, or the orbit $O_{3,2}$. In each latter case the orbit $O_{3,2}$ will just be the simply connected variety $O_3$ of matrices of rank $3$. So all the non-open orbits considered below are simply connected. Furthermore, in case we take a representation that is geometrically equivalent to $G\to \GL(X)$, the only possible additional vertices of the quiver of equivariant $\dif$-modules must be isolated vertices (when the semi-invariant $f$ is present, since the $b$-function of $f$ has integer roots) corresponding to integrable connections on the open orbit. Now we proceed describing the quiver for each case.

\medskip
{\bf Case 1:}  $\Sp_{2n}\otimes \GL_2$, with $n\geq 2$.
\medskip

In this case there are $4$ orbits corresponding to $(r,s)\in\{(0,0), (1,0), (2,0), (2,2)\}$ of codimensions $4n, 2n-1, 1, 0,$ in this order. The $b$-function of the semi-invariant $f$ has roots $-1, -2n$. This gives $3$ simple equivariant $\dif$-modules, as in Proposition \ref{prop:root}. We note that the orbit $(1,0)$ is just the orbit of rank $1$ matrices. Using Theorem \ref{thm:gener} and Lemma \ref{lem:local}, we see that there are no arrows between vertices $(0,0)$ and $(2,2)$. The injective hull of the simple at $(2,2)$ is $\CC[X]_f$, and its decomposition shows that there are no arrows between $(2,2)$ and $(1,0)$. Further, the quotient of $\CC[X]_f$ by $\CC[X]$ is the injective hull of the simple at $(2,0)$, and so there are also no arrows between $(2,0)$ and $(1,0)$. We obtain that the quiver of $\op{mod}_G(\dif_X)$ is of type $\AAA_3$

\[\xymatrix{(0,0) \ar@<0.5ex>[r] & \ar@<0.5ex>[l] (2,0)
\ar@<0.5ex>[r] & \ar@<0.5ex>[l] (2,2)}\]
with all 2-cycles zero, and $(1,0)$ is an isolated vertex.

\medskip
{\bf Case 2:}  $\Sp_{2n}\otimes \GL_3$, with $n\geq 2$.
\medskip

Then there are $6$ orbits $(r,s)\in\{(0,0), (1,0), (2,0), (2,2), (3,0), (3,2)\}$ of codimensions $6n, 4n-2, 2n-1, 2n-2, 3, 0,$ respectively (except when $n=2$, when there is no orbit corresponding to $(3,0)$). We note that among all the examples in this paper, this is the only case ($n\geq 3$) when the orbits are not linearly ordered with respect to the inclusion of orbit closures, as $O_{2,2}$ is not in the orbit closure of $O_{3,0}$. (see \cite[Section 13]{howumed}). We will discuss according to two cases: $n=2$, and $n\geq 3$.

\vspace{0.1in}

First, we discuss the case $n=2$ with  $5$ orbits. The complement of the (open) orbit of $(3,2)$ has codimension $\geq 2$, hence it is simply-connected and the pushforward of the simple at $(3,2)$ to $X$ is $\CC[X]$. But this push-forward must be the injective hull of the simple at $(3,2)$ and so $(3,2)$ is has no arrows going  out, and by duality also not in. By the Fourier transform, the same is true for $(0,0)$.

By Lemma \ref{lem:local}, we have an exact sequence (with $U=X \setminus \ove{O}_{2,0}$)
\[ 0 \to H^2_{\ove{O}_{2,2}} (X,\orb_X) \to H_{O_{2,2}}^2(U,\orb_U) \to H_{\ove{O}_{2,0}}^{3}(X,\orb_X) \to  H^{3}_{\ove{O}_{2,2}} (X,\orb_X).\]
Since $\ove{O}_{2,2}$ is just the variety of matrices of rank $<3$, the simple $\dif_X$-module corresponding to $(2,0)$ does not appear as a composition factor in the $\dif_X$-module  $H^{3}_{\ove{O}_{2,2}} (X,\orb_X)$, by the case of generic matrices. This shows that there is an arrow from $(2,0)$ to $(2,2)$. Using the twisted Fourier transform, we obtain that the quiver of $\op{mod}_G(\dif_X)$ is a subquiver of the type $\AAA_3$ quiver
\[\xymatrix{(1,0) \ar@<0.5ex>[r] & \ar@<0.5ex>[l] (2,0)
\ar@<0.5ex>[r] & \ar@<0.5ex>[l] (2,2)}\]
with all 2-cycles zero, and $(3,2)$ and $(0,0)$ are isolated vertices. (Note that the Fourier transform switches $(2,2)$ and $(1,0)$. It must therefore send $(2,0)$ to itself. Hence the arrows between $(2,0)$ and $(1,0)$).  To see that the quiver is $\AAA_3^c$ as in \eqref{eq:doubb}, we show that the composition $(2,2)\to (2,0) \to (1,0)$ is zero (this implies that $(1,0)\to (2,0) \to (2,2)$ is also zero). This follows from the following result.

\begin{lemma}\label{lem:m2}
Let $P_{2,2}$ be the projective cover in $\op{mod}_G(\dif_X)$ of the simple $\dif_X$-module $S_{2,2}$ supported on $\ove{O}_{2,2}$. Then the holonomic length of $P_{2,2}$ is two.
\end{lemma}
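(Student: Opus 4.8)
The plan is to identify $P_{2,2}$ explicitly as a localization-type $\dif_X$-module and read off its composition series. Since $X = \Sp_4 \otimes \GL_3$ acting on $4\times 3$ matrices is $\dif$-affine and of Capelli type (see \cite{howumed} — the relevant block is $\Sp_{2n}\otimes\GL_m$), we may identify $\dif_X$-modules with their global sections, and by Theorem \ref{thm:capelli} the projective cover of $S_{2,2}$ is $P(\lambda)$ for any $\lambda\in\Pi$ with $m_\lambda(\Gamma(X,S_{2,2}))\neq 0$. The simple $S_{2,2}$ supported on the open orbit $\ove{O}_{2,2}$ (the rank-$<3$ locus, which coincides with the zero locus of the semi-invariant $f = \operatorname{Pf}(Y^tJY)$, a cubic with $G$-character $\det$) is $\complex[X]$, so one natural choice is $\lambda = 0$ and $P(0) = \complex[X]$ — but $\complex[X]$ is not projective. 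The point is that in the quiver $\AAA_3^c$ computed just above, the vertex $(2,2)$ has its projective cover with composition factors only at $(2,2)$ and $(2,0)$ (the single arrow $(2,2)\to(2,0)$, with the composite $(2,2)\to(2,0)\to(1,0)$ required to vanish), so $P_{2,2}$ has holonomic length exactly two, which is what we want to establish; the argument must therefore \emph{prove} that composite is zero rather than assume the quiver is $\AAA_3^c$.

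First I would construct an explicit projective surjecting onto $S_{2,2} = \complex[X]$: namely $P(\lambda)$ for a suitable fundamental-type weight $\lambda$, or more concretely the module $\dif_X f^{-1}$ localized appropriately. Using Proposition \ref{prop:root} and the fact that the $b$-function of $f$ has roots $-1,-2n = -1,-4$, the module $\complex[X]_f$ has a three-step filtration with factors $\complex[X], L_f^{-1}, L_f^{-4}$, which are the simples at vertices $(2,2), (2,0), (0,0)$ in order (matching the codimension/support pattern: $\operatorname{supp} L_f^{-1} = \ove{O}_{2,0}$, the rank-$<2$ locus). Dualizing, $\dif_X f^{\,1}$ (the ideal-type module, or rather the submodule of $\complex[X]_f$ generated by $f$) will have the reversed composition series. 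The injective envelope of $S_{2,0}$ is, by Lemma \ref{lem:local} applied with $O = O_{2,0}$, the local cohomology module $H^c_{O_{2,0}}(U,\orb_U)$ where $c = \operatorname{codim} O_{2,0} = 2n-1 = 3$ and $U = X\setminus\ove{O}_{1,0}$; the exact sequence in Lemma \ref{lem:local} together with the determinantal computation (Theorem \ref{thm:gener}, since $\ove{O}_{2,2}$ is exactly the $3\times 3$-minor variety for the underlying $\GL_4\times\GL_3$ action restricted here) bounds which simples can appear. Dually, $P_{2,2}$ is the holonomic dual of the injective envelope of $S_{2,2}$, i.e.\ of $\complex[X]_f / (\text{part supported on }\ove{O}_{2,0})$ — one checks $\complex[X]_f$ itself is the injective envelope of $\complex[X] = S_{2,2}$ since $j'_*\orb_U = \complex[X]_f$ with $j': O_{3,2}\hookrightarrow X$ and this is indecomposable injective by adjunction.

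So the crux is: $\complex[X]_f$ is the injective envelope of $S_{2,2}$, hence $P_{2,2} = \dual(\complex[X]_f)$, and I must show this has holonomic length two, i.e.\ that the simple $S_{1,0}$ at vertex $(1,0)$ does \emph{not} occur in $\complex[X]_f$. Equivalently: $\complex[X]_f$ has composition factors exactly $S_{2,2}, S_{2,0}, S_{0,0}$, each with multiplicity one. The inclusions $S_{2,2}=\complex[X] \subset \dif_X f^{-1} \subset \dif_X f^{-2} = \dif_X f^{-4}$-range$\subset\complex[X]_f$ give at most these three factors if each quotient $\dif_X f^{r}/\dif_X f^{r+1}$ is itself simple; by Theorem \ref{thm:multfreee}(a) $\complex[X]_f$ is multiplicity-free as a $G$-module, and since $X$ is of Capelli type, Corollary \ref{cor:nocommon}(c) converts this into: the composition length of $\complex[X]_f$ equals the number of distinct simples appearing, each once, and these are precisely $\dif_X f^r/\dif_X f^{r+1}$ for $r = 0,-1,-4$ — no room for $S_{1,0}$. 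The main obstacle I anticipate is verifying cleanly that $\dif_X f^r/\dif_X f^{r+1}$ is \emph{simple} (not just having a unique simple quotient as in Proposition \ref{prop:root}) for each root $r$; this should follow from multiplicity-freeness of $\complex[X]_f$ as in the square-matrix argument in the proof of Theorem \ref{thm:gener}, where exactly this step ("$L_f^r \cong \dif_X f^r/\dif_X f^{r+1}$ because $\complex[X]_f$ is multiplicity-free") is made. Once that is in hand, $P_{2,2} = \dual(\complex[X]_f)$ has a two-step filtration $0\to S_{2,0}\oplus(\text{nothing})\to \cdot\to S_{2,2}\to 0$ after quotienting out the socle part supported on $\ove{O}_{0,0}$ — more precisely, as the projective cover of $S_{2,2}$ its radical has a single composition factor $S_{2,0}$ (vertex $(0,0)$ being isolated, as shown just above via the Fourier transform), giving holonomic length two.
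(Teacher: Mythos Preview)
Your proposal rests on a fundamental misreading of the setup, and the argument does not go through. In the case $\Sp_4\otimes\GL_3$ (that is, $n=2$, $m=3$ in the notation of Section~\ref{subsec:sp}), the open orbit is $O_{3,2}$, not $O_{2,2}$; the orbit closure $\ove{O}_{2,2}$ has codimension $2$ (it is the rank-$\leq 2$ locus), so $S_{2,2}$ is \emph{not} $\complex[X]$. Moreover, since $m=3$ is odd, there is no nonconstant $G$-semi-invariant on $X$: the matrix $Y^tJY$ is $3\times 3$ skew-symmetric and hence has no Pfaffian. Consequently the module $\complex[X]_f$, the $b$-function with roots ``$-1,-4$'', and the filtration $\complex[X]\subset\dif_Xf^{-1}\subset\cdots$ on which your entire argument is built simply do not exist here. (You appear to have imported the structure from the $\Sp_{2n}\otimes\GL_2$ case, where $m=2$ is even and a quadratic semi-invariant is present.)

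Since the complement of $O_{3,2}$ has codimension $\geq 2$, the module $\complex[X]$ is simple, projective, and injective in $\op{mod}_G(\dif_X)$, and the vertices $(3,2)$ and $(0,0)$ are isolated; the lemma is genuinely about the interior vertex $(2,2)$ of the remaining $\AAA_3$-shaped piece $(1,0)\leftrightarrow(2,0)\leftrightarrow(2,2)$. The paper resolves this by an explicit characteristic-variety computation: it identifies a weight $\lambda$ with $m_\lambda(S_{2,2})\neq 0$ (namely the $G$-type $\operatorname{triv}\otimes\Sym_2\complex^3$, found via the known $\GL_4\times\GL_3$-decomposition of $S_{2,2}$), invokes the Capelli-type property so that $P_{2,2}\cong P(\lambda)$ has the explicit presentation \eqref{eq:highest}, computes generators of the characteristic ideal in Macaulay2, and then exhibits a single element of that ideal which does not vanish at a chosen point of $\ove{T^*_{O_{1,0}}X}$. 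This rules out $\ove{T^*_{O_{1,0}}X}$ as a component of $\charC(P_{2,2})$, hence $S_{1,0}$ is not a composition factor and the length is two. Any replacement argument you devise must similarly distinguish $(1,0)$ from $(2,0)$ in a setting with no semi-invariant; your localization-based approach does not make contact with this.
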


\begin{proof}
By the quiver description of $\op{mod}_G(\dif_X)$, the length of $P_{2,2}$ is either two or three. Hence, it is enough to show that the closure of the conormal bundle $\ove{T^*_{O_{1,0}} X}$  is not an irreducible component of the characteristic variety of $P_{2,2}$. We note that $S_{2,2}$ is also a $\GL_4\times \GL_3$-equivariant $\dif_X$-module. By \cite[Section 5]{claudiu1}, the irreducible $\GL_4\times \GL_3$-representation $\bigwedge^2 \CC^4 \otimes \Sym_2 \CC^3$ appears in $S_{2,2}$ with multiplicity one. Restricting $\GL_4$ to $\Sp_4$, we obtain that the $G$-representation $V=\op{triv} \otimes \Sym_2\CC^3$ appears in the $G$-decomposition of $S_{2,2}$. Since $\Sp_4 \otimes \GL_3$ is of Capelli type by \cite[Section 15]{howumed}, we have $P(V)\cong P_{2,2}$ according to Theorem \ref{thm:capelli}. On the other hand, $P(V)$ has an explicit $\dif$-module presentation as described in \eqref{eq:highest}, which is given by elements of the universal enveloping algebra that generate the annihilator of the highest weight vector in $V$. Implementing this presentation in the software Macaulay 2 \cite{M2}, we obtained generators of the corresponding characteristic ideal, whose zeroes form the characteristic variety of $P_{2,2}$. \\

\noindent\textbf{M2 code for characteristic ideal of} $P_{2,2}$:
\begin{verbatim}
load "Dmodules.m2"
R=QQ[x_(1,1)..x_(4,3)];
makeWeylAlgebra R;
g_(1,1)=x_(1,1)*dx_(1,1)+x_(1,2)*dx_(1,2)+x_(1,3)*dx_(1,3);
g_(1,2)=x_(1,1)*dx_(2,1)+x_(1,2)*dx_(2,2)+x_(1,3)*dx_(2,3);
g_(1,3)=x_(1,1)*dx_(3,1)+x_(1,2)*dx_(3,2)+x_(1,3)*dx_(3,3); 
g_(1,4)=x_(1,1)*dx_(4,1)+x_(1,2)*dx_(4,2)+x_(1,3)*dx_(4,3);
g_(2,1)=x_(2,1)*dx_(1,1)+x_(2,2)*dx_(1,2)+x_(2,3)*dx_(1,3);
g_(2,2)=x_(2,1)*dx_(2,1)+x_(2,2)*dx_(2,2)+x_(2,3)*dx_(2,3);
g_(2,3)=x_(2,1)*dx_(3,1)+x_(2,2)*dx_(3,2)+x_(2,3)*dx_(3,3);
g_(2,4)=x_(2,1)*dx_(4,1)+x_(2,2)*dx_(4,2)+x_(2,3)*dx_(4,3);
g_(3,1)=x_(3,1)*dx_(1,1)+x_(3,2)*dx_(1,2)+x_(3,3)*dx_(1,3);
g_(3,2)=x_(3,1)*dx_(2,1)+x_(3,2)*dx_(2,2)+x_(3,3)*dx_(2,3);
g_(3,3)=x_(3,1)*dx_(3,1)+x_(3,2)*dx_(3,2)+x_(3,3)*dx_(3,3);
g_(3,4)=x_(3,1)*dx_(4,1)+x_(3,2)*dx_(4,2)+x_(3,3)*dx_(4,3);
g_(4,1)=x_(4,1)*dx_(1,1)+x_(4,2)*dx_(1,2)+x_(4,3)*dx_(1,3);
g_(4,2)=x_(4,1)*dx_(2,1)+x_(4,2)*dx_(2,2)+x_(4,3)*dx_(2,3);
g_(4,3)=x_(4,1)*dx_(3,1)+x_(4,2)*dx_(3,2)+x_(4,3)*dx_(3,3);
g_(4,4)=x_(4,1)*dx_(4,1)+x_(4,2)*dx_(4,2)+x_(4,3)*dx_(4,3);
h_(1,1)=x_(1,1)*dx_(1,1)+x_(2,1)*dx_(2,1)+x_(3,1)*dx_(3,1)+x_(4,1)*dx_(4,1);
h_(1,2)=x_(1,1)*dx_(1,2)+x_(2,1)*dx_(2,2)+x_(3,1)*dx_(3,2)+x_(4,1)*dx_(4,2);
h_(1,3)=x_(1,1)*dx_(1,3)+x_(2,1)*dx_(2,3)+x_(3,1)*dx_(3,3)+x_(4,1)*dx_(4,3);
h_(2,1)=x_(1,2)*dx_(1,1)+x_(2,2)*dx_(2,1)+x_(3,2)*dx_(3,1)+x_(4,2)*dx_(4,1);
h_(2,2)=x_(1,2)*dx_(1,2)+x_(2,2)*dx_(2,2)+x_(3,2)*dx_(3,2)+x_(4,2)*dx_(4,2);
h_(2,3)=x_(1,2)*dx_(1,3)+x_(2,2)*dx_(2,3)+x_(3,2)*dx_(3,3)+x_(4,2)*dx_(4,3);
h_(3,1)=x_(1,3)*dx_(1,1)+x_(2,3)*dx_(2,1)+x_(3,3)*dx_(3,1)+x_(4,3)*dx_(4,1);
h_(3,2)=x_(1,3)*dx_(1,2)+x_(2,3)*dx_(2,2)+x_(3,3)*dx_(3,2)+x_(4,3)*dx_(4,2);
h_(3,3)=x_(1,3)*dx_(1,3)+x_(2,3)*dx_(2,3)+x_(3,3)*dx_(3,3)+x_(4,3)*dx_(4,3);
I=ideal(g_(1,1)-g_(3,3),g_(1,2)-g_(4,3),g_(2,1)-g_(3,4),g_(2,2)-g_(4,4));
I=I+ideal(g_(1,4)+g_(2,3),g_(1,3),g_(2,4),g_(3,1),g_(3,2)+g_(4,1),g_(4,2));
J=ideal(h_(1,1)+2,h_(2,1),h_(3,1),h_(2,2),h_(3,3));
J=J+ideal(h_(2,3),h_(3,2),h_(1,3)^3,h_(1,2)^3);
K=I+J;
C=charIdeal K;
\end{verbatim}

The code finished in approximately 1.5 hours on a computer with a x86\_64 architecture involving 32 threads, running at 3 GHz. 

Among the generators of the characteristic ideal $C$ obtained, we choose the following element (in $\CC[X\times Y]$ where $Y=X^*$):
\[h(x,y)=x_{2,1} y_{2,3} y_{3,2}-x_{2,1} y_{2,2} y_{3,3}-x_{1,1} y_{2,3}y_{4,2}+x_{1,1}y_{2,2}y_{4,3}-x_{4,1}y_{3,3}y_{4,2}+x_{4,1}y_{3,2}y_{4,3}.\]
We choose the following element $v\in \ove{T^*_{O_{1,0}} X}$:
\[v=\left(\begin{bmatrix} 1 & 0 & 0 \\ 0 & 0 & 0 \\ 0 & 0 & 0 \\ 0 & 0 & 0 \end{bmatrix} , \begin{bmatrix} 0 & 0 & 0 \\ 0 & 1 & 0 \\ 0 & 0 & 0 \\ 0 & 0 & 1 \end{bmatrix} \right).\]
We have $h(v)\neq 0$, finishing the proof of the lemma.
\end{proof}

\vspace{0.1in}

Now let $n\geq 3$ in the proof of the theorem. As before, $(3,2)$ and $(0,0)$ are isolated vertices, and there is an arrow from $(2,0)$ to $(2,2)$. The twisted Fourier transform of 
$(2,2)$ is $(1,0)$. By Corollary \ref{cor:relations} (applied to the variety $\ove{O}_{3,0} \cup \ove{O}_{2,2}$), we see that there are no arrows between $(3,0)$ and $(2,2)$. To understand the arrows at the vertex $(3,0)$ using Lemma \ref{lem:local}, we proceed to describe $H^3_{\ove{O}_{3,0}}(R)$ in more detail, where $R=\orb_X$. Let $S_{r,s}$ denote the simple $\dif$-module with support $\ove{O}_{r,s}$. Then $S_{3,0}$ is a submodule of $H^3_{\ove{O}_{3,0}}(R)$, and their quotient can have as composition factors $S_{2,0}$ and $S_{1,0}$ with multiplicity $\leq 1$.

By Lemma \ref{lem:compint}, there are three polynomials $f_1,f_2,f_3$ whose zeroes give $\ove{O}_{3,0}$, and we have the following exact sequence:
\[R_{f_1 f_2} \oplus R_{f_1 f_3} \oplus R_{f_2 f_3} \to R_{f_1 f_2 f_3} \to H^3_{\ove{O}_{3,0}}(R) \to 0.\]
Let $f:=f_1 f_2 f_3$, and denote by $\hat{f}^{-1}$ the image in $H^3_{\ove{O}_{3,0}}(R)$ of the element $1/f$ in $R_f$. The element $\hat{f}^{-1}$ is a semi-invariant under the action of $\lie = \mathfrak{sp}_{2n}\times \mathfrak{gl}_3$ of (highest) weight $1 \otimes (-2,-2,-2)$ (by convention, we work with the dual action of $G$, so that polynomials get positive weights). Let $S$ be a simple $\dif_X$-module that is a composition factor of $H^3_{\ove{O}_{3,0}}(R)$, and has a $G$-semi-invariant section of weight $1 \otimes (-2,-2,-2)$. Using the equivariant decompositions obtained in \cite[Section 5]{claudiu1}, we see that $S_{1,0}$ has no $\SL_3$-invariant sections. Hence, $S$ is isomorphic to either $S_{3,0}$ or $S_{2,0}$. Then $\tilde{\mathcal{F}}(S)$ is also isomorphic to either $S_{3,0}$ or $S_{2,0}$, and has a $G$-semi-invariant section of weight $1\otimes (-2n+2,-2n+2,-2n+2)$ by \eqref{eq:fourier}. By Lemma \ref{lem:section} below, we must have $\tilde{\mathcal{F}}(S)\cong S_{2,0}$, and $S_{2,0}$ is not a composition factor of $H^3_{\ove{O}_{3,0}}(R)$. This implies that $S_{3,0} \cong S$ is generated by $\hat{f}^{-1}$. Using the twisted Fourier transform, we obtain that the quiver of $\op{mod}_G(\dif_X)$ is a disjoint union of two type $\AAA_2$ quivers
\[\xymatrix{(1,0) \ar@<0.5ex>[r] & \ar@<0.5ex>[l] (3,0)}
\hspace{0.5in} \xymatrix{(2,0) \ar@<0.5ex>[r] & \ar@<0.5ex>[l] (2,2)}
\]
with all 2-cycles zero, and $(3,2)$ and $(0,0)$ are isolated vertices.

\begin{lemma}\label{lem:section}
 The equivariant $\dif_X$-module $H^3_{\ove{O}_{3,0}}(R)$ has no non-zero $G$-semi-invariant sections of (highest) weight $1\otimes (-2n+2,-2n+2,-2n+2)$.
\end{lemma}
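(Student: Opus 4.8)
The plan is to reduce the question to a local cohomology computation over the ring of $\Sp_{2n}$-invariants, where it becomes a matter of listing the $\GL_3$-weights that occur. Since $\Sp_{2n}$ is semi-simple, a $G$-semi-invariant of weight $1\otimes(-2n+2,-2n+2,-2n+2)$ is automatically $\Sp_{2n}$-invariant, so it suffices to show that the $\GL_3$-module $H^3_{\ove{O}_{3,0}}(R)^{\Sp_{2n}}$ contains no semi-invariant of weight $\det^{-2n+2}$.

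By Lemma \ref{lem:compint}, $\ove{O}_{3,0}$ is the set-theoretic complete intersection cut out by $f_1:=f_{12}$, $f_2:=f_{13}$, $f_3:=f_{23}$, so $H^3_{\ove{O}_{3,0}}(R)$ is the top cohomology of the \v{C}ech complex of $R=\complex[X]$ on $f_1,f_2,f_3$. As the $f_i$ are $\Sp_{2n}$-invariant and $\Sp_{2n}$ is reductive, taking invariants is exact and commutes with localization at the $f_i$; hence $H^3_{\ove{O}_{3,0}}(R)^{\Sp_{2n}}$ is the top cohomology of the \v{C}ech complex of $R^{\Sp_{2n}}$ on $f_1,f_2,f_3$. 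By Lemma \ref{lem:compint} again, $R^{\Sp_{2n}}=\complex[f_1,f_2,f_3]=:A$ is a polynomial ring, $(f_1,f_2,f_3)=:\mathfrak m$ is its homogeneous maximal ideal, and therefore $H^3_{\ove{O}_{3,0}}(R)^{\Sp_{2n}}\cong H^3_{\mathfrak m}(A)$ with its residual $\GL_3$-action.

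It then remains to identify $H^3_{\mathfrak m}(A)$ as a $\GL_3$-module and to list the semi-invariant weights occurring in it. Writing $W:=\langle f_1,f_2,f_3\rangle$ (so $A=\Sym W$), the normalization used just before the lemma — where $f_1f_2f_3$ has weight $(2,2,2)$ and $\hat{f}^{-1}$ has weight $(-2,-2,-2)$ — forces $W\cong\bigwedge^2\complex^3\cong(\complex^3)^*\otimes\det$. The inverse-polynomial (graded local duality) description of top local cohomology then gives, as a $\GL_3$-module,
\[
H^3_{\mathfrak m}(A)\;\cong\;\bigoplus_{e\geq 0}\Sym^e(W^*)\otimes\bigwedge^3 W^*\;\cong\;\bigoplus_{e\geq 0}\Sym^e(\complex^3)\otimes\det{}^{-e-2},
\]
whose $e$-th summand is the irreducible $\GL_3$-module of highest weight $(-2,-e-2,-e-2)$. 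Such a summand is one-dimensional — equivalently a power of $\det$, equivalently contains a $\GL_3$-semi-invariant — only when $e=0$, in which case it is $\det^{-2}$, of weight $(-2,-2,-2)$. Hence the only $\GL_3$-semi-invariants of $H^3_{\ove{O}_{3,0}}(R)^{\Sp_{2n}}$ are the scalar multiples of $\hat{f}^{-1}$, of weight $(-2,-2,-2)$; since $n\geq 3$ we have $-2n+2\neq -2$, so no $G$-semi-invariant of the asserted weight exists.

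I do not expect a serious obstacle. The closest thing to one is the bookkeeping in the second paragraph — exactness of invariants and commutation with localization for the reductive group $\Sp_{2n}$, together with the description of the invariant ring from Lemma \ref{lem:compint} (whose second half is the First Fundamental Theorem for $\Sp_{2n}$) — and, in the third paragraph, matching the $\GL_3$-weight conventions of the paper so that $W$, rather than its dual, is identified with $\bigwedge^2\complex^3$. Once these are pinned down, the $\GL_3$-module structure of a local cohomology module of a polynomial ring at its maximal ideal is classical.
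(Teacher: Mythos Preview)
Your proof is correct. Both your argument and the paper's rest on the same reduction: a $G$-semi-invariant is $\Sp_{2n}$-invariant, and by Lemma~\ref{lem:compint} the $\Sp_{2n}$-invariants form the polynomial ring $A=\complex[f_1,f_2,f_3]$. From there the two arguments finish differently. The paper stays at the level of the surjection $R_f\twoheadrightarrow H^3_{\ove{O}_{3,0}}(R)$: it notes that $(R_f)^{\Sp_{2n}}=A_f$ contains a \emph{unique} monomial of torus weight $(-2n+2,-2n+2,-2n+2)$, namely $f^{-n+1}=(f_1f_2f_3)^{-n+1}$, so any $G$-semi-invariant of that weight would have to be its image $\hat f^{-n+1}$; one then checks directly (the paper calls it ``obvious'') that $\hat f^{-n+1}$ is not $\mathfrak{sl}_3$-invariant, e.g.\ $E_{12}\cdot\hat f^{-n+1}$ is a nonzero monomial with all exponents negative. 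You instead pass to $H^3_{\mathfrak m}(A)$ and write down its full $\GL_3$-decomposition $\bigoplus_{e\geq 0}\Sym^e(\complex^3)\otimes\det^{-e-2}$, from which the absence of $\det^{-2n+2}$ for $n\geq 3$ is immediate. Your route is a bit more structural and yields more information (the entire $\GL_3$-content of the $\Sp_{2n}$-invariants); the paper's is terser, trading the representation-theoretic bookkeeping for a one-line computation at the end.
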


\begin{proof}
Although the $\dif_X$-module $R_f$ is not $G$-equivariant, it is $H=\Sp_{2n} \times (\CC^*)^3$-equivariant. By Lemma \ref{lem:compint}, the only $H$-semi-invariant element (up to scalar) of $H$-weight $1\otimes (-2n+2,-2n+2,-2n+2)$ in $R_f$ is $f^{-n+1}$. Hence, if there is any $G$-semi-invariant section of (highest) weight $1\otimes (-2n+2,-2n+2,-2n+2)$ in $H^3_{\ove{O}_{3,0}}(R)$, then it must be the image $\hat{f}^{-n+1}$ of the element $f^{-n+1}$. However, it is obvious that $\hat{f}^{-n+1}$ is not $\mathfrak{sl}_3$-invariant.
\end{proof}

\medskip
{\bf Case 3:}  $\Sp_{4}\otimes \GL_m$, with $m\geq 4$.
\medskip

There are $6$ orbits corresponding to the data $(0,0), (1,0), (2,0), (2,2), (3,2), (4,4)$. These are of codimensions $4m,3m-3, 2m-3, 2m-4, m-3, 0,$ respectively. There is a non-trivial semi-invariant only when $m=4$. Hence we discuss according to these two cases.

When $m>4$, then $(0,0)$ and $(4,4)$ are isolated vertices. By Theorem \ref{thm:gener} and Lemma \ref{lem:local}, $(3,2)$ and $(1,0)$ are also isolated, since the $s$-component of the type $(r,s)$ is here irrelevant once $(2,0)$ is ignored. By Lemma \ref{lem:local} again, there is an arrow from $(2,0)$ to $(2,2)$. Hence the quiver of $\op{mod}_G(\dif_X)$ is the $\AAA_2$ type quiver.

\[\xymatrix{
(2,0) \ar@<0.5ex>[r] & \ar@<0.5ex>[l] (2,2)}\]
with all 2-cycles zero, and $(0,0), (1,0), (3,2), (4,4)$ are isolated vertices.

\vspace{0.1in}

Now let $m=4$. If we delete the vertex $(2,0)$ of the quiver of $\op{mod}_G(\dif_X)$, we obtain a quiver of type $\AAA_5$, by Theorem \ref{thm:gener}. By Lemma \ref{lem:local}, there is an arrow $(2,0)\to (2,2)$, but there are no non-zero paths from $(2,0)$ to $(3,2)$. Using the twisted Fourier transform, we obtain that the quiver of $\op{mod}_G(\dif_X)$ is of type $\EEE$
\[\xymatrix{
& & (2,0) \ar@<0.5ex>[d]^{\alpha} & & \\
(0,0) \ar@<0.5ex>[r] & \ar@<0.5ex>[l] (1,0) \ar@<0.5ex>[r] & \ar@<0.5ex>[u]^{\beta} \ar@<0.5ex>[l] \ar@<0.5ex>[r] (2,2) & \ar@<0.5ex>[l] (3,2)  \ar@<0.5ex>[r] & \ar@<0.5ex>[l] (4,4)
}\]
with all 2-cycles zero, and all compositions with the arrows $\alpha$ or $\beta$ equal to zero.

\subsection{The other cases}

Lastly, we discuss the cases $ \Sp_n\,, \spin_{10}\, , \SO_n \otimes \CC^*$ for $n\geq 3\,$, $\spin_7 \otimes \CC^*\,$, $\spin_9 \otimes \CC^*\,,\Gtwo \otimes \CC^*\,$, $\E\otimes \CC^*$ in this order. Since the techniques are analogous to the previous cases, we skip the details. For the orbit structure of these spaces, cf. \cite[Section 13]{howumed}.

\medskip
{\bf Case 4:}  $\Sp_{2n}$
\medskip

Here the group $G=\Sp_n$ acts on its fundamental representation $X$ that is $2n$-dimensional. There is only one non-zero orbit, whose complement has codimension $\geq 2$ in $X$. Hence, the orbits are simply connected, and the quiver of $\op{mod}_G(\dif_X)=\op{mod}_{G\times \complex^*}(\dif_X)=\op{mod}_\Lambda^{rh}(\dif_X)$ consists of $2$ isolated vertices.

\medskip
{\bf Case 5:} $\spin_{10}$
\medskip

Here $G=\spin_{10}$ and $X$ is the even half-spin representation. There are only 2 non-zero orbits. The open orbit has complement with codimension $\geq 2$, and the highest weight orbit has connected $G$-stabilizer by Lemma \ref{lem:highest}. Hence, all orbits are simply connected, so that the quiver of $\op{mod}_G(\dif_X)=\op{mod}_{G\times \complex^*}(\dif_X)=\op{mod}_\Lambda^{rh}(\dif_X)$ consists of $3$ isolated vertices. (Since there is no semi-invariant, the simple on the big orbit is isolated, and by duality so is the simple on the point. Hence, so must be the last. Compare the argument in the case $\Sp_{2n}\otimes \GL_3$ above).

\medskip
{\bf Case 6:} $\SO_n \otimes \CC^*$ for $n\geq 3$
\medskip

Here $G=\SO_n \times \CC^*$ acts on the fundamental representation $X$ of dimension $n$. There are 2 non-zero orbits, the open orbit $O_2$, and $O_1$ whose closure is a hypersurface. The quadratic semi-invariant $f$ has degree $2$, and the roots of its $b$-function are $-1$, $-n/2$ (see \cite[Section 12]{kimu} or \cite[Example 2.9]{bub2}). As seen in Proposition \ref{prop:semiorbit}, $O_1$ is simply connected. By Remark \ref{rem:spherical}, there are (up to isomorphism) two simple equivariant $\dif$-modules with full support. Using our previous methods, we obtain the following result.

\begin{theorem}\label{thm:ortho} 
Let $G=\SO_n \times \CC^*$ and $X$ as above. Then  $\op{mod}_G (\dif_X) \cong \rep(\widehat{Q})$, where the quiver $\widehat{Q}$ has two connected components as follows.
\begin{itemize}
\item[(a)] if $n$ is even, then $\widehat{Q}$ is of type $\AAA_3$
\[\xymatrix{(0) \ar@<0.5ex>[r] & \ar@<0.5ex>[l] (1)
\ar@<0.5ex>[r] & \ar@<0.5ex>[l] (2)}\]
with all 2-cycles zero, and $(2')$ is an isolated vertex.
\item[(b)] if $n$ is odd, then $\widehat{Q}$ is
\[\xymatrix{(1) \ar@<0.5ex>[r] & \ar@<0.5ex>[l] (2)}
\hspace{0.5in} \xymatrix{(0) \ar@<0.5ex>[r] & \ar@<0.5ex>[l] (2')}
\]
with all 2-cycles zero.
\end{itemize}
\end{theorem}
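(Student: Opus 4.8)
The plan is to follow the template of Theorem~\ref{thm:gener}: list the simple equivariant $\dif_X$-modules, realise each of their injective envelopes as a localisation $\complex[X]_f\cdot f^{r}$, read the composition series off the roots of $b_f$ via Proposition~\ref{prop:root}, and then recover $\widehat{Q}$ using duality $\dual$, Corollary~\ref{cor:quivnice} and the twisted Fourier transform. By Theorem~\ref{thm:eqfin} there are exactly four simples: the delta module $E=H^{n}_{\{0\}}(X,\calO_X)$ at the origin (vertex $(0)$); the simple $S_1$ supported on the quadric cone $\overline{O_1}=\{f=0\}$ (vertex $(1)$, unique because $O_1$ is simply connected by Proposition~\ref{prop:semiorbit}); and the two simples with full support, which by Remark~\ref{rem:spherical} (here $\sigma=\chi^2$, so $d=2$) are the middle extensions of the connections $\calO_X$ (vertex $(2)$) and $\complex[U]\cdot f^{1/2}$ (vertex $(2')$), $U=O_2$. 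All four are $\dual$-self-dual (for $(2')$ because $\complex[U]f^{1/2}$ and $\complex[U]f^{-1/2}$ carry the same order-two local system on $U$). By \cite[Lemma~2.4]{binary}, $j_*\calO_U=\complex[X]_f$ is the injective envelope of $(2)$ and $j_*(\complex[U]f^{1/2})=\complex[X]_f\cdot f^{1/2}$ is that of $(2')$.

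As in the proof of Theorem~\ref{thm:gener}(a), since $U$ is spherical both localisations are multiplicity-free over $G$, so every nonzero step $\dif_X f^{r}/\dif_X f^{r+1}$ of the order filtrations equals the simple $L_f^{r}$, and a step is nonzero exactly at a root of $b_f$. With roots $-1$ and $-n/2$ this yields, for $n$ even, the three factors $L_f^{0}=\calO_X$, $L_f^{-1}$, $L_f^{-n/2}$ in $\complex[X]_f$ and an irreducible $\complex[X]_f f^{1/2}=\dif_X f^{1/2}$; and for $n$ odd, the two factors $\calO_X$, $L_f^{-1}$ in $\complex[X]_f$ and the two factors $L_f^{1/2}$, $L_f^{-n/2}$ in $\complex[X]_f f^{1/2}$. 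Here $L_f^{1/2}=(2')$, while $L_f^{-1}$ and $L_f^{-n/2}$ are supported in $\overline{O_1}$ and distinct (Proposition~\ref{prop:root}), so they exhaust $\{S_1,E\}$; applying Lemma~\ref{lem:local} with $O=O_1$ (and $H^{j}_{\{0\}}(\calO_X)=0$ for $j\ne n$) identifies $\complex[X]_f/\complex[X]=H^1_{\overline{O_1}}(X,\calO_X)$ with the indecomposable injective envelope of $S_1$ in $\op{mod}^{\overline{O_1}}_G(\dif_X)$, whose socle is therefore $S_1$; as $L_f^{-1}$ sits inside it as a simple submodule, $L_f^{-1}=S_1$ and hence $L_f^{-n/2}=E$.

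Now assemble $\widehat{Q}$. For $n$ even, $\complex[X]_f$ is uniserial $\calO_X$–$S_1$–$E$: the two non-split length-two subquotients give the arrows $(1)\to(2)$ and $(0)\to(1)$, $\dual$ gives their reverses, Corollary~\ref{cor:quivnice} annihilates all $2$-cycles and forbids a direct arrow $(0)\to(2)$ (the length-two path is already nonzero), and $\dif_X f^{1/2}=\complex[X]_f f^{1/2}$, being irreducible, is at once the injective envelope and (via $\dual$ and $j_!=\dual\,j_*\,\dual$) the projective cover of $(2')$, so $(2')$ is isolated; this is $\AAA_3\sqcup\{(2')\}$. For $n$ odd, the two length-two injective envelopes give arrows $(1)\leftrightarrow(2)$ and $(0)\leftrightarrow(2')$ with zero $2$-cycles and no other arrows at $(2)$ or $(2')$; since the ordinary Fourier transform carries $\calO_X=\dif_X/\dif_X(\partial_1,\dots,\partial_n)$ to the delta module, $\tilde{\mathcal{F}}$ swaps $(2)\leftrightarrow(0)$, hence carries the injective envelope of $(2)$ onto that of $(0)$ and that of $(2')$ onto that of $(1)$; comparing with the arrows already found forces $\tilde{\mathcal{F}}(S_1)=S_{2'}$, so $(0)$ meets only $(2')$ and $(1)$ only $(2)$. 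Finally, the customary argument via Lemma~\ref{lem:faithful} — using that $\overline{O_1}\setminus O_1=\{0\}$ and $O_1$ are simply connected, and that any equivariant connection $\complex[U]f^{r}$ with $r\notin\tfrac12\integ$ extends to the irreducible, projective–injective $\dif_X f^{r}$ — shows that passing to a covering of $\rho(G)$ only adds isolated vertices.

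The main obstacle is the parity dichotomy behind the second step: showing that $H^1_{\overline{O_1}}(X,\calO_X)$ is simple when $n$ is odd and of length two when $n$ is even (equivalently, that the root $-n/2$ of $b_f$ contributes to $\complex[X]_f$ only for $n$ even; equivalently, that $\tilde{\mathcal{F}}$ fixes $S_1$ for $n$ even and exchanges $S_1$ and $S_{2'}$ for $n$ odd). This is a concrete fact about the quadratic hypersurface, obtainable from the characteristic cycle of $\complex[X]_f$ — the multiplicity along the cotangent fibre at the origin, which is tied precisely to whether $-n/2\in\integ$ — or, since $\SO_n\otimes\complex^*$ on $\complex^n$ is of Capelli type, from the known $G$-decompositions of the four simples together with Corollary~\ref{cor:nocommon}(c). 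With this in hand the remainder is routine bookkeeping with $\dual$, Corollary~\ref{cor:quivnice} and the Fourier involution.
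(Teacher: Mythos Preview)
Your argument is correct and is exactly the instantiation of the paper's methods (Proposition~\ref{prop:root}, Lemma~\ref{lem:local}, Corollary~\ref{cor:quivnice}, the twisted Fourier transform) that the paper itself leaves implicit with the sentence ``Using our previous methods, we obtain the following result.'' One remark: your closing paragraph overstates the difficulty. The parity split is immediate once you observe that the root $-n/2$ of $b_f$ is an integer (hence contributes a step to the integer filtration on $\complex[X]_f$ via Proposition~\ref{prop:root}) precisely when $n$ is even, and a half-integer (hence contributes instead to $\complex[X]_f\cdot f^{1/2}$) when $n$ is odd---exactly what you already used in your second paragraph; together with $L_f^{-1}\neq L_f^{-n/2}$ from Proposition~\ref{prop:root} and your identification $L_f^{-1}=S_1$ via Lemma~\ref{lem:local}, this forces $L_f^{-n/2}=E$ without any appeal to characteristic cycles or the Capelli property.
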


\bigskip
{\bf Case 7:} $\spin_7 \otimes \CC^*$
\medskip

Here $G=\spin_7 \times \CC^*$ acting on the $8$-dimensional spin representation $X$. The image of the action map is contained in that of $\SO_8 \otimes \CC^*$ (see \cite[Section 7]{saki}), and it has the same $3$ orbits. The quiver of $\op{mod}_G (\dif_X)$ is the same as in Theorem \ref{thm:ortho} (a).

\medskip
{\bf Case 8:} $\spin_9 \otimes \CC^*$
\medskip

Here $G=\spin_9 \times \CC^*$ is acting on the $16$-dimensional spin representation $X$. The image of the action map is contained in that of $\SO_{16} \otimes \CC^*$ (see \cite{saki}). However, there are four $G$-orbits $O_0,O_1,O_2,O_3$, as the highest weight $\SO_{16}$-orbit decomposes as the union of $O_2$ and $O_1$ of codimensions $1$ and $5$, respectively (see \cite[Section 13]{howumed}). Hence, $O_2$ is also simply connected, and by Lemma \ref{lem:highest}, the same is true for $O_1$. Using \cite[Lemma 2.4]{binary} one can see that $(1)$ is an isolated vertex, since $H^1_{\ove{O}_2}(X,\orb_X)=\complex[X]_f/\complex[X]$ is the injective cover in $\op{mod}_G^{\ove{O}_2}(\dif_X)$ of the simple supported on $\ove{O}_2$. Using Theorem \ref{thm:ortho} we obtain that the quiver of $\op{mod}_G(\dif_X)$ is of type $\AAA_3$
\[\xymatrix{(0) \ar@<0.5ex>[r] & \ar@<0.5ex>[l] (2)
\ar@<0.5ex>[r] & \ar@<0.5ex>[l] (3)}\]
with all 2-cycles zero, and $(1)$ and $(3')$ are isolated vertices.

\medskip
{\bf Case 9:} $\Gtwo \otimes \CC^*$
\medskip

 Here  $G=\Gtwo \times \CC^*$ acts on a $7$-dimensional space $X$. The image of the action map is contained in that of $\SO_7\otimes \CC^*$, and it has the same $3$ orbits. The quiver of $\op{mod}_G (\dif_X)$ is the same as in Theorem \ref{thm:ortho} (b). 

\medskip
{\bf Case 10:} $\E\otimes \CC^*$
\medskip

Here $G=\E\times \CC^*$ acts on its $27$-dimensional fundamental representation $X$. There are $4$ orbits $O_0,O_1,O_2,O_3$, and there is a semi-invariant $f$ of degree $3$. The roots of the $b$-function of $f$ are $-1,-5,-9$ (see \cite[Section 6]{kimu}). Since the roots are integers, the simples supported on $X$ that are not isomorphic to $\CC[X]$ must be projective-injective. By Lemma \ref{lem:highest}, the highest weight orbit $O_1$ is simply connected. Hence, up to isolated vertices, the quiver of $\op{mod}_G(\dif_X)$ is of type $\AAA_4$
\[\xymatrix{(0) \ar@<0.5ex>[r] & \ar@<0.5ex>[l] (1)
\ar@<0.5ex>[r] & \ar@<0.5ex>[l] (2)
\ar@<0.5ex>[r] & \ar@<0.5ex>[l] (3)}\]
with all 2-cycles zero. Note that even if the stabilizer of $O_2$ is not connected, the corresponding additional vertices must be isolated because they can't have arrows to (or from) vertex (1), since the characteristic cycle of the projective cover of the simple at $(1)$ is multiplicity-free by Corollary \ref{cor:projmult}.

%%%%%%%%%%%%%%%%%%%%%%%%%%%%%%%%%%%%%%%%%%%%%%%%%%
\section{Concluding remarks}
%%%%%%%%%%%%%%%%%%%%%%%%%%%%%%%%%%%%%%%%%%%%%%%%%%

\subsection{Explicit presentations of $\dif$-modules}
%%%%%%%%%%%%%%%%%%%%%%%%%%%%%%%%%%%%%%%%%%%%%%%%%%%%%%

Let us assume, for simplicity, that $X$ is an affine space of Capelli type (see Definition \ref{def:capelli}). By Theorem \ref{thm:capelli}, each indecomposable projective in $\op{mod}_G(\dif_X)$ is isomorphic to $P(\lambda)$, for some $\lambda \in \Pi$. These $\dif$-modules have explicit presentations as described in \eqref{eq:highest}, which can be useful in explicit computations (see proof of Lemma \ref{lem:m2}). Furthermore, since any equivariant $\dif$-module has a projective resolution in $\op{mod}_G(\dif_X)$, one can in principle give explicit presentations to arbitrary $\dif$-modules in $\op{mod}_G(\dif_X)$.

For this, one needs to find for a simple $\dif_X$-module $S$ a weight $\lambda \in \Pi$ such that $m_\lambda(S) \neq 0$ (in which case it is equal to $1$). There are several approaches, for example see \cite{claudiu1} or Proposition \ref{prop:root}. We consider the following example.

Let $X$ be the space of $m\times n$ matrices with $G=\GL_m \times \GL_n$ as in Section \ref{subsec:gener}. When $m\neq n$, the category $\op{mod}_G(\dif_X)$ is semi-simple, hence the indecomposable projective $\dif$-modules are in fact simple. Using the $G$-equivariant decompositions of the simples as described in \cite[Section 5]{claudiu1}, we obtain explicit presentations as mentioned above.

When $m=n$, each simple in $\op{mod}_G(\dif_X)$ has a $G$-semi-invariant section, as seen in Section \ref{subsec:gener}. More precisely, let $\sigma$ denote the character $\det:G \to \CC$, and $S_i$ the simple supported on the subset $\ove{O}_i$ of matrices of rank $\leq i$, for $i=0,\dots, n$. Then we have $m_{\sigma^{i-n}}(S_i) =1$, hence the projective cover of $S_i$ is $P_i:=P(\sigma^{i-n})$. Using the quiver-theoretic description as in Theorem \ref{thm:gener}, we obtain easily the projective resolutions of the simples $S_i$. For example, for $0< i < n$ we have the periodic resolution of $S_i$
\[\dots \rightarrow P_i^2 \rightarrow P_{i-1}\oplus P_{i+1} \rightarrow P_i^2 \rightarrow  P_{i-1}\oplus P_{i+1} \rightarrow P_i \rightarrow S_i \rightarrow 0.\]
To fully understand the presentation of $S_i$, we describe the maps between the projectives. Let $f$ denote the determinant in the generic variables and $f^*$ the determinant in the partial variables. Take two projectives $P_i, P_j$, with $i\leq j$. We know that $\Hom_{\dif_X}(P_i,P_j) = \Hom_{\dif_X}(P_j,P_i)=\CC$. The respective non-zero maps can be described on the level of the generators as $1\otimes 1 \mapsto f^{j-i}\otimes 1$ and $1\otimes 1 \mapsto (f^*)^{j-i}\otimes 1$. Moreover, we can realize the equivalence of categories $\op{mod}_G(\dif_X) \cong \rep(\AAA_{n+1})$ from Theorem \ref{thm:gener} explicitly by sending an object $M\in \op{mod}_G(\dif_X)$ to the following representation of $\rep(\AAA_{n+1})$:
\[\xymatrix{
M_{\sigma^{-n}} \ar@<0.5ex>[r]^{f} & \ar@<0.5ex>[l]^{f^*} M_{\sigma^{-n+1}}
\ar@<0.5ex>[r]^-{f} & \ar@<0.5ex>[l]^-{f^*} \dots
\ar@<0.5ex>[r]^{f} &\ar@<0.5ex>[l]^{f^*} M_{\sigma^{-1}}
\ar@<0.5ex>[r]^-{f} & \ar@<0.5ex>[l]^-{f^*} M_{\sigma^0}}.
\]
Here, $M_{\sigma^i}$ denotes the weight space of $M$ corresponding to $\sigma^i$, and $M_{\sigma^i} \xrightarrow{f} M_{\sigma^{i+1}}$ (resp.\ $M_{\sigma^{i+1}} \xrightarrow{f^*} M_{\sigma^{i}}$) denotes the linear map induced by multiplication by $f$ (resp.\ $f^*$). This realization appears essentially in \cite[Section 7]{nang}.

\subsection{Characteristic cycles and the Pyasetskii pairing}

In all cases from Section \ref{sec:examples}, we can describe the Pyasetskii pairing using the Fourier transform as explained in Section \ref{subsec:four}. Moreover, for almost all simple equivariant $\dif$-modules we can describe their corresponding characteristic cycles; note that by Corollary \ref{cor:projmult}, this follows from the description of the characteristic varieties only. 

Let us exhibit these considerations in the case $\Sp_{2n} \otimes \GL_3$, with $n\geq 3$. As seen in Section \ref{subsec:sp}, the quiver of $\op{mod}_G(\dif_X)$ is 
\[\xymatrix{(1,0) \ar@<0.5ex>[r] & \ar@<0.5ex>[l] (3,0)}
\hspace{0.5in} \xymatrix{(2,0) \ar@<0.5ex>[r] & \ar@<0.5ex>[l] (2,2)}
\]
with all 2-cycles zero, and $(3,2)$ and $(0,0)$ are isolated vertices. Let $S_{r,s}$ denote the simple supported on $\ove{O}_{r,s}$. We have \[\tilde{\mathcal{F}}(S_{3,2})=S_{0,0}\,, \,\tilde{\mathcal{F}}(S_{2,2})= S_{1,0} \,, \, \tilde{\mathcal{F}}(S_{3,0})=S_{2,0}.\]
The Pyasetskii pairing is given by (see \eqref{eq:four-char})
\[ O_{3,2}^{\vee} = O_{0,0} \, , \, O_{2,2}^{\vee} = O_{1,0} \, , \, O_{3,0}^{\vee} = O_{2,0}.\]
This shows that in this case the characteristic varieties of the simples are irreducible and 
\[\charC (S_{r,s}) = [\ove{T^*_{O_{r,s}} X}]\,,\, \mbox{ for all } (r,s).\]

%%%%%%%%%%%%%%%%%%%%%%%%%%%%%%%%%%%%%%%%%%%%%%%%%%%%%%%%%
\section*{Acknowledgments}
We would like to thank Claudiu Raicu and Jerzy Weyman for fruitful conversations and helpful commentary.

\bibliography{biblo}
\bibliographystyle{amsplain}

\end{document}